\pdfoutput=1
\RequirePackage{ifpdf}
\ifpdf 
\documentclass[pdftex]{sigma}
\else
\documentclass{sigma}
\fi

\numberwithin{equation}{section}

\newtheorem{Theorem}{Theorem}[section]
\newtheorem*{Theorem*}{Theorem}
\newtheorem{Corollary}[Theorem]{Corollary}
\newtheorem{Lemma}[Theorem]{Lemma}
\newtheorem{Proposition}[Theorem]{Proposition}
 { \theoremstyle{definition}
\newtheorem{Definition}[Theorem]{Definition}

\newtheorem{Example}[Theorem]{Example}
\newtheorem{Examples}[Theorem]{Examples}
\newtheorem{Remark}[Theorem]{Remark}
\newtheorem{Remarks}[Theorem]{Remarks} }

\newcommand{\R}{\ensuremath{\mathbb{R}}}
\newcommand{\N}{\ensuremath{\mathbb{N}}}
\newcommand{\Z}{\ensuremath{\mathbb{Z}}}

\newcommand{\C}{\ensuremath{\mathbb{C}}}
\newcommand{\T}{\ensuremath{\mathbb{T}}}

\def\calT{\mathcal{T}}
\def\calC{\mathcal{C}}

\def\calK{\mathcal{K}}
\def\calB{\mathcal{B}}
\def\calH{\mathcal{H}}

\def\calA{\mathcal{A}}

\def\calQ{\mathcal{Q}}

\def\calU{\mathcal{U}}

\newcommand{\wh}{\ensuremath{\widehat}}
\newcommand{\wt}{\ensuremath{\widetilde}}
\DeclareMathOperator{\End}{End}

\newcommand{\one}{{\bf 1}}
\newcommand{\ol}{\overline}
\DeclareMathOperator{\Aut}{Aut}

\theoremstyle{definition}

\DeclareMathOperator{\Dom}{Dom}

\DeclareMathOperator{\Index}{Index}
\DeclareMathOperator{\Ind}{Ind}
\DeclareMathOperator{\Mult}{Mult}

\DeclareMathOperator{\Ker}{Ker}

\DeclareMathOperator{\Ran}{Ran}
\DeclareMathOperator{\Tr}{Tr}

\newcommand{\Cl}{\mathbb{C}\ell}


\newcommand{\hox}{ \hat{\otimes} }

\begin{document}
\allowdisplaybreaks

\newcommand{\arXivNumber}{2211.10601}

\renewcommand{\PaperNumber}{053}

\FirstPageHeading

\ShortArticleName{Index Theory of Chiral Unitaries and Split-Step Quantum Walks}

\ArticleName{Index Theory of Chiral Unitaries\\ and Split-Step Quantum Walks}

\Author{Chris BOURNE~$^{\rm ab}$}

\AuthorNameForHeading{C.~Bourne}

\Address{$^{\rm a)}$~Institute for Liberal Arts and Sciences and Graduate School of Mathematics,\\
\hphantom{$^{\rm a)}$}~Nagoya University, Furo-cho, Chikusa-ku, Nagoya, 464-8601, Japan}

\Address{$^{\rm b)}$~RIKEN iTHEMS, 2-1 Hirosawa, Wako, Saitama 351-0198, Japan}
\EmailD{\href{mailto:bourne@ilas.nagoya-u.ac.jp}{cbourne@ilas.nagoya-u.ac.jp}}

\ArticleDates{Received December 04, 2022, in final form July 24, 2023; Published online July 28, 2023}

\Abstract{Building from work by Cedzich et al.\ and Suzuki et al., we consider topological and index-theoretic properties of chiral unitaries, which are an abstraction of the time evolution of a chiral-symmetric self-adjoint operator. Split-step quantum walks provide a rich class of examples. We use the index of a pair of projections and the Cayley transform to define topological indices for chiral unitaries on both Hilbert spaces and Hilbert $C^*$-modules. In the case of the discrete time evolution of a Hamiltonian-like operator, we relate the index for chiral unitaries to the index of the Hamiltonian. We also prove a double-sided winding number formula for anisotropic split-step quantum walks on Hilbert $C^*$-modules, extending a result by Matsuzawa.}

\Keywords{index theory; $K$-theory; quantum walk; operator algebras}

\Classification{46L80; 47L90}

\section{Introduction}

Quantum walks are discrete analogues of random walks and bring together elements of probability
theory, quantum information theory and mathematical physics.
Such quantum walks are often
realised via a unitary operator, considered as a discrete time step,
constructed from the discrete shift and multiplication/coin operators on the
Hilbert space $\ell^2(\Z, \C^n)$.

Adapting ideas from topological phases
of matter, Kitagawa examined topological properties of \emph{split-step} quantum walks, which are a
sub-class of quantum walks whose spectrum is symmetric about the real axis~\cite{Kitagawa12}.
This was then extended by Cedzich et al., who gave a~mathematical classification of unitaries $U$
on a Hilbert space $\calH$ with $\pm 1$ not in the essential spectrum and obeying
symmetry relations adapted from free-fermionic topological insulators and superconductors~\cite{Cedzich18, Cedzich22}.
The case $U = \Gamma_0 \Gamma_1$ where $\Gamma_0$ and $\Gamma_1$ are self-adjoint unitaries
was further examined by Suzuki, who studied the index by Cedzich et al.
from the perspective of supersymmetry and the Fredholm index~\cite{Suzuki19}.
Throughout this paper, we will call unitaries with a decomposition as a product of self-adjoint unitaries
\emph{chiral unitaries}.
Our terminology comes from the property $\Gamma_0 U \Gamma_0 = U^*$, which can be considered
as a chiral symmetry of the unitary $U$.

The total symmetry index for chiral unitaries on a Hilbert space studied by Cedzich et al.\ and Suzuki possesses two important properties.
First, it is
realised via the index of a supersymmetric (odd) Fredholm operator. That is, for a given $U = \Gamma_0 \Gamma_1$,
the self-adjoint Fredholm operator
$\tfrac{1}{2\rm{i}}(U-U^*)$
anti-commutes with the self-adjoint unitary $\Gamma_0$, which can be considered as a grading operator of the Hilbert space.
Second, and more importantly, the index of this Fredholm operator is closely connected to invariant
and quasi-invariant states of the discrete time step $U$. Namely, a non-trivial index guarantees the existence
of eigenvectors in either $\Ker(U-\one)$ or $\Ker(U+\one)$.

Analogues of the symmetry index for chiral unitaries on Hilbert spaces have been generalised in a variety of areas, including
non-unitary split-step quantum walks~\cite{nonUnitaryQW} and situations where the Fredholm condition
fails, where the Witten index is employed instead~\cite{QWWitten21}. The purpose of this paper is to
give a new generalisation using tools from $K$-theory and (noncommutative) index theory. Namely, we
broaden our definition of chiral unitaries to include adjointable operators on Hilbert $C^*$-modules.
In this generalised setting, we obtain topological indices taking value in the
$K$-theory group of a $C^*$-algebra. The Hilbert space setting can be recovered by considering
Hilbert $C^*$-modules over $\C$.

 The condition we require in order to define topological indices for chiral unitaries is purely spectral
(or a related condition in the $C^*$-module setting using the generalised Calkin algebra) and so is independent from the setting
of quantum walks. However, the class of split-step quantum walks provides us with a rich class of examples that are of relevance
in a variety of fields. So while our constructions are general, they are done with
concrete applications in mind.

Let us provide some motivations to study chiral unitaries on Hilbert $C^*$-modules. Suppose that
we have a compact Hausdorff parameter space $X$ for which we can consider a chiral unitary
$U_x \in \calU(\calH)$ for any $x \in X$. If the map $X\ni x \mapsto U_x \in \calB(\calH)$
is strongly continuous, then the family $u = \{U_x\}_{x\in X}$ defines a unitary operator on the Hilbert
$C^*$-module $(\calH \otimes C(X))_{C(X)}$ (see Section \ref{subsec:C*_modules} for precise
definitions). Therefore, we can consider index-theoretic
properties of the family $u = \{U_x\}_{x\in X}$ that will also be sensitive to the topology of the parameter space~$X$.
Another motivation is that topological indices that take value in the $K$-theory group of a~$C^*$-algebra
are in general much richer than the setting of operators on Hilbert spaces. Hence, we gain access to a
wider variety of possible invariants that can not be accessed by working solely on Hilbert spaces.
Indeed, the Fredholm-like condition required to define $K$-theoretic indices is substantially weaker than
the case of
Hilbert spaces. As an example from topological phases of matter, suppose $H=H^*$ is a Hamiltonian acting on
$\ell^2\bigl(\Z^d, \C^n\bigr)$, $d\geq 2$, and constructed from matrices of polynomials of the discrete shift operators
on $\ell^2\bigl(\Z^d, \C^n\bigr)$ such that $0 \notin \sigma(H)$. Then the restriction of $H$ to the half-space
$\ell^2\bigl(\Z^{d-1}\times \N, \C^n\bigr)$ will define
a Fredholm operator on the Hilbert $C^*$-module $\bigl(\ell^2(\N, \C^n) \otimes M_n\bigl( C^*\bigl(\Z^{d-1}\bigr) \bigr)\bigr)_{M_n( C^*(\Z^{d-1}) )}$
with $C^*\bigl(\Z^{d-1}\bigr)$ the group $C^*$-algebra of~$\Z^{d-1}$. Hence, Hilbert $C^*$-modules also give us a mathematical
framework to study topological properties of edge/boundary phenomena, which may be useful for applications
in physics.\looseness=-1

The main tools we use to define topological indices are the Cayley transform of unitary
operators and the index of a pair of projections. That is, given a chiral unitary
$U$, we may consider the index of the (generally unbounded) self-adjoint operator
${\calC(U) = {\rm i}( \one +U)(\one-U)^{-1}}$
or~$\calC(-U)$. Similarly, writing $U = \Gamma_0 \Gamma_1 = (2P_0-\one)(2P_1-\one)$ with
$P_0$ and $P_1$ self-adjoint projections, we may consider the index of a pair of projections
$\Ind(P_0, P_1)$ or $\Ind(P_0, \one - P_1)$. If these indices are well defined, their sum
$\Index( \calC(U) ) + \Index( \calC(-U) )$ or $\Ind(P_0, P_1) + \Ind(P_0, \one - P_1)$ recovers
the previously studied total symmetry index for chiral unitaries,
 though the individual indices are defined more generally. Indeed,
$\Index( \calC(U) )$ and $\Ind(P_0, P_1)$ examine $\Ker(U+\one)$ while
$\Index( \calC(-U) )$ and $\Ind(P_0, \one - P_1)$ examine $\Ker(U-\one)$. The relation between
the index for chiral unitaries and the sum of an index of a pair of projections was already noted in~\cite{Suzuki19}.
One advantage of working with the Cayley transform is that $\Index( \calC(U) )$
 still comes from the index of a self-adjoint odd Fredholm
operator as $\calC( \pm U)$ anti-commutes with $\Gamma_0$. The cost is that we need to work
with unbounded operators. We make the above statements
precise for unitaries acting on general Hilbert $C^*$-modules over an auxiliary
$C^*$-algebra.\footnote{Some care is needed to discuss the index of a pair of projections in a Hilbert
$C^*$-module as closed subspaces need not be complemented.}

Outside of the quantum walk setting, our constructions provide a new presentation
of the $K$-theory group $K_0(A)$ of a $C^*$-algebra $A$ in terms of equivalence classes of chiral unitaries acting on
Hilbert $C^*$-modules over $A$, which may be of independent interest.

{\samepage An important class of chiral unitaries arise from the discrete time evolution of a Hamiltonian-like
operator ${\rm e}^{{\rm i} \pi H}$, where $H$ possesses a~so-called chiral symmetry, $\Gamma_0 H \Gamma_0 = -H$
with $\Gamma_0$ a~self-adjoint unitary.\footnote{Indeed, the example of the time-evolution of a chiral-symmetric
Hamiltonian is the main motivation for our use of the term `chiral unitaries'.}
If $H$ is Fredholm, then we can consider the symmetry index of $H$ directly. Up to a regularisation of
$H$, we show that this index can be computed by the index of $\calC\bigl( - {\rm e}^{{\rm i} \pi H} \bigr)$ or
$\Ind(P_0, \one-P_1)$, which we also
extend to the setting of Hilbert $C^*$-modules and $K$-theoretic indices.

}

As a brief application, we consider split-step quantum walk-like chiral unitaries on the
Hilbert $C^*$-module $\ell^2(\Z, B)$, where $B$ is a unital $C^*$-algebra with a trace. Restricting to
operators constructed from the shift on $\Z$ and functions $\Z \to B$ with limits at $\pm \infty$,
a short exact sequence of $C^*$-algebras can be constructed that is closely related to the Toeplitz extension of the
crossed product $B \rtimes \Z$. Making this connection precise,
we can then use a semifinite analogue of the Noether--Toeplitz index theorem to relate the trace of the topological index of
a chiral unitary to the difference of winding numbers of a related unitary at the limit points $\pm \infty$.
Such a result was previously shown by Matsuzawa in the Hilbert space setting~\cite{Matsuzawa}.

Another advantage of working with unitaries on Hilbert $C^*$-modules is that non-trivial $K$-theoretic indices
can also be defined for regular (non-split-step) quantum walks. While the lack of chiral symmetry means there
is no $\Z_2$-grading,
the Cayley transform of generic unitaries can still be used
to obtain a self-adjoint operator. If the Cayley transform is Fredholm, topological indices can be constructed
in odd $K$-theory.

We have previously studied the Cayley transform as a convenient tool to pass between self-adjoint and unitary
operators that is compatible with $K$-theory~\cite{B21, BKR2}. Indeed, our results will not be surprising to
those familiar with noncommutative index theory. Rather, we hope that this paper can be used as a starting
point to further probe the connections of index theory with quantum walks.

We note that while the works~\cite{GrafTauber, LHR18, SadelSchuba} also adopt techniques from the study
of topological phases to quantum walks, the setting of the current paper is rather different. In particular, we do
not consider bulk-edge phenomena, leaving this question for future work.

During the writing of this paper, we became aware of similar work by Moriyoshi and Natsume, who also
prove a Noether-like index theorem for split-step quantum walks~\cite{MoriyoshiNatsume}.

\subsection{Outline and main results}

Sections \ref{sec:HSpace_chiral_unitaries} and \ref{sec:NCIndex_theory_prelims} serve as a summary of previous
results and, while some proofs are provided for completeness, no originality is claimed for any results in this part of the paper.

Section \ref{sec:HSpace_chiral_unitaries} considers $\Z$-valued indices defined for chiral unitaries on
Hilbert spaces. This section is mostly self-contained and independent from the rest of the paper.
 We give a survey of the various topological indices for chiral unitaries on Hilbert spaces that have appeared
in the mathematical physics literature and show their connections to one another.
We then give another presentation of these indices via the index of a pair of projections and show
some basic stability properties and trace formulas. The connection to the generator of the
unitary time step and the Cayley transform is also briefly considered.

Section \ref{sec:NCIndex_theory_prelims} is an executive summary of Hilbert $C^*$-modules and $K$-theory
via Fredholm operators, mostly following~\cite{Wahl07}.
In particular we use a picture of $K$-theory that comes
about via the isomorphisms $K_0(A) \cong KK(\C, A)$ and $K_1(A) \cong KK^1(\C, A)$ for a $\sigma$-unital
$C^*$-algebra~$A$. We also consider an analogue of the index of a pair of projections in Hilbert $C^*$-modules,
where due to the lack of guaranteed complementability of subspaces, we instead construct an index via a~concrete Kasparov module and equivalence class in $KK(\C, A)$. We show some basic properties of
this index and an
addition formula, which helps justify our terminology.
Lastly in the case of a unital $C^*$-algebra $B$ with trace $\tau$, we review how to extract a numerical index via the map~$\tau_\ast\colon KK(\C,B) \to \R$.

In Section \ref{sec:Cstar_mod_index}, we return to chiral unitaries now defined on Hilbert $C^*$-modules.
Our first result is a precise condition on chiral unitaries that ensures the Cayley transform is a Fredholm
operator on a Hilbert $C^*$-module $E_A$ and defines
an even $K$-theory class for the algebra $A$.
We can also work with arbitrary chiral unitaries in $\operatorname{Mult}(A)$, the multiplier algebra
of $A$, by considering $A$ as a $C^*$-module over itself.
We also introduce the Cayley transform and $K$-theoretic index for a pair of projections that are
sufficiently close in the generalised Calkin algebra. We show that our indices for chiral unitaries and
pairs of projections are compatible with
each other as well as more standard presentations of the $K_0$-group of a $C^*$-algebra.
 Properties of our $K$-theoretic index analogous to those in
Section~\ref{sec:HSpace_chiral_unitaries} are also shown, though the proofs require more operator algebraic
machinery. In particular, we are able to relate the $K$-theoretic index of a self-adjoint and odd Fredholm operator $H$
with $\|H \| \leq 1$ to the index defined from the chiral unitary ${\rm e}^{{\rm i} \pi H}$. As previously mentioned in this
introduction, this example is of particular relevance for the case of a $H$ a Hamiltonian on a system with
boundary.
Interestingly, many of our results in this section also hold true for non-chiral unitaries up to a
$K$-theoretic degree shift
and so may be applicable to a wider class of quantum walk
systems.

Finally in Section \ref{sec:1D_winding}, we prove a noncommutative version of the Noether--Toeplitz index theorem
for the index of chiral unitaries $u$ acting on the Hilbert $C^*$-module $\ell^2(\Z, B)$ with an anisotropic condition
on the element $u$. Applying the trace map to $[\calC(u)] \in KK(\C, B)$ gives us a numerical quantity and our index
formula shows that this number can be computed via the
noncommutative winding number of pair of unitaries in $B\rtimes \Z$ that are defined at the limits $\pm \infty$ of the
space $\ell^2(\Z, B)$. While still more abstract than the winding number on Hilbert spaces, the noncommutative
winding number is generally much easier to compute than $K$-theoretic Fredholm indices.

Our work deals extensively with projections and unitaries acting on both Hilbert spaces and
Hilbert $C^*$-modules. To help distinguish between these settings, we will use capital letters
$U$ and $P$ for unitaries and projection on Hilbert spaces and small letters $u$ and $p$ for
unitaries and projections in more general $C^*$-algebras.

\section{Chiral unitaries on Hilbert spaces} \label{sec:HSpace_chiral_unitaries}

In this section, we review the various $\Z$-valued indices that have been
defined for unitary operators $U$ on a
separable Hilbert space $\calH$ with $\pm 1\notin \sigma_{\rm ess}(U)$ and whose spectrum is symmetric about the real axis.
We then relate these indices both to each other and
the index of a pair
of projections popularised by Avron, Seiler and Simon~\cite{ASS}. Connections to the
index of a self-adjoint generator and the Cayley transform for unitaries are also considered,
which helps motivate later sections.
Much of the content of this section can be directly
found or easily adapted from analogous results in~\cite{Cedzich18, MST21, Suzuki19}.

\subsection{Basic definitions}

We denote by $\calB(\calH)$ and $\calK(\calH)$ the algebra of bounded and compact operators on
$\calH$, respectively. The quotient $\calQ(\calH) = \calB(\calH)/\calK(\calH)$ is the Calkin algebra.
Given $S \in \calB(\calH)$, we will often abuse notation and write $\| S \|_{\calQ(\calH)}$ to mean
$\| \pi(S) \|_{\calQ(\calH)}$ with $\pi\colon \calB(\calH) \to \calQ(\calH)$ the quotient map.

We first note an elementary result.

\begin{Lemma} \label{lem:chiral_iff_decomp}
Let $B$ be a unital $C^*$-algebra, $u \in B$ a unitary element and $\gamma_0 \in B$ a self-adjoint unitary.
Then $\gamma_0 u \gamma_0 = u^*$ if and only if there is a self-adjoint unitary $\gamma_1 \in B$ such that
$u = \gamma_0 \gamma_1$.
\end{Lemma}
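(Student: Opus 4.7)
The proof splits naturally into the two implications, and both are short algebraic checks with no analytic obstacle. The plan is to handle the easy direction first and then define $\gamma_1$ explicitly in the harder direction.

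For the implication $(\Leftarrow)$, assume $u = \gamma_0 \gamma_1$ with $\gamma_1$ a self-adjoint unitary. Then one computes directly
\[
\gamma_0 u \gamma_0 = \gamma_0 (\gamma_0 \gamma_1) \gamma_0 = \gamma_1 \gamma_0 = \gamma_1^* \gamma_0^* = (\gamma_0 \gamma_1)^* = u^*,
\]
using only $\gamma_0^2 = \one$ and the fact that $\gamma_0,\gamma_1$ are self-adjoint. No further work is needed.

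For the implication $(\Rightarrow)$, the natural candidate is $\gamma_1 := \gamma_0 u$, since then $\gamma_0 \gamma_1 = \gamma_0^2 u = u$ automatically recovers $u$. It remains to verify that this $\gamma_1$ is a self-adjoint unitary. Unitarity is immediate from $\gamma_0^* = \gamma_0$ and $u^* u = u u^* = \one$. For self-adjointness one computes $\gamma_1^* = u^* \gamma_0$, and the hypothesis $\gamma_0 u \gamma_0 = u^*$ rearranges (multiplying on the right by $\gamma_0$) to $u^* \gamma_0 = \gamma_0 u$, which is precisely $\gamma_1^* = \gamma_1$.

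The only step that requires the hypothesis is the self-adjointness check in the forward direction; this is where all the content of the lemma sits, and it is essentially a one-line manipulation. There is no genuine obstacle to anticipate — the lemma is really a bookkeeping statement identifying chiral symmetry with a factorisation into two self-adjoint unitaries.
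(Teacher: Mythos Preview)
Your proof is correct and follows exactly the same approach as the paper: the paper also defines $\gamma_1 = \gamma_0 u$ for the forward direction and verifies $\gamma_1^* = u^* \gamma_0 = \gamma_0 u$, dismissing the reverse direction as a simple check. You have simply written out the details more fully.
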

\begin{proof}
The right-to-left implication is a simple check. Similarly, if $\gamma_0 u \gamma_0 = u^*$, we define $\gamma_1 = \gamma_0 u$
and check that $\gamma_1^* = u^* \gamma_0 = \gamma_0 u$.
\end{proof}

\begin{Definition}
We say that a unitary operator $U \in \calB(\calH)$ is a chiral unitary
 if $U = \Gamma_0 \Gamma_1$, where $\Gamma_j$ are self-adjoint unitaries on $\calH$ for $j \in \{0,1\}$.
\end{Definition}

Because $\Gamma_0 U \Gamma_0 = U^*$, the spectrum of the chiral unitaries is symmetric about the real axis.
This observation along with the example of a discrete time evolution of a chiral-symmetric Hamiltonian
(see Section \ref{subsec:H_space_generator}) motivates our use of the term `chiral unitary'.

\begin{Example}[split-step quantum walks] \label{ex:canonical_example}
An important class of examples of chiral unitaries arise from split-step (discrete) quantum walks, which for
the case of the lattice $\Z$ are often
 constructed from shift and coin operators on $\calH = \ell^2\bigl(\Z, \C^2\bigr)$. We
fix functions $a\colon\Z\to \R$, $b\colon\Z \to \C$,
$n \in \N$ as well as $n$-periodic and bounded functions $c\colon \Z\to \R$ and $d\colon \Z \to \C$ such
that $a(x)^2 + |b(x)|^2 = c(x)^2 + |d(x)|^2 = 1$ for all $x \in \Z$.
Then using the
shift operator $(S\psi)(x) = \psi(x-1)$ for $\psi \in \ell^2(\Z)$ and $x \in \Z$, we define $U = \Gamma_0 \Gamma_1$ with
\[
 \Gamma_0 = \begin{pmatrix} c & (S^n)^* \ol{d} \\ d S^n & -c \end{pmatrix} , \qquad
 \Gamma_1 = \begin{pmatrix} a & b^* \\ b & -a \end{pmatrix},
\]
and the operators $a$, $b$, $c$ and $d$ act as multiplication operators, $(a\psi)(x) = a(x) \psi(x)$.
A short computation shows that $\Gamma_0$ and $\Gamma_1$ are self-adjoint unitaries.
If $n=1$, then we can consider~$c$ and $d$ as constants in $\R$ and $\C$, respectively.
More generally, we can consider any product of self-adjoint unitaries $\Gamma_0, \Gamma_1 \in M_2(L^\infty(\Z)\rtimes \Z)$,
where $L^\infty(\Z)\rtimes \Z$ is the $\ast$-algebra generated by~$L^\infty(\Z)$, the algebra of bounded functions $\Z\to \C$,
and the shift operator $S$. See~\cite[Section~6]{Cedzich22} for more examples.
\end{Example}

\subsection{The symmetry indices for chiral unitaries of Cedzich et al.}

\begin{table}
\begin{center}\renewcommand{\arraystretch}{1.2}
 \begin{tabular}{ @{ }l l l l@{ } }
\hline
 Symmetry index & Assumption & Reference & Equivalent index \\
\hline
$\operatorname{si}_\pm(U)$ & $ \pm 1 \notin \sigma_{\rm ess}(U)$ & Definition \ref{def:si_pm_def}, \cite[Section~2]{Cedzich18} & \\
$\operatorname{si}(U)$ & $\{\pm 1\} \cap \sigma_{\rm ess}(U) = \varnothing$ & Definition \ref{def:si_pm_def}, \cite[Section~2]{Cedzich18} & \\
$\operatorname{si}(Q)$ & $0 \notin \sigma_{\rm ess}(Q)$ & equation \eqref{eq:Cedzich_selfadj_index}, \cite[Section~2]{Cedzich18} & \\
$\Ind_{\Gamma_0}(U)$ & $\{\pm 1\} \cap \sigma_{\rm ess}(U) = \varnothing$ & Definition \ref{def:suzuki_index}, \cite[Section~2]{Suzuki19} & $\operatorname{si}(U)$, $\operatorname{si}(\operatorname{Im}(U))$ \\
$\Ind_\pm (\Gamma_0, U)$ & $ \pm 1 \notin \sigma_{\rm ess}(U)$ & Definition \ref{def:Tanaka_symm_index}, \cite[Section~2]{MST21} & $\operatorname{si}_\pm(U)$ \\
$\Ind(P_0, P_1)$ & $\|P_0 - P_1\|_{\calQ(\calH)} < 1$ & Definition \ref{def:Ind(p,q)_HSpace}, \cite[Section~3]{ASS} & $\operatorname{si}_-(U)$ \\
$\Ind(P_0, \one - P_1)$ & $\| P_0 + P_1 - \one \|_{\calQ(\calH)} < 1$ & Definition \ref{def:Ind(p,q)_HSpace}, \cite[Section~3]{ASS} & $\operatorname{si}_+(U)$ \\
\hline
 \end{tabular}
 \caption{Summary of the symmetry indices for chiral unitaries $U=\Gamma_0 \Gamma_1 = (2P_0-\one)(2P_1- \one)$,
 $\Gamma_0 U \Gamma_0 = U^*$, self-adjoint operators $Q$ with chiral symmetry, $\Gamma_0 Q \Gamma_0 = -Q$, and
 the index of a pair of projections $P_0$, $P_1$. All operators are acting on a fixed separable Hilbert space $\calH$. } \label{table:symmetry_index_summary}
 \end{center}
\end{table}

Let us now review the indices for chiral unitaries $U = \Gamma_0\Gamma_1$ on $\calH$ defined by
Cedzich et al.~\cite{Cedzich18}.
 While we will define other topological indices for chiral
unitaries on Hilbert spaces, we emphasise that all other indices are merely a rewriting of the symmetry indices
introduced in~\cite{Cedzich18}. In particular, all results in this section can be considered as results about the symmetry
indices of Cedzich et al.
A summary of the indices we consider for operators on Hilbert spaces is given in Table \ref{table:symmetry_index_summary}.

Recall that, because $\Gamma_0 U \Gamma_0 = U^*$, the spectrum of $U$ is symmetric about the
real axis. In particular, the points $\pm 1$ of the spectrum are of interest as they are the only points that need not be
symmetric under $\Gamma_0$. The indices defined in~\cite{Cedzich18} precisely measure this possible
spectral asymmetry.

The spectral asymmetry of a self-adjoint and invertible operator $S$ on a finite-dimensional Hilbert space is given by
the signature $\operatorname{Sig}(S)$, which is the number of positive eigenvalues subtracted by
the number of negative eigenvalues (counting multiplicity).
Therefore, if ${\pm 1 \notin\! \sigma_{\rm ess}(U)}$, the space $\Ker(U \mp \one)$ is finite dimensional and we can
consider $\operatorname{Sig}\bigl( \Gamma_0 |_{\Ker(U \mp \one)} \bigr)$ as a way to probe the possible asymmetry of
the spectrum of $U$ with respect to $\Gamma_0$.

\begin{Definition} \label{def:si_pm_def}
Let $U = \Gamma_0 \Gamma_1$ be a chiral unitary on $\calH$. If $\pm 1 \notin \sigma_{{\rm ess}}(U)$,
the symmetry index of $U$ is given by
\[
 \operatorname{si}_\pm(U) = \operatorname{Sig}\bigl( \Gamma_0 |_{\Ker(U \mp \one)} \bigr) = \Tr\bigl( \Gamma_0 |_{\Ker(U \mp \one)} \bigr).
\]
If both indices are defined, the total symmetry index is defined by the sum $\operatorname{si}(U) = \operatorname{si}_+(U) + \operatorname{si}_-(U)$.
\end{Definition}

In the definition of $\operatorname{si}_\pm(U)$, we have used that the signature of a self-adjoint and unitary operator on a finite-dimensional
Hilbert space can be computed via its trace.
We also remark that various other indices are defined in~\cite{Cedzich18}
for $U$ satisfying other (possibly antilinear) symmetry relations by studying the action of the symmetry operator on the
$\pm 1$ eigenspaces of $U$.

Following the convention of the symmetries of free-fermionic Hamiltonians,
we say that ${Q=Q^*}$ is chiral-symmetric as a self-adjoint operator with respect to the self-adjoint unitary~$\Gamma_0$ if~$Q$ anti-commutes with $\Gamma_0$. One can therefore consider
a symmetry index of self-adjoint Fredholm operators $Q$ that are chiral-symmetric with respect to $\Gamma_0$ by defining
\begin{equation} \label{eq:Cedzich_selfadj_index}
 \operatorname{si}(Q) := \operatorname{Sig}\bigl( \Gamma_0 |_{\Ker(Q)} \bigr) = \Tr\bigl( \Gamma_0 |_{\Ker(Q)} \bigr),
\end{equation}
That is, $\operatorname{si}(Q)$ measures the spectral
asymmetry of $\Ker(Q)$ with respect to the self-adjoint unitary~$\Gamma_0$.
It was observed in~\cite[Section 4]{Cedzich18} that the total symmetry index $\operatorname{si}(U)$ can
be expressed using the symmetry index of the chiral-symmetric and self-adjoint operator $\operatorname{Im}(U) = \frac{1}{2\rm{i}} (U - U^*)$,
$\operatorname{si}(U) = \operatorname{si}(\operatorname{Im}(U))$.
The finite dimensionality of $\Ker(\operatorname{Im}(U))$ is equivalent to the condition that $\{\pm 1\} \cap \sigma_{{\rm ess}}(U)=\varnothing$~\cite[Lemma 3.7]{Cedzich18b}.

\subsection{The symmetry indices of Suzuki et al.} \label{subsec:suzuki_index}

Let us now review alternative descriptions of the symmetry indices of Cedzich et al.\ that have appeared in the
mathematical physics literature. As previously emphasised, these topological indices are not new, but provide
a different presentation of the previously-defined indices that may be useful depending on the example under study.

Let $U = \Gamma_0 \Gamma_1$ be a chiral unitary on $\calH$ and suppose that both $+1$ and $-1$ are
not contained in the essential spectrum.
The index of self-adjoint operators from equation \eqref{eq:Cedzich_selfadj_index} was further studied by Suzuki~\cite{Suzuki19}, who in particular emphasised the role of
supersymmetry.\footnote{Our use of the term `supersymmetry' is to align our terminology with~\cite{Suzuki19}
and denotes the setting of a $\Z_2$-graded Hilbert space with operators that change the parity of vectors.
Such a setting need not be related to any symmetry between bosons and fermions.}
Namely, given~$U = \Gamma_0\Gamma_1$, the operator $Q = \operatorname{Im}(U)$ anti-commutes with the
self-adjoint unitary $\Gamma_0$.
Hence there is a off-diagonal representation of $Q$ with respect to the spectral decomposition
of $\Gamma_0$.
Let~$\calH_{\pm} = (\Gamma_0 \pm \one)\calH = \Ker( \Gamma_0 \mp \one)$. Then we may write
\[
 \calH = \calH_+ \oplus \calH_-, \qquad
 Q = \begin{pmatrix} 0 & Q_+^* \\ Q_+ & 0 \end{pmatrix}, \qquad
 Q_+\colon\ \calH_+ \to \calH_-.
\]

\begin{Definition}[{\cite[Section 4]{Cedzich18} and \cite{Suzuki19}}] \label{def:suzuki_index}
We say that the chiral unitary $U= \Gamma_0 \Gamma_1$ is of supersymmetric Fredholm type if $Q_+$ is a Fredholm operator. In such a case,
we define the supersymmetric index of $U$ as the Fredholm index,
\[
 \Ind_{\Gamma_0}(U) = \Index(Q_+) = \dim \Ker(Q_+) - \dim \Ker(Q_+^*) \in \Z.
\]
\end{Definition}

Note that if $\Ker(Q)$ is finite dimensional, then it is well known (see, for example,~\cite[p.~124]{BGVbook}) that
\[
 \Index(Q_+) = \Tr\bigl( \Gamma_0 |_{\Ker(Q)} \bigr).
\]
Therefore, we immediately have an equivalence
\begin{equation} \label{eq:total_symm_index_Fred_index}
 \operatorname{si}(U) = \Ind_{\Gamma_0}(U) = \operatorname{si}(\operatorname{Im}(U)).
\end{equation}

Much like the decomposition of $\operatorname{si}(U)$ into a sum of $\operatorname{si}_\pm (U)$, a decomposition of
$\Ind_{\Gamma_0}(U)$ into a sum of two indices was given by Matsuzawa et al.~\cite{MST21, MTW21},
which we now review.

We first let $P_0$ be the projection onto the $+1$ eigenspace of $\Gamma_0$, i.e., $\Gamma_0 = 2P_0 - \one$.
Then using the decomposition of $\calH$ into even and odd subspaces,
$\calH = (\one + \Gamma_0)\calH \oplus (\one - \Gamma_0)\calH = P_0 \calH \oplus (\one - P_0)\calH$, one
defines $R_1 = \tfrac{1}{2} P_0( U+ U^*) P_0$ and $R_2 = \tfrac{1}{2} (\one -P_0)( U+ U^*)(\one - P_0)$ as the restriction of
$\operatorname{Re}(Q) = \tfrac{1}{2}(U+U^*)$ to the even and odd subspaces.

\begin{Definition} \label{def:Tanaka_symm_index}
If $\Ker( R_1 \pm \one)$ and $\Ker(R_2 \pm \one)$ are finite dimensional, we define
\[
 \Ind_\pm (\Gamma_0, U) = \dim \Ker(R_1 \mp \one) - \dim \Ker( R_2 \mp \one).
\]
\end{Definition}

It is already shown in~\cite[Section 2]{MST21} that $\Ind_\pm (\Gamma_0, U)$ is well defined if
$\pm 1 \notin \sigma_{{\rm ess}}(U)$ as well as the relations
\begin{equation} \label{eq:pm_index_si_index_connection}
 \Ind_\pm (\Gamma_0, U) = \operatorname{si}_\pm(U), \qquad
 \Ind_+ (\Gamma_0, U) + \Ind_- (\Gamma_0, U) = \Ind_{\Gamma_0}(U).
\end{equation}

\begin{Example}
Let us return to Example \ref{ex:canonical_example} with $n=1$,
\[
 \Gamma_0 = \begin{pmatrix} c & \ol{d} S^* \\ d S & -c \end{pmatrix} , \qquad
 \Gamma_1 = \begin{pmatrix} a & b^* \\ b & -a \end{pmatrix}, \qquad U = \Gamma_0 \Gamma_1,
\]
where the operators $a$ and $b$ act as multiplication operators, $(a\psi)(x) = a(x) \psi(x)$, and
$(c,d) \in \R\times \C$ are scalars such that $c^2 + |d|^2 = 1$.
 The unitary $\Gamma_0$ does not act locally, as is often demanded of a chiral symmetry operator, but can
be diagonalised to the more standard form~$\left(\begin{smallmatrix} \one & \hphantom{-}0 \\ 0 & -\one \end{smallmatrix}\right)$ by the unitary transformation
in~\cite[Theorem 8]{SuzukiTanaka}.\footnote{Ensuring that
 the symmetry operators act locally is particularly important for studying the bulk-boundary correspondence, which we do not
 consider in this paper.}
The index $\operatorname{si}(U) = \Ind_{\Gamma_0}(U)$ has been explicitly computed in the case
that the functions $a$ and $b$ have limits at $\pm \infty$~\cite{SuzukiTanaka}. The refined indices $\operatorname{si}_\pm(U) =\Ind_\pm( \Gamma_0, U)$
are also computed for explicit models under similar assumptions in~\cite{MST21, MTW21}.
Briefly, the results in~\cite{MST21, MTW21, SuzukiTanaka} use the fact that the essential spectrum of $U$ is characterised
by~$U$ at the limit points $\pm \infty$, which allows for precise conditions to determine whether $\pm 1 \notin \sigma_{{\rm ess}}(U)$.
 Taking compact perturbations of $U$ to a simpler unitary then allow the authors to compute the symmetry indices.
In~\cite[Section 5.5]{Cedzich18}, split-step quantum walks are constructed from shift operators and
rotation matrices, where the
angle of rotation may vary as a function of $\Z$. By constraining the possible range of the angle of rotation, Cedzich et al.\
show that the essential spectrum of the split-step quantum walk is gapped at $\pm 1$.
\end{Example}

\subsection{Connection to the index of a pair of projections} \label{subsec:Ind(p,q)_HSpace}

Our next task to connect the indices $\operatorname{si}_\pm(U)= \Ind_\pm(\Gamma_0,U)$ for the chiral unitary
$U = \Gamma_0 \Gamma_1$ to the index of a pair of projections~\cite{ASS}.
Let us first recall the following result.

\begin{Lemma}[{\cite[Section~6, Lemma 4]{Kasparov80}}] \label{lemma:Kas_unit_equiv_projs}
Let $B$ be a unital $C^*$-algebra and $p_0,p_1 \in B$ projections such that $\|p_0-p_1\|_{B} < 1$. Then
there exists a unitary $v \in B$ such that $p_1 = vp_0v^*$.
\end{Lemma}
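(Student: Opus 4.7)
The plan is to explicitly construct the unitary via a polar decomposition argument, guided by the standard trick for this kind of comparison of projections. First I would introduce the element
\[ b = p_1 p_0 + (1-p_1)(1-p_0) \in B. \]
The motivation is the easy identity $b p_0 = p_1 b$ (both sides reduce to $p_1 p_0$, using $(1-p_0)p_0 = 0 = p_1(1-p_1)$), so if $b$ turns out to be invertible then $p_1 = b p_0 b^{-1}$, and only a polishing step will be needed to upgrade this similarity to a unitary equivalence.

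Next I would prove invertibility of $b$ from the smallness hypothesis. A direct expansion yields the key identity
\[ b^* b = 1 - (p_0 - p_1)^2, \]
and by the same calculation with the roles of $p_0$ and $p_1$ exchanged, $b b^* = 1 - (p_0-p_1)^2$ as well. Since $\|p_0 - p_1\| < 1$ implies $\|(p_0-p_1)^2\| < 1$, the element $1 - (p_0-p_1)^2$ is positive and invertible in $B$, so $b$ is invertible and its modulus is $|b| = (1 - (p_0-p_1)^2)^{1/2}$.

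Now form the unitary part of the polar decomposition, $v = b\,|b|^{-1} \in B$. To show $v p_0 v^* = p_1$, I would check that $|b|$ commutes with $p_0$. It suffices to see that $(p_0 - p_1)^2$ commutes with $p_0$, which is a short computation: both $(p_0-p_1)^2 p_0$ and $p_0 (p_0-p_1)^2$ equal $p_0 - p_0 p_1 p_0$. Hence $|b|$, being a continuous function of $(p_0-p_1)^2$, also commutes with $p_0$. Combining this with $b p_0 = p_1 b$ gives
\[ p_1 = b p_0 b^{-1} = v |b| p_0 |b|^{-1} v^* = v p_0 v^*, \]
which is the desired equality.

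The proof is essentially a sequence of algebraic verifications, and the only step that is not completely automatic is the identity $b^*b = 1 - (p_0-p_1)^2$; the main (mild) obstacle is just keeping the expansion of the product $(p_0 p_1 + (1-p_0)(1-p_1))(p_1 p_0 + (1-p_1)(1-p_0))$ organised, noting that the cross terms vanish because of $p_j(1-p_j) = 0$ and that $(1-p_j)^2 = 1 - p_j$. Once that identity is in hand, invertibility and the commutation of $|b|$ with $p_0$ follow immediately by functional calculus, and the polar decomposition delivers the required unitary $v$.
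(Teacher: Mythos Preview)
Your proof is correct and is essentially the standard argument (the one in Kasparov's paper that the lemma cites); the paper itself does not reproduce a proof but merely gives the reference. The construction $b = p_1 p_0 + (1-p_1)(1-p_0)$, the identity $b^*b = bb^* = 1-(p_0-p_1)^2$, and the polar-decomposition step are exactly the ingredients of the classical proof, and all your verifications go through as written.
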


\begin{Definition} \label{def:Ind(p,q)_HSpace}
Let $P_0$ and $P_1$ be self-adjoint projections on $\calH$.
We say that $(P_0,P_1)$ is a~Fredholm pair if
$\|P_0 - P_1\|_{\calQ(\calH)} < 1$.

If $(P_0,P_1)$ is a Fredholm pair, then the operator $P_1|_{\Ran(P_0)}\colon \Ran(P_0) \to \Ran(P_1)$ is a Fredholm operator and we define
\[
 \Ind(P_0,P_1) = \Index\bigl( P_1|_{\Ran(P_0)} \bigr).
\]
\end{Definition}

See~\cite[Lemma 4.1]{BCPRSW} for a proof that $P_1|_{\Ran(P_0)}$ is Fredholm if and only if $\|P_0 - P_1\|_{\calQ(\calH)} < 1$.
The index of a pair of projections is a generalisation of the essential codimension of self-adjoint
projections.

\begin{Lemma}[{\cite[Lemma 4.10]{BCLR}}] \label{lem:Ind(p,q)_formula}
If $(P_0,P_1)$ is a Fredholm pair of projections, then
\[
 \Ind(P_0,P_1) = \dim \bigl( \Ran(P_0) \cap \Ker(P_1) \bigr) - \dim \bigl( \Ker(P_0) \cap \Ran(P_1) \bigr).
\]
\end{Lemma}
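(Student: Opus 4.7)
The plan is to unpack the definition $\Ind(P_0,P_1) = \Index\bigl(P_1|_{\Ran(P_0)}\bigr)$ and identify the kernel and cokernel of the Fredholm operator $T := P_1|_{\Ran(P_0)} \colon \Ran(P_0) \to \Ran(P_1)$ explicitly. Since $T$ acts between Hilbert spaces, the cokernel is isomorphic to $\Ker(T^*)$, so the whole proof reduces to computing $\Ker(T)$ and $\Ker(T^*)$.

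First, $\Ker(T)$ is immediate: a vector $v \in \Ran(P_0)$ lies in $\Ker(T)$ if and only if $P_1 v = 0$, i.e., $v \in \Ran(P_0)\cap \Ker(P_1)$. Next I would compute $T^*$. For $v \in \Ran(P_0)$ and $w \in \Ran(P_1)$, the identity
\[
\langle T v, w\rangle = \langle P_1 v, w\rangle = \langle v, P_1 w\rangle = \langle v, w\rangle = \langle P_0 v, w\rangle = \langle v, P_0 w\rangle
\]
shows that the Hilbert space adjoint of $T$ is $T^* = P_0|_{\Ran(P_1)} \colon \Ran(P_1) \to \Ran(P_0)$. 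Consequently $\Ker(T^*) = \{ w \in \Ran(P_1) : P_0 w = 0\} = \Ran(P_1) \cap \Ker(P_0)$.

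Combining these with the standard formula $\Index(T) = \dim\Ker(T) - \dim\Ker(T^*)$ for a Fredholm operator between Hilbert spaces yields
\[
\Ind(P_0,P_1) = \dim\bigl(\Ran(P_0)\cap\Ker(P_1)\bigr) - \dim\bigl(\Ker(P_0)\cap\Ran(P_1)\bigr),
\]
as claimed. There is no real obstacle: the only point that requires a moment of care is verifying that the adjoint of $T$ as a map $\Ran(P_0) \to \Ran(P_1)$ is indeed $P_0|_{\Ran(P_1)}$, which is the short inner product calculation above (and uses that $P_0$ and $P_1$ are self-adjoint projections, so their ranges are closed and the restrictions make sense).
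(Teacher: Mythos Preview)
Your proof is correct and is the standard argument. The paper does not actually prove this lemma; it simply cites it from \cite[Lemma 4.10]{BCLR}, so there is no paper-proof to compare against beyond noting that your direct computation of $\Ker(T)$ and $\Ker(T^*)$ via the adjoint identity $T^* = P_0|_{\Ran(P_1)}$ is exactly the natural route.
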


We list some basic properties of $\Ind(P_0,P_1)$.
\begin{Lemma}[{cf.~\cite[Section 3]{ASS} and \cite[Lemma 4.3]{BCPRSW}}] \label{lem:Ind(p,q)_properties}
Let $P_0, P_1 \in \calB(\calH)$ be self-adjoint projections.
\begin{itemize}\itemsep=0pt
 \item[$(i)$] If $(P_0, P_1)$ is a Fredholm pair, then $\Ind(P_0,P_1) = - \Ind(P_1,P_0) = - \Ind\bigl( (\one - P_0), (\one - P_1) \bigr)$.
 \item[$(ii)$] If $(P_0, \one - P_1)$ is a Fredholm pair, then $\Ind(P_0,(\one - P_1)) = \Ind(P_1, (\one - P_0) )$.
 \item[$(iii)$] If $P_0$, $P_1$, $P_2$ are projections such that $\|P_0 - P_1\|_{\calQ(\calH)} < \tfrac{1}{2}$ and $\|P_1 - P_2\|_{\calQ(\calH)} < \tfrac{1}{2}$, then
 $\Ind(P_0,P_1)+\Ind(P_1,P_2) = \Ind(P_0,P_2)$.
\end{itemize}
\end{Lemma}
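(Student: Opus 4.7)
The plan is to derive parts (i) and (ii) as direct consequences of the kernel/range intersection formula in Lemma~\ref{lem:Ind(p,q)_formula}, and to obtain (iii) by factoring $P_2|_{\Ran(P_0)}$ through $\Ran(P_1)$ and invoking multiplicativity of the Fredholm index, with a controlled Calkin-norm perturbation argument to handle the failure of strict factorization.

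For (i), I first observe that the Calkin-norm hypothesis is invariant under $P_j\mapsto \one-P_j$ and under swapping the order, so all three pairs named in the statement are simultaneously Fredholm. Applying Lemma~\ref{lem:Ind(p,q)_formula}, swapping the subscripts $0\leftrightarrow 1$ exchanges the two summands in the formula, yielding $\Ind(P_1,P_0)=-\Ind(P_0,P_1)$; substituting $\Ran(\one-P_j)=\Ker(P_j)$ and $\Ker(\one-P_j)=\Ran(P_j)$ again exchanges the summands, giving $\Ind(\one-P_0,\one-P_1)=-\Ind(P_0,P_1)$. For (ii), the same substitution turns the formula into
\[
 \Ind(P_0,\one-P_1) = \dim\bigl(\Ran(P_0)\cap\Ran(P_1)\bigr) - \dim\bigl(\Ker(P_0)\cap\Ker(P_1)\bigr),
\]
which is manifestly symmetric in $(P_0,P_1)$ and therefore equals $\Ind(P_1,\one-P_0)$; the Fredholm-pair condition $\|P_0+P_1-\one\|_{\calQ(\calH)}<1$ is symmetric as well.

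For (iii), the triangle inequality yields $\|P_0-P_2\|_{\calQ(\calH)}<1$, so $\Ind(P_0,P_2)$ is defined. The core observation is the factorization of Fredholm operators
\[
 \bigl(P_2|_{\Ran(P_1)}\bigr)\circ\bigl(P_1|_{\Ran(P_0)}\bigr) = P_2P_1|_{\Ran(P_0)}\colon\ \Ran(P_0)\to\Ran(P_2),
\]
whose index, by multiplicativity, is $\Ind(P_0,P_1)+\Ind(P_1,P_2)$. It then suffices to show $\Index\bigl(P_2P_1|_{\Ran(P_0)}\bigr)=\Index\bigl(P_2|_{\Ran(P_0)}\bigr)=\Ind(P_0,P_2)$. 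The difference of these two operators is $P_2(\one-P_1)|_{\Ran(P_0)}$, and using the identity $(\one-P_1)P_0=(\one-P_1)(P_0-P_1)$ its Calkin norm is bounded by $\|P_0-P_1\|_{\calQ(\calH)}<\tfrac{1}{2}$.

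The main obstacle I anticipate is promoting this Calkin-norm bound to an equality of Fredholm indices. My approach is to write $P_2(\one-P_1)|_{\Ran(P_0)}=E+K$ with $K$ compact and $\|E\|$ arbitrarily close to the Calkin norm, then connect the two operators by a straight-line homotopy in $E$ after absorbing $K$. Invariance of the Fredholm index under compact perturbations and under norm-continuous deformations through Fredholm operators then delivers the desired equality, with the sharp $\tfrac{1}{2}$ bound playing the role of keeping the path inside the Fredholm locus. If the straight-line argument becomes delicate, the fallback is to pass to the Calkin algebra, use Lemma~\ref{lemma:Kas_unit_equiv_projs} to unitarily conjugate $\pi(P_0)$, $\pi(P_1)$ and $\pi(P_2)$ to a common projection, and track the additivity through the resulting unitary equivalences.
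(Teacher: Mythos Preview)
Your treatment of (i) and (ii) is exactly what the paper has in mind: it states that these follow easily from Lemma~\ref{lem:Ind(p,q)_formula} and gives no further detail, so you are aligned there. For (iii) the paper offers no argument at all, simply remarking that it ``is less trivial'' and deferring to \cite{ASS} and \cite{BCPRSW}; your proposal therefore goes further than the paper does.

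Your strategy for (iii) --- factor $P_2P_1|_{\Ran(P_0)}$ as a composite, use multiplicativity of the Fredholm index, and then show $\Index\bigl(P_2P_1|_{\Ran(P_0)}\bigr)=\Index\bigl(P_2|_{\Ran(P_0)}\bigr)$ --- is sound, but the justification of this last equality has a gap. Knowing only that the Calkin norm of the difference is $<\tfrac12$ does \emph{not} by itself keep the straight-line path inside the Fredholm locus: for the Neumann-series argument you need $\|\pi(D)\|\cdot\big\|\pi(F)^{-1}\big\|<1$, where $F$ is one endpoint and $D$ the difference. If you anchor at $F=P_2|_{\Ran(P_0)}$ you only know $\big\|\pi(F)^{-1}\big\|\le\bigl(1-\|\pi(P_0-P_2)\|^2\bigr)^{-1/2}$, which can be arbitrarily large since $\|\pi(P_0-P_2)\|$ is only bounded by~$1$. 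The missing ingredient is to anchor instead at the composite $F=P_2P_1|_{\Ran(P_0)}$ and bound its Calkin inverse: from $P_jP_kP_j = P_j\bigl(\one-(P_j-P_k)^2\bigr)$ one gets $\big\|\pi(P_1|_{\Ran(P_0)})^{-1}\big\|,\,\big\|\pi(P_2|_{\Ran(P_1)})^{-1}\big\|\le (1-\tfrac14)^{-1/2}=\tfrac{2}{\sqrt3}$, hence $\big\|\pi(P_2P_1|_{\Ran(P_0)})^{-1}\big\|\le\tfrac{4}{3}$, and then $\tfrac{4}{3}\cdot\tfrac12<1$ closes the argument. This is precisely where the sharp $\tfrac12$ hypothesis is used, and once you insert this bound your straight-line homotopy works; your fallback via Calkin unitaries is unnecessary.
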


Parts (i) and (ii) of Lemma \ref{lem:Ind(p,q)_properties} can be proved quite easily using Lemma \ref{lem:Ind(p,q)_formula}. Part (iii)
is less trivial.

Let us now use $\Ind(P_0, P_1)$ to study chiral unitaries $U = \Gamma_0 \Gamma_1 \in \calB(\calH)$.
Because $\Gamma_0$ and $\Gamma_1$ are self-adjoint unitaries, there are self-adjoint
projections $P_0, P_1 \in \calB(\calH)$ such that $\Gamma_j = 2P_j - \one$, $j\in\{0,1\}$.
We first show the relation of the index of a pair of projections to the total symmetry
index $\operatorname{si}(U)= \Ind_{\Gamma_0}(U)$.

\begin{Proposition}[{cf.~\cite[equation (1.6)]{Suzuki19}}] \label{prop:HSpace_SuzukiIndex_is_ProjectionIndex}
Let $U = \Gamma_0 \Gamma_1 \in \calB(\calH)$ be a chiral unitary with
$\Gamma_j = 2P_j - \one$, $j\in\{0,1\}$. Then $Q = \operatorname{Im}(U) = \frac{1}{2\rm{i}}(U-U^*)$ is Fredholm
if and only if $(P_0,P_1)$ and $(P_0, \one - P_1)$ are Fredholm pairs and
\[
 \Ind(P_0, P_1) + \Ind(P_0, (\one - P_1)) = \operatorname{si}(Q) = \operatorname{si}(U) = \Ind_{\Gamma_0}(U).
\]
\end{Proposition}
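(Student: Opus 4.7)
The plan is to reduce everything to algebraic identities relating the self-adjoint unitaries $\Gamma_j = 2P_j - \one$ to $U = \Gamma_0\Gamma_1$, and then apply Lemma~\ref{lem:Ind(p,q)_formula} together with the previously recorded relations \eqref{eq:total_symm_index_Fred_index} and \eqref{eq:pm_index_si_index_connection}. First I would establish the equivalence of the Fredholm conditions. The identities $\Gamma_0(\Gamma_1 \mp \Gamma_0) = U \mp \one$, together with $\Gamma_1 - \Gamma_0 = 2(P_1 - P_0)$ and $\Gamma_1 + \Gamma_0 = 2(P_0 + P_1 - \one)$, and the fact that $\Gamma_0$ is unitary, give
\[
\|U - \one\|_{\calQ(\calH)} = 2\|P_0 - P_1\|_{\calQ(\calH)}, \qquad \|U + \one\|_{\calQ(\calH)} = 2\|P_0 + P_1 - \one\|_{\calQ(\calH)}.
\]
Since $\pi(U)$ is a normal element of the Calkin algebra with spectrum in $S^1$, one has $\|U \mp \one\|_{\calQ(\calH)} < 2$ if and only if $\mp 1 \notin \sigma_{\rm ess}(U)$. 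Applying the spectral mapping theorem for normal elements to $Q = \frac{1}{2\rm{i}}(U - U^*)$ yields $\sigma_{\rm ess}(Q) = \operatorname{Im}(\sigma_{\rm ess}(U))$, so $Q$ is Fredholm exactly when $\{\pm 1\} \cap \sigma_{\rm ess}(U) = \varnothing$, i.e., exactly when both $(P_0, P_1)$ and $(P_0, \one - P_1)$ are Fredholm pairs.

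For the index identity, I would identify the subspaces appearing in Lemma~\ref{lem:Ind(p,q)_formula} with eigenspace intersections. Direct checks show
\[
\Ran(P_0) \cap \Ker(P_1) = \Ker(U+\one) \cap \Ker(\Gamma_0 - \one), \qquad \Ker(P_0) \cap \Ran(P_1) = \Ker(U+\one) \cap \Ker(\Gamma_0 + \one),
\]
since in each case the projection conditions force $U\psi = \Gamma_0 \Gamma_1 \psi = -\psi$. Because $\Gamma_0 U \Gamma_0 = U^*$, the restriction $\Gamma_0|_{\Ker(U+\one)}$ is a self-adjoint unitary on a finite-dimensional space, whose trace equals the difference of these two dimensions, so
\[
\Ind(P_0, P_1) = \Tr\bigl(\Gamma_0|_{\Ker(U+\one)}\bigr) = \operatorname{si}_-(U).
\]
The same computation with $P_1$ replaced by $\one - P_1$ flips the $\Gamma_1$-eigenvalue, so $\Gamma_0\Gamma_1\psi = \psi$ instead, giving $\Ind(P_0, \one - P_1) = \Tr\bigl(\Gamma_0|_{\Ker(U-\one)}\bigr) = \operatorname{si}_+(U)$. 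Summing the two and invoking \eqref{eq:total_symm_index_Fred_index}--\eqref{eq:pm_index_si_index_connection} gives the required chain of equalities.

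The main obstacle is essentially bookkeeping: one must keep the sign conventions aligned so that the pair $(P_0, P_1)$ is matched with $\Ker(U+\one)$ and $\operatorname{si}_-(U)$ rather than with $\Ker(U-\one)$ and $\operatorname{si}_+(U)$. The only non-mechanical input is the spectral mapping theorem for the essential spectrum of the normal element $\pi(U)$, which is standard and already implicit in the discussion preceding the proposition.
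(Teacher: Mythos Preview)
Your argument is correct, but it is organised differently from the paper's own proof. The paper proves the proposition by an explicit computation of $Q_+$: it shows $Q_+ = 2{\rm i}(\one-P_0)P_1 P_0$, then identifies $\Ker(Q_+)$ and $\Ker(Q_+^*)$ directly as the four intersection spaces $\Ran(P_0)\cap\Ker(P_1)$, $\Ran(P_0)\cap\Ran(P_1)$, $\Ker(P_0)\cap\Ran(P_1)$, $\Ker(P_0)\cap\Ker(P_1)$, and reads off both the Fredholm equivalence and the index formula from Lemma~\ref{lem:Ind(p,q)_formula}. You instead go through the essential spectrum of $\pi(U)$ for the Fredholm equivalence, and for the index you prove the refined identities $\Ind(P_0,P_1)=\operatorname{si}_-(U)$ and $\Ind(P_0,\one-P_1)=\operatorname{si}_+(U)$ first, then sum and invoke \eqref{eq:total_symm_index_Fred_index}. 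Your route is essentially the alternative argument the paper records after Proposition~2.11 (and in Lemma~\ref{lem:essential_spectrum_condition}), so you are effectively proving the later, finer statements first and deducing Proposition~\ref{prop:HSpace_SuzukiIndex_is_ProjectionIndex} as a corollary; this is economical and arguably cleaner, though it bypasses the concrete description of $\Ker(Q_\pm)$ that the paper's direct computation provides. Note also that you do not actually need \eqref{eq:pm_index_si_index_connection}: once you have $\Ind(P_0,P_1)=\operatorname{si}_-(U)$ and $\Ind(P_0,\one-P_1)=\operatorname{si}_+(U)$, the definition $\operatorname{si}(U)=\operatorname{si}_+(U)+\operatorname{si}_-(U)$ and \eqref{eq:total_symm_index_Fred_index} suffice.
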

\begin{proof}
Given $U = (2P_0 -\one)(2P_1 - \one)$, a simple computation gives that
\[
 Q = \frac{1}{2\rm{i}}(U-U^*) = \frac{1}{2\rm{i}}(\Gamma_0 \Gamma_1 - \Gamma_1\Gamma_0) = \frac{2}{\rm i} (P_0 P_1 - P_1P_0).
\]
Hence we have
\begin{align*}
 Q_+ &= \frac{1}{2} (\one -\Gamma_0) Q \frac{1}{2} (\one + \Gamma_0) = (\one - P_0)Q P_0 \\
 &= -2{\rm i} (\one - P_0)(P_0P_1-P_1P_0) P_0 = 2{\rm i} (\one - P_0) P_1 P_0
\end{align*}
and similarly $Q_+^* = -2{\rm i} P_0 P_1(\one - P_0)$. Therefore, $\Ker(Q)$ is finite dimensional and
$Q_+$ is Fredholm if and only if $\Ker\bigl( (\one - P_0) P_1 P_0 \bigr)$ and $\Ker\bigl( P_0 P_1(\one - P_0) \bigr)$
are finite dimensional. Recalling that $Q_+\colon P_0\calH \to (\one - P_0)\calH$ and $Q_+^*\colon (\one - P_0)\calH \to P_0\calH$, we
 furthermore find that
\begin{align*}
 \Ker(Q_+) &= \Ker( (\one - P_0)P_1 P_0) = \Ker( (\one - P_0)P_1) \cap \Ran(P_0) \\
 &= \bigl(\Ker(P_1) \oplus \Ran(P_1) \bigr) \cap \Ran(P_0),
\end{align*}
and similarly
\begin{align*}
 \Ker(Q_+^*) &= \Ker( P_0 P_1(\one - P_0) ) = \Ker( P_0 P_1) \cap \Ran( \one - P_0) \\
 &= \bigl( \Ker(P_1) \oplus \Ran(P_1) \bigr) \cap \Ker(P_0).
\end{align*}
That is,
\begin{align*}
 \Index(Q_+) ={}& \dim \bigl( \Ran(P_0) \cap \Ker(P_1) \bigr) + \dim \bigl( \Ran(P_0) \cap \Ran(P_1) \bigr) \\
 & - \dim \bigl( \Ker(P_0) \cap \Ran(P_1) \bigr) - \dim \bigl( \Ker(P_0) \cap \Ker(P_1) \bigr).
\end{align*}
Using Lemma \ref{lem:Ind(p,q)_formula}, if $(P_0, P_1)$ and $(P_0, \one - P_1)$ are Fredholm pairs, then
\begin{align*}
 &\Ind(P_0, P_1) = \dim \bigl( \Ran(P_0) \cap \Ker(P_1) \bigr) - \dim \bigl( \Ker(P_0) \cap \Ran(P_1) \bigr), \\
 &\Ind(P_0, \one - P_1) = \dim \bigl( \Ran(P_0) \cap \Ran(P_1) \bigr) - \dim \bigl( \Ker(P_0) \cap \Ker(P_1) \bigr)
\end{align*}
and the index equality $\Ind_{\Gamma_0}((2P_0-\one)(2P_1-\one)) = \Ind(P_0, P_1) + \Ind(P_0, (\one - P_1))$ follows.
The connection to $\operatorname{si}(U)= \operatorname{si}(\operatorname{Im}(U))$ then also follows from equation \eqref{eq:total_symm_index_Fred_index}.
We note that $\Ker(Q)$ is finite dimensional if and only if the spaces
$\Ker(P_0) \cap \Ran(P_1)$, $\Ker(P_1) \cap \Ran(P_0),$ $\Ker(P_0) \cap \Ker(P_1)$ and $\Ran(P_0)\cap \Ran(P_1)$ are finite dimensional,
which is equivalent to $(P_0, P_1)$ and $(P_0, \one - P_1)$ being Fredholm pairs.
\end{proof}

We can similarly relate $\Ind(P_0, P_1)$ and $\Ind(P_0, (\one - P_1))$ to the refined indices
$\operatorname{si}_\pm(U)$ and $\Ind_\pm( \Gamma_0, U)$. We first prove some preparatory results.

\begin{Lemma} \label{lem:U_kernel_to_P_kernel}
Let $U = \Gamma_0 \Gamma_1 =(2P_0-\one)(2P_1 - \one)$ be a chiral unitary on $\calH$. If
$(P_0, P_1)$ is a~Fredholm pair, then
\[
 \Ker( U+ \one) = \Ker(\Gamma_0 + \Gamma_1) = \Ran(P_0)\cap \Ker(P_1) \oplus \Ker(P_0)\cap \Ran(P_1).
\]
If $(P_0, \one - P_1)$ is a Fredholm pair, then
\[
 \Ker(U - \one) = \Ker( \Gamma_0 - \Gamma_1) = \Ran(P_0) \cap \Ran(P_1) \oplus \Ker(P_0) \oplus \Ker(P_1).
\]
\end{Lemma}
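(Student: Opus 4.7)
The plan is to reduce each kernel identity to a purely algebraic statement about the projections $P_0$ and $P_1$. Since $\Gamma_0$ is a self-adjoint unitary and therefore invertible,
\[
 \Ker(U\pm\one) = \Ker\bigl(\Gamma_0(\Gamma_0\Gamma_1\pm\one)\bigr) = \Ker(\Gamma_1\pm\Gamma_0),
\]
which gives the first equality in each line. Substituting $\Gamma_j=2P_j-\one$ gives $\Gamma_0+\Gamma_1 = 2(P_0+P_1-\one)$ and $\Gamma_0-\Gamma_1 = 2(P_0-P_1)$, so the problem reduces to describing $\Ker(P_0+P_1-\one)$ and $\Ker(P_0-P_1)$ as direct sums of the four intersection subspaces.

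The reverse containments are immediate by substitution: any $\psi\in\Ran(P_0)\cap\Ker(P_1)$ satisfies $(P_0+P_1-\one)\psi = \psi+0-\psi = 0$, and the other three cases are dispatched identically. For the forward containments, I would exploit the orthogonal splitting $\psi = P_0\psi + (\one-P_0)\psi$ available for every $\psi\in\calH$. If $(P_0+P_1-\one)\psi=0$, then applying $P_0$ on the left gives $P_0P_1\psi = 0$, so $P_0\psi\in\Ran(P_0)\cap\Ker(P_1)$; rearranging the kernel equation also yields $(\one-P_0)\psi = P_1\psi$, which places $(\one-P_0)\psi\in\Ker(P_0)\cap\Ran(P_1)$. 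If instead $(P_0-P_1)\psi=0$, i.e., $P_0\psi = P_1\psi$, then $P_1(P_0\psi) = P_1^2\psi = P_1\psi = P_0\psi$ puts $P_0\psi\in\Ran(P_0)\cap\Ran(P_1)$, while $P_1(\one-P_0)\psi = P_1\psi - P_1P_0\psi = P_0\psi - P_0\psi = 0$ puts $(\one-P_0)\psi\in\Ker(P_0)\cap\Ker(P_1)$.

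Directness of each sum is automatic, because the decomposition refines the orthogonal splitting $\calH = P_0\calH \oplus (\one-P_0)\calH$. Strictly speaking, the Fredholm-pair hypotheses are not required for the algebraic identities themselves; their role is, via Lemma \ref{lem:Ind(p,q)_formula}, to guarantee that each of the four intersection subspaces is finite-dimensional so that the decompositions carry the index-theoretic content used later (e.g., in expressing $\operatorname{si}_\pm(U)$ as a difference of dimensions).

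There is no real obstacle; the argument is routine projection arithmetic. The only bookkeeping point is to avoid confusing $P_0P_1$ with $P_1P_0$ when applying projections to the kernel equations, which is handled by always substituting the given relation ($P_1\psi = \psi - P_0\psi$ in the first case, $P_1\psi = P_0\psi$ in the second) before simplifying.
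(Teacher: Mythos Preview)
Your approach matches the paper's: both reduce $\Ker(U\pm\one)$ to $\Ker(\Gamma_0\pm\Gamma_1)$ via the invertibility of one of the $\Gamma_j$, then identify $\Ker(P_0+P_1-\one)$ and $\Ker(P_0-P_1)$ with the stated direct sums. The paper simply cites \cite[Lemma 4.10]{BCLR} for that last identification, whereas you supply the projection arithmetic directly; your remark that the Fredholm hypothesis plays no role in the algebraic identity itself is correct and worth keeping.

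There is one slip in your execution. In the first case you write ``applying $P_0$ on the left gives $P_0P_1\psi=0$, so $P_0\psi\in\Ran(P_0)\cap\Ker(P_1)$'', but $P_0P_1\psi=0$ says $P_1\psi\in\Ker(P_0)$, not $P_0\psi\in\Ker(P_1)$. What you need is $P_1P_0\psi=0$, obtained by applying $P_1$ (not $P_0$) to $P_0\psi+P_1\psi=\psi$; equivalently, use your own substitution recipe and write $P_0\psi=\psi-P_1\psi$, whence $P_1(P_0\psi)=P_1\psi-P_1\psi=0$. With that corrected the argument goes through; the second case is already written correctly.
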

\begin{proof}
If $(P_0,P_1)$ is a Fredholm pair of projections if and only if the subspace
$\Ran(P_0)\cap \Ker(P_1) \oplus \Ker(P_0)\cap \Ran(P_1)$ is finite dimensional. We next note that
\[
\Ker( \Gamma_0 + \Gamma_1) = \Ker( P_0 + P_1 - \one) = \Ran(P_0)\cap \Ker(P_1) \oplus \Ker(P_0)\cap \Ran(P_1),
\]
where the last equality comes from~{\cite[Lemma 4.10]{BCLR}} (which can be proven easily).
We then see that
\[
 \Ker( \Gamma_0 + \Gamma_1) = \Ker\bigl( (\Gamma_0 + \Gamma_1)\Gamma_1 \bigr) = \Ker( U + \one)
\]
as $\Gamma_1$ is unitary.
The case of $\Ker(U-\one)$ follows the same argument using that $(P_0, \one - P_1)$ is a~Fredholm pair. We omit the details.
\end{proof}

\begin{Lemma}[{\cite[Lemma 3.7]{Cedzich18b}}] \label{lem:essential_spectrum_condition}
Let $U =(2P_0-\one)(2P_1 - \one)$ be a chiral unitary on $\calH$. The projections $(P_0, P_1)$
$($respectively $(P_0, \one - P_1))$ are a Fredholm pair if and only if $-1 \notin \sigma_{\rm ess}(U)$
$($respectively $1 \notin \sigma_{\rm ess}(U))$. Consequently $\operatorname{si}(U) = \Ind_{\Gamma_0}(U)$
is well defined if and only if
$\{\pm 1\} \cap \sigma_{\rm ess}(U)= \varnothing$.
\end{Lemma}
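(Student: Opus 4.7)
The plan is to convert each essential-spectrum condition on $U$ into a Fredholm condition on a self-adjoint combination of $P_0$ and $P_1$, and then recognise this as the Calkin-norm inequality from Definition~\ref{def:Ind(p,q)_HSpace} via one short algebraic identity.

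First, using $\Gamma_0^2 = \one$, I would compute
\[
\Gamma_0(U + \one) = \Gamma_1 + \Gamma_0 = 2(P_0 + P_1 - \one), \qquad \Gamma_0(U - \one) = \Gamma_1 - \Gamma_0 = 2(P_1 - P_0).
\]
Since $\Gamma_0$ is unitary, $U+\one$ is Fredholm iff $P_0 + P_1 - \one$ is Fredholm, and $U-\one$ is Fredholm iff $P_0 - P_1$ is Fredholm. Equivalently, $-1 \notin \sigma_{\rm ess}(U)$ is Fredholmness of $P_0 + P_1 - \one$, and $1 \notin \sigma_{\rm ess}(U)$ is Fredholmness of $P_0 - P_1$.

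Next I would verify the operator identity
\[
(P_0 - P_1)^2 + (P_0 + P_1 - \one)^2 = \one,
\]
which follows by expanding both squares and cancelling cross-terms. Both $P_0 - P_1$ and $P_0 + P_1 - \one$ are self-adjoint contractions, so for either one, Fredholmness is equivalent to $0$ not lying in its essential spectrum, and hence to $0$ not lying in the spectrum of its square in $\calQ(\calH)$. Pushing the identity into the Calkin algebra and using continuous functional calculus, $P_0 - P_1$ is Fredholm iff $1 \notin \sigma_{\rm ess}\bigl((P_0 + P_1 - \one)^2\bigr)$ iff $\|P_0 + P_1 - \one\|_{\calQ(\calH)} < 1$; symmetrically, $P_0 + P_1 - \one$ is Fredholm iff $\|P_0 - P_1\|_{\calQ(\calH)} < 1$.

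Combining the two steps: $(P_0, P_1)$ is a Fredholm pair, i.e.\ $\|P_0 - P_1\|_{\calQ(\calH)} < 1$, iff $-1 \notin \sigma_{\rm ess}(U)$, and $(P_0, \one - P_1)$ is a Fredholm pair, i.e.\ $\|P_0 + P_1 - \one\|_{\calQ(\calH)} < 1$, iff $1 \notin \sigma_{\rm ess}(U)$. The `consequently' clause then follows immediately from Proposition~\ref{prop:HSpace_SuzukiIndex_is_ProjectionIndex}, which expresses well-definedness of $\operatorname{si}(U) = \Ind_{\Gamma_0}(U)$ in terms of both Fredholm-pair conditions simultaneously. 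I do not expect a serious obstacle: the only slightly delicate point is the transition between $0 \in \sigma_{\rm ess}$ of one operator and $\|\cdot\|_{\calQ(\calH)} = 1$ for the other, which the identity $(P_0 - P_1)^2 + (P_0 + P_1 - \one)^2 = \one$ reduces to elementary functional calculus on the Calkin algebra.
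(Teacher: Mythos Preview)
Your argument is correct, and it takes a genuinely different route from the paper's proof. The paper invokes Lemma~\ref{lem:U_kernel_to_P_kernel}: it identifies $\Ker(U\pm\one)$ with the intersection subspaces $\Ran(P_0)\cap\Ker(P_1)\oplus\Ker(P_0)\cap\Ran(P_1)$ (respectively $\Ran(P_0)\cap\Ran(P_1)\oplus\Ker(P_0)\cap\Ker(P_1)$), uses that the Fredholm-pair condition is equivalent to finite-dimensionality of this subspace, and then asserts that $\pm 1\notin\sigma_{\rm ess}(U)$ is equivalent to $\Ker(U\mp\one)$ being finite dimensional. Your proof instead stays entirely in the Calkin algebra: you relate $U\pm\one$ to $P_0\pm P_1$ via left multiplication by $\Gamma_0$, and then use the quadratic identity $(P_0-P_1)^2+(P_0+P_1-\one)^2=\one$ to turn Fredholmness of one self-adjoint combination into the norm bound $\|\cdot\|_{\calQ(\calH)}<1$ on the other. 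This bypasses the kernel-dimension step entirely (the only place the paper's argument is at all delicate, since for a general normal operator finite-dimensionality of the eigenspace alone does not exclude a point from the essential spectrum), and it is exactly the mechanism the paper itself employs later in the Hilbert $C^*$-module setting (see Lemma~\ref{lem:p_q_properties} and the proof that $\|p_1-p_0\|_{\calQ_A(E)}<1$ iff $\|u-\one\|_{\calQ_A(E)}<2$ around equation~\eqref{eq:C(u)_and_C(p,q)}). So your approach is more self-contained for this lemma and foreshadows the module-level argument; the paper's approach has the advantage of tying the result directly to the explicit description of the $\pm 1$ eigenspaces already established in Lemma~\ref{lem:U_kernel_to_P_kernel}.
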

\begin{proof}
The result immediately follows from Lemma \ref{lem:U_kernel_to_P_kernel} as $\pm 1 \notin \sigma_{\rm ess}(U)$
if and only if ${\Ker(U \mp \one)}$ is finite dimensional.
\end{proof}

\begin{Proposition}
Let $U = \Gamma_0 \Gamma_1 =(2P_0-\one)(2P_1 - \one)$ be a chiral unitary on $\calH$.
If
$(P_0, \one - P_1)$ is a Fredholm pair, then
\[
 \Ind(P_0, \one- P_1) = \Ind_+( \Gamma_0, U) = \operatorname{si}_+(U).
\]
If $(P_0, P_1)$ is a Fredholm pair, then
\[
 \Ind(P_0, P_1) = \Ind_-( \Gamma_0, U) = \operatorname{si}_-(U).
\]
\end{Proposition}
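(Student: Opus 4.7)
The plan is to establish each equality directly from the definition of $\operatorname{si}_\pm(U)$ combined with the kernel decompositions already recorded in Lemma \ref{lem:U_kernel_to_P_kernel}. Since the equalities $\operatorname{si}_\pm(U)=\Ind_\pm(\Gamma_0,U)$ are already given in equation \eqref{eq:pm_index_si_index_connection}, the only new content is the identification with the index of a pair of projections.

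First, I would observe that under the hypothesis, $\Ker(U\mp\one)$ is finite-dimensional and $\Gamma_0$-invariant: indeed $\Gamma_0 U \Gamma_0 = U^*$ and, $U$ being unitary, $\Ker(U\mp\one)=\Ker(U^*\mp\one)$, so $\Gamma_0$ preserves each kernel. Writing $\Gamma_0=2P_0-\one$, any finite-dimensional $\Gamma_0$-invariant subspace $V$ splits as $(V\cap\Ran(P_0))\oplus(V\cap\Ker(P_0))$ on which $\Gamma_0$ acts as $+\one$ and $-\one$, respectively, giving
\[
\operatorname{si}_\mp(U)=\Tr\bigl(\Gamma_0|_{\Ker(U\pm\one)}\bigr)=\dim\bigl(\Ker(U\pm\one)\cap\Ran(P_0)\bigr)-\dim\bigl(\Ker(U\pm\one)\cap\Ker(P_0)\bigr).
\]

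Next, I would insert the decompositions from Lemma \ref{lem:U_kernel_to_P_kernel}. For the $-$ case, the decomposition $\Ker(U+\one)=(\Ran(P_0)\cap\Ker(P_1))\oplus(\Ker(P_0)\cap\Ran(P_1))$ already has its first summand contained in $\Ran(P_0)$ and its second in $\Ker(P_0)$, so the intersections above pick out precisely these two summands. Hence
\[
\operatorname{si}_-(U)=\dim\bigl(\Ran(P_0)\cap\Ker(P_1)\bigr)-\dim\bigl(\Ker(P_0)\cap\Ran(P_1)\bigr)=\Ind(P_0,P_1),
\]
where the last equality is Lemma \ref{lem:Ind(p,q)_formula}. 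The same reasoning applied to $\Ker(U-\one)=(\Ran(P_0)\cap\Ran(P_1))\oplus(\Ker(P_0)\cap\Ker(P_1))$ yields
\[
\operatorname{si}_+(U)=\dim\bigl(\Ran(P_0)\cap\Ran(P_1)\bigr)-\dim\bigl(\Ker(P_0)\cap\Ker(P_1)\bigr),
\]
which equals $\Ind(P_0,\one-P_1)$ by Lemma \ref{lem:Ind(p,q)_formula} upon noting $\Ran(\one-P_1)=\Ker(P_1)$ and $\Ker(\one-P_1)=\Ran(P_1)$.

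I do not anticipate any serious obstacle: the entire proof reduces to identifying the $\pm 1$ eigenspaces of $\Gamma_0$ inside each kernel and invoking the dimension formula for $\Ind(P_0,P_1)$. As a consistency check, summing the two identities recovers the conclusion of Proposition \ref{prop:HSpace_SuzukiIndex_is_ProjectionIndex}.
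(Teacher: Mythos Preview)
Your proof is correct and complete. It differs from the paper's main argument, which works through $\Ind_\pm(\Gamma_0,U)$ rather than $\operatorname{si}_\pm(U)$: the paper simplifies $R_1=\tfrac{1}{2}P_0(U+U^*)P_0$ and $R_2=\tfrac{1}{2}(\one-P_0)(U+U^*)(\one-P_0)$ to $P_0(2P_1-\one)P_0$ and $-(\one-P_0)(2P_1-\one)(\one-P_0)$, computes $\Ker(R_j\mp\one)$ as the relevant intersections of ranges and kernels of $P_0$, $P_1$, and then applies Lemma~\ref{lem:Ind(p,q)_formula}; the link to $\operatorname{si}_\pm(U)$ is supplied at the end via \eqref{eq:pm_index_si_index_connection}, just as in your argument. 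You instead bypass $R_1$, $R_2$ entirely, going straight from the $\Gamma_0$-eigenspace decomposition of $\Ker(U\mp\one)$ and Lemma~\ref{lem:U_kernel_to_P_kernel} to $\Ind(P_0,\cdot)$. In fact the paper records precisely your argument as an alternative proof in the paragraph immediately following its own. Your route is marginally shorter and leans on material already in place; the paper's route has the virtue of making the definition of $\Ind_\pm(\Gamma_0,U)$ explicit and self-contained, not depending on the kernel decomposition of Lemma~\ref{lem:U_kernel_to_P_kernel}.
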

\begin{proof}
Recall the operators $R_1 = \tfrac{1}{2} P_0( U+ U^*) P_0$ and $R_2 = \tfrac{1}{2} (\one -P_0)( U+ U^*)(\one - P_0)$ that
appear in the definition of $\Ind_\pm( \Gamma_0, U)$.
Elementary computations give that
$R_1 = P_0(2P_1- \one)P_0$ and $R_2 = -(\one-P_0)(2P_1 - \one)(\one - P_0)$, therefore
\begin{align*}
 \Ker( R_1 - \one) &= \Ker\bigl( P_0( 2P_1 -\one - \one) P_0 \bigr) = \Ker\bigl(P_0( \one - P_1)P_0 \bigr) \\
 &= \Ran(P_0) \cap \Ker( \one - P_1) = \Ran(P_0) \cap \Ran( P_1)
\end{align*}
and similarly
\begin{align*}
 \Ker(R_2 - \one)&= \Ker\bigl( (\one - P_0)(-2P_1) (\one - P_0) \bigr) = \Ran(\one - P_0) \cap \Ker(P_1)\\
 &= \Ker(P_0) \cap \Ker(P_1).
\end{align*}
Hence,
\begin{align*}
\Ind_+( \Gamma_0, U) &= \dim \Ker(R_1 - \one) - \dim \Ker( R_2 - \one) \\
 &= \dim \bigl( \Ran(P_0) \cap \Ran( P_1) \bigr) - \dim \bigl( \Ker(P_0) \cap \Ker(P_1) \bigr)
 = \Ind(P_0, \one- P_1)
\end{align*}
and the equality with $\operatorname{si}_+(U)$ follows from \eqref{eq:pm_index_si_index_connection}. An analogous argument also shows that
$\operatorname{si}_-(U) = \Ind_- (\Gamma_0, U) = \Ind(P_0, P_1)$.
\end{proof}

We can also prove the connection between the index of a pair of projections and $\operatorname{si}_\pm(U)$ without
first showing equality with $\Ind_\pm(\Gamma_0, U)$. Namely, if $(P_0, P_1)$ is a Fredholm pair, then
 because $P_0$ is the $+1$ eigenspace projection of $\Gamma_0$ and
$\Ker( U+ \one) = \Ran(P_0)\cap \Ker(P_1) \oplus \Ker(P_0)\cap \Ran(P_1)$ from Lemma \ref{lem:U_kernel_to_P_kernel},
\begin{align*}
 \operatorname{si}_-(U) &= \Tr\bigl( \Gamma_0 |_{\Ker(U + \one)} \bigr) = \dim \big[ \Ran(P_0) \cap \Ker(U+\one) \big] - \dim \big[ \Ker(P_0) \cap \Ker(U+\one) \big] \\
 &= \dim \bigl( \Ran(P_0) \cap \Ker(P_1) \bigr) - \dim \bigl( \Ker(P_0) \cap \Ran(P_1) \bigr) = \Ind(P_0, P_1).
\end{align*}
The equality $\operatorname{si}_+(U) = \Ind(P_0, \one - P_1)$ follows the same argument.

\subsection{Further properties}

We now use properties of the index of a pair of projections to further study the symmetry indices of chiral unitaries.

\begin{Proposition}
Let $U = (2P_0- \one)(2P_1 - \one )$ be a chiral unitary on $\calH$.
If $(P_0, P_1)$ is a~Fredholm pair, then $\dim \Ker(U+ \one) \geq |\Ind(P_0,P_1)|$.
If $(P_0, \one - P_1)$ is a~Fredholm pair, then $\dim \Ker(U- \one) \geq |\Ind(P_0,\one - P_1)|$.
\end{Proposition}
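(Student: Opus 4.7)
The proposal is to read off both sides of each inequality from the structural descriptions already established, namely Lemma \ref{lem:U_kernel_to_P_kernel} for the kernel of $U\mp\one$ and Lemma \ref{lem:Ind(p,q)_formula} for the index of a pair of projections. Once these are in hand, each inequality reduces to the elementary fact that $|m-n|\leq m+n$ for non-negative integers $m,n$.

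Concretely, for the first statement, assume $(P_0,P_1)$ is a Fredholm pair. By Lemma \ref{lem:U_kernel_to_P_kernel},
\[
\Ker(U+\one) = \bigl(\Ran(P_0)\cap \Ker(P_1)\bigr) \oplus \bigl(\Ker(P_0)\cap \Ran(P_1)\bigr),
\]
so
\[
\dim \Ker(U+\one) = \dim\bigl(\Ran(P_0)\cap \Ker(P_1)\bigr) + \dim\bigl(\Ker(P_0)\cap \Ran(P_1)\bigr).
\]
On the other hand, Lemma \ref{lem:Ind(p,q)_formula} gives
\[
\Ind(P_0,P_1) = \dim\bigl(\Ran(P_0)\cap \Ker(P_1)\bigr) - \dim\bigl(\Ker(P_0)\cap \Ran(P_1)\bigr),
\]
so the desired bound $\dim\Ker(U+\one)\geq |\Ind(P_0,P_1)|$ is just the triangle-type inequality $|m-n|\leq m+n$ applied to the two non-negative dimensions.

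For the second statement, I would argue in the same way: if $(P_0,\one-P_1)$ is a Fredholm pair, Lemma \ref{lem:U_kernel_to_P_kernel} identifies $\Ker(U-\one)$ as the direct sum of $\Ran(P_0)\cap\Ran(P_1)$ with $\Ker(P_0)\cap\Ker(P_1)$ (applying the stated result with $P_1$ replaced by $\one-P_1$, so that $\Ker(\one-P_1)=\Ran(P_1)$ and $\Ran(\one-P_1)=\Ker(P_1)$), while Lemma \ref{lem:Ind(p,q)_formula} expresses $\Ind(P_0,\one-P_1)$ as the difference of the same two dimensions. The same $|m-n|\leq m+n$ argument concludes.

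There is no real obstacle here: the content is entirely in the preceding two lemmas, and the proposition is a one-line corollary of combining their formulas. The only mild care needed is to verify that the two summands in each decomposition of $\Ker(U\mp\one)$ are precisely the subspaces whose dimensions compute $\Ind(P_0,P_1)$ and $\Ind(P_0,\one-P_1)$, respectively; this is immediate from how the formulas are written.
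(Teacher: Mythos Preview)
Your proposal is correct and matches the paper's own proof essentially line for line: the paper also invokes Lemma~\ref{lem:Ind(p,q)_formula} for the index formulas and Lemma~\ref{lem:U_kernel_to_P_kernel} for the kernel decompositions, then reads off the inequality. Your explicit mention of the elementary bound $|m-n|\leq m+n$ just makes the final step a touch more transparent than the paper's terse ``the result follows.''
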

\begin{proof}
Using that
\begin{align*}
 &\Ind(P_0, P_1) = \dim \bigl( \Ran(P_0) \cap \Ker(P_1) \bigr) - \dim \bigl( \Ker(P_0) \cap \Ran(P_1) \bigr), \\
 &\Ind(P_0, \one - P_1) = \dim \bigl( \Ran(P_0) \cap \Ran(P_1) \bigr) - \dim \bigl( \Ker(P_0) \cap \Ker(P_1) \bigr),
\end{align*}
 the result follows from Lemma \ref{lem:U_kernel_to_P_kernel}.
\end{proof}

The index $\Ind(P_0, P_1)$ is well defined if $\|P_0 - P_1 \|_{\calQ(\calH)} < 1$. In particular, we can define
$\Ind(P_0, P_1)$ if $P_0 - P_1$ or equivalently $\Gamma_0 - \Gamma_1$ is a compact operator.
If we are in the special case that $(\Gamma_0 \pm \Gamma_1)^{2m+1}$ is trace-class for some odd integer $2m+1$,
we can write another trace formula to compute $\Ind(P_0, P_1)$ and $\Ind(P_0, \one - P_1)$.

\begin{Proposition}[{\cite[Theorem 4.1]{ASS}}] \label{prop:Hspace_traceclass_formula}
If $(\Gamma_0 - \Gamma_1)^{2m+1}$ is trace-class, then
\[
 \operatorname{si}_-(U) = \Ind(P_0, P_1) = \Tr\bigl( (P_0 - P_1)^{2m+1} \bigr) = \frac{1}{2^{2m+1}} \Tr\bigl( (\Gamma_0 - \Gamma_1)^{2m+1} \bigr).
\]
If $(\Gamma_0 + \Gamma_1)^{2m+1}$ is trace-class, then
\[
 \operatorname{si}_+(U) = \Ind(P_0,\one - P_1) = \Tr\bigl( (P_0 + P_1- \one)^{2m+1} \bigr) = \frac{1}{2^{2m+1}} \Tr\bigl( (\Gamma_0 + \Gamma_1)^{2m+1} \bigr).
\]
\end{Proposition}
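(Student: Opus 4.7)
The last equality in each statement is immediate from the identities $\Gamma_0 - \Gamma_1 = 2(P_0 - P_1)$ and $\Gamma_0 + \Gamma_1 = 2(P_0 + P_1 - \one)$. Combined with the identifications $\operatorname{si}_-(U) = \Ind(P_0, P_1)$ and $\operatorname{si}_+(U) = \Ind(P_0, \one - P_1)$ established in the previous subsection, the remaining content is the middle equality, e.g., $\Ind(P_0, P_1) = \Tr\bigl((P_0 - P_1)^{2m+1}\bigr)$. Since replacing $P_1$ by $\one - P_1$ interchanges the minus and plus statements (because $P_0 - (\one - P_1) = P_0 + P_1 - \one$), it suffices to treat the minus case.

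Set $A = P_0 - P_1$ and introduce the auxiliary operator $B = \one - P_0 - P_1$. The trace-class hypothesis on $A^{2m+1}$ forces $A$ itself to be compact, so its spectrum is discrete away from $0$ with finite-dimensional eigenspaces. A short algebraic computation using only $P_j^2 = P_j$ yields
\[
 AB = -P_0 P_1 + P_1 P_0 = -BA, \qquad A^2 + B^2 = \one.
\]
Hence $B$ sends the $\lambda$-eigenspace of $A$ into the $(-\lambda)$-eigenspace, and on this eigenspace $B^2 = (1-\lambda^2)\one$. Consequently, for each $\lambda \in (-1,1)\setminus\{0\}$, $B$ is a linear isomorphism between the $\lambda$- and $(-\lambda)$-eigenspaces, while on the $\pm 1$-eigenspaces of $A$ the operator $B$ vanishes. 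These latter spaces are easily identified as $\Ran(P_0)\cap \Ker(P_1)$ and $\Ker(P_0)\cap \Ran(P_1)$ respectively.

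Applying the spectral theorem to the compact self-adjoint operator $A$ and using that the contributions from the paired eigenvalues $\pm \lambda$ to an odd power cancel,
\[
 \Tr\bigl(A^{2m+1}\bigr) = \sum_{\lambda \in \sigma(A)} \lambda^{2m+1} \dim E_\lambda(A) = \dim \bigl(\Ran(P_0)\cap \Ker(P_1)\bigr) - \dim \bigl(\Ker(P_0)\cap \Ran(P_1)\bigr),
\]
which equals $\Ind(P_0, P_1)$ by Lemma \ref{lem:Ind(p,q)_formula}. The plus case then follows by applying the same argument to the pair $(P_0, \one - P_1)$, noting that this is a Fredholm pair precisely when $(\Gamma_0 + \Gamma_1)^{2m+1}$ is trace-class.

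The only nontrivial step is the recognition of the anticommuting partner $B = \one - P_0 - P_1$; once $AB = -BA$ is in hand the symmetry of $\sigma(A)$ off the endpoints $\pm 1$ falls out immediately. The trace-class hypothesis on $A^{2m+1}$ (equivalently, the summability $\sum_\lambda |\lambda|^{2m+1} \dim E_\lambda(A) < \infty$) suffices to ensure absolute convergence of the spectral sum.
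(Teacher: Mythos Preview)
Your proof is correct and is precisely the argument of Avron--Seiler--Simon that the paper cites; the paper itself does not supply a proof but simply refers to \cite[Theorem~4.1]{ASS}. One minor wording point: you write that $(P_0,\one-P_1)$ is a Fredholm pair ``precisely when'' $(\Gamma_0+\Gamma_1)^{2m+1}$ is trace-class, but only the forward implication holds (trace-class $\Rightarrow$ compact $\Rightarrow$ Fredholm pair); this does not affect the argument since the trace-class hypothesis is assumed.
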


We can use Proposition \ref{prop:Hspace_traceclass_formula} to write many non-trivial examples of
$\Ind(P_0, P_1)$ or $\Ind(P_0,\allowbreak\one - P_1)$ by taking
trace-class perturbations of projections, for example.
More general trace formulas for $\Ind(P_0, P_1)$ can be found in~\cite[Section 3]{CP2}.

Let us briefly consider the case where $\calH$ is finite dimensional.

\begin{Proposition}[{cf.~\cite[Lemma 2.5]{Cedzich18}}] \label{prop:finite_index_is_sig}
Let $U = \Gamma_0 \Gamma_1$ be a chiral unitary on a finite-di\-men\-sion\-al
Hilbert space $\calH$. Then $\operatorname{si}(U) = \Ind_{\Gamma_0}(U)= \Tr(\Gamma_0) = \operatorname{Sig}(\Gamma_0)$.
\end{Proposition}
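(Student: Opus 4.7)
The plan is to reduce the identity to a spectral decomposition of $\calH$ under $U$, and then use that $\Gamma_0$ must swap conjugate spectral subspaces of $U$.

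First I would observe that in finite dimensions all essential spectra are empty, so both $\operatorname{si}_{\pm}(U)$ are automatically defined and
\[
\operatorname{si}(U) = \operatorname{si}_+(U) + \operatorname{si}_-(U) = \Tr\bigl(\Gamma_0|_{\Ker(U-\one)}\bigr) + \Tr\bigl(\Gamma_0|_{\Ker(U+\one)}\bigr),
\]
while $\operatorname{si}(U) = \Ind_{\Gamma_0}(U)$ is equation \eqref{eq:total_symm_index_Fred_index}. So it remains to identify the right-hand side with $\Tr(\Gamma_0)$.

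Next I would exploit the chiral relation $\Gamma_0 U \Gamma_0 = U^*$ in the form
\[
\Gamma_0(U+U^*) = (U+U^*)\Gamma_0,
\]
so $\Gamma_0$ commutes with $\operatorname{Re}(U)$ and therefore preserves each of its eigenspaces. For unitary $U$, $(U+U^*)\psi = \pm 2\psi$ forces $U\psi = \pm \psi$, hence $\Ker(U \mp \one)$ is exactly the $\pm 2$-eigenspace of $U+U^*$, and in particular is $\Gamma_0$-invariant.

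Then I would decompose $\calH$ as
\[
\calH = \Ker(U-\one) \oplus \Ker(U+\one) \oplus W,
\]
where $W$ is the sum of $\Ker(U-\lambda)$ over $\lambda \in \sigma(U)\setminus\{\pm 1\}$. The chiral relation gives $\Gamma_0 \Ker(U-\lambda) = \Ker(U^*-\lambda) = \Ker(U-\ol{\lambda})$, so $\Gamma_0$ pairs $\Ker(U-\lambda)$ with $\Ker(U-\ol{\lambda})$ for $\lambda \neq \ol{\lambda}$; with respect to this pairing, $\Gamma_0|_W$ is off-diagonal, hence $\Tr(\Gamma_0|_W) = 0$.

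Finally, using the $\Gamma_0$-invariance of the first two summands together with $\Tr(\Gamma_0|_W)=0$, I conclude
\[
\Tr(\Gamma_0) = \Tr\bigl(\Gamma_0|_{\Ker(U-\one)}\bigr) + \Tr\bigl(\Gamma_0|_{\Ker(U+\one)}\bigr) = \operatorname{si}(U) = \Ind_{\Gamma_0}(U),
\]
and $\Tr(\Gamma_0) = \operatorname{Sig}(\Gamma_0)$ is immediate since $\Gamma_0$ is a self-adjoint unitary. No step is really an obstacle in finite dimensions; the only subtlety is confirming the off-diagonal structure on $W$, which just uses that $\Gamma_0$ is self-adjoint and swaps distinct orthogonal subspaces.
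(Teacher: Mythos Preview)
Your proof is correct, but it takes a different route from the paper's. The paper first writes $\operatorname{si}(U)=\Ind(P_0,P_1)+\Ind(P_0,\one-P_1)$ via Proposition~\ref{prop:HSpace_SuzukiIndex_is_ProjectionIndex}, then uses the properties of the index of a pair of projections (Lemma~\ref{lem:Ind(p,q)_properties}) to collapse this sum:
\[
\Ind(P_0,P_1)+\Ind(P_0,\one-P_1)=\Ind(P_0,P_1)+\Ind(P_1,\one-P_0)=\Ind(P_0,\one-P_0)=\dim\Ran(P_0)-\dim\Ker(P_0)=\Tr(\Gamma_0).
\]
Your argument instead works directly with the spectral decomposition of $U$: you split off the $\pm1$ eigenspaces, observe that $\Gamma_0$ swaps the remaining conjugate eigenspaces $\Ker(U-\lambda)\leftrightarrow\Ker(U-\ol\lambda)$, and conclude $\Tr(\Gamma_0|_W)=0$ from the resulting block-off-diagonal structure. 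Your approach is more elementary and self-contained, requiring no prior machinery about pairs of projections; the paper's approach, by contrast, illustrates how the additivity of $\Ind(P_0,P_1)$ developed in Section~\ref{subsec:Ind(p,q)_HSpace} reproduces the result, which serves the paper's broader narrative of expressing the symmetry indices through that framework.
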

Proposition \ref{prop:finite_index_is_sig} follows from Propositions \ref{prop:HSpace_SuzukiIndex_is_ProjectionIndex}
and \ref{prop:Hspace_traceclass_formula} with $m=0$. We also give a short proof using properties of
the index of a pair of projections.
\begin{proof}
If $\calH$ is finite dimensional, then $\sigma_{\rm ess}(U) = \varnothing$ and all
projections are Fredholm pairs.
We use Lemma \ref{lem:Ind(p,q)_properties} and compute
\begin{align*}
 \Ind_{\Gamma_0}(U)&= \Ind(P_0, P_1) + \Ind(P_0, \one - P_1) \\
 &= \Ind(P_0, P_1) + \Ind(P_1, \one - P_0) \\
 &= \Ind(P_0, \one - P_0) \\
 &= \dim \bigl( \Ran(P_0) \cap \Ker(\one - P_0) \bigr) - \dim \bigl( \Ker(P_0) \cap \Ran(\one - P_0) \bigr) \\
 &= \dim \Ran(P_0) - \dim \Ker(P_0) = \Tr(\Gamma_0) = \operatorname{Sig}(\Gamma_0)
\end{align*}
as $\Ran(P_0)$ and $\Ker(P_0)$ are the $+1$ and $-1$ eigenspaces of $\Gamma_0$, respectively.
\end{proof}

\begin{Remark}
The $\Z_2$-graded nature of $\Ind_{\Gamma_0}(U)$ allows us to obtain non-trivial indices even in finite
dimensions, where the ungraded Fredholm index will be zero. Indeed, considering $\C^m \oplus \C^n$ as
a $\Z_2$-graded Hilbert space, the operator
\[
 \mathbf{0} = \begin{pmatrix} 0_{m,m} & 0_{n,m} \\ 0_{m,n} & 0_{n,n} \end{pmatrix}, \qquad
 0_{j,k}\colon\ \C^j \to \C^k
\]
is such that $Q_+ := 0_{m,n}$ is Fredholm with
\[
 \Index(Q_+) = \dim \Ker(0_{m,n}) - \dim \Ker(0_{n,m}) = m-n.
\]
\end{Remark}

Clearly the argument used in the proof of Proposition \ref{prop:finite_index_is_sig} fails
 in infinite dimensions. Indeed, if $\calH$ is infinite dimensional, then $(P_0, \one - P_0)$ can not be a
Fredholm pair as any self-adjoint unitary $\Gamma_0$ on $\calH$ can not be trace class.
This simple observation leads to the following non-obvious result.

\begin{Proposition} \label{prop:trace_no_go}
Suppose $\calH$ is infinite dimensional and $U = \Gamma_0 \Gamma_1= (2P_0-\one)(2P_1 - \one)$ is a~chiral unitary
on $\calH$. Then $\| \Gamma_0 - \Gamma_1 \|_{\calQ(\calH)} \geq 1$ or $\| \Gamma_0 + \Gamma_1 \|_{\calQ(\calH)} \geq 1$.
\end{Proposition}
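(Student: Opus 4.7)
The plan is to argue by contradiction, translating the bounds on $\|\Gamma_0\pm\Gamma_1\|_{\calQ(\calH)}$ into the quantitative Fredholm-pair hypotheses required by the additivity formula in Lemma~\ref{lem:Ind(p,q)_properties}(iii), and then exploiting the fact that in infinite dimensions the pair $(P_0,\one-P_0)$ cannot be a Fredholm pair.

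Suppose for contradiction that both $\|\Gamma_0-\Gamma_1\|_{\calQ(\calH)}<1$ and $\|\Gamma_0+\Gamma_1\|_{\calQ(\calH)}<1$. Since $\Gamma_j=2P_j-\one$, these translate to
\[
\|P_0-P_1\|_{\calQ(\calH)}<\tfrac{1}{2}, \qquad \|P_1-(\one-P_0)\|_{\calQ(\calH)} = \|P_0+P_1-\one\|_{\calQ(\calH)}<\tfrac{1}{2}.
\]
Then Lemma~\ref{lem:Ind(p,q)_properties}(iii), applied with the triple $P_0, P_1, \one-P_0$, is available and yields
\[
\Ind(P_0,P_1)+\Ind(P_1,\one-P_0)=\Ind(P_0,\one-P_0).
\]
In particular $(P_0,\one-P_0)$ must itself be a Fredholm pair of projections, i.e.\ $\|P_0-(\one-P_0)\|_{\calQ(\calH)}=\|\Gamma_0\|_{\calQ(\calH)}<1$.

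The remaining step, and the crux of the argument, is to observe that this last inequality is impossible when $\calH$ is infinite-dimensional. Indeed, $\Gamma_0$ is a self-adjoint unitary, so its image $\pi(\Gamma_0)\in\calQ(\calH)$ satisfies $\pi(\Gamma_0)^2=\one_{\calQ(\calH)}$. Because $\calH$ is infinite-dimensional the Calkin algebra is unital with $\|\one_{\calQ(\calH)}\|=1$, and then
\[
1=\|\one_{\calQ(\calH)}\|=\|\pi(\Gamma_0)^2\|\leq\|\pi(\Gamma_0)\|^2=\|\Gamma_0\|_{\calQ(\calH)}^2,
\]
so $\|\Gamma_0\|_{\calQ(\calH)}\geq 1$, contradicting the bound obtained above. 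This contradiction forces at least one of $\|\Gamma_0\pm\Gamma_1\|_{\calQ(\calH)}$ to be $\geq 1$, which is the claim. No step looks genuinely difficult; the only substantive point is recognising that the additivity formula forces $(P_0,\one-P_0)$ into the picture, after which the self-adjoint-unitary character of $\Gamma_0$ closes the argument immediately.
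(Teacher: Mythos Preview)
Your proof is correct and follows the same route as the paper's: both argue by contradiction, translate the assumed bounds to $\|P_0-P_1\|_{\calQ(\calH)}<\tfrac12$ and $\|P_1-(\one-P_0)\|_{\calQ(\calH)}<\tfrac12$, and conclude that $(P_0,\one-P_0)$ is a Fredholm pair, which is impossible when $\calH$ is infinite-dimensional. The only cosmetic difference is that the paper reaches the Fredholm-pair conclusion by the direct triangle-inequality estimate $\|P_0-(\one-P_0)\|_{\calQ(\calH)}\leq\|P_0-P_1\|_{\calQ(\calH)}+\|P_1-(\one-P_0)\|_{\calQ(\calH)}<1$ rather than routing through Lemma~\ref{lem:Ind(p,q)_properties}(iii), whose index identity you never actually use.
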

\begin{proof}
We have that $\Gamma_0 - \Gamma_1 = 2(P_0-P_1)$ and $ \Gamma_0 + \Gamma_1 = 2(P_0+P_1- \one)$.
Now suppose the result is false. Then $\|P_0 - P_1\|_{\calQ(\calH)}$ and $\|P_0 + P_1 - \one \|_{\calQ(\calH)}$ are
strictly less than $\tfrac{1}{2}$. Hence we can estimate
\[
 \| P_0 - (\one - P_0) \|_{\calQ(\calH)} \leq \|P_0 - P_1\|_{\calQ(\calH)} + \|P_0 + P_1 - \one \|_{\calQ(\calH)} < 1
\]
and $(P_0, \one - P_0)$ is a Fredholm pair. But this is only possible if $\calH$ is finite dimensional,
a~contradiction of our assumptions.
\end{proof}

\begin{Corollary}
If $\calH$ is infinite dimensional and $U = \Gamma_0 \Gamma_1= (2P_0-\one)(2P_1 - \one)$ is a chiral unitary
with $\{\pm 1 \} \cap \sigma_{\rm ess}(U) = \varnothing$, then $1 \leq \| \Gamma_0 - \Gamma_1 \|_{\calQ(\calH)} <2$ or
$1 \leq \| \Gamma_0 + \Gamma_1 \|_{\calQ(\calH)} < 2$.
\end{Corollary}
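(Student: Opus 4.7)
The plan is to combine the preceding Proposition \ref{prop:trace_no_go} (which supplies the lower bound for at least one of the two quantities) with the spectral criterion in Lemma \ref{lem:essential_spectrum_condition} (which, translated through $\Gamma_j = 2P_j - \one$, supplies the upper bounds for \emph{both} quantities). There is essentially no new work: the corollary is really just a reformulation of Proposition \ref{prop:trace_no_go} once one unpacks what the gap condition $\{\pm 1\} \cap \sigma_{\rm ess}(U) = \varnothing$ means for the Calkin-norm distances between the associated projections.

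First I would translate the hypothesis. Since $\Gamma_0 - \Gamma_1 = 2(P_0 - P_1)$ and $\Gamma_0 + \Gamma_1 = 2(P_0 - (\one - P_1))$, the inequalities
\[
 \| \Gamma_0 - \Gamma_1 \|_{\calQ(\calH)} < 2, \qquad \| \Gamma_0 + \Gamma_1 \|_{\calQ(\calH)} < 2
\]
are equivalent, respectively, to $(P_0, P_1)$ and $(P_0, \one - P_1)$ being Fredholm pairs of projections. By Lemma \ref{lem:essential_spectrum_condition}, these two Fredholm-pair conditions are in turn equivalent to $-1 \notin \sigma_{\rm ess}(U)$ and $+1 \notin \sigma_{\rm ess}(U)$, and both hold by hypothesis. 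So both upper bounds are automatic.

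Next I would invoke Proposition \ref{prop:trace_no_go} directly: under the standing assumption that $\calH$ is infinite dimensional, at least one of
\[
 \| \Gamma_0 - \Gamma_1 \|_{\calQ(\calH)} \geq 1, \qquad \| \Gamma_0 + \Gamma_1 \|_{\calQ(\calH)} \geq 1
\]
must hold. Combined with the previous paragraph, at least one of the two chains
\[
 1 \leq \| \Gamma_0 - \Gamma_1 \|_{\calQ(\calH)} < 2, \qquad 1 \leq \| \Gamma_0 + \Gamma_1 \|_{\calQ(\calH)} < 2
\]
is valid, which is the desired statement. The only thing one has to keep straight is the asymmetry between the two bounds: the essential-spectrum gap forces \emph{both} upper bounds, while Proposition \ref{prop:trace_no_go} only forces \emph{one} of the lower bounds, so the disjunction in the conclusion cannot be strengthened to a conjunction. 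There is no real obstacle here; the work was already done in Proposition \ref{prop:trace_no_go} and Lemma \ref{lem:essential_spectrum_condition}.
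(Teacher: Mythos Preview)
Your proof is correct and follows essentially the same approach as the paper: invoke Proposition \ref{prop:trace_no_go} for the lower bound on at least one of the two quantities, and use Lemma \ref{lem:essential_spectrum_condition} together with $\Gamma_0 \pm \Gamma_1 = 2(P_0 - P_1)$ and $2(P_0 + P_1 - \one)$ to obtain both strict upper bounds from the essential-spectrum hypothesis.
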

\begin{proof}
The bound $1 \leq \| \Gamma_0 \pm \Gamma_1 \|_{\calQ(\calH)}$ is Proposition \ref{prop:trace_no_go}. If
$U$ is a chiral unitary with $\{\pm 1 \} \cap \sigma_{\rm ess}(U) = \varnothing$, then both $(P_0, P_1)$ and $(P_0, \one-P_1)$ are
Fredholm pairs, $\|P_0 - P_1\|_{\calQ(\calH)}<1$ and $\|P_0 + P_1 - \one \|_{\calQ(\calH)}<1$,
which then gives the strict upper bound on $\| \Gamma_0 \pm \Gamma_1 \|_{\calQ(\calH)}$.
\end{proof}

Proposition \ref{prop:trace_no_go} shows that a tracial formula for $\operatorname{si}(U) = \Ind_{\Gamma_0}(U)$
of the form considered in Proposition \ref{prop:Hspace_traceclass_formula} is impossible if $\calH$ is
 infinite dimensional as $(\Gamma_0 +\Gamma_1)^{2m+1}$ and $(\Gamma_0 - \Gamma_1)^{2n+1}$
cannot simultaneously be trace-class.

Finally, let us note some basic stability properties of the indices for chiral unitaries we have considered.

\begin{Proposition} \label{prop:Hspace_strong_path_invariance}
Let $[0,1] \ni t \mapsto U(t) = \Gamma_0(t) \Gamma_1(t)= (2P_0(t)-\one)(2P_1(t) - \one)$ be a strongly continuous
path of chiral unitaries on $\calH$.
\begin{itemize}\itemsep=0pt
 \item[$(i)$] If $-1 \notin \sigma_{{\rm ess}}(U(t))$ for all $t \in [0,1]$, then $\Ind(P_0(t), P_1(t))$ is constant.
 \item[$(ii)$] If $+1 \notin \sigma_{{\rm ess}}(U(t))$ for all $t \in [0,1]$, then $\Ind(P_0(t), \one -P_1(t))$ is constant.
\end{itemize}
\end{Proposition}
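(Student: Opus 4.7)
The plan is to establish (i); part (ii) follows by replacing $P_1(t)$ with $\one - P_1(t)$ throughout. Since $\Ind(P_0(t), P_1(t)) \in \Z$ and $[0,1]$ is connected, it suffices to prove local constancy of $t \mapsto \Ind(P_0(t), P_1(t))$.

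Fix $t_0 \in [0,1]$. Using the preceding proposition I would rewrite
$\Ind(P_0(t), P_1(t)) = \operatorname{si}_-(U(t)) = \Tr\bigl(\Gamma_0(t)|_{\Ker(U(t)+\one)}\bigr)$.
Since $-1 \notin \sigma_{\rm ess}(U(t_0))$, I pick $\epsilon \in (0,\pi)$ so that the closed arc $\calA = \bigl\{e^{{\rm i}\theta} : \pi - \epsilon \leq |\theta| \leq \pi\bigr\}$ meets $\sigma(U(t_0))$ only in finitely many eigenvalues of finite total multiplicity, and $\partial\calA$ lies in the resolvent set of $U(t_0)$. Let $\Pi(t_0)$ denote the corresponding Riesz projection of $U(t_0)$. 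The relation $\Gamma_0(t_0) U(t_0) \Gamma_0(t_0) = U(t_0)^*$ implies that for every non-real $\lambda \in \calA$ the unitary $\Gamma_0(t_0)$ intertwines the $\lambda$- and $\overline{\lambda}$-eigenspaces; each such paired block contributes $0$ to $\Tr(\Gamma_0(t_0)\Pi(t_0))$, leaving $\Tr(\Gamma_0(t_0)\Pi(t_0)) = \Tr\bigl(\Gamma_0(t_0)|_{\Ker(U(t_0)+\one)}\bigr) = \Ind(P_0(t_0), P_1(t_0))$.

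The core step is to extend this identity to a neighborhood of $t_0$ with $\Pi(t)$ depending norm-continuously on $t$. I would choose a continuous bump $f \in C(\T)$ equal to $1$ on $\calA$ and supported inside a slightly enlarged arc whose boundary still lies in the resolvent set of $U(t_0)$. Strong continuity of $U(t)$ combined with the continuous functional calculus gives $f(U(t)) \to f(U(t_0)) = \Pi(t_0)$ strongly, and strong continuity of $\Gamma_0(t) = 2P_0(t) - \one$ (since $P_0(t)$ is strongly continuous) gives strong continuity of $\Gamma_0(t)f(U(t))$. Local constancy of the integer-valued quantity $\Tr(\Gamma_0(t)\Pi(t))$ then follows provided (a) $f(U(t))$ coincides with a genuine Riesz projection $\Pi(t)$ of $U(t)$ for $t$ near $t_0$, and (b) this convergence is in norm rather than merely strong.

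The main obstacle is precisely (a) and (b): strong operator continuity does not in general imply norm continuity of spectral projections, so some additional ingredient is needed. That ingredient is the finite-rank nature of $\Pi(t_0)$: by upper semicontinuity of the rank under strong convergence onto a finite-rank idempotent, the ranks of $f(U(t))$ are locally uniformly bounded, which forces $\partial\calA$ to remain in the resolvent set of $U(t)$ for $t$ near $t_0$ (giving (a)), and which promotes the strong convergence $f(U(t)) \to \Pi(t_0)$ to norm convergence on the finite-dimensional range (giving (b)). With these points handled, $\Tr(\Gamma_0(t)\Pi(t))$ is a continuous $\Z$-valued function on a neighborhood of $t_0$, hence locally constant, and the proposition follows by connectedness.
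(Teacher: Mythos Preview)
Your overall strategy mirrors the heuristic argument the paper sketches immediately after the statement, and you correctly isolate the central difficulty: strong continuity of $t\mapsto U(t)$ does not by itself give norm continuity of spectral projections. However, your proposed resolution of points (a) and (b) does not work. The claim ``upper semicontinuity of the rank under strong convergence onto a finite-rank idempotent'' is false: rank is \emph{lower} semicontinuous under strong limits, not upper. For instance, with $\{e_n\}$ an orthonormal basis, the rank-two projections $T_n = |e_1\rangle\langle e_1| + |e_n\rangle\langle e_n|$ converge strongly to the rank-one projection $|e_1\rangle\langle e_1|$, yet $\|T_n - T\| = 1$ for all $n$. This same example shows that even if the ranks of $f(U(t))$ were uniformly bounded, strong convergence would not be promoted to norm convergence, so (b) fails. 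And nothing in your hypotheses prevents the essential spectrum of $U(t)$ from drifting into the interior of the support of $f$ for $t$ arbitrarily close to $t_0$ (only the single point $-1$ is excluded), so (a) is also unjustified.

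The paper does not attempt to make this direct spectral-projection argument rigorous; it explicitly calls it a heuristic. Its actual proof proceeds by an entirely different route: it invokes Proposition~\ref{prop:homotopy_of_chiral_unitaries}, which passes to the Cayley transform $\calC(U(t))$, chooses a single normalising function $\chi$, and uses that $t\mapsto \one - \chi(\calC(U(t)))^2$ is a \emph{norm}-continuous path of compact operators (citing \cite{Wahl08}). This norm continuity on the compact side is exactly what allows one to assemble a Kasparov module over $C[0,1]\otimes A$ and conclude constancy of the $KK$-class, hence of the integer index. The Cayley transform is the device that converts the awkward strong-continuity hypothesis on unitaries into a tractable norm-continuity statement about compacts; your argument lacks any analogous mechanism.
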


This result is proved in a more general context in Proposition \ref{prop:homotopy_of_chiral_unitaries}, where for
Hilbert spaces the inequality $\| \pm U(t) - \one \|_{\calQ(\calH)} < 1$ is equivalent to the condition that $\mp 1 \notin \sigma_{{\rm ess}}(U(t))$.
In the Hilbert space setting, the result is relatively intuitive and we present a heuristic argument. Suppose $-1 \notin \sigma_{{\rm ess}}(U(t))$
for all $t\in [0,1]$. Then there is a $\delta >0$ such that
 range of the spectral projection $\chi_{({\rm e}^{{\rm i}(\pi + \delta)}, {\rm e}^{{\rm i}(\pi-\delta)})}(U(t))$ will be finite dimensional for all $t \in [0,1]$.
 Therefore, any change in the dimension of $\Ker(U(t) + \one)$ comes only from
eigenvalues of finite multiplicity. However, because the spectrum of $U(t)$ is symmetric about the real axis,
eigenvalues of $U(t)$ outside of~$\pm 1$ will come in pairs with respect to the spectral decomposition of $\Gamma_0(t)$.
So any change of dimension between $\Ker( U(t) + \one)$ and $\Ker( U(t+ \epsilon) + \one)$ will always be even dimensional and such that
\[
 \Tr\bigl( \Gamma_0(t) |_{\Ker(U(t) + \one)} \bigr) = \Tr\bigl( \Gamma_0(t+\epsilon) |_{\Ker(U(t+\epsilon) + \one)} \bigr).
\]
Hence, $\operatorname{si}_-(U(t)) = \operatorname{si}_-(U(t+ \epsilon))$ and therefore the index of a pair of projections will be constant.

\begin{Corollary}
The total symmetry index $\operatorname{si}(U) = \Ind_{\Gamma_0}(U)$ is invariant under strongly continuous paths of chiral unitaries
$\{U(t)\}_{t\in[0,1]}$ such that $\{\pm 1\}\cap \sigma_{{\rm ess}}(U(t)) = \varnothing$ for all $t\in [0,1]$.
\end{Corollary}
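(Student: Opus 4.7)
The plan is to reduce the corollary immediately to the preceding proposition, since the heavy lifting has already been done there. The strategy is: use the decomposition of the total symmetry index as a sum of two indices of pairs of projections, apply the path-invariance statement of Proposition \ref{prop:Hspace_strong_path_invariance} to each summand separately, and add the results.

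More precisely, I would first invoke Proposition \ref{prop:HSpace_SuzukiIndex_is_ProjectionIndex} to rewrite
\[
 \operatorname{si}(U(t)) = \Ind_{\Gamma_0(t)}(U(t)) = \Ind(P_0(t), P_1(t)) + \Ind(P_0(t), \one - P_1(t))
\]
for each $t \in [0,1]$. The hypothesis that $\{\pm 1\} \cap \sigma_{{\rm ess}}(U(t)) = \varnothing$ for every $t$ ensures, via Lemma \ref{lem:essential_spectrum_condition}, that $(P_0(t), P_1(t))$ and $(P_0(t), \one - P_1(t))$ are both Fredholm pairs for all $t$, so both terms on the right-hand side are defined along the entire path. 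Then $-1 \notin \sigma_{{\rm ess}}(U(t))$ for all $t$ gives the hypothesis of part~(i) of Proposition \ref{prop:Hspace_strong_path_invariance}, yielding constancy of $t \mapsto \Ind(P_0(t), P_1(t))$, while $+1 \notin \sigma_{{\rm ess}}(U(t))$ for all $t$ feeds into part~(ii) to give constancy of $t \mapsto \Ind(P_0(t), \one - P_1(t))$.

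Adding the two constant functions yields that $t \mapsto \operatorname{si}(U(t)) = \Ind_{\Gamma_0(t)}(U(t))$ is constant on $[0,1]$, which is exactly the assertion. There is no real obstacle here because Proposition \ref{prop:Hspace_strong_path_invariance} already performs all the genuine analytic work (the stability of the index of a pair of projections under strong continuity together with the spectral gap condition); the corollary is essentially an additivity bookkeeping step, and the only point worth noting is the use of Lemma \ref{lem:essential_spectrum_condition} to translate the hypothesis $\{\pm 1\}\cap \sigma_{{\rm ess}}(U(t)) = \varnothing$ into the two separate Fredholm pair conditions required by Proposition \ref{prop:Hspace_strong_path_invariance}.
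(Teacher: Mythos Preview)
Your proposal is correct and is exactly the intended argument: the paper gives no separate proof of this corollary because it is meant to follow immediately from Proposition~\ref{prop:Hspace_strong_path_invariance} by summing the two constant terms $\Ind(P_0(t),P_1(t))$ and $\Ind(P_0(t),\one-P_1(t))$, using the decomposition $\operatorname{si}(U(t))=\Ind(P_0(t),P_1(t))+\Ind(P_0(t),\one-P_1(t))$ from Proposition~\ref{prop:HSpace_SuzukiIndex_is_ProjectionIndex}. Your invocation of Lemma~\ref{lem:essential_spectrum_condition} is slightly more explicit than strictly necessary (the hypotheses of Proposition~\ref{prop:Hspace_strong_path_invariance} are already stated in terms of $\sigma_{\rm ess}(U(t))$), but it does no harm.
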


Because $\operatorname{si}_-(U) = \Ind(P_0,P_1)$ and $\operatorname{si}_+(U) = \Ind(P_0, \one-P_1)$ can be written as a Fredholm index,
these quantities are locally constant in the norm-topology. We give a concrete realisation of this result below. A similar result can be
found in~\cite[Proposition 2.6]{Cedzich18}.

\begin{Proposition}
Let $U = \Gamma_0 \Gamma_1$ and $U' = \Gamma_0 \Gamma_1'$ be two chiral unitaries with the same chiral symmetry operator $\Gamma_0$
and $\pi\colon \calB(\calH) \to \calQ(\calH)$ the quotient map onto the Calkin algebra.
Suppose that $\| \pi(\Gamma_0) \pm \pi(\Gamma_1) \|_{\calQ(\calH)} < 1$ and $\| U - U'\|_{\calB(\calH)} < 1$. Then
$\operatorname{si}_\pm(U) = \operatorname{si}_\pm(U')$.
\end{Proposition}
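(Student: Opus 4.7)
The plan is to express both $\operatorname{si}_\pm$ as indices of pairs of projections via the equality $\operatorname{si}_-(U) = \Ind(P_0,P_1)$ and $\operatorname{si}_+(U) = \Ind(P_0,\one-P_1)$ established earlier in this section, and then exploit the addition formula in Lemma \ref{lem:Ind(p,q)_properties}(iii). Writing $\Gamma_j = 2P_j - \one$ and $\Gamma_1' = 2P_1' - \one$, the identities $\Gamma_0 \mp \Gamma_1 = \mp 2(P_0 - P_1)$ and $\Gamma_0 + \Gamma_1 = 2(P_0 + P_1 - \one)$ translate the spectral hypothesis into $\|P_0 - P_1\|_{\calQ(\calH)} < \tfrac{1}{2}$ and $\|P_0 + P_1 - \one\|_{\calQ(\calH)} < \tfrac{1}{2}$. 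Since $\Gamma_0$ is a unitary, $U - U' = \Gamma_0(\Gamma_1 - \Gamma_1') = 2\Gamma_0(P_1 - P_1')$, whence $\|P_1 - P_1'\|_{\calB(\calH)} < \tfrac{1}{2}$, and \emph{a fortiori} $\|P_1 - P_1'\|_{\calQ(\calH)} < \tfrac{1}{2}$.

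The heart of the argument is to show $\Ind(P_1,P_1') = 0$. By Lemma \ref{lem:Ind(p,q)_formula}, this amounts to verifying $\Ran(P_1)\cap \Ker(P_1') = \{0\}$ and $\Ker(P_1)\cap \Ran(P_1') = \{0\}$. For any nonzero $x$ in the first intersection, $P_1 x = x$ and $P_1' x = 0$ give $\|(P_1 - P_1')x\|_\calH = \|x\|_\calH$, contradicting $\|P_1 - P_1'\|_{\calB(\calH)} < 1$; the second intersection is handled symmetrically.

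With $\Ind(P_1,P_1') = 0$ in hand, the three-term addition formula applied to the triple $(P_0,P_1,P_1')$ (whose consecutive distances in $\calQ(\calH)$ are each strictly less than $\tfrac{1}{2}$) yields
\[
\Ind(P_0,P_1') = \Ind(P_0,P_1) + \Ind(P_1,P_1') = \Ind(P_0,P_1),
\]
which is precisely $\operatorname{si}_-(U') = \operatorname{si}_-(U)$. For $\operatorname{si}_+$, repeat the argument with $P_1$ replaced by $\one - P_1$ (and $P_1'$ by $\one - P_1'$): the distance $\|P_0 - (\one - P_1)\|_{\calQ(\calH)} = \|P_0 + P_1 - \one\|_{\calQ(\calH)} < \tfrac{1}{2}$ is controlled by the other spectral hypothesis, and $\|(\one - P_1) - (\one - P_1')\|_{\calB(\calH)} = \|P_1 - P_1'\|_{\calB(\calH)} < \tfrac{1}{2}$ as before, so the same three steps give $\Ind(P_0,\one - P_1) = \Ind(P_0,\one - P_1')$.

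The only mildly delicate point is the vanishing of $\Ind(P_1,P_1')$; however, the norm bound $\|P_1 - P_1'\|_{\calB(\calH)} < 1$ (as opposed to the weaker Calkin bound) is exactly what forces both intersections in Lemma \ref{lem:Ind(p,q)_formula} to be trivial, making this the natural input. Everything else is a direct application of results already proved in this section.
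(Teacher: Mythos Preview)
Your proof is correct and follows essentially the same route as the paper: translate the hypotheses into $\|P_0 - P_1\|_{\calQ(\calH)} < \tfrac{1}{2}$ (resp.\ $\|P_0 + P_1 - \one\|_{\calQ(\calH)} < \tfrac{1}{2}$) and $\|P_1 - P_1'\|_{\calB(\calH)} < \tfrac{1}{2}$, then invoke the addition formula of Lemma~\ref{lem:Ind(p,q)_properties}(iii) together with $\Ind(P_1,P_1') = 0$. The one point of departure is how you obtain this last vanishing: you argue directly from Lemma~\ref{lem:Ind(p,q)_formula} that $\|P_1 - P_1'\|_{\calB(\calH)} < 1$ forces both intersections $\Ran(P_1)\cap\Ker(P_1')$ and $\Ker(P_1)\cap\Ran(P_1')$ to be zero, whereas the paper instead invokes Lemma~\ref{lemma:Kas_unit_equiv_projs} to obtain a unitary $V$ with $VP_1V^* = P_1'$ and then cites \cite[Theorem~3.3]{ASS} that unitarily equivalent projections have relative index zero. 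Your argument is slightly more self-contained, avoiding the external citation; the paper's version has the advantage of isolating a reusable principle (unitary equivalence in $\calB(\calH)$ kills the index) rather than a one-off calculation.
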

We do not need to assume that $\operatorname{si}_\pm(U')$ is well defined as this is implied by the hypothesis of the proposition.
Because the statement uses norm-bounds bounds in both $\calB(\calH)$ and in $\calQ(\calH)$, we will explicitly
write $\| \pi(S) \|_{\calQ(\calH)}$ (unlike the rest of this section).
\begin{proof}
We consider the case that $\| \pi(\Gamma_0) + \pi(\Gamma_1)\|_{\calQ(\calH)} < 1$ and $\| U - U'\|_{\calB(\calH)} < 1$
(the case $\| \pi(\Gamma_0) - \pi(\Gamma_1) \|_{\calQ(\calH)} < 1$ is analogous). Writing
$\Gamma_0 = 2P_0-\one$, $\Gamma_1 = 2P_1 - \one$ and $\Gamma_1' = 2P_1'-\one$, our assumptions imply that
\[
 \big\| \pi(P_0) + \pi(P_1) - \one \big\|_{\calQ(\calH)} = \frac{1}{2} \big\| \pi(\Gamma_0) + \pi(\Gamma_1) \big\|_{\calQ(\calH)} < \frac{1}{2}
\]
and
\[
 1 > \big\| U - U' \big\|_{\calB(\calH)} = \big\| \Gamma_0 ( \Gamma_1 - \Gamma_1') \big\|_{\calB(\calH)} = 2 \big\| P_1 - P_1' \big\|_{\calB(\calH)},
\]
so $\|\pi(P_1) - \pi(P_1')\|_{\calQ(\calH)} \leq \| P_1 - P_1'\|_{\calB(\calH)} < \tfrac{1}{2}$. Therefore, $(P_0, \one - P_1)$ and $(P_1, P_1')$ are Fredholm pairs. Applying
Lemma \ref{lem:Ind(p,q)_properties}, $(\one - P_1, \one - P_1')$ and $(P_0,\one - P_1')$ are also Fredholm pairs with
\[
 \Ind(P_0, \one - P_1') = \Ind(P_0, \one - P_1) + \Ind(\one - P_1, \one - P_1') = \Ind(P_0, \one - P_1) - \Ind(P_1, P_1').
\]
Because $\| P_1 - P_1'\|_{\calB(\calH)} < 1$, by Lemma \ref{lemma:Kas_unit_equiv_projs}
there is a unitary $V \in \calU(\calH)$ such that
$VP_1 V^* = P_1'$. In particular, unitarily equivalent Fredholm pairs of projections are such that $\Ind(P_1, P_1') = 0$ \cite[Theorem 3.3]{ASS}.
Putting these results together,
\[
 \operatorname{si}_+(U) = \Ind(P_0, \one-P_1) = \Ind(P_0, \one - P_1') - \Ind(P_1, P_1')
 = \Ind(P_0, \one - P_1') = \operatorname{si}_+(U').\tag*{\qed}
\] \renewcommand{\qed}{}
\end{proof}

\subsection{Connection to the generator/Hamiltonian} \label{subsec:H_space_generator}

Quantum walks are often regarded as a discretisation of a time evolution, so it is natural to
consider the case $U_H = {\rm e}^{{\rm i} \pi H}$ for some self-adjoint operator $H$ on $\calH$.
Recalling the symmetries of free-fermionic Hamiltonians,
 $H$ is chiral-symmetric as a self-adjoint operator with respect to the self-adjoint unitary $\Gamma_0$
if
$\Gamma_0 \cdot \Dom(H) \subset \Dom(H)$ and $\Gamma_0 H \Gamma_0 = - H$.
For such chiral-symmetric~$H$, we note that
$\Gamma_0 U_H \Gamma_0 = U_H^*$ and the discrete time evolution is a chiral unitary.
As in Section~\ref{subsec:suzuki_index}, we can
decompose $H$ as an off-diagonal operator under the spectral decomposition of $\Gamma_0$.
If $H$ is Fredholm, we can consider the index problem
$\Index(H_+)$ with $H_+ = (\one - P_0) H P_0$ and $\Gamma_0 = 2P_0 - \one$.
We note that $\Index(H_+)$ is the same as the symmetry index $\operatorname{si}(H)$ of the self-adjoint
and chiral-symmetric operator $H$ introduced in equation \eqref{eq:Cedzich_selfadj_index} and
considered more generally in~\cite[Section 2.6]{Cedzich18}.

\begin{Proposition} \label{prop:Generator_index_Hspace}
Let $H \in \calB(\calH)$ be self-adjoint with $\Gamma_0 H \Gamma_0 = -H$, $\|H\|\leq 1$ and
$U_H = {\rm e}^{{\rm i}\pi H} = (2P_0-\one)(2P_1-\one)$. If $H$ is Fredholm, then
\[
 \operatorname{si}(H) = \Index(H_+) = \Ind(P_0, \one - P_1) = \operatorname{si}_+(U_H).
\]
\end{Proposition}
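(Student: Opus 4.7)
The plan is to reduce the statement to the previously established identity $\operatorname{si}_+(U) = \Ind(P_0, \one - P_1)$ by identifying the relevant kernels through functional calculus. The three equalities in the conclusion split naturally: the outer equalities $\operatorname{si}(H) = \Index(H_+)$ and $\operatorname{si}_+(U_H) = \Ind(P_0, \one - P_1)$ are already available (the former as noted after Definition \ref{def:suzuki_index} applied to the chiral-symmetric self-adjoint $H$, and the latter from the proposition relating the refined indices to the index of a pair of projections, provided the Fredholm pair condition holds). The substantive content is therefore the middle equality, which I would extract from a kernel identification $\Ker(U_H - \one) = \Ker(H)$.

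First I would use the spectral theorem and the norm bound $\|H\|\leq 1$: the spectrum of $H$ lies in $[-1,1]$, and on this interval $e^{{\rm i}\pi\lambda} = 1$ holds precisely when $\lambda = 0$. Hence by the functional calculus $\Ker(U_H - \one) = \Ker(H)$. Since $H$ is Fredholm, $\Ker(H)$ is finite dimensional, so $\Ker(U_H - \one)$ is finite dimensional as well, which by Lemma \ref{lem:essential_spectrum_condition} gives $+1\notin \sigma_{\rm ess}(U_H)$ and therefore that $(P_0, \one - P_1)$ is a Fredholm pair. This legitimises the appearance of $\Ind(P_0, \one - P_1)$ on the right-hand side.

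Next, because $\Gamma_0 \Ker(H) \subset \Ker(H)$ (as $\Gamma_0 H = -H\Gamma_0$ implies $\Gamma_0$ preserves the zero eigenspace), the restriction $\Gamma_0|_{\Ker(H)}$ is a self-adjoint unitary on a finite-dimensional space, and by the standard supersymmetric identity (see the remark after Definition \ref{def:suzuki_index})
\[
 \operatorname{si}(H) = \Tr\bigl(\Gamma_0|_{\Ker(H)}\bigr) = \Index(H_+).
\]
Applying the same formula to $U_H$, together with $\Ker(U_H-\one) = \Ker(H)$, yields
\[
 \operatorname{si}_+(U_H) = \Tr\bigl(\Gamma_0|_{\Ker(U_H - \one)}\bigr) = \Tr\bigl(\Gamma_0|_{\Ker(H)}\bigr) = \operatorname{si}(H).
\]

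Finally, having established that $(P_0,\one - P_1)$ is a Fredholm pair, the earlier proposition gives $\operatorname{si}_+(U_H) = \Ind(P_0, \one - P_1)$, chaining all four quantities together. I do not expect a serious obstacle here; the only delicate point is the hypothesis $\|H\|\leq 1$, which is essential to rule out additional eigenvalues of $H$ at nonzero even integers contributing to $\Ker(U_H - \one)$. Without this bound one would instead get $\Ker(U_H - \one) = \bigoplus_{k\in\Z}\Ker(H - 2k\,\one)$, and the neat reduction to $\operatorname{si}(H)$ would fail.
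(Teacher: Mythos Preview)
Your proof is correct and follows essentially the same route as the paper: identify $\Ker(U_H-\one)=\Ker(H)$ via the spectral bound $\|H\|\le 1$, deduce that $(P_0,\one-P_1)$ is a Fredholm pair, and then chain the trace formula $\Index(H_+)=\Tr(\Gamma_0|_{\Ker(H)})=\Tr(\Gamma_0|_{\Ker(U_H-\one)})=\operatorname{si}_+(U_H)=\Ind(P_0,\one-P_1)$. Your write-up is in fact a bit more careful than the paper's, which contains a harmless typo ($(P_0,\one-P_0)$ for $(P_0,\one-P_1)$) and omits the explicit spectral-theorem justification you supply.
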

The condition $\|H\| \leq 1$ can always be guaranteed by considering $H\bigl(\one +H^2\bigr)^{-1/2}$ for bounded or unbounded $H$.
We will prove an analogous result in the more general context of chiral unitaries on Hilbert $C^*$-modules in
Section \ref{subsec:Module_generator_index}. For completeness, we give a short and independent proof.
\begin{proof}
If $H_+$ is Fredholm, then the index can be computed by the restricted trace of $\Gamma_0$,
$\Index(H_+) = \Tr\bigl( \Gamma_0 |_{\Ker(H)} \bigr)$. Because $\Ker(H)$ is finite dimensional and
$\|H\| \leq 1$, we have that $\Ker(H) = \Ker(U_H-\one)$ and so $(P_0, \one - P_0)$ is a Fredholm pair of
projections by Lemma \ref{lem:U_kernel_to_P_kernel}. We now use
that $\Ind(P_0, \one - P_1) = \operatorname{si}_+(U_H)$ and so
\[
\Ind(P_0, \one - P_1) = \Tr\bigl( \Gamma_0 |_{\Ker(U_H - \one)} \bigr) = \Tr\bigl( \Gamma_0 |_{\Ker(H)} \bigr) = \Index(H_+). \tag*{\qed}
\]\renewcommand{\qed}{}
\end{proof}

\begin{Remark}[a warning with homotopies]
We have previously investigated the topological stability of $\Z$-valued indices for
chiral unitaries $U = (2P_0-\one)(2P_1 - \one)$ via the index of a pair of projections.
Proposition \ref{prop:Generator_index_Hspace} shows
that in the case that $U$ comes from the discrete time evolution of a self-adjoint,
chiral-symmetric and bounded Fredholm operator $H$, then the index theoretic information of
$H$ can be recovered by considering $\Ind(P_0, \one - P_1)$.

A natural question to consider is whether the index of $H$ is also connected to $\Ind(P_0, P_1)$.
 This is not the case. Furthermore, some care is required as homotopies
of the generator $H$ that leave $\Index(H_+)$ invariant need not leave $\Ind(P_0, P_1)$ invariant.
As a simple example, let $H \in \calB(\calH)$ be Fredholm and consider the homotopy $[0,1]\ni t \mapsto H_t = (2\|H\|)^{-t} H$,
a continuous path from $H$ to $\frac{1}{2 \|H\|}H$. Clearly this path
is such that $\Ker(H_t)$ and $\Index((H_t)_+)$ is constant for all $t\in [0,1]$.
 However, $\|H_1\| = \tfrac{1}{2}$ so
by the spectral mapping theorem $U_{H_1} = {\rm e}^{{\rm i}\pi H_1} = {\rm e}^{\frac{{\rm i} \pi}{2\| H\|} H}$ will have a spectral gap at $-1$.
In particular, $\Ker(U_{H_1} + \one) = \{0\}$. Therefore, for $P_0'$ and $P_1'$ such that ${\rm e}^{{\rm i}\pi H_1} = (2P_0' - \one)(2P_1' - \one)$,
$\big| \Ind(P_0', P_1') \big| \leq \dim \Ker(U_{H_1} + \one) = 0$ and so
$\Ind(P_0', P_1') = 0$. If, on the other hand, $H=H_0$ has a unique eigenvalue at $+1$, then
$\operatorname{si}_-\bigl( {\rm e}^{{\rm i}\pi H_0} \bigr) = \Ind(P_0, P_1) = \pm 1$ with ${\rm e}^{{\rm i}\pi H_0} = (2P_0 - \one) (2P_1 - \one)$.
\end{Remark}

\subsection{Connection to the Cayley transform} \label{subsec:HSpace_Cayley}

Let us now study topological properties of the chiral unitary $U = \Gamma_0 \Gamma_1$ on $\calH$ via
the Cayley transform of $U$. Because we will study the Cayley transform more thoroughly in Section \ref{sec:Cstar_mod_index},
we only provide a brief summary here. Recall that
\[
 \calC(U) = {\rm i}(\one + U)(\one-U)^{-1}, \qquad \Dom(\calC(U)) = \Ran(\one-U)
\]
and $\calC(U)$ is self-adjoint on the Hilbert subspace $\ol{\Ran(\one-U)}$, see, for example, \cite[Section X.1]{ReedSimon2}.

Let us first note that
\[
\Gamma_0(\one-U) = (\one-U^*)\Gamma_0 = (U-\one)U^*\Gamma_0,
\]
so $\Gamma_0$ preserves $\Ran(\one-U) = \Dom(\calC(U))$ and furthermore
\begin{align*}
 \Gamma_0 \calC(U ) \Gamma_0 &= (\one+U^*)(\one-U^*)^{-1} = \bigl( (\one + U^*)U \bigr) \bigl( (\one - U^*)U \bigr)^{-1}\\
 &= (U+\one)(U-\one)^{-1} = -\calC(U ).
\end{align*}
Therefore, $\calC(U)$ anti-commutes with $\Gamma_0$ and is chiral-symmetric as a self-adjoint operator.
We define
$\calC(U)_+ = (\one - P_0) \calC(U) P_0$ with
$\Gamma_0 = 2P_0 - \one$.

\begin{Proposition} \label{Prop:Hspace_Cayley_P0P1}
If $- 1 \notin \sigma_{{\rm ess}}(U)$, then $\calC(U)$ is Fredholm and
\[
\operatorname{si}(\calC(U)) =\Index(\calC(U)_+ ) = \Ind(P_0, P_1) = \operatorname{si}_-(U).
\]
If $+1 \notin \sigma_{{\rm ess}}(U)$, then
$\calC(-U)$ is Fredholm and
\[
 \operatorname{si}(\calC(-U)) =\Index(\calC(-U)_+ ) = \Ind(P_0, \one - P_1) = \operatorname{si}_+(U).
\]
\end{Proposition}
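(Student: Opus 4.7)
My plan is to handle the two halves of the proposition in parallel, since replacing $U$ by $-U$ swaps the roles of $+1$ and $-1$ in the spectrum and sends $\calC(U)$ to $\calC(-U)$; I will just write out the case $-1\notin \sigma_{\rm ess}(U)$.

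First I would establish the spectral picture of $\calC(U)$. On the closed invariant subspace $\overline{\Ran(\one-U)} = \Ker(\one-U)^\perp$, the operator $\calC(U)$ is self-adjoint and, via the functional calculus, unitarily equivalent to the multiplication operator by the Cayley function $z \mapsto \mathrm{i}(1+z)/(1-z)$ applied to $U$ restricted to this subspace. Since this function sends $-1$ to $0$ and is a homeomorphism of $\sigma(U)\setminus\{1\}$ onto $\sigma(\calC(U))$, one has $0 \notin \sigma_{\rm ess}(\calC(U))$ if and only if $-1 \notin \sigma_{\rm ess}(U)$. So under our hypothesis, $\calC(U)$ is a self-adjoint Fredholm operator, and the kernel computation $(1+U)\psi = 0 \iff \calC(U)(\one-U)\psi = 0$ shows $\Ker(\calC(U)) = \Ker(U+\one)$ (the trivial extension from $\overline{\Ran(\one - U)}$ to $\calH$ is safe since $\Ker(\one-U)$ contributes no kernel to $\calC(U)$ once we regard it as a self-adjoint operator on the full Hilbert space via zero on the orthogonal complement, or alternatively we simply note both sides of the identities below live in $\Ker(\one - U)^\perp$).

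Next I would invoke the chiral symmetry $\Gamma_0 \calC(U)\Gamma_0 = -\calC(U)$ already derived in the paragraph preceding the proposition. With respect to $\calH = P_0\calH \oplus (\one - P_0)\calH$ this puts $\calC(U)$ in off-diagonal form with off-diagonal entry $\calC(U)_+ = (\one - P_0)\calC(U)P_0$, and by the standard McKean--Singer-type identity for chiral-symmetric self-adjoint Fredholm operators (the same one cited after Definition~\ref{def:suzuki_index}) one has
\[
 \Index(\calC(U)_+) \;=\; \Tr\bigl(\Gamma_0|_{\Ker(\calC(U))}\bigr) \;=\; \operatorname{si}(\calC(U)).
\]
Combining this with $\Ker(\calC(U)) = \Ker(U+\one)$ immediately yields
\[
 \operatorname{si}(\calC(U)) \;=\; \Tr\bigl(\Gamma_0|_{\Ker(U+\one)}\bigr) \;=\; \operatorname{si}_-(U),
\]
and the previous proposition identifies this with $\Ind(P_0, P_1)$.

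The only genuine subtlety is handling the unboundedness of $\calC(U)$: one has to justify the chiral-symmetry argument on the domain (which is invariant under $\Gamma_0$, as computed in the text) and the McKean--Singer identity in the unbounded self-adjoint Fredholm setting. I would sidestep both worries by replacing $\calC(U)$ with the bounded Riesz transform $F = \calC(U)(\one+\calC(U)^2)^{-1/2}$, which is still self-adjoint, anti-commutes with $\Gamma_0$, has the same kernel as $\calC(U)$, and is Fredholm under the same spectral hypothesis; the graded index of its $(\one-P_0, P_0)$-corner computes $\Index(\calC(U)_+)$ (these two off-diagonal parts have the same kernel and cokernel dimensions since the Riesz map is a bijection on the orthogonal complement of the kernel). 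The case $+1\notin \sigma_{\rm ess}(U)$ is identical after the substitution $U \mapsto -U$, which swaps $\Ker(U+\one)$ and $\Ker(U-\one)$ and converts $\Ind(P_0, P_1)$ into $\Ind(P_0, \one - P_1)$ via the previous proposition.
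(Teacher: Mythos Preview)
Your proof is correct and follows essentially the same route as the paper: both arguments reduce to identifying $\Ker(\calC(U)) = \Ker(U+\one)$ and then reading off $\Tr\bigl(\Gamma_0|_{\Ker(\calC(U))}\bigr) = \operatorname{si}_-(U) = \Ind(P_0,P_1)$. The only cosmetic difference is that the paper verifies the kernel identity by passing through the projection decomposition of Lemma~\ref{lem:U_kernel_to_P_kernel} and checking explicitly that each summand lies in $\Ran(P_0-P_1)=\Ran(\one-U)$, whereas you obtain $\Ker(\calC(U)) = \Ker(U+\one)$ directly from the Cayley transform (using that $(\one-U)$ acts as multiplication by $2$ on $\Ker(U+\one)$, so this subspace already sits inside $\Dom(\calC(U))$); your extra care with the bounded transform to justify the McKean--Singer step is a nice touch that the paper leaves implicit.
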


\begin{proof}
If $- 1 \notin \sigma_{{\rm ess}}(U)$, then $\Ker(U+\one)$ is finite dimensional and $(P_0, P_1)$ is a Fredholm pair
by Lemma \ref{lem:essential_spectrum_condition}.
 Therefore, $\Ker(\calC(U)) = \Ker(U+\one) \cap \Ran(\one - U)$ is also finite dimensional
and so $\calC(U)$ is Fredholm. Using $\Ran(U - \one) =\Ran(P_0 - P_1)$ and Lemma \ref{lem:U_kernel_to_P_kernel},
\[
 \Ker(\calC(U)) = \bigl( \Ran(P_0)\cap \Ker(P_1) \oplus \Ker(P_0)\cap \Ran(P_1) \bigr) \cap \Ran(P_0 - P_1).
\]
If $x \in \Ran(P_0)\cap \Ker(P_1)$, then $(P_0 - P_1)x = P_0x = x$ and so $x\in \Ran(P_0-P_1)$.
Similarly, $\Ker(P_0)\cap \Ran(P_1) \subset \Ran(P_0 -P_1)$ and so
\[
 \Ker(\calC(U)) = \Ran(P_0)\cap \Ker(P_1) \oplus \Ker(P_0)\cap \Ran(P_1).
\]
Now, $\Index(\calC(U)_+ ) $ measures the spectral asymmetry of $\Gamma_0 = 2P_0 - \one$ on $\Ker( \calC(U) )$. Hence,
\begin{align*}
 \Index(\calC(U)_+ ) &= \dim \bigl( \Ran(P_0) \cap \Ker(P_1) \bigr) - \dim \bigl( \Ker(P_0) \cap \Ran(P_1) \bigr)
 = \Ind(P_0, P_1).
\end{align*}
The case of $\Ind(P_0, \one - P_1)$ and $\calC(-U)$ is entirely analogous.
\end{proof}

Hence, all the previous results concerning $\operatorname{si}_-(U)=\Ind(P_0, P_1)$ and $\operatorname{si}_+(U)=\Ind(P_0, \one - P_1)$ can be expressed
in terms of the Fredholm index of $\calC(U)_+$ and $\calC(-U)_+$.

\section[Resum'{e} on Hilbert C\^{}*-modules and index theory]{Resum\'{e} on Hilbert $\boldsymbol{C^*}$-modules and index theory} \label{sec:NCIndex_theory_prelims}

As a preliminary to considering chiral unitaries on Hilbert $C^*$-modules, we first review the basics of
Hilbert $C^*$-modules and noncommutative index theory.

\subsection[Hilbert C\^{}*-modules]{Hilbert $\boldsymbol{C^*}$-modules} \label{subsec:C*_modules}

We refer the reader to~\cite{Lance} for a more detailed introduction and proofs.
Throughout this section, $A$ is a $\sigma$-unital $C^*$-algebra. If $A$ is separable, this condition
is equivalent to the existence of a~countable approximate unit.

\begin{Definition}
A Hilbert $C^*$-module is a right $A$-module $E_A$ with a map
$(\cdot\mid\cdot)_A\colon E_A \times E_A \to A$ that is linear in the second variable such that
for all $e_1, e_2 \in E_A$ and $a \in A$,
\[
 	(e_1 \mid e_2\cdot a)_A = (e_1\mid e_2)_A a, \qquad
 	( e_1\mid e_2)_A = (e_2 \mid e_1)_A^*, \qquad
 	(e_1 \mid e_1)_A \geq 0
\]
and $(e_1 \mid e_1)_A = 0$ if and only if $e_1=0$. Furthermore, $E_A$ is complete with
respect to the norm $\|e \| = \big\| ( e\mid e)_A \big\|_A^{1/2}$, $e \in E_A$.
\end{Definition}

\begin{Examples}\quad
\begin{itemize}\itemsep=0pt
 \item[(i)] A complex Hilbert space $\calH$ is a Hilbert $C^*$-module over $\C$ with $(\cdot\mid \cdot)_\C$ given by the
 usual Hilbert space inner product.

 \item[(ii)] The algebra $A$ can be seen as a Hilbert $C^*$-module $A_A$ with the structure
 \[
 a_1 \cdot a_2 = a_1 a_2, \qquad (a_1 \mid a_2)_A = a_1^* a_2, \qquad a_1, a_2 \in A.
 \]

 \item[(iii)] The standard Hilbert $C^*$-module $\ell^2(\N, A)$ is defined as follows:
 \[
 \ell^2(\N, A) = \bigg\{ (a_n)_{n \in \N}, a_n \in A \colon\ \sum_{n \in \N} a_n^* a_n \text{ converges in }A \bigg\}
 \]
 and has $C^*$-module structure
 \[
 (a_n)_{n\in \N} \cdot b = (a_n b)_{n\in \N}, \quad a_n, b \in A, \qquad
 \bigl( (a_n)_{n \in \N} \mid (b_n)_{n\in \N} \bigr)_A = \sum_{n\in \N} a_n^* b_n.
 \]
\end{itemize}
\end{Examples}

A Hilbert $C^*$-module $E_A$ is full if the closure in the norm of $A$ of the span of $\{ (e_1 \mid e_2)_A$: $e_1, e_2 \in E_A \}$
recovers $A$.
Both $A_A$ and $\ell^2(\N, A)$ are full Hilbert $C^*$-modules.
Kasparov's stabilisation theorem implies that for any countably generated Hilbert $C^*$-module $E_A$, there is an isometric embedding
$E_A \to \ell^2(\N, A)$~\cite{KasparovStinespring}.

\begin{Remark}
While Hilbert $C^*$-modules share many similarities to Hilbert spaces, one important difference is that if $F_A$ is a closed submodule
of $E_A$ with
\[
 F^\perp_A = \big\{ e \in E_A \colon (e\mid f)_A = 0 \text{ for all } f \in F_A \big\},
\]
then it is not guaranteed that $E_A = F_A \oplus F^\perp_A$ in general.
\end{Remark}

Given a map $T\colon E_A \to E_A$, we say that $T$ is adjointable if there is a map $T^*\colon E_A \to E_A$ that acts as the adjoint
with respect to the $A$-valued inner product on $E_A$. Such maps are right $A$-linear and bounded in the operator norm.
The set of adjointable operators on $E_A$ is denoted~$\End_A(E)$ and is a $C^*$-algebra with respect to the operator norm.
In analogy to the case of Hilbert spaces, where the compact operators arise as a norm-closure of finite-rank operators,
for any $e_1, e_2 \in E_A$, we define the operator $\Theta_{e_1,e_2}(e_3) = e_1 \cdot (e_2\mid e_3)_A$, $e_3 \in E_A$.
One finds that~$\Theta_{e_1,e_2}^* = \Theta_{e_2,e_1}$, so $\Theta_{e_1,e_2}$ is adjointable, and the
compact operators $\mathbb{K}_A(E)$ on $E_A$ are defined such that
\[
 \mathbb{K}_A(E) = \ol{\operatorname{span}\big\{ \Theta_{e_1,e_2} \colon e_1 , e_2 \in E_A \big\} }.
\]
The compact operators are a $2$-sided ideal in $\End_A(E)$ and we denote the
$C^*$-algebra $\calQ_A(E) = \End_A(E)/ \mathbb{K}_A(E)$
with $\pi\colon \End_A(E) \to \calQ_A(E)$ the quotient map. Note that $\calQ_A(E)$ is a
generalisation of the Calkin algebra to the setting of Hilbert $C^*$-modules.

\begin{Examples}\quad
\begin{itemize}\itemsep=0pt
 \item[(i)] Considering a Hilbert space $\calH$ as a Hilbert $C^*$-module over $\C$, the compact and adjointable operators are
 precisely the compact and bounded operators on $\calH$, respectively.

 \item[(ii)] Considering $A$ as a Hilbert $C^*$-module over itself, $\mathbb{K}_A(A) = A$ as multiplication is dense in $A$.
 The adjointable operators are isomorphic to the multiplier algebra of $A$, $\Mult(A) \cong \End_A(A)$.

 \item[(iii)] Let $\calH$ be a separable Hilbert space and $\calH \otimes A$ the Hilbert $C^*$-module with structure
 \[
 (\psi_1 \otimes a_1)\cdot a_2 = \psi_1 \otimes a_1 a_2, \qquad
 ( \psi_1 \otimes a_1 \mid \psi_2 \otimes a_2 )_A = \langle \psi_1, \psi_2 \rangle_\calH a_1^* a_2
 \]
 for $\psi_1,\psi_2 \in \calH$, $a_1, a_2 \in A$. Note that this Hilbert $C^*$-module is isomorphic to the standard Hilbert $C^*$-module $\ell^2(\N, A)$.
 In this case, $\mathbb{K}_A(\calH \otimes A) = \calK(\calH) \otimes A$ and
 $\End_A( \calH \otimes A) \cong \Mult( \calK(\calH) \otimes A)$.
\end{itemize}
\end{Examples}

\subsection[Fredholm operators on Hilbert C\^{}*-modules and K-theory]{Fredholm operators on Hilbert $\boldsymbol{C^*}$-modules and $\boldsymbol{K}$-theory}

We fix a $\sigma$-unital $C^*$-algebra $A$ and countably generated
Hilbert $C^*$-module $E_A$. Our exposition closely follows~\cite[Section 2]{Wahl07}.

We will occasionally need to deal with unbounded operators on Hilbert $C^*$-modules, though we will only
consider densely defined, closed and right $A$-linear operators ${D\colon \Dom(D) \!\! \subset\! E_A\! \to \!\!E_A}$.
We say that $D$
is \emph{regular} if $D^*$ is densely defined and the operator $\one +D^*D\colon \Dom (D^*D) \to E_{A}$ has dense range.
Self-adjoint and regular operators admit a continuous functional calculus. We let
$\Dom(D)_A$ be the Hilbert $C^*$-module that comes from the completion of $\Dom(D)$ in the graph
norm of $D$.
If $D$ is regular, $\one +D^*D$ has a bounded and positive inverse in $\End_A(E)$ with
$\Ran\bigl( (\one +D^*D)^{-1} \bigr) \subset \Dom(D)$ and dense in $E_A$~\cite[Lemma 9.2]{Lance}.
 A short computation then shows that the
square root
$(\one +D^*D)^{-1/2}\colon E_A \to \Dom(D)_A$ is a unitary map.
The operator $F_D:= D(\one + D^*D)^{-1/2} \in \End_A(E)$ is called the bounded transform of $D$.

\begin{Definition}
We say that an operator $T \in \End_A(E)$ is Fredholm on $E_A$ if $\pi(T) \in \calQ_A(E)$ is invertible.
We say that a regular operator $D\colon \Dom(D) \to E_A$ is Fredholm on $E_A$ if
$F_D = D(\one + D^*D)^{-1/2} \in \End_A(E)$ is Fredholm on $E_A$.
\end{Definition}

If $D\colon \Dom(D) \to E_A$ is a regular operator, then $D$ is Fredholm on $E_A$ if and only if
$D$ is Fredholm as a bounded operator $\Dom(D)_A \to E_A$.

\begin{Proposition}[{\cite[Proposition 2.1, Corollary 2.2]{Wahl07}}] \label{prop:Fredholm_properties}
Let $D$ be a regular self-adjoint operator on $E_A$. The following statements are equivalent.
\begin{itemize}\itemsep=0pt
 \item[$(i)$] $D$ is Fredholm,
 \item[$(ii)$] There is an $\varepsilon >0$ such that for all $\varphi \in C_c(-\varepsilon, \varepsilon)$, $\varphi(D) \in \mathbb{K}_A(E)$.
 \item[$(iii)$] There is an $\varepsilon \!>\!0$ such that for any continuous function $\chi\colon\R \to \R$ such that ${\chi |_{(-\infty, -\varepsilon]} = -1}$
 and $\chi |_{[\varepsilon, \infty)} = +1$, $\chi(D)^2 - \one \in \mathbb{K}_A(E)$.
\end{itemize}
\end{Proposition}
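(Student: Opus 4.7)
The plan is to translate between the functional calculi of the unbounded operator $D$ and its bounded transform $F_D = D(\one + D^2)^{-1/2}$ via the homeomorphism $f(x) = x(1+x^2)^{-1/2} \colon \R \to (-1,1)$, using that $\varphi(D) = (\varphi \circ f^{-1})(F_D)$ for $\varphi \in C_0(\R)$, and analogously $\chi(D) = (\chi \circ f^{-1})(F_D)$ for $\chi \in C_b(\R)$. All three conditions can then be rephrased as conditions on the spectrum of the self-adjoint element $\pi(F_D) \in \calQ_A(E)$.

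For $(i) \Rightarrow (ii)$, invertibility of $\pi(F_D)$ together with self-adjointness and $\|\pi(F_D)\| \leq 1$ yields $\delta > 0$ with $\sigma(\pi(F_D)) \subseteq [-1,-\delta] \cup [\delta,1]$. Choosing $\varepsilon > 0$ with $f(\varepsilon) < \delta$, for any $\varphi \in C_c(-\varepsilon,\varepsilon)$ the function $\tilde\varphi = \varphi \circ f^{-1}$ extends continuously to $[-1,1]$ with support in $[-f(\varepsilon), f(\varepsilon)] \subset (-\delta,\delta)$, and hence vanishes on $\sigma(\pi(F_D))$. Therefore $\pi(\varphi(D)) = \tilde\varphi(\pi(F_D)) = 0$, i.e., $\varphi(D) \in \mathbb{K}_A(E)$.

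For $(ii) \Leftrightarrow (iii)$, the direction $(ii) \Rightarrow (iii)$ follows by picking the $\varepsilon$ in (iii) strictly smaller than the $\varepsilon_0$ provided by (ii), so that $\chi^2 - 1$, being supported in $[-\varepsilon,\varepsilon]$, lies in $C_c(-\varepsilon_0,\varepsilon_0)$; then $\chi(D)^2 - \one = (\chi^2 - 1)(D) \in \mathbb{K}_A(E)$. For the converse, given $\varphi \in C_c(-\varepsilon,\varepsilon)$ I would choose a $\chi$ satisfying the hypothesis of (iii) with $\chi \equiv 0$ on a neighbourhood of $\operatorname{supp}(\varphi)$. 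Then $\psi := \varphi/(\chi^2 - 1) \in C_b(\R)$ (well defined and continuous because $1 - \chi^2 \geq c > 0$ on $\operatorname{supp}(\varphi)$, and $\varphi$ vanishes wherever $\chi^2 = 1$), and $\varphi(D) = \psi(D)(\chi(D)^2 - \one) \in \mathbb{K}_A(E)$ since $\mathbb{K}_A(E)$ is an ideal in $\End_A(E)$.

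Finally, for $(iii) \Rightarrow (i)$, fix a specific $\chi$ satisfying the hypothesis with $\chi(0) = 0$ (e.g., equal to $x/\varepsilon$ on $[-\varepsilon,\varepsilon]$), and set $\tilde\chi = \chi \circ f^{-1} \in C([-1,1])$ so that $\tilde\chi(0) = 0$ and $\chi(D) = \tilde\chi(F_D)$. Applying $\pi$ and using $\chi(D)^2 - \one \in \mathbb{K}_A(E)$, the element $\tilde\chi(\pi(F_D)) = \pi(\chi(D))$ is a self-adjoint unitary in $\calQ_A(E)$; by spectral mapping, $\tilde\chi(\sigma(\pi(F_D))) \subseteq \{-1,+1\}$, and since $\tilde\chi(0) = 0 \notin \{-1,+1\}$ we conclude $0 \notin \sigma(\pi(F_D))$, so $\pi(F_D)$ is invertible. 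The main technical care point is keeping the functional calculi of $D$ and $F_D$ aligned through $f$, and distinguishing $C_c$ from $C_0$ at the endpoints of $(-\varepsilon,\varepsilon)$; both issues are handled by slight adjustments of the threshold $\varepsilon$.
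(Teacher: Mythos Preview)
The paper does not supply its own proof of this proposition; it is quoted from Wahl's article and stated without argument. Your proof is therefore not competing against anything in the text, and it is correct. The reduction to the bounded transform $F_D$ via $f(x)=x(1+x^2)^{-1/2}$ and the use of the spectral mapping theorem for $\pi(F_D)\in\calQ_A(E)$ is exactly the standard route; each of your three implications is sound. Two minor remarks: in $(iii)\Rightarrow(ii)$ you should note that $\psi=\varphi/(\chi^2-1)$ actually has compact support (it is supported where $\varphi$ is), so $\psi(D)$ is unambiguously defined via the $C_0(\R)$-functional calculus without needing to invoke $C_b$; and in $(iii)\Rightarrow(i)$ you should record that $\tilde\chi=\chi\circ f^{-1}$ extends continuously to $[-1,1]$ with $\tilde\chi(\pm1)=\pm1$, which is what makes $\tilde\chi(\pi(F_D))$ well defined.
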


\begin{Definition} \label{def:Reg_fn}
Let $D$ be self-adjoint and regular on $E_A$.
We say that a smooth odd non-decreasing function $\chi\colon \R \to \R$ is a normalising function for
$D$ if $\chi'(0)>0$, $\lim_{x\to \infty}\chi(x) = 1$ and $\chi(D)^2 - \one \in \mathbb{K}_A(E)$.
\end{Definition}

We note that if $D$ is self-adjoint, regular and $D$ has compact resolvent, $(i+D)^{-1} \in \mathbb{K}_A(E)$, then
the bounded transform
$x \mapsto x\bigl(1+x^2\bigr)^{1/2}$ is a normalising function for $D$.

If $S \in \End_A(E)$, we will (again)
abuse notation and write $\| S \|_{\calQ_A(E)}$ to mean $\| \pi(S) \|_{\calQ_A(E)}$ with
$\pi\colon \End_A(E) \to \calQ_A(E)$ the quotient map.

\begin{Lemma}[{\cite[Lemma 2.7]{Wahl07}}] \label{lem:unbdd_Fred_condition}\quad
\begin{itemize}\itemsep=0pt
 \item[$(i)$] A self-adjoint element $T \in \End_A(E)$ with $\| T\| \leq 1$ is Fredholm if and only if \linebreak ${\big\| \one - T^2 \big\|_{\calQ_A(E)} < 1}$.
 \item[$(ii)$] A self-adjoint regular operator $T$ is Fredholm if and only if $\big\| ({\rm i}+T)^{-1} \big\|_{\calQ_A(E)} < 1$.
\end{itemize}
\end{Lemma}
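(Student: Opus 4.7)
The plan is to prove both parts using elementary $C^*$-algebraic facts after passing to the generalised Calkin algebra $\calQ_A(E)$, together with the identity expressing $\one - F_T^2$ in terms of the resolvent of $T$. Throughout, write $\pi\colon \End_A(E) \to \calQ_A(E)$ for the quotient map.

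For part (i), the key observation is that $T$ is Fredholm precisely when the self-adjoint element $\pi(T) \in \calQ_A(E)$ is invertible, and since invertibility of a self-adjoint element $x$ in a $C^*$-algebra is equivalent to invertibility of $x^2$ (both reduce to $0 \notin \sigma(x)$ via the spectral mapping theorem applied to $\lambda \mapsto \lambda^2$), we get that $T$ is Fredholm if and only if $\pi(T^2)$ is invertible. The assumption $\|T\| \leq 1$ gives $\sigma(T) \subset [-1,1]$, hence $0 \leq T^2 \leq \one$ and so $0 \leq \pi(T^2) \leq \one$. For a positive element $y$ of a unital $C^*$-algebra with $y \leq \one$, invertibility of $y$ is equivalent to $\sigma(y)$ being bounded away from $0$, which is equivalent to $\sigma(\one - y) \subset [0,1)$, i.e., $\|\one - y\| < 1$. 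Applying this to $y = \pi(T^2)$ finishes part (i).

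For part (ii), I reduce to part (i) via the bounded transform $F_T = T(\one + T^2)^{-1/2}$. By definition, $T$ is Fredholm if and only if $F_T$ is Fredholm, and $F_T \in \End_A(E)$ is self-adjoint with $\|F_T\| \leq 1$, so part (i) applies: $T$ is Fredholm if and only if $\|\one - F_T^2\|_{\calQ_A(E)} < 1$. A direct functional-calculus computation yields
\[
 \one - F_T^2 = (\one + T^2)^{-1}.
\]
Since $T$ is self-adjoint and regular, $({\rm i}+T)^{-1} \in \End_A(E)$ with $\bigl(({\rm i}+T)^{-1}\bigr)^* = (-{\rm i}+T)^{-1}$, and the factorisation $({\rm i}+T)(-{\rm i}+T) = \one + T^2$ gives
\[
 (\one + T^2)^{-1} = ({\rm i}+T)^{-1}\bigl(({\rm i}+T)^{-1}\bigr)^*.
\]
Applying $\pi$ and invoking the $C^*$-identity in $\calQ_A(E)$ yields
\[
 \bigl\| \one - F_T^2 \bigr\|_{\calQ_A(E)} = \bigl\| ({\rm i}+T)^{-1} \bigr\|_{\calQ_A(E)}^2,
\]
so the condition $\|\one - F_T^2\|_{\calQ_A(E)} < 1$ is equivalent to $\|({\rm i}+T)^{-1}\|_{\calQ_A(E)} < 1$, completing part (ii).

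The argument is essentially routine operator-algebraic bookkeeping, so there is no real obstacle; the one point to be careful about is justifying the identity $(\one + T^2)^{-1} = ({\rm i}+T)^{-1}\bigl(({\rm i}+T)^{-1}\bigr)^*$ for regular (possibly unbounded) $T$, which follows from the continuous functional calculus for self-adjoint regular operators and the fact that $({\rm i}+T)^{-1}$ and $(-{\rm i}+T)^{-1}$ commute as elements of $\End_A(E)$.
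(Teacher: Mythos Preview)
Your proof is correct and, for part (ii), follows essentially the same route as the paper: both reduce to part (i) via the bounded transform $F_T$, compute $\one - F_T^2 = (\one + T^2)^{-1}$, and then use the $C^*$-identity to rewrite this norm as $\|({\rm i}+T)^{-1}\|_{\calQ_A(E)}^2$.

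For part (i) there is a small difference worth noting. You argue directly in the Calkin algebra: $T$ is Fredholm iff $\pi(T)$ is invertible iff $\pi(T)^2$ is invertible, and since $0 \leq \pi(T)^2 \leq \one$ this is equivalent to $\|\one - \pi(T)^2\| < 1$. The paper instead phrases the norm bound as a lower bound on $\pi(T)^2$ and then invokes Proposition~\ref{prop:Fredholm_properties} (the characterisation of Fredholmness via normalising functions $\chi$ with $\chi(T)^2 - \one$ compact). Your route is slightly more self-contained, avoiding the appeal to that proposition; the paper's route has the minor virtue of keeping the argument tied to the normalising-function picture used elsewhere. Substantively the two are equivalent.
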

\begin{proof}
The inequality $\big\| \one - T^2 \big\|_{\calQ_A(E)} < c < 1$ holds if and only if $\pi(T)^2$ is bounded from below by $\one-c$.
Taking a
continuous function $\chi\colon\R \to \R$ such that $\chi |_{(-\infty, -(1-c)]} = -1$,
 $\chi |_{[1-c, \infty)} = +1$, we see that $\chi(T)^2 - \one \in \mathbb{K}_A(E)$.
Part (i) then follows by Proposition \ref{prop:Fredholm_properties}.

For (ii), we write $F_T = T\bigl( \one + T^2\bigr)^{-1/2} = T( (T+{\rm i})^*(T+{\rm i}) )^{-1/2}$ and see that
\begin{align*}
 \one - F_T^2 &= \one - T^2\bigl( (T+{\rm i})^*(T+{\rm i}) \bigr)^{-1}
 = \bigl( (T+{\rm i})^*(T-{\rm i}) - T^2\bigr) \bigl( (T+{\rm i})^*(T+{\rm i}) \bigr)^{-1} \\
 &= \bigl( (T+{\rm i})^*(T+{\rm i}) \bigr)^{-1}.
\end{align*}
Therefore, $\big\| \one - \ F_T^2 \big\|_{\calQ_A(E)} = \big\| (T+{\rm i})^{-1} \big\|_{\calQ_A(E)}^2$ and by part (i)
$\pi( F_T)$ is invertible if and only if $ \big\| (T+{\rm i})^{-1} \big\|_{\calQ_A(E)} < 1$.
\end{proof}

Self-adjoint Fredholm operators on Hilbert $C^*$-modules yield topological information of
$C^*$-algebras via Kasparov's
$KK$-theory~\cite{Kasparov80}, which we briefly introduce in a simpler context.
If there is a $\Z_2$-grading $E_A \cong \bigl(E^0 \oplus E^1\bigr)_A$, we say that a densely
defined operator
$D$ is even (respectively odd) if $D\cdot \bigl(E^i \cap \Dom(D)\bigr) \subset E^i$
(respectively $D \cdot \bigl(E^i \cap \Dom(D)\bigr) \subset E^{i+1}$) for $i \in \Z_2$.

\begin{Definition}
Let $E_A$ be a countably generated Hilbert $C^*$-module and $F \in \End_A(E)$ a~self-adjoint operator such that
$\one - F^2 \in \mathbb{K}_A(E)$.
If there is a $\Z_2$-grading $E_A \cong \bigl(E^0 \oplus E^1\bigr)_A$ such that~$F$ acts as an odd operator, we
 call the triple $(\C, E_A, F)$ an even Kasparov module. Otherwise we call $(\C, E_A, F)$ an odd Kasparov
 module.
\end{Definition}

Two Kasparov modules $\bigl(\C, E^{(0)}_A, F_0\bigr)$ and $\bigl(\C, E^{(1)}_A, F_1\bigr)$ are unitarily equivalent if there is an
even unitary $U\colon E^{(1)}_A \to E^{(2)}$
such that $UF_1 U^* = F_2$.
We say that even/odd Kasparov modules are homotopic if there is an even/odd Kasparov module
$\bigl(\C, \tilde{E}_{A \otimes C([0,1])}, F\bigr)$ such that the evaluating the fibre at $0$ and $1$ yields Kasparov modules
that are unitarily equivalent to $\bigl(\C, E^{(0)}_A, F_0\bigr)$ and $\bigl(\C, E^{(1)}_A, F_1\bigr)$, respectively.

Homotopy equivalence classes of even and odd Kasparov modules yields the abelian groups $KK(\C, A)$ and $KK^1(\C, A)$, respectively,
where the group operation is by direct sum~\cite[Section~4]{Kasparov80}.

\begin{Remarks} \label{Remark_KKgroup_notes}\quad
\begin{itemize}\itemsep=0pt
 \item[(1)] If $F \in \End_A(E)$ is a self-adjoint Fredholm operator such that $F^2 = \one$, then the Kasparov module
 $\bigl( \C, E_A, F \bigr)$ is called degenerate. Degenerate Kasparov modules represent the group identity in
 $KK(\C, A)$ or $KK^1(\C, A)$.
 \item[(2)] Suppose that $\bigl(\C, \bigl(E^0 \oplus E^1\bigr)_A, F\bigr)$ is an even Kasparov module. We can ignore the $\Z_2$-grading
 of $E_A$ and instead consider this triple as an odd Kasparov module. However, this odd Kasparov module will be trivial
 in $KK^1(\C, A)$. To see this, let $\gamma = \gamma^* = \gamma^{-1}$ denote the grading operator of $E_A$
 \big(that is, $\gamma e^i = (-1)^i e^i$ for $e^i \in E^i$ and $i \in \Z_2$\big). Then
 by assumption $F \gamma + \gamma F = 0$ as $F$ is odd. One can then check that the path
 $[0,1] \ni t \mapsto F_t = \cos(\tfrac{\pi}{2} t) F + \sin(\tfrac{\pi}{2} t) \gamma \in \End_A(E)$ is a homotopy of Fredholm operators on~$E_A$ and
 we can define the odd Kasparov module
\[
 \bigl( \C, C([0,1], E)_{A\otimes C([0,1])}, F_\bullet \bigr), \qquad (F_\bullet e)(t) = F_t e(t).
\]
Evaluating at the point $t=1$, the Kasparov module $(\C, E_A, \gamma)$ is a degenerate odd Kasparov module and
so represents the identity in $KK^1(\C, A)$.
\end{itemize}
\end{Remarks}

\begin{Proposition}[{\cite[Proposition 2.14]{vdDungen17}}] \label{prop:Fredholm_to_KK}
Let $D$ be a regular, self-adjoint and Fredholm operator on $E_A$ and $\chi$ a
normalising function for $D$. Then the triple $\bigl(\C, E_A, \chi(D) \bigr)$ is an odd
Kasparov module. If there is a $\Z_2$-grading $\bigl(E^0 \oplus E^1\bigr)_A$ such that $D$ is an
odd operator, then the triple $\bigl(\C, E_A, \chi(D)\bigr)$ is an even Kasparov module.
The equivalence class of this Kasparov module in $KK(\C, A)$ or $KK^1(\C, A)$ is independent
of the choice of normalising function.
\end{Proposition}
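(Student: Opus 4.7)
The plan is to verify the two axioms of a Kasparov module for $(\C, E_A, \chi(D))$, then establish the independence of normalising function by a linear-path homotopy.

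First I would observe that since $\chi$ is bounded (it is odd, non-decreasing and tends to $\pm 1$) and $D$ is self-adjoint and regular, the continuous functional calculus yields $\chi(D)\in\End_A(E)$ with $\chi(D)=\chi(D)^*$. The compactness condition $\one - \chi(D)^2 \in \mathbb{K}_A(E)$ is built into Definition \ref{def:Reg_fn}, so the triple $(\C, E_A, \chi(D))$ is immediately an odd Kasparov module. For the graded case, note that if $\gamma$ denotes the self-adjoint unitary grading operator, $D$ odd means $\gamma \cdot \Dom(D) \subset \Dom(D)$ and $\gamma D\gamma = -D$ on $\Dom(D)$; by functional calculus this gives $\gamma \chi(D)\gamma = \chi(-D) = -\chi(D)$ since $\chi$ is odd, so $\chi(D)$ is odd and $(\C, E_A, \chi(D))$ is an even Kasparov module.

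The substantive step is independence of $\chi$. I would first upgrade the Fredholm hypothesis via Proposition \ref{prop:Fredholm_properties}: there exists $\varphi \in C_c(\R)$ with $\varphi(D)\in \mathbb{K}_A(E)$ and $\varphi$ nowhere zero on $\R$ (e.g.\ take $\varphi = \one - \chi(D)^2$ for some fixed normalising $\chi$, regarded as a function on $\R$; this function is strictly positive near $0$ and, although not compactly supported, lies in $C_0(\R)$). The ideal generated by a nowhere-vanishing element of $C_0(\R)$ is dense in $C_0(\R)$, so by continuity of the functional calculus one deduces that $f(D)\in \mathbb{K}_A(E)$ for \emph{every} $f\in C_0(\R)$. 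Given two normalising functions $\chi_0, \chi_1$, the difference $\chi_0 - \chi_1$ lies in $C_0(\R)$, hence
\[
\chi_0(D) - \chi_1(D) \in \mathbb{K}_A(E).
\]

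For the homotopy itself, I would take the linear interpolation $\chi_t = (1-t)\chi_0 + t\chi_1$, which is again smooth, odd, non-decreasing, has $\chi_t'(0)>0$ and satisfies $\lim_{x\to\infty}\chi_t(x)=1$. Writing $\chi_t(D) = \chi_0(D) + t(\chi_1(D)-\chi_0(D))$, the second summand is compact, so $\chi_t(D)^2 - \one$ differs from $\chi_0(D)^2 - \one$ by a compact, and is therefore itself compact. Letting $\widetilde{E} = C([0,1], E)$ regarded as a Hilbert $C^*$-module over $A\otimes C([0,1])$ and defining $F_\bullet$ by $(F_\bullet e)(t) = \chi_t(D)e(t)$, one checks that $t\mapsto \chi_t(D)$ is norm-continuous (because $\chi_t - \chi_0 = t(\chi_1-\chi_0)$ gives a Lipschitz norm-continuous family) and hence $F_\bullet \in \End_{A\otimes C([0,1])}(\widetilde{E})$ with $\one - F_\bullet^2$ compact; in the graded case $F_\bullet$ is odd since each $\chi_t(D)$ is. Evaluation at $t=0,1$ recovers the two Kasparov modules, proving the equivalence in $KK(\C,A)$ or $KK^1(\C,A)$.

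The main obstacle is the bootstrapping argument that $f(D)\in\mathbb{K}_A(E)$ for all $f\in C_0(\R)$ from the single compactness fact $\one - \chi(D)^2 \in \mathbb{K}_A(E)$: one has to argue carefully using the ideal structure of $C_0(\R)$ (or equivalently Proposition \ref{prop:Fredholm_properties}(ii) applied to a suitable cutoff), and then feed this into the compactness of $\chi_0(D)-\chi_1(D)$. Once this is in hand, verifying the homotopy axioms is routine.
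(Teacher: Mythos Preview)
The paper does not give its own proof of this statement; it is quoted from \cite{vdDungen17}. Your overall strategy (verify the Kasparov-module axioms, then connect any two normalising functions by the straight-line path) is the standard one and is essentially correct, but the justification of the key compactness step contains a genuine error.

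You claim that $1-\chi^2$ is nowhere vanishing and hence generates $C_0(\R)$ as an ideal, concluding that $f(D)\in\mathbb{K}_A(E)$ for \emph{every} $f\in C_0(\R)$. Both the premise and the conclusion fail in general: Definition~\ref{def:Reg_fn} allows $\chi(x)=\pm 1$ for $|x|$ large (this is in fact forced whenever $D$ is bounded and invertible), so $1-\chi^2$ can vanish on a half-line. For a concrete counterexample to the conclusion, take $D=2\cdot\one$ on $\ell^2(\N)$: then $D$ is Fredholm, every normalising function must satisfy $\chi(2)=1$, yet $f(D)=f(2)\,\one$ is non-compact for any $f\in C_0(\R)$ with $f(2)\neq 0$.

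What \emph{is} true, and is exactly what your homotopy needs, is the weaker statement $(\chi_0-\chi_1)(D)\in\mathbb{K}_A(E)$. One clean way to see this: the set $J=\{f\in C_b(\R): f(D)\in\mathbb{K}_A(E)\}$ is a closed ideal of $C_b(\R)\cong C(\beta\R)$, hence equals $\{f: f|_Y=0\}$ for some closed $Y\subset\beta\R$. Since $1-\chi_0^2,\,1-\chi_1^2\in J$, every $y\in Y\cap\R$ satisfies $\chi_0(y),\chi_1(y)\in\{\pm1\}$; both functions being odd and non-decreasing forces $\chi_0(y)=\chi_1(y)$, so $\chi_0-\chi_1$ vanishes on $Y\cap\R$. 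It also vanishes on $\beta\R\setminus\R$ because $\chi_0-\chi_1\in C_0(\R)$. Hence $\chi_0-\chi_1\in J$, and with this in hand your linear interpolation $\chi_t(D)=\chi_0(D)+t(\chi_1(D)-\chi_0(D))$ is a norm-continuous path of self-adjoint (and, in the graded case, odd) operators with $\one-\chi_t(D)^2\in\mathbb{K}_A(E)$, giving the required operator homotopy.
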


\begin{Definition}
Given a self-adjoint, regular and Fredholm operator $D$ on $E_A$, we denote by~$[D] \in KK(\C, A)$ or $KK^1(\C, A)$ the equivalence class of the Kasparov
module from Proposition~\ref{prop:Fredholm_to_KK}.
\end{Definition}

\begin{Remark}
 If $F_0, F_1 \in \End_A(E)$ are self-adjoint Fredholm operators such that $F_0 - F_1 \in \mathbb{K}_A(E)$, then
$[F_0] = [F_1] \in KK(\C, A)$ or $KK^1(\C, A)$, which can be shown by the simple homotopy
$[0,1] \ni t \mapsto F_t = F_0 + t(F_1 - F_0)$.
\end{Remark}

\begin{Lemma} \label{prop:anticommuting_Fredholms_homotopic}
Let $A$ be a $C^*$-algebra and $E_A$ a countably generated Hilbert $C^*$-module.
Suppose that $T_0$ and $T_1$ are self-adjoint, regular and Fredholm on $E_A$,
$\Dom(T_0 T_1) = \Dom(T_1 T_0)$ is dense and $T_1 T_0 + T_0 T_1 = 0$ on this domain.
Then $[T_0] = [T_1] \in KK^1(\C, A)$.
If $E_A$ is $\Z_2$-graded and~$T_0$ and $T_1$ are odd operators, then $[T_0]=[T_1] \in KK(\C,A)$.
\end{Lemma}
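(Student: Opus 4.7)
The plan is to build a continuous path of self-adjoint Fredholm operators in $\End_A(E)$ connecting the bounded transforms $F_{T_0}$ and $F_{T_1}$, and then to package it as a Kasparov module over $A \otimes C([0,1])$ as in the last Remark of the previous section. The natural candidate is the rotation
\[
 F_t = \cos\!\bigl(\tfrac{\pi}{2}t\bigr)\, F_{T_0} + \sin\!\bigl(\tfrac{\pi}{2}t\bigr)\, F_{T_1}, \qquad t \in [0,1],
\]
whose endpoints represent $[T_0]$ and $[T_1]$ by Proposition \ref{prop:Fredholm_to_KK}, since $x\mapsto x(1+x^2)^{-1/2}$ is a normalising function.

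The key technical step is to upgrade the algebraic anti-commutation of $T_0$ and $T_1$ on $\Dom(T_0T_1)=\Dom(T_1T_0)$ to an honest anti-commutation $F_{T_0}F_{T_1} + F_{T_1} F_{T_0} = 0$ in $\End_A(E)$. First, I would observe that $T_1 T_0 + T_0 T_1 = 0$ on the common dense domain forces $T_0^2$ and $T_1$ to commute in the strong sense that $T_1 (\one+T_0^2)^{-1} = (\one+T_0^2)^{-1} T_1$ holds on $\Dom(T_1)$, and hence in $\End_A(E)$; this in turn yields that $(\one+T_0^2)^{-1/2}$ commutes with $T_1$ by functional calculus. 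Consequently
\[
 F_{T_0} T_1 + T_1 F_{T_0} = \bigl(T_0 T_1 + T_1 T_0\bigr) (\one+T_0^2)^{-1/2} = 0
\]
on $\Dom(T_1)$. Repeating the same argument with $T_0$ and $T_1$ swapped shows that $F_{T_0}$ anti-commutes with $F_{T_1}$ as bounded operators.

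With this in hand, the path $F_t$ is manifestly self-adjoint and norm-continuous in $t$, and the anti-commutation kills the cross-terms in
\[
 \one - F_t^2 = \cos^2\!\bigl(\tfrac{\pi}{2}t\bigr)\bigl(\one - F_{T_0}^2\bigr) + \sin^2\!\bigl(\tfrac{\pi}{2}t\bigr)\bigl(\one - F_{T_1}^2\bigr),
\]
so $\one - F_t^2 \in \mathbb{K}_A(E)$ for every $t$ because both $\one - F_{T_j}^2 \in \mathbb{K}_A(E)$ by Proposition \ref{prop:Fredholm_properties}. Packaging the family as $F_\bullet$ on $C([0,1], E)_{A \otimes C([0,1])}$ as in Remark \ref{Remark_KKgroup_notes}(2) gives an odd Kasparov module whose fibres at $0$ and $1$ are unitarily equivalent to $(\C, E_A, F_{T_0})$ and $(\C, E_A, F_{T_1})$, establishing $[T_0] = [T_1] \in KK^1(\C,A)$. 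In the $\Z_2$-graded case where both $T_j$ are odd, the linear combination $F_t$ is again odd, so the same homotopy lives in the even picture and gives $[T_0]=[T_1] \in KK(\C,A)$.

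The main obstacle is the first step: the anti-commutation hypothesis is only given on a dense subspace, and passing it to bounded functional calculus for regular (possibly unbounded) operators on a $C^*$-module requires some care with domains and with the regularity assumption. Everything after that -- continuity in $t$ and preservation of the Fredholm/compact-difference condition -- is then a direct consequence of the anti-commutation and the usual Kasparov homotopy machinery.
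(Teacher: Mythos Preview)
Your overall strategy---the rotation $F_t = \cos(\tfrac{\pi}{2}t)F_0 + \sin(\tfrac{\pi}{2}t)F_1$ exploiting anti-commutation of the bounded representatives---is exactly the route the paper takes, and your treatment of how the anti-commutation passes from the unbounded operators to their functional calculus is in fact more detailed than what the paper offers (it simply asserts that $\chi_0(T_0)$ and $\chi_1(T_1)$ anti-commute).

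There is, however, a genuine gap in your choice of the bounded transform as the normalising function. By Definition~\ref{def:Reg_fn} a normalising function $\chi$ for $D$ must satisfy $\chi(D)^2 - \one \in \mathbb{K}_A(E)$, but for the bounded transform one has $\one - F_{T_j}^2 = (\one + T_j^2)^{-1}$, which is compact only when $T_j$ has compact resolvent. A general self-adjoint regular Fredholm operator need not have compact resolvent: Proposition~\ref{prop:Fredholm_properties} only guarantees $\varphi(T_j) \in \mathbb{K}_A(E)$ for $\varphi$ supported in a small neighbourhood of $0$, not for $\varphi(x) = (1+x^2)^{-1}$. Hence your displayed formula for $\one - F_t^2$ is algebraically correct but does not land in $\mathbb{K}_A(E)$, the triples $(\C, E_A, F_{T_j})$ are not Kasparov modules, and Proposition~\ref{prop:Fredholm_to_KK} does not apply at the endpoints. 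The fix is to replace $F_{T_j}$ by $\chi_j(T_j)$ for genuine normalising functions $\chi_j$, as the paper does; your anti-commutation argument transfers to this setting, since an odd $\chi_j$ factors as $T_j$ times a bounded function of $T_j^2$, and functions of $T_j^2$ commute with $T_{1-j}$ by the same resolvent manipulation you already outlined.
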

\begin{proof}
We first fix normalising functions $\chi_0$ and $\chi_1$ for $T_0$ and $T_1$, respectively.
Then $\chi_0(T_0)$ and $\chi_1(T_1) \in \End_A(E)$ also anti-commute. We can now
argue analogously to part 2 of Remarks~\ref{Remark_KKgroup_notes}, where
\[
 \bigl( \C, C([0,1], E)_{A\otimes C([0,1])}, F_\bullet \bigr), \qquad
 (F_\bullet e)(t) = \bigl(\cos\big(\tfrac{\pi}{2} t\big) \chi_0(T_0) + \sin\big(\tfrac{\pi}{2} t\big) \chi_1(T_1) \bigr) e(t)
\]
defines a Kasparov module and
so gives a homotopy in $KK(\C, A)$ or $KK^1(\C,A)$.
\end{proof}

There are isomorphisms of $KK$-theory to operator algebraic $K$-theory for trivially graded $C^*$-algebras,
$KK(\C, A) \xrightarrow{\simeq} K_0(A)$ and $KK^1(\C, A) \xrightarrow{\simeq} K_1(A)$,
see~\cite[Section~6]{Kasparov80} or~\cite[Section~3]{BKR2}. We will further examine the
isomorphism $KK(\C, A) \cong K_0(A)$ in Section \ref{subsec:Index_pair_projs}.

\begin{Remark}
Given separable $C^*$-algebras $A$ and $B$, the more general group $KK(A, B)$ is constructed
from equivalence classes of triples
$\bigl(A, {}_\phi E_B, F \bigr)$, where $E_B$ is a $\Z_2$-graded Hilbert $C^*$-module,
$F$ is an odd self-adjoint operator and there is a representation $\phi\colon A \to \End_B(E)$
such that $\phi(a)\bigl( \one - F^2\bigr)$ and $[ \phi(a), F]_{\pm}$ are compact for all $a \in A$ with
$[ \cdot, \cdot]_\pm$ the $\Z_2$-graded commutator.
The notation ${}_\phi E_B$ is used to denote both the (right) Hilbert $C^*$-module $E_B$ and the (left)
representation $\phi\colon A \to \End_B(E)$.
We note that
$KK^1(\C, A) \cong KK(\Cl_1, A)$ with $\Cl_1$ the complex Clifford algebra with one generator.
While we will mostly work with $KK(\C, A)$ and $KK^1(\C, A)$, we will occasionally take advantage
of the existence of the Kasparov product, a map
\[
 KK(A, B) \times KK(B, C) \to KK(A, C)
\]
for separable $C^*$-algebras $A$, $B$ and $C$. The Kasparov product also equips
$KK(A, A)$ with the structure of a ring.
\end{Remark}

\subsection{The index of a pair of projections} \label{subsec:Index_pair_projs}

In this section we review an analogue of the index of a pair of projections in the Hilbert $C^*$-module setting.
Most of the content can be extracted from~\cite[Section~6]{Kasparov80}. See also~\cite[Section~3.2]{Wahl07}.
We again fix a $\sigma$-unital $C^*$-algebra $A$ and countably generated Hilbert $C^*$-module~$E_A$.

\begin{Definition}
We say that the projections $p_0,p_1 \in \End_A(E)$ are a Fredholm pair if
$\| p_0 - p_1 \|_{\calQ_A(E)} < 1$.
\end{Definition}

Recalling Lemma \ref{lemma:Kas_unit_equiv_projs}, any Fredholm pair of projections $p_0, p_1 \in \End_A(E)$
will be unitarily equivalent in the quotient algebra $\calQ_A(E)$.
Constructing a Kasparov module from a Fredholm pair of projections is well known,
 but for completeness we recall the details.

\begin{Proposition} \label{prop:Module_index_of_projections}
Let $(p_0,p_1)$ be a Fredholm pair of projections. Then for any
lift $V$ of a~unitary $v \in \calQ_A(E)$ such that $v\pi( p_0)v^* = \pi(p_1)$, the triple
\[
 \biggl( \C, p_0 E_A \oplus p_1 E_A, \begin{pmatrix} 0 & p_0 V^* p_1 \\ p_1 V p_0 & 0 \end{pmatrix} \biggr)
\]
is a Kasparov module. If $V'$ is another lift of $v$, then the two Kasparov modules represent the same class in $KK(\C, A)$.
\end{Proposition}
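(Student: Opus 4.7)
The plan is to verify the three defining conditions of an even Kasparov module for the stated triple, then to show independence from the lift by a linear homotopy. First I would observe that $p_0 E_A \oplus p_1 E_A$ is countably generated (as a direct sum of complemented submodules of a countably generated module) and carries the natural $\Z_2$-grading with even part $p_0 E_A$ and odd part $p_1 E_A$. With respect to this grading the operator
\[
 F = \begin{pmatrix} 0 & p_0 V^* p_1 \\ p_1 V p_0 & 0 \end{pmatrix}
\]
is off-diagonal, hence odd, and self-adjoint because $(p_1 V p_0)^* = p_0 V^* p_1$.

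The main content is to verify $\one - F^2 \in \mathbb{K}_A(p_0 E \oplus p_1 E)$. A direct multiplication yields
\[
 F^2 = \begin{pmatrix} p_0 V^* p_1 V p_0 & 0 \\ 0 & p_1 V p_0 V^* p_1 \end{pmatrix},
\]
so I must show that $p_0 - p_0 V^* p_1 V p_0$ and $p_1 - p_1 V p_0 V^* p_1$ are compact as operators on $p_0 E_A$ and $p_1 E_A$, respectively (noting that $p_i$ is the identity on $p_i E_A$). The hypothesis $\pi(V) = v$ with $v \pi(p_0) v^* = \pi(p_1)$ gives $V p_0 V^* - p_1 \in \mathbb{K}_A(E)$, and multiplying the equivalent statement $\pi(p_0) = v^* \pi(p_1) v$ out yields $V^* p_1 V - p_0 \in \mathbb{K}_A(E)$. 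Sandwiching these two compact elements between $p_1$'s and $p_0$'s respectively produces exactly the required compact blocks, once one identifies $\mathbb{K}_A(p_i E)$ with $p_i \mathbb{K}_A(E) p_i$; this identification is available because $p_i$ is an adjointable projection, hence $p_i E_A$ is a complemented submodule.

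For independence of the lift, suppose $V'$ is another lift of the same $v$, so that $\pi(V - V') = 0$ and therefore $V - V' \in \mathbb{K}_A(E)$. Then $F - F' \in \mathbb{K}_A(p_0 E \oplus p_1 E)$, and the linear path $t \mapsto t F + (1-t) F'$ gives a homotopy of self-adjoint odd operators each of which satisfies the compactness condition (since any convex combination still differs from $F$ by a compact, and $\one - F^2$ is compact). As noted in the remark preceding the proposition, such a compact perturbation produces an equal class in $KK(\C, A)$.

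The only real subtlety is the bookkeeping in step two: turning the Calkin-level identity $v \pi(p_0) v^* = \pi(p_1)$ into operator-level compactness of the two diagonal blocks of $\one - F^2$, via the canonical identification $\mathbb{K}_A(p_i E) \cong p_i \mathbb{K}_A(E) p_i$. Everything else is essentially algebraic and fits the general pattern in \cite[Section~6]{Kasparov80} for constructing a Kasparov module from a Fredholm pair of projections.
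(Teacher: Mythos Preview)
Your proposal is correct and takes essentially the same approach as the paper: both compute $F^2-\one$ diagonally, use $Vp_0V^*=p_1$ and $V^*p_1V=p_0$ modulo compacts to get compactness of the blocks, and handle independence of the lift via the compact perturbation $V-V'\in\mathbb{K}_A(E)$. You are simply more explicit about the grading and the identification $\mathbb{K}_A(p_iE)\cong p_i\mathbb{K}_A(E)p_i$, while the paper compresses this to a one-line computation.
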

\begin{proof}
Using that $Vp_0V^* = p_1$ modulo compacts, we check that
\[
 \begin{pmatrix} 0 & p_0 V^* p_1 \\ p_1 Vp_0 & 0 \end{pmatrix}^2 - \one =
 \begin{pmatrix} p_0 V^* p_1 Vp_0 - p_0 & 0 \\ 0 & p_1 V p_0 V^* p_1 - p_1 \end{pmatrix} = 0 \quad {\rm mod} \ \mathbb{K}_A(E).
\]
Finally, any other lift $V'$ of $v$ is such that $V- V' \in \mathbb{K}_A(E)$. Therefore
$\left(\begin{smallmatrix} 0 & p_0 V'^* p_1 \\ p_1 V' p_0 & 0 \end{smallmatrix}\right)$ is also a~compact perturbation of
$\left(\begin{smallmatrix} 0 & p_0 V^* p_1 \\ p_1 V p_0 & 0 \end{smallmatrix}\right)$. Hence their classes coincide in $KK(\C, A)$.
\end{proof}

\begin{Definition}
Let $(p_0,p_1)$ be a Fredholm pair of projections in $\End_A(E)$. We denote by
$[(p_0,p_1)] \in KK(\C, A)$ the equivalence class of the Kasparov module from Proposition \ref{prop:Module_index_of_projections}.
\end{Definition}

The element $[(p_0,p_1)]$ can be considered as a $K$-theoretic analogue of the index of a pair of projections in $\calB(\calH)$.
Indeed, when $A = \calK(\calH)$, the isomorphism $KK(\C, \calK(\calH)) \xrightarrow{\simeq} \Z$ maps
$[(p_0,p_1)]$ to $\Ind(p_0,p_1)$ introduced in Section \ref{subsec:Ind(p,q)_HSpace}.

\begin{Remark}
Let $(p_0,p_1)$ be a Fredholm pair of projections in $\End_A(E)$. Using the same proof as
Proposition \ref{prop:Module_index_of_projections}, we can also construct the Kasparov module
\[
 \bigg( \C, p_0 \mathbb{K}_A(E) \oplus p_1 \mathbb{K}_A(E), \begin{pmatrix} 0 & p_0 V^* p_1 \\ p_1 V p_0 & 0 \end{pmatrix} \bigg),
 \qquad \pi( V p_0 V^* ) = \pi(p_1),
\]
where $\pi(V)\in \calQ_A(E)$ is unitary and $\mathbb{K}_A(E)$ is the Hilbert $C^*$-module over itself.
We therefore obtain
an element $\big[ (p_0, p_1)_{\mathbb{K}_A(E)} \big] \in KK(\C, \mathbb{K}_A(E))$.
When $E_A$ is a full Hilbert $C^*$-module, $\mathbb{K}_A(E)$ is Morita equivalent to $A$, which
gives an isomorphism $KK(\C, \mathbb{K}_A(E)) \xrightarrow{\simeq} KK(\C, A)$ that maps
$\big[ (p_0, p_1)_{\mathbb{K}_A(E)} \big]$ to $[(p_0, p_1)]$.

We also note that any class in $KK(\C, A)$ can be represented by an even Kasparov module
$\bigl( \C, \tilde{E}_A, F \bigr)$ such that $\tilde{E}_A$ is a full Hilbert $C^*$-module.
\end{Remark}

If $A$ is a unital $C^*$-algebra with ideal $J$ and quotient $q\colon A \to A/J$, then the relative $K$-theory group $K_0(A, J)$ can be
constructed via projections $p_0, p_1 \in A$ such that $q(p_0)$ and $q(p_1)$ are
unitarily equivalent in $A/J$, see, for example, \cite[Section~5.4]{Blackadar}. There is also
an excision isomorphism $K_0( A, J) \cong K_0( J)$.
Therefore, if $(p_0,p_1)$ is a Fredholm pair of projections in~$\End_A(E)$, then by Lemma
\ref{lemma:Kas_unit_equiv_projs} we obtain a relative $K$-theory class
\[
 [p_0] - [p_1] \in K_0( \End_A(E), \mathbb{K}_A(E)).
\]

\begin{Theorem}[{\cite[Section~6, Theorem 3]{Kasparov80}}] \label{Theorem:KK_to_K_Kasparov}
Let $(p_0,p_1)$ be a Fredholm pair of projections in~$\End_A(E)$. Then the map
$K_0( \End_A(E), \mathbb{K}_A(E)) \to KK(\C, \mathbb{K}_A(E))$ given by
\[
 [p_0] - [p_1] \mapsto
 \bigg[ \bigg( \C, p_0 \mathbb{K}_A(E) \oplus p_1 \mathbb{K}_A(E),
 \begin{pmatrix} 0 & p_0 V^* p_1 \\ p_1 V p_0 & 0 \end{pmatrix} \bigg) \bigg],
 \qquad \pi( V p_0 V^* ) = \pi(p_1),
\]
is well defined and an isomorphism of groups.
\end{Theorem}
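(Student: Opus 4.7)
The plan is to verify well-definedness and additivity by direct inspection, and then to establish bijectivity by comparing our map with the composition of two standard isomorphisms: excision $K_0(\End_A(E), \mathbb{K}_A(E)) \cong K_0(\mathbb{K}_A(E))$ (valid because $\mathbb{K}_A(E)$ is $\sigma$-unital and sits as an ideal with approximate unit in $\End_A(E)$) followed by the standard identification $K_0(B) \cong KK(\C, B)$.

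For well-definedness I would make three checks. First, if $V, V'$ are two lifts of the same unitary $v \in \calQ_A(E)$, then $V - V' \in \mathbb{K}_A(E)$, so the two candidate off-diagonal operators on $p_0 \mathbb{K}_A(E) \oplus p_1 \mathbb{K}_A(E)$ differ by a compact perturbation and hence represent the same class in $KK(\C,\mathbb{K}_A(E))$; this is already part of Proposition~\ref{prop:Module_index_of_projections}. Second, a degenerate pair $(q,q)$ admits the trivial lift $V = \one$, giving the operator $\bigl(\begin{smallmatrix} 0 & q \\ q & 0 \end{smallmatrix}\bigr)$ on $q\mathbb{K}_A(E) \oplus q\mathbb{K}_A(E)$; since $q$ acts as the identity on each summand, this operator squares to $\one$ and so the Kasparov module is degenerate, representing zero in $KK(\C,\mathbb{K}_A(E))$. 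Third, a norm-continuous path of Fredholm pairs $t \mapsto (p_0(t), p_1(t))$ with coherently chosen lifts $V(t)$ assembles into a Kasparov module over $\mathbb{K}_A(E) \otimes C([0,1])$, exhibiting the endpoints as $KK$-homotopic. These three observations together show the map descends to the relative $K_0$-group, and additivity under block-diagonal direct sums is immediate.

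For bijectivity, I would trace through the composition. The excision isomorphism sends $[p_0] - [p_1]$ with $\pi(p_0) = v\pi(p_1)v^*$ to $[p_0] - [Vp_1V^*] \in K_0(\mathbb{K}_A(E))$, using that $p_0 - Vp_1V^* \in \mathbb{K}_A(E)$ and that $[Vp_1V^*] = [p_1]$ by Murray--von Neumann equivalence. The standard identification $K_0(\mathbb{K}_A(E)) \cong KK(\C, \mathbb{K}_A(E))$ then assigns to such a formal difference of projections precisely the off-diagonal Kasparov module on $p_0 \mathbb{K}_A(E) \oplus p_1 \mathbb{K}_A(E)$ with odd part $p_1 V p_0$, matching our map on representatives.

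The main obstacle is this last compatibility step: both the excision isomorphism and the $K_0 \to KK$ passage are standard, but tracing them on explicit representatives and aligning the outcome with the concrete Kasparov module of the theorem requires care with the excision zigzag and with the chosen picture of $K_0$. An alternative that avoids excision is to construct an explicit inverse: given an even Kasparov module $(\C, E'_B, F)$ with $B = \mathbb{K}_A(E)$ and $F$ odd, apply Kasparov's stabilization theorem to embed $E'$ as a complemented summand of a standard module, add a compact perturbation so that $F \oplus 0$ becomes a partial isometry, and take $p_0, p_1$ to be its range and support projections in the ambient $\End_A$-algebra. Verifying that this prescription is independent of the stabilization and the choice of perturbation, is invariant under $KK$-homotopy, and actually inverts the map of the theorem is where most of the technical work of the argument concentrates.
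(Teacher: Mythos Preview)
The paper does not give its own proof of this theorem: it is stated with the attribution \cite[Section~6, Theorem~3]{Kasparov80} and no proof environment follows. So there is nothing in the paper to compare your proposal against; the result is simply quoted from Kasparov's original work.

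That said, your outline is a reasonable route to the result. The well-definedness checks are sound (independence of the lift is exactly Proposition~\ref{prop:Module_index_of_projections}, and the degenerate-pair and homotopy arguments are correct). For bijectivity, factoring through excision and the standard $K_0(B)\cong KK(\C,B)$ identification is the natural strategy and is essentially how Kasparov argues. You are right that the delicate point is the explicit matching of representatives under excision; in practice this is handled by choosing a picture of relative $K$-theory well adapted to the comparison (e.g., the Fredholm picture, or working directly with $[p_0]-[p_1]$ as a class in $K_0(\mathbb{K}_A(E))$ after stabilisation). Your proposed alternative of building an explicit inverse via stabilisation and perturbation to a partial isometry also works, and is closer in spirit to how one shows $KK(\C,B)\cong K_0(B)$ in the first place.
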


We now consider some additional properties of $[(p_0,p_1)]$ that help justify our terminology as
the index of a pair of projections.

\begin{Lemma} \label{lem:[(p,q)]_properties}\quad
\begin{itemize}\itemsep=0pt
 \item[$(i)$] If $(p_0, p_1)$ is a Fredholm pair of projections in $\End_A(E)$, then $[(p_0, p_1)]^{-1} = [(p_1,p_0)] \in KK(\C, A)$.
 \item[$(ii)$] If $(p_0, \one - p_1)$ is a Fredholm pair of projections in $\End_A(E)$, so is $(p_1, \one -p_0)$
 and $[(p_0, \one - p_1)] = [(p_1, \one - p_0)] \in KK(\C, A)$.
\end{itemize}
\end{Lemma}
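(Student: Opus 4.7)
The plan is to handle (i) by realising the Kasparov module for $(p_1,p_0)$ as a grading-reversal of the one for $(p_0,p_1)$, and to deduce (ii) by exhibiting both classes as orthogonal summands of a single Kasparov module whose class is manifestly zero.

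For (i), if $V\in\End_A(E)$ lifts the Calkin unitary $v$ with $v\pi(p_0)v^*=\pi(p_1)$, then $V^*$ lifts $v^*$ and satisfies $v^*\pi(p_1)v=\pi(p_0)$. Substituting $V^*$ into Proposition \ref{prop:Module_index_of_projections} for the pair $(p_1,p_0)$ produces exactly the Kasparov module of $(p_0,p_1)$ with its even and odd summands $p_0E$ and $p_1E$ interchanged. Since reversing the $\Z_2$-grading of an even Kasparov module replaces its class by the additive inverse in $KK(\C,A)$, this gives $[(p_1,p_0)]=[(p_0,p_1)]^{-1}$.

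For (ii), the Fredholm-pair condition $\|p_0-(\one-p_1)\|_{\calQ_A(E)}=\|p_0+p_1-\one\|_{\calQ_A(E)}$ is symmetric in $p_0,p_1$, so $(p_1,\one-p_0)$ is automatically a Fredholm pair. Fix a lift $V$ of $v$ with $v\pi(p_0)v^*=\pi(\one-p_1)$; equivalently $v\pi(\one-p_0)v^*=\pi(p_1)$, so $V^*$ is a valid lift for the module of $(p_1,\one-p_0)$. Consider the odd self-adjoint operator
\[
 \tilde V=\begin{pmatrix}0 & V^*\\ V & 0\end{pmatrix}\in\End_A(E\oplus E)
\]
on the standard $\Z_2$-graded module $E\oplus E$. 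Since $\pi(V)$ is unitary, $\tilde V^2\equiv\one$ modulo $\mathbb{K}_A(E\oplus E)$, and $(\C,E\oplus E,\tilde V)$ is an even Kasparov module; its class is the $K$-theoretic index of the Fredholm operator $V$, which vanishes because $V$ is invertible modulo compacts. The intertwining relations $Vp_0\equiv(\one-p_1)V$ and $V(\one-p_0)\equiv p_1V$ modulo compacts imply that $\tilde V$ preserves, up to a compact perturbation that does not change $[\tilde V]$, the graded orthogonal decomposition
\[
 E\oplus E=\bigl(p_0E\oplus(\one-p_1)E\bigr)\oplus\bigl((\one-p_0)E\oplus p_1E\bigr).
\]
On the first summand the restriction of $\tilde V$ is exactly the Kasparov module of $(p_0,\one-p_1)$; on the second summand it is the module of $(p_1,\one-p_0)$ built from the lift $V^*$, but with the $\Z_2$-grading reversed. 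Applying (i) to the second summand and using additivity of $KK$-classes under orthogonal direct sums yields $0=[\tilde V]=[(p_0,\one-p_1)]-[(p_1,\one-p_0)]$, as required.

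The main obstacle is making the splitting of $\tilde V$ rigorous: the intertwining identities hold only modulo compacts, so one must first replace $V$ by $V'=(\one-p_1)Vp_0+p_1V(\one-p_0)$, a compact perturbation of $V$ with the same Calkin class, so that $\tilde V'$ decomposes exactly. One also needs the fact that a Fredholm operator invertible in the Calkin algebra has vanishing $K$-theoretic index, which follows either from a functional-calculus homotopy of $\tilde V$ to a genuine symmetry on $E\oplus E$ or from the general six-term exact sequence.
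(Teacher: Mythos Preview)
Your argument for part (i) is correct and more economical than the paper's: the paper proves $[(p_0,p_1)]\oplus[(p_1,p_0)]=0$ by an explicit anticommutation homotopy, whereas you simply observe that the Kasparov module for $(p_1,p_0)$ built from the lift $V^*$ is the grading-reversal of the one for $(p_0,p_1)$, and then invoke the standard fact that reversing the grading inverts the $KK$-class. Both are valid; yours is shorter.

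For part (ii) your overall architecture matches the paper's --- both identify the relevant sum with the class of $\tilde V=\left(\begin{smallmatrix}0&V^*\\V&0\end{smallmatrix}\right)$ on $E\oplus E$ --- but your justification that $[\tilde V]=0$ is wrong. You assert that ``a Fredholm operator invertible in the Calkin algebra has vanishing $K$-theoretic index''. This is false: \emph{every} Fredholm operator is invertible in the Calkin algebra by definition, yet the index is generically nonzero (take $A=\C$, $E=\ell^2(\N)$, and $V$ the unilateral shift; then $\pi(V)$ is unitary but $\operatorname{Index}(V)=-1$). Neither a functional-calculus homotopy nor the six-term exact sequence rescues this --- the boundary map $K_1(\calQ_A(E))\to K_0(A)$ is precisely the index and is typically an isomorphism for the standard module.

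What actually makes $[\tilde V]=0$ is the specific choice of $v$. The canonical conjugating unitary of Lemma~\ref{lemma:Kas_unit_equiv_projs} depends norm-continuously on the pair of projections and equals $\one$ when the projections coincide; hence it lies in the path-component of $\one$ in the unitary group of $\calQ_A(E)$, so its $K_1$-class and therefore its index vanish. Equivalently, one appeals to Theorem~\ref{Theorem:KK_to_K_Kasparov} to know that the class $[(p_0,p_1)]$ is independent of the choice of $v$, whence $[(\one,\one)]=0$ by choosing $v=\one$. The paper's proof relies on exactly this point (its module on $E\oplus E$ is identified with $[(\one,\one)]$), so once you replace your incorrect claim with this observation, your argument goes through.
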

\begin{proof}
For part (i) we will show that $[(p_0,p_1)] \oplus [(p_1, p_0)]$ is the group identity in $KK(\C, A)$. Taking a unitary
$v \in \calQ_A(E)$ such that $v \pi(p_0) v^* = \pi(p_1)$ with lift $V\in \End_A(E)$, we have the sum of Kasparov modules
\begin{align*}
 &\biggl( \C, p_0 E_A \oplus p_1 E_A, \begin{pmatrix} 0 & p_0 V^* p_1 \\ p_1 V p_0 & 0 \end{pmatrix} \biggr) \oplus
 \biggl( \C, p_1 E_A \oplus p_0 E_A, \begin{pmatrix} 0 & p_1 V p_0 \\ p_0 V^* p_1 & 0 \end{pmatrix} \biggr) \\
 &\quad = \biggl( \C, \bigl( p_0 E_A \oplus p_1 E_A \bigr) \oplus \bigl( p_1 E_A \oplus p_0 E_A, \bigr),
 \begin{pmatrix} 0 & p_0 V^* p_1 \\ p_1 V p_0 & 0 \end{pmatrix} \oplus \begin{pmatrix} 0 & p_1 V p_0 \\ p_0 V^* p_1 & 0 \end{pmatrix} \biggr).
\end{align*}
We decompose the direct sum of Hilbert $C^*$-modules into its even and odd parts and write the sum as the Kasparov module
\[
 \left( \C, \bigl( p_0 E_A \oplus p_1 E_A \bigr) \oplus \bigl( p_0 E_A \oplus p_1 E_A, \bigr),
 \begin{pmatrix} 0 & 0 & 0 & p_0V^*p_1 \\ 0 & 0 & p_1V p_0 & 0 \\ 0 & p_0V^*p_1 & 0 & 0 \\ p_1 V p_0 & 0 & 0 & 0 \end{pmatrix} \right).
\]
The Fredholm operator of this Kasparov module anti-commutes with the operator $\sigma_1 \otimes \sigma_3$
with $\sigma_1 = \left(\begin{smallmatrix} 0 & \one \\ \one & 0 \end{smallmatrix}\right)$ and $\sigma_3 = \left(\begin{smallmatrix} \one & \hphantom{-} 0 \\ 0 & -\one \end{smallmatrix}\right)$.
Therefore, by Proposition \ref{prop:anticommuting_Fredholms_homotopic} the sum
$[(p_0, p_1)] \oplus [(p_1, p_0)]$ can be represented by the Kasparov module
\[
 ( \C, \bigl( p_0 E_A \oplus p_1 E_A \bigr) \oplus \bigl( p_0 E_A \oplus p_1 E_A, \bigr), \sigma_1 \otimes \sigma_3 ),
\]
which is degenerate and so represents the group identity in $KK(\C, A)$.

For part (ii), it is immediate that $(p_0, \one - p_1)$ is a Fredholm pair if and only if
$(p_1, \one - p_0)$ is a Fredholm pair. We also note that if $Vp_0 V^* = \one - p_1$ modulo $\mathbb{K}_A(E)$, then
$V(\one - p_0)V^* = p_1$ modulo $\mathbb{K}_A(E)$.
We will show that $[(p_0, \one -p_1)] \oplus [(p_1, \one - p_0)]^{-1} = [(p_0, \one -p_1)] \oplus [(\one - p_0, p_1)]$ is the
identity in $KK(\C, A)$. The sum can be represented by the Kasparov module
\begin{align*}
 &\left( \C, \bigl( p_0 E_A \oplus (\one - p_0)E_A \bigr) \oplus \bigl( p_1 E_A \oplus (\one -p_1) E_A \bigr),
 \vphantom{\begin{pmatrix} 0 & 0 & 0 & p_0 V^* (\one - p_1) \\ 0 & 0 & (\one - p_0)V^* p_1 & 0 \\ 0 & p_1 V (\one- p_0) & 0 & 0 \\ (\one - p_1)V p_0 \end{pmatrix}}
 \right. \\
 &\left.\qquad \begin{pmatrix} 0 & 0 & 0 & p_0 V^* (\one - p_1) \\ 0 & 0 & (\one - p_0)V^* p_1 & 0 \\ 0 & p_1 V (\one- p_0) & 0 & 0 \\ (\one - p_1)V p_0 \end{pmatrix} \right).
\end{align*}
We make the identifications
\[
 p_0 E_A \oplus (\one - p_0)E_A \cong E_A, \qquad p_1 E_A \oplus (\one -p_1) E_A \cong E_A,
\]
which simplifies the Kasparov module to
\[
 \biggl( \C, E_A \oplus E_A, \begin{pmatrix} 0 & V^* \\ V & 0 \end{pmatrix} \biggr).
\]
But this Kasparov module represents the index $[(\one, \one)]\in KK(\C, A)$ as indeed $\pi(V \one V^*) = \pi(\one)$ in $\calQ_A(E)$.
Hence by part (i), the sum $[(p_0, \one -p_1)] \oplus [(\one - p_0, p_1)]$ is the group identity.
\end{proof}

Lastly, we prove an addition formula for Fredholm pairs of projections using the group structure in $KK(\C, A)$.

\begin{Proposition} \label{prop:Index_pair_projection_sum_Kasmod}
Suppose that $(p_0, p_1)$ and $(p_1,p_2)$ are Fredholm pairs of projections in $\End_A(E)$
with $v, w \in \calQ_A(E)$ unitaries such that $v \pi(p_0) v^* = \pi(p_1)$ and $w \pi(p_1) w^* = \pi(p_2)$.
Then
\begin{equation} \label{eq:p0_p2_mod}
\biggl( \C, p_0 E_A \oplus p_2 E_A, \begin{pmatrix} 0 & V^* p_1 W^* \\ W p_1 V & 0 \end{pmatrix} \biggr),
 \qquad \pi(V) = v, \qquad \pi(W) = w,
\end{equation}
is a Kasparov module whose equivalence class $[(p_0,p_2)] = [(p_0,p_1)] \oplus [(p_1,p_2)] \in KK(\C, A)$.
\end{Proposition}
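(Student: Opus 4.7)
Proof plan: The proof splits into two parts: verify the Kasparov-module property of the triple in \eqref{eq:p0_p2_mod}, then establish the additivity $[(p_0,p_2)] = [(p_0,p_1)] + [(p_1,p_2)]$ by means of a rotation homotopy.

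For the Kasparov-module property I would expand the square of the Fredholm operator on $p_0 E_A \oplus p_2 E_A$ with projections made explicit. The top-left entry is $p_0 V^* p_1 W^* p_2 W p_1 V p_0$; two applications of the defining relations, first $\pi(W p_1 W^*) = \pi(p_2)$ so that $\pi(W^* p_2 W) = \pi(p_1)$, and then $\pi(V p_0 V^*) = \pi(p_1)$ so that $\pi(V^* p_1 V) = \pi(p_0)$, reduce it to $p_0$ modulo $\mathbb{K}_A(E)$, which is the identity on $p_0 E_A$. The bottom-right entry reduces to $p_2$ by an analogous calculation.

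For the additivity, I would represent $[(p_0,p_1)] \oplus [(p_1,p_2)]$ by the Kasparov module on $p_0 E_A \oplus p_1 E_A \oplus p_1 E_A \oplus p_2 E_A$ with grading $(+,-,+,-)$ and block-diagonal Fredholm operator
\[
F = \begin{pmatrix} 0 & p_0 V^* p_1 & 0 & 0 \\ p_1 V p_0 & 0 & 0 & 0 \\ 0 & 0 & 0 & p_1 W^* p_2 \\ 0 & 0 & p_2 W p_1 & 0 \end{pmatrix},
\]
and introduce on the same graded module the odd self-adjoint auxiliary
\[
G = \begin{pmatrix} 0 & 0 & 0 & -p_0 V^* p_1 W^* p_2 \\ 0 & 0 & p_1 & 0 \\ 0 & p_1 & 0 & 0 \\ -p_2 W p_1 V p_0 & 0 & 0 & 0 \end{pmatrix}.
\]
The operator $G$ splits orthogonally along positions $(1,4)$ and $(2,3)$: the outer block is the negative of the Fredholm operator in \eqref{eq:p0_p2_mod}, and conjugating by the even unitary $\mathrm{diag}(-1,1,1,1)$ (which commutes with the grading) restores the sign, while the inner block is the degenerate Kasparov module $\bigl(\C, p_1 E_A \oplus p_1 E_A, \bigl(\begin{smallmatrix} 0 & p_1 \\ p_1 & 0 \end{smallmatrix}\bigr)\bigr)$ graded by $(-,+)$. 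Hence $[G] = [(p_0,p_2)]$ in $KK(\C, A)$.

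The core computation is $\{F,G\} \in \mathbb{K}_A(E)$: each of the four nonzero matrix entries of $\{F, G\}$ is a sum of two terms, and one application of $\pi(V p_0 V^*) = \pi(p_1)$ or $\pi(W p_1 W^*) = \pi(p_2)$ shows the two terms coincide modulo compacts, so that the chosen minus signs in $G$ produce cancellation. Together with $F^2 - \one, G^2 - \one \in \mathbb{K}_A(E)$, this gives $F_\theta^2 - \one \in \mathbb{K}_A(E)$ for all $\theta \in [0, \pi/2]$, where $F_\theta := \cos(\theta) F + \sin(\theta) G$. The norm-continuous family $\{F_\theta\}$ assembles into a Kasparov module over $A \otimes C([0, \pi/2])$, yielding the homotopy $[F] = [G]$ and the desired identity $[(p_0,p_1)] + [(p_1,p_2)] = [(p_0,p_2)]$ in $KK(\C, A)$. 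The main obstacle is finding the correct signs in $G$: with all-plus signs, each entry of $\{F, G\}$ picks up two copies of the same term modulo compacts instead of cancelling, and the anticommutator fails to be compact; the minus signs in the outer block are what enforces cancellation.
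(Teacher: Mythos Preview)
Your proof is correct. The verification of the Kasparov-module property is essentially the same as the paper's, just organised as a direct computation of the square rather than via the observation that $wv$ conjugates $\pi(p_0)$ to $\pi(p_2)$ followed by a compact perturbation.

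For the additivity step you take a genuinely different route. The paper rearranges the summands of $(p_0 E_A \oplus p_1 E_A) \oplus (p_1 E_A \oplus p_2 E_A)$ into $(p_0 E_A \oplus p_2 E_A) \oplus (p_1 E_A \oplus p_1 E_A)$ and then asserts that, after compact perturbations, the Fredholm operator splits block-diagonally as $\left(\begin{smallmatrix} 0 & V^* p_1 W^* \\ W p_1 V & 0 \end{smallmatrix}\right) \oplus \left(\begin{smallmatrix} 0 & V p_0 V^* \\ W^* p_2 W & 0 \end{smallmatrix}\right)$, the second summand being degenerate modulo compacts. Your rotation-homotopy argument is instead an explicit instance of the anticommuting-Fredholms trick (Lemma~\ref{prop:anticommuting_Fredholms_homotopic}): you exhibit an odd self-adjoint $G$ with $\{F,G\}$ and $G^2-\one$ compact, so that $F_\theta=\cos\theta\,F+\sin\theta\,G$ furnishes the homotopy, and $G$ visibly decomposes as the target module plus a degenerate piece. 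Your route is more self-contained and makes every step checkable by matrix calculation; the paper's is shorter to state but leaves the passage from the rearranged operator (which is block-\emph{anti}-diagonal after any grading-preserving permutation of the four summands) to the block-diagonal form somewhat unelaborated.
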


According to our definition, $(p_0, p_2)$ need not be a Fredholm pair of projections. However, this is not required in order to build
a Kasparov module and class $[(p_0,p_2)] \in KK(\C, A)$.

\begin{proof}
Given the unitaries $v, w \in \calQ_A(E)$,
it's then easy to see that $wv \in \calQ_{A}(E)$ is also unitary and
$wv \pi(p_0) v^* w^* = \pi(p_2)$.
Therefore, we can build the Kasparov module
\[
 \biggl( \C, p_0 E_A \oplus p_2 E_A, \begin{pmatrix} 0 & p_0 V^* W^* p_2 \\ p_2 WV p_0 & 0 \end{pmatrix} \biggr).
\]
Taking compact perturbations, this Kasparov module is equivalent in $KK(\C, A)$ to
\begin{equation*}
 \biggl( \C, p_0 E_A \oplus p_2 E_A, \begin{pmatrix} 0 & V^* p_1 W^* \\ W p_1 V & 0 \end{pmatrix} \biggr).
\end{equation*}

We now take the sum of the two Kasparov modules
\begin{align*} \label{eq:sum_module}
 & \biggl( \C, p_0 E_A \oplus p_1 E_A, \begin{pmatrix} 0 & p_0 V^* p_1 \\ p_1 V p_0 & 0 \end{pmatrix} \biggr)
 \oplus \biggl( \C, p_1 E_A \oplus p_2 E_A, \begin{pmatrix} 0 & p_1 W^* p_2 \\ p_2 W p_1 & 0 \end{pmatrix} \biggr)
 \nonumber \\
 &\quad= \biggl( \C, \bigl(p_0 E_A \oplus p_1 E_A \bigr) \oplus \bigl( p_1 E_A \oplus p_2 E_A \bigr),
 \begin{pmatrix} 0 & p_0 V^* p_1 \\ p_1 V p_0 & 0 \end{pmatrix} \oplus \begin{pmatrix} 0 & p_1 W^* p_2 \\ p_2 W p_1 & 0 \end{pmatrix} \biggr).
\end{align*}

On the summed Kasparov module, we first rearrange the direct sum
\[
 \bigl(p_0 E_A \oplus p_1 E_A \bigr) \oplus \bigl( p_1 E_A \oplus p_2 E_A \bigr) \to
 \bigl(p_0 E_A \oplus p_2 E_A \bigr) \oplus \bigl( p_1 E_A \oplus p_1 E_A \bigr).
\]
Applying this rearranging, we split the Kasparov module into a sum,
\[
 \biggl( \C, p_0 E_A \oplus p_2 E_A, \begin{pmatrix} 0 & V^* p_1 W^* \\ W p_1 V & 0 \end{pmatrix} \biggr)
 \oplus \biggl( \C, p_1 E_A \oplus p_1 E_A, \begin{pmatrix} 0 & V p_0 V^* \\ W^* p_2 W & 0 \end{pmatrix} \biggr),
\]
where we have freely taken compact perturbations of the operators (which will not change the $KK$-class).
Finally, $V p_0 V^*$ and $W^* p_2 W$ equal $p_1$ modulo $\mathbb{K}_A(E)$. Hence the second term in the sum is equivalent
to the Kasparov module
\[
 \biggl( \C, p_1 E_A \oplus p_1 E_A, \begin{pmatrix} 0 & \one \\ \one & 0 \end{pmatrix} \biggr),
\]
which is degenerate and represents the group identity in $KK(\C, A)$. Therefore
\begin{align*}
 &\biggl( \C, p_0 E_A \oplus p_2 E_A, \begin{pmatrix} 0 & V^* p_1 W^* \\ W p_1 V & 0 \end{pmatrix} \biggr)
 \oplus \biggl( \C, p_1 E_A \oplus p_1 E_A, \begin{pmatrix} 0 & V p_0 V^* \\ W^* p_2 W & 0 \end{pmatrix} \biggr) \\
 &\qquad\sim \biggl( \C, p_0 E_A \oplus p_2 E_A, \begin{pmatrix} 0 & V^* p_1 W^* \\ W p_1 V & 0 \end{pmatrix} \biggr),
\end{align*}
which is the same Kasparov module as equation \eqref{eq:p0_p2_mod}.
\end{proof}

\subsection[The map (tau\_B)\_*: KK(C,B) to R]{The map $\boldsymbol{(\tau_B)_\ast\colon KK(\C, B) \to \R}$} \label{subsec:induced_trace}

Finally, we recall the induced map on $KK(\C, B)$ from a faithful norm-lower semicontinuous
trace $\tau_B$ on a $C^*$-algebra $B$. In the context of this paper, we only consider the case where
$B$ is unital and $\tau_B(\one) = 1$. Further details and
proofs can be found in~\cite[Section 4]{ElementsNCG}.

We first recall the induced trace on operator algebraic $K$-theory. An element
$[p_0] - [p_1] \in K_0(B)$ is represented by a pair of projections $p_0, p_1 \in M_N(B)$
for some sufficiently large $N$. Writing these matrices as
$p_\bullet = \bigl( (p_\bullet)_{j,k} \bigr)_{j,k=1}^N$, $(p_\bullet)_{jk} \in B$, $\bullet \in \{0,1\}$,
we can then easily define a~map
\[
 (\tau_B)_\ast([p_0] - [p_1] ) = (\Tr \otimes \tau_B)( p_0 - p_1) = \sum_{j=1}^N ( \tau_B( (p_0)_{jj} ) - \tau_B( (p_1)_{jj} ) ) \in \R
\]
that respects the equivalence relation on $K_0(B)$.

Our task is to translate this map to the setting of Hilbert $C^*$-modules and Fredholm operators.

\begin{Definition}
We say that a Hilbert $C^*$-module $E_B$ is finitely generated and projective if there is a
set $\{y_j\}_{j=1}^n \subset E_B$ such that $\one = \sum_{j=1}^n \Theta_{y_j, y_j}$.
\end{Definition}

If $E_B$ is finitely generated and projective, we call $\{y_j\}_{j=1}^n$ a finite frame for
$E_B$. We note that for finitely generated and projective Hilbert $C^*$-modules,
$\one$ is a compact endomorphism and so~$\End_B(E) = \mathbb{K}_B(E)$. In particular,
the triple $\bigl( \C, E_B, 0 \bigr)$ is a Kasparov module.

Using the finite frame $\{y_j\}_{j=1}^n$,
we can define a projection
\[
 p = \bigl( p_{jk} \bigr)_{j,k = 1}^n \in M_n(B), \qquad p_{jk} = ( y_j \mid y_k)_B,
\]
and there are maps
\begin{alignat*}{5}
 &S\colon\ &&E_B \to (B_B)^n,\qquad &&R\colon\ &&(B_B)^n \to E_B,& \\
 &&&S(e) = \bigl( ( y_j\mid e)_B \bigr)_{j=1}^n,
 \qquad&&&&R \bigl( (b_j)_{j=1}^n \bigr) = \sum_{j=1}^n y_j\cdot b_j,&
\end{alignat*}
that restrict to isomorphisms $S\colon E_B\to p(B_B)^n$ and $R\colon p(B_B)^n\to E_B$.
We now note the following crucial result.

\begin{Proposition}[{\cite[Section 4.2]{ElementsNCG}}]
Let $B$ be a unital $C^*$-algebra, $E_B = (E^0 \oplus E^1)_B$ a $\Z_2$-graded Hilbert
$C^*$-module and $T = \left(\begin{smallmatrix} 0 & F^* \\ F & 0 \end{smallmatrix}\right) \in \End_B(E)$
a self-adjoint Fredholm operator. If $F\colon E^0_B \to E^1_B$ has closed range, then the Hilbert
$C^*$-submodules $\Ker(F)$ and $\Ker(F^*)$ are finitely generated and projective.
\end{Proposition}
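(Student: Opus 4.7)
The plan is to show that the spectral projection of $T$ at $0$ is compact and splits across the $\Z_2$-grading, and then to invoke the standard characterisation of finitely generated projective Hilbert $C^*$-modules as those whose identity operator is compact. First, I would exploit the closed range of $F$ (which forces closed range of $F^*$ as well) to obtain the orthogonal decomposition $E^0 = \Ker(F) \oplus \overline{\Ran(F^*)}$, on whose second summand the positive operator $F^*F$ restricts to an invertible operator. Hence $\sigma(F^*F) \subset \{0\} \cup [\delta, \infty)$ for some $\delta > 0$, and an analogous statement holds for $FF^*$ on $E^1$. Because $T^2 \in \End_B(E)$ is block-diagonal with entries $F^*F$ and $FF^*$, the point $0$ is isolated in $\sigma(T^2)$, hence also in $\sigma(T)$.

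Next, since $T$ is a bounded (hence regular) self-adjoint Fredholm operator, Proposition~\ref{prop:Fredholm_properties} provides $\varepsilon > 0$ such that $\varphi(T) \in \mathbb{K}_B(E)$ for every $\varphi \in C_c(-\varepsilon, \varepsilon)$. Shrinking $\varepsilon < \delta$ so that $\sigma(T) \cap [-\varepsilon, \varepsilon] = \{0\}$ and taking $\varphi$ equal to $1$ in a neighbourhood of $0$, continuous functional calculus identifies $\varphi(T)$ with the spectral projection $p = \chi_{\{0\}}(T)$, which is therefore compact. Since $T$ anticommutes with the grading $\gamma$, the square $T^2$ commutes with $\gamma$, and the identity $\Ker(T) = \Ker(T^2)$ lets us rewrite $p = \chi_{\{0\}}(T^2)$, which also commutes with $\gamma$. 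Thus $p$ decomposes as $p_0 \oplus p_1$, where $p_0 \in \mathbb{K}_B(E^0)$ is the orthogonal projection onto $\Ker(F)$ and $p_1 \in \mathbb{K}_B(E^1)$ is the orthogonal projection onto $\Ker(F^*)$. The identity on the Hilbert $B$-submodule $\Ker(F) = p_0 E^0$ thus coincides with the restriction of $p_0$ and lies in $\mathbb{K}_B(\Ker(F))$, and likewise for $\Ker(F^*)$.

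The remaining step is to upgrade the compactness of the identity into the diagonal finite frame required by the definition. This is standard: one approximates $\one_{\Ker(F)}$ within distance $1$ by a finite rank operator $K = \sum_{i=1}^n \Theta_{e_i, f_i}$, rendering $K$ invertible in $\End_B(\Ker(F))$, and writes $K = S^*S$ for the adjointable map $S \colon \Ker(F) \to B^n$, $S(x) = ((f_i \mid x)_B)_{i=1}^n$. The polar decomposition of $S$ then produces an isometric isomorphism of $\Ker(F)$ onto $pB^n$ for some projection $p \in M_n(B)$, and pulling back the standard frame of $pB^n$ yields vectors $y_1, \ldots, y_n \in \Ker(F)$ with $\one_{\Ker(F)} = \sum_{j=1}^n \Theta_{y_j, y_j}$. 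The same argument handles $\Ker(F^*)$. I expect this last paragraph to be the main technical obstacle, as producing an honest diagonal frame rather than merely establishing compactness of the identity requires the detour through the polar decomposition; the earlier reductions are routine applications of the spectral theorem and Proposition~\ref{prop:Fredholm_properties}.
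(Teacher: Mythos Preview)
The paper does not supply its own proof of this proposition; it is quoted from \cite[Section~4.2]{ElementsNCG}, followed only by the remark that closed range is equivalent to the existence of a Moore--Penrose inverse. So there is nothing in the paper to compare your argument against. Your approach is sound: isolating $0$ in $\sigma(T)$ via the closed-range decomposition (which in the $C^*$-module setting is the nontrivial fact \cite[Theorem~3.2]{Lance}) and then invoking Proposition~\ref{prop:Fredholm_properties} to exhibit the kernel projection as a compact operator is exactly the right mechanism, and the passage from a compact identity to a finite frame is standard.

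There is one slip in the last paragraph. With $K=\sum_{i=1}^n\Theta_{e_i,f_i}$ and $S(x)=\bigl((f_i\mid x)_B\bigr)_{i=1}^n$ one computes $S^*S=\sum_i\Theta_{f_i,f_i}$, which is not $K$ unless $e_i=f_i$; rather $K=RS$ where $R\bigl((b_i)\bigr)=\sum_i e_i b_i$. The argument survives: invertibility of $K=RS$ still forces $S$ to be bounded below, so $S^*S$ is invertible after all, and your polar decomposition $V=S(S^*S)^{-1/2}$ then gives the desired isometry of $\Ker(F)$ onto $pB^n$ with $p=VV^*\in M_n(B)$. A cosmetic point: the spectral gap for $T$ itself is $\sqrt{\delta}$ rather than $\delta$, since it is $T^2$ whose spectrum lies in $\{0\}\cup[\delta,\infty)$; shrink $\varepsilon$ below $\sqrt{\delta}$.
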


The condition that $F$ has closed range is equivalent to the existence of a Moore--Penrose inverse~\cite[Theorem 2.2]{XuSheng},
an adjointable operator $G:E^1_B \to E^0_B$ such that
\[
 GFG = G, \qquad FGF = F, \qquad (FG)^* = FG, \qquad (GF)^* = GF.
\]

Once we obtain finitely generated and projective Hilbert $C^*$-modules $\Ker(F)$ and $\Ker(F^*)$, we
can construct projections $p_0 \in M_n(B)$ and $p_1 \in M_m(B)$ from the finite frames
and take the induced trace. We can extend this to a map on $KK(\C, B)$ via the following
result.

\begin{Proposition}[{\cite[Sections 4.3--4.4]{ElementsNCG}}]
Let $T \in \End_B(E)$ be an odd self-adjoint Fredholm operator on the $\Z_2$-graded
Hilbert $C^*$-module $E_B$. Then $[T] = \big[\tilde{T}\big] \in KK(\C, B)$ with
$\tilde{T} = \left(\begin{smallmatrix} 0 & \tilde{F}^* \\ \tilde{F} & 0 \end{smallmatrix}\right) \in \End_B\bigl(\tilde{E}\bigr)$
an odd self-adjoint Fredholm operator on $\tilde{E}_B$ such that
$\tilde{F}$ has closed range. Furthermore, $[T]$ can be represented by the
even Kasparov module
\[
 \bigl( \C, \Ker\bigl(\tilde{F}\bigr)_B \oplus \Ker\bigl( \tilde{F}^*\bigr)_B, 0 \bigr),
\]
with grading operator $\left(\begin{smallmatrix} \one & \hphantom{-}0 \\ 0 & -\one \end{smallmatrix}\right)$.
\end{Proposition}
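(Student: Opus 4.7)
The plan is to proceed in two steps. First, perturb $T$ by a compact operator to obtain a self-adjoint odd Fredholm operator $\tilde T$ (living on the same module, so $\tilde E_B = E_B$) whose off-diagonal component has closed range. Second, use the resulting spectral gap of $\tilde T$ at $0$ to split off an invertible summand, which contributes a degenerate Kasparov module, leaving only the kernel part.

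For the first step, use that $T$ is Fredholm to invoke Proposition \ref{prop:Fredholm_properties}: there is $\varepsilon > 0$ such that $\varphi(T) \in \mathbb{K}_B(E)$ for every $\varphi \in C_c(-\varepsilon, \varepsilon)$. Pick a smooth odd function $h\colon \R \to \R$ with $h(x) = 0$ for $|x| \leq \varepsilon/4$ and $h(x) = x$ for $|x| \geq \varepsilon/2$, and set $\tilde T = h(T)$. Then $\tilde T - T = (h(x)-x)(T)$ is compact since $x - h(x)$ is supported in $[-\varepsilon/2, \varepsilon/2]$, and oddness of $\tilde T$ follows from $h$ being odd and $T$ anticommuting with the grading. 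In particular $[\tilde T] = [T] \in KK(\C, B)$ by invariance of the $KK$-class under compact perturbation (the remark preceding Lemma~\ref{prop:anticommuting_Fredholms_homotopic}). Moreover $\sigma(\tilde T) \subseteq \{0\} \cup \{y \colon |y| \geq \varepsilon/2\}$, so $0$ is isolated in $\sigma(\tilde T)$ and the spectral projection $P_0 = \chi_{\{0\}}(\tilde T) \in \End_B(E)$ is well defined by continuous functional calculus on $\sigma(\tilde T)$. Because the indicator $\chi_{\{0\}}$ is an even function and $\tilde T$ anticommutes with the grading, $P_0$ is even and splits as $P_0 = P_0^0 \oplus P_0^1$ with respect to the grading. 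Consequently $\Ran(\tilde F) = (\one - P_0^1)E^1_B$ is closed, and analogously for $\tilde F^*$.

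For the second step, choose a normalising function $\chi$ for $\tilde T$ in the sense of Definition \ref{def:Reg_fn} with the additional property that $\chi(y) = \operatorname{sgn}(y)$ for $|y| \geq \varepsilon/2$ (obtained by smoothing a sign function on the bounded region near $0$ while preserving $\chi'(0) > 0$ and oddness). Then $\chi(\tilde T)$ respects the decomposition $E_B = P_0 E_B \oplus (\one - P_0)E_B$, vanishing identically on the first summand and satisfying $\chi(\tilde T)^2 = \one$ on the second. This exhibits $(\C, E_B, \chi(\tilde T))$ as the direct sum of $(\C, P_0 E_B, 0)$ and a degenerate Kasparov module supported on $(\one - P_0) E_B$; the degenerate piece vanishes in $KK(\C, B)$, while $P_0 E_B = \Ker(\tilde T) = \Ker(\tilde F) \oplus \Ker(\tilde F^*)$ under the grading, delivering the claimed representative with zero operator and the stated grading.

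The main obstacle is a bookkeeping issue on parities under functional calculus: odd $h$ yields odd $\tilde T = h(T)$, even $\chi_{\{0\}}$ yields even spectral projection $P_0$, and odd $\chi$ yields odd $\chi(\tilde T)$. Once these parities are verified, the argument reduces to compact-perturbation invariance of the $KK$-class together with the fact that a self-adjoint odd $F$ with $F^2 = \one$ represents a degenerate Kasparov module, hence the identity in $KK(\C, B)$.
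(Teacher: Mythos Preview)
The paper does not give its own proof of this proposition; it is quoted from \cite[Sections~4.3--4.4]{ElementsNCG}. Your second step is sound: once $0$ is genuinely isolated in $\sigma(\tilde T)$, the continuous functional calculus produces the kernel projection, the parity bookkeeping is correct, and the degenerate summand drops out.

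The gap is in the first step, at the assertion $\sigma(\tilde T)\subseteq\{0\}\cup\{y:|y|\ge\varepsilon/2\}$. Your function $h$ is continuous with $h(\varepsilon/4)=0$ and $h(\varepsilon/2)=\varepsilon/2$, so by the intermediate value theorem $h$ takes every value in $(0,\varepsilon/2)$ on the interval $(\varepsilon/4,\varepsilon/2)$. The spectral mapping theorem gives $\sigma(h(T))=h(\sigma(T))$, so whenever $\sigma(T)$ meets $(\varepsilon/4,\varepsilon/2)$ the spectrum of $\tilde T$ will contain points arbitrarily close to but distinct from $0$. The Fredholm hypothesis does not prevent this: Proposition~\ref{prop:Fredholm_properties} only says $\varphi(T)\in\mathbb{K}_B(E)$ for $\varphi\in C_c(-\varepsilon,\varepsilon)$, which does not force $\sigma(T)\cap(-\varepsilon,\varepsilon)$ to be discrete or to avoid any subinterval. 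Without the spectral gap, $\chi_{\{0\}}$ is not continuous on $\sigma(\tilde T)$ and $P_0$ cannot be obtained from the continuous functional calculus. More generally, no continuous odd $h$ with $h-\mathrm{id}$ supported in $(-\varepsilon,\varepsilon)$ can create an isolated zero in $\sigma(h(T))$ unless $\sigma(T)$ already had a gap near $0$; the obstruction is the intermediate value theorem itself.

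This is precisely why the statement allows $\tilde E_B\neq E_B$. The approach in \cite{ElementsNCG} does not stay on the original module: one takes a parametrix $G$ for $F$, approximates the compact defect $\one-FG$ by a finite-rank operator, and uses this to enlarge the domain by a finitely generated projective summand so that the extended operator becomes surjective (hence has closed range for free). The stabilisation is then invisible in $KK(\C,B)$. Your functional-calculus route attempts to achieve closed range on $E_B$ itself, and that need not be possible.
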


We will write the map $(\tau_B)_\ast \colon KK(\C, B) \to \R$ as
\begin{align*}
 (\tau_B)_\ast ([ T])={}& \tau_B\bigl( \Ker\bigl(\tilde{F}\bigr) \bigr) - \tau_B \bigl( \Ker\bigl( \tilde{F}^*\bigr) \bigr) \\
:={}& (\Tr \otimes \tau_B )\big( p_{\Ker(\tilde{F})} \big) - (\Tr \otimes \tau_B )\big( p_{\Ker(\tilde{F}^*)} \big),
\end{align*}
where $p_{\Ker(\tilde{F})}$ and $p_{\Ker(\tilde{F}^*)}$ are the projections constructed from the
finite frames on $\Ker\bigl(\tilde{F}\bigr)$ and $\Ker\bigl( \tilde{F}^*\bigr)$, respectively.

\section[Chiral unitaries on Hilbert C\^{}*-modules and K-theory]{Chiral unitaries on Hilbert $\boldsymbol{C^*}$-modules and $\boldsymbol{K}$-theory} \label{sec:Cstar_mod_index}
In this section, we extend our definition of chiral unitary to adjointable operators acting on
Hilbert $C^*$-modules. As Hilbert spaces are an example of Hilbert $C^*$-modules, the contents of
Section \ref{sec:HSpace_chiral_unitaries}
can be recovered as a special case.

\subsection{The Cayley transform for chiral unitaries} \label{subsec:Module_Cayley}

We will freely use definitions and results from Section \ref{sec:NCIndex_theory_prelims}.
Throughout this section, we will fix a complex $\sigma$-unital $C^*$-algebra $A$ and a
countably generated Hilbert $C^*$-module $E_A$.
The adjointable and
compact operators on this Hilbert $C^*$-module will be denoted by~$\End_A(E)$ and~$\mathbb{K}_A(E)$
respectively. We also denote the quotient algebra $\calQ_A(E) = \End_A(E)/ \mathbb{K}_A(E)$.

\begin{Definition}
We say that a unitary operator $u \in \End_A(E)$ is a chiral unitary with
respect to a self-adjoint unitary $\gamma_0 \in \End_A(E)$ if $\gamma_0 u \gamma_0 = u^*$.
\end{Definition}

Applying Lemma \ref{lem:chiral_iff_decomp}, $u \in \End_A(E)$ is a chiral unitary with
respect to a self-adjoint uni\-tary~$\gamma_0 \in \End_A(E)$ if and only if there is a
self-adjoint unitary $\gamma_1 \in \End_A(E)$ such that $u = \gamma_0 \gamma_1$.

The Cayley transform of chiral unitaries considered in Section \ref{subsec:HSpace_Cayley} can
also be employed in the Hilbert $C^*$-module setting.

\begin{Definition}
Let $u \in \End_A(E)$ be a chiral unitary with respect to $\gamma_0 \in \End_A(E)$.
The Cayley transform of $u$ is the operator
\[
 \calC(u) = {\rm i}(\one + u)(\one - u)^{-1} , \qquad \Dom(\calC(u)) = (\one - u)E_A.
\]
\end{Definition}

\begin{Lemma}[{\cite[Chapter 10]{Lance}}] \label{lem:chiral_cayley_selfadj}
Let $u \in \End_A(E)$ be a chiral unitary with respect to $\gamma_0 \in \End_A\!(E)$. Then
$\calC(u)$ is a self-adjoint and regular operator on $\ol{(\one\! -\! u)E}_{\!A}$, the closure of $\Dom(\calC(u)\!)$ in the
module norm of $E_A$. Furthermore, $\calC(u)$ anti-commutes with $\gamma_0$.
\end{Lemma}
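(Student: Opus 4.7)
The plan is to invoke the Cayley-transform correspondence for unitaries on Hilbert $C^*$-modules developed in~\cite[Chapter 10]{Lance} and then confirm the anti-commutation relation by a direct algebraic calculation mirroring Section~\ref{subsec:HSpace_Cayley}. The preliminary step would be to restrict attention to the Hilbert submodule $F_A := \ol{(\one - u)E}_A$; using unitarity and adjointability of $u$ one verifies that $F_A^\perp = \Ker(\one - u)$, since $e \in F_A^\perp$ holds iff $(e \mid (\one - u)g)_A = ((\one - u^*)e \mid g)_A = 0$ for all $g \in E_A$, iff $u^* e = e$, iff $u e = e$. In particular $F_A \cap F_A^\perp = \{0\}$ and $(\one - u)$ is injective on $F_A$ with dense range in $F_A$, so $\calC(u)$ is a densely defined operator on $F_A$.

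Self-adjointness and regularity of $\calC(u)$ on $F_A$ are then the content of the Hilbert-module Cayley correspondence: any unitary $v \in \End_A(F)$ for which $\one - v$ has dense range defines, via the commutative continuous functional calculus on the $C^*$-subalgebra it generates, a self-adjoint regular operator ${\rm i}(\one + v)(\one - v)^{-1}$, and conversely every self-adjoint regular operator with $\pm {\rm i}$ in its resolvent set arises in this way. Specialising to $u|_{F_A}$ then delivers the first half of the lemma.

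For the anti-commutation, Lemma~\ref{lem:chiral_iff_decomp} gives $\gamma_0 u \gamma_0 = u^*$, so $\gamma_0(\one - u)\gamma_0 = \one - u^*$ and, using unitarity, $(\one - u^*) u = -(\one - u)$. In particular $\gamma_0$ preserves $(\one - u)E_A$ and hence $F_A$. On the domain of $\calC(u)$,
\begin{align*}
 \gamma_0 \, \calC(u) \, \gamma_0
 &= {\rm i} \, \bigl(\gamma_0 (\one + u) \gamma_0\bigr) \bigl(\gamma_0 (\one - u) \gamma_0\bigr)^{-1} \\
 &= {\rm i} \, (\one + u^*)(\one - u^*)^{-1} \\
 &= {\rm i} \, \bigl((\one + u^*) u\bigr) \bigl((\one - u^*) u\bigr)^{-1} \\
 &= {\rm i} \, (u + \one)(u - \one)^{-1} = -\calC(u).
\end{align*}

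The main technical obstacle is the regularity assertion: the spectral-theoretic argument available on Hilbert spaces is unavailable here, and $F_A$ need not be orthogonally complemented in $E_A$, so the existence of a bounded transform in $\End_A(F)$ and density of $\Ran(\one + \calC(u)^* \calC(u))$ are not immediate. Once one restricts to $F_A$, however, this is precisely the content of Lance's general Cayley correspondence, so the substantive work of the proof reduces to the inner-product identification of $F_A^\perp$ with $\Ker(\one - u)$ together with the short algebraic computation establishing the anti-commutation.
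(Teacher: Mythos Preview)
Your proposal is correct and follows essentially the same approach as the paper: both defer the self-adjointness and regularity of $\calC(u)$ on $\ol{(\one-u)E}_A$ to Lance's Cayley correspondence, and both verify the anti-commutation by the same direct algebraic computation using $\gamma_0 u \gamma_0 = u^*$. Your discussion of $F_A^\perp = \Ker(\one - u)$ is additional scaffolding the paper omits, but it does not change the argument.
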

\begin{proof}
It is proved in~\cite[Chapter 10]{Lance} that $\calC(u )$ is self-adjoint and regular.
We next note that~$\gamma_0$ preserves the domain of $\calC(u )$,
\[
 \gamma_0(\one - u)E_A = (\one - u^*)\gamma_0 E_A = (u-\one)u^* \gamma_0 E_A \subset (\one - u)E_A
\]
and furthermore
\begin{align*}
 \gamma_0 \calC(u ) \gamma_0 &= (\one + u^*)(\one - u^*)^{-1} = (u + \one)(u - \one)^{-1} = -\calC(u ). \tag*{\qed}
\end{align*}\renewcommand{\qed}{}
\end{proof}

\begin{Definition} \label{Def:u_Fred_type}
We say that a chiral unitary $u \in \End_A(E)$ with respect to $\gamma_0 \in \End_A(E)$ is
of Fredholm type if $\|\one - u \|_{\calQ_A(E)} < 2$.
\end{Definition}

Our terminology is justified by the following result.

\begin{Proposition}
If $u \in \End_A(E)$ is a chiral unitary with respect to $\gamma_0 \in \End_A(E)$ of Fredholm type,
then $\calC(u)$ is Fredholm on $\ol{(\one - u)E}_A$, the closure of $\Dom(\calC(u))$ in the
module norm of $E_A$.
\end{Proposition}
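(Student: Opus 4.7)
The plan is to apply Lemma \ref{lem:unbdd_Fred_condition}(ii) to $T = \calC(u)$, viewed as a self-adjoint regular operator on $F := \overline{(\one-u)E}_A$ (which it is by Lemma \ref{lem:chiral_cayley_selfadj}). It therefore suffices to prove that $\|(T+i)^{-1}\|_{\calQ_A(F)} < 1$.

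First I would check that both $u$ and $u^*$ preserve $F$: since $u(\one-u) = (\one-u)u$, one has $u(\one-u)E_A \subseteq (\one-u)E_A$, and taking closures gives $uF \subseteq F$; similarly $u^*F \subseteq F$. Consequently $u_F := u|_F$ is a unitary in $\End_A(F)$. A direct computation starting from $(T+i)(\one-u)e = i(\one+u)e + i(\one-u)e = 2ie$ on $\Dom(T) = (\one-u)E_A$ then identifies
\[
(T+i)^{-1} = -\tfrac{i}{2}(\one - u_F) \in \End_A(F).
\]
Hence $\|(T+i)^{-1}\|_{\calQ_A(F)} = \tfrac{1}{2}\|\one - u_F\|_{\calQ_A(F)}$, and it suffices to show $\|\one - u_F\|_{\calQ_A(F)} \leq \|\one - u\|_{\calQ_A(E)}$; by hypothesis this quantity is then strictly less than $2$, giving $\|(T+i)^{-1}\|_{\calQ_A(F)} < 1$ and thus Fredholmness via Lemma \ref{lem:unbdd_Fred_condition}(ii).

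The step I expect to be the main obstacle is controlling the Calkin-style norm under restriction to the (possibly non-complemented) submodule $F$. The cleanest route is to exhibit compacts on $F$ directly from the spectral calculus of $u$: since $\|\one - u\|_{\calQ_A(E)} < 2$ is equivalent to $-1 \notin \sigma_{\calQ_A(E)}(u)$, one can choose $g \in C(S^1)$ with $g(-1)=1$, supported in a small arc around $-1$ that avoids $+1$ and is disjoint from $\sigma_{\calQ_A(E)}(u)$. Then $g(u) \in \mathbb{K}_A(E)$, and since $g$ vanishes in a neighborhood of $+1$ we may factor $g(u) = (\one-u)h(u)$ for some $h \in C(S^1)$; this forces $g(u)E_A \subseteq F$, from which $g(u)|_F \in \mathbb{K}_A(F)$. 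Transporting this compact function of $u$ across the Cayley transform produces a compactly supported continuous function of $T$ that does not vanish at $0$, and then Proposition \ref{prop:Fredholm_properties}(ii) yields the Fredholmness of $T$ on $F$ without ever needing to assert that restriction is norm-decreasing on Calkin algebras.
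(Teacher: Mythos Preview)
Your core computation --- reducing to Lemma~\ref{lem:unbdd_Fred_condition}(ii) and identifying $(i+\calC(u))^{-1}=-\tfrac{i}{2}(\one-u)|_F$ --- is exactly the paper's argument. The paper then simply asserts $\big\|(i+\calC(u))^{-1}\big\|_{\calQ_A(\ol{(\one-u)E})}=\tfrac12\|\one-u\|_{\calQ_A(E)}$ without further comment; you are right that the passage from $\calQ_A(E)$ to $\calQ_A(F)$ is not automatic when $F$ need not be complemented, and your proposal is more careful here than the paper.

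Your workaround via Proposition~\ref{prop:Fredholm_properties} is a sound strategy, but the step ``$g(u)E_A\subseteq F$, from which $g(u)|_F\in\mathbb{K}_A(F)$'' is the very same restriction issue in disguise: an operator compact on $E$ with range in $F$ is not automatically compact on $F$. One line fixes this. Choose a real $g_1\in C(S^1)$ with $g_1\equiv 1$ on $\operatorname{supp}(g)$ and $g_1$ vanishing near $1$; then $g_1(u)$, viewed as an adjointable map $S\colon E\to F$ (with adjoint $S^*f=g_1(u)f$), satisfies $S\,\Theta^E_{e,e'}\,S^*=\Theta^F_{Se,Se'}$, so $S\,\mathbb{K}_A(E)\,S^*\subseteq\mathbb{K}_A(F)$ and hence $g(u)|_F=S\,g(u)\,S^*\in\mathbb{K}_A(F)$. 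With this in hand your Cayley-transport step is fine: $\tilde g(T)=g(u)|_F$ is compact with $\tilde g(0)\neq 0$, and for $\varepsilon>0$ small enough that $\tilde g$ is nonvanishing on $(-\varepsilon,\varepsilon)$ every $\varphi\in C_c(-\varepsilon,\varepsilon)$ satisfies $\varphi(T)=(\varphi/\tilde g)(T)\,\tilde g(T)\in\mathbb{K}_A(F)$, so Proposition~\ref{prop:Fredholm_properties}(ii) applies. The same sandwich trick, applied to compacts of the form $(\one-u)\phi(u)$ with $\phi$ supported away from $1$, also gives the inequality $\|(\one-u)|_F\|_{\calQ_A(F)}\le\|\one-u\|_{\calQ_A(E)}$ you originally wanted, so either route closes.
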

\begin{proof}
Recalling Lemma \ref{lem:unbdd_Fred_condition}, it suffices to show that $\big\| ({\rm i} + \calC(u))^{-1} \big\|_{\calQ_A(\ol{(\one - u)E})} < 1$.
We can then easily compute that on $\Dom(\calC(u))$
\[
 {\rm i} + \calC(u) = {\rm i}\bigl( (\one - u) + (\one + u) \bigr)(\one - u)^{-1} = 2{\rm i}(\one - u)^{-1},
\]
hence $({\rm i}+\calC(u))^{-1} = -\tfrac{{\rm i}}{2}(\one - u)$ and extends to a map $E_A \to \Dom(\calC(u))$. We therefore have that
\[
 \big\| ({\rm i} + \calC(u))^{-1} \big\|_{\calQ_A(\ol{(\one - u)E})} = \frac{1}{2} \| u- \one \|_{\calQ_A(E)} < 1. \tag*{\qed}
\]\renewcommand{\qed}{}
\end{proof}

The self-adjoint unitary $\gamma_0$ gives a $\Z_2$-grading of the Hilbert $C^*$-module
$\ol{(\one - u)E}_A$ with $\calC(u)$ an odd operator. Therefore, if $u \in \End_A(E)$ is a chiral
unitary with respect to
$\gamma_0 \in \End_A(E)$ of Fredholm type, we obtain an element
$\big[ \calC(u) \big] \in KK(\C, A) \cong K_0(A)$ by Proposition \ref{prop:Fredholm_to_KK}.

\begin{Remark}
Suppose that $u \in \End_A(E)$ is a \emph{self-adjoint} chiral unitary,
$u = u^* = \gamma_0 u \gamma_0$. Hence $u$ commutes with $\gamma_0$. Using
that $u^{-1} = u$, one can show that $\calC(u ) = -\calC(u )$ and so is the zero map.
Similarly the Hilbert $C^*$-module $\ol{(\one - u)E}_A = pE_A$ with $p= \tfrac{1}{2}(\one - u)$. Hence, if
$\|u - 1 \|_{\calQ_A(E)} < 2$, then the class $\big[ \calC(u) \big]$ is determined by
the $-1$ eigenspace projection $ \tfrac{1}{2}(\one - u)$.
\end{Remark}

For completeness, let us also study the bounded transform
$F_{\calC(u)} = \calC(u)\bigl( \one + \calC(u)^2\bigr)^{-1/2}$ on $\ol{(\one - u)E}_A$.
We first compute on the appropriate domain using the normality of $u$,
\begin{align*}
 \one + \calC(u)^* \calC(u ) &= \one + \gamma_0 (\one - u^*)^{-1}(\one + u^*)(\one + u)(\one - u)^{-1}\gamma_0 \\
 &= \one + (2+ u+u^*)(2-u-u^*)^{-1} \\
 &= \bigl( (2-u-u^*)+(2+u+u^*) \bigr)(2-u-u^*)^{-1} \\
 &= 4(2-u-u^*)^{-1} = 4\bigl((\one - u)^* (\one - u)\bigr)^{-1}.
\end{align*}
Therefore, $\bigl( \one + \calC(u)^2 \bigr)^{-1/2} = \tfrac{1}{2} (2- u - u^*)^{1/2} = \tfrac{1}{2}|\one - u|$.
We define $V$ to be the (unitary) completion of the operator
$(\one - u)e \mapsto |\one - u|e$ on the dense subspace $(\one - u)E_A$ (cf.~\cite[Proposition 3.8]{Lance}).
Then we find that
\begin{align} \label{eq:C(u)_bdd_transfrom}
 F_{\calC(u)} &= \frac{{\rm i}}{2} (\one + u)(\one - u)^{-1} (2-u-u^*)^{1/2} = \frac{{\rm i}}{2} (\one + u)(\one - u)^{-1} |\one - u|
 = \frac{{\rm i}}{2} (\one + u) V
\end{align}
and
\begin{align*}
 F_{\calC(u)}^* F_{\calC(u)} &= \frac{1}{4} |\one - u| (\one - u^*)^{-1}(\one + u^*)(\one + u) (\one - u) |\one - u| \\
 &= \frac{1}{4} (\one + u^*)(\one + u) = \frac{1}{4}(2+u+u^*),
\end{align*}
which then implies
\[
 \one- F_{\calC(u)}^2 = \frac{1}{2} - \frac{1}{4}(u+u^*) = \frac{1}{4}(2-u-u^*) = \frac{1}{4} (\one - u)^* (\one - u).
\]

If $\one - u \in \mathbb{K}_A(E)$, then $\one - F_{\calC(u)}^2 \in \mathbb{K}_A(\ol{(\one - u)E})$
and $\calC(u)$ has compact resolvent, $({\rm i} + \calC(u))^{-1} \in \mathbb{K}_A(\ol{(\one - u)E})$.

We now prove a key stability result of our $K$-theoretic index for chiral unitaries.

\begin{Proposition} \label{prop:homotopy_of_chiral_unitaries}
Let $\{u(t)\}_{t \in [0,1]} \subset \End_A(E)$ and $\{\gamma_0(t)\}_{t \in [0,1]} \subset \End_A(E)$ be
strongly continuous paths of unitaries
such that $u(t)$ is a chiral unitary with respect to $\gamma_0(t)$ for all $t\in [0,1]$.
If $\|u(t) - \one\|_{\calQ_A(E)} < 2$ for all $t \in [0,1]$, then
 $\big[ \calC(u(t))\big]$ is constant in $KK(\C,A)$.
\end{Proposition}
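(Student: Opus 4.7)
The plan is to package the entire path into a single Kasparov module over $A \otimes C([0,1])$ and invoke the homotopy invariance of $KK$-theory along the evaluation $*$-homomorphisms.

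Set $\tilde{A} = A \otimes C([0,1])$ and $\tilde{E} = E \otimes C([0,1])$, which is a countably generated Hilbert $\tilde{A}$-module. Strong continuity of $t \mapsto u(t)$ and $t \mapsto \gamma_0(t)$ ensures that pointwise action defines adjointable unitaries $\tilde{u}, \tilde{\gamma}_0 \in \End_{\tilde{A}}(\tilde{E})$, and the pointwise chirality relations give $\tilde{\gamma}_0 \tilde{u} \tilde{\gamma}_0 = \tilde{u}^*$, so $\tilde{u}$ is a chiral unitary on $\tilde{E}$. The key step is to verify that $\tilde{u}$ is of Fredholm type in the sense of Definition \ref{Def:u_Fred_type}, i.e., $\|\one - \tilde{u}\|_{\calQ_{\tilde{A}}(\tilde{E})} < 2$. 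Using the identification $\mathbb{K}_{\tilde{A}}(\tilde{E}) \cong C([0,1], \mathbb{K}_A(E))$, one identifies this essential norm with $\sup_{t \in [0,1]} \|u(t) - \one\|_{\calQ_A(E)}$, and the pointwise hypothesis combined with the compactness of $[0,1]$ and upper semicontinuity of the quotient norm gives the required strict inequality.

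Once $[\calC(\tilde{u})] \in KK(\C, \tilde{A})$ is produced via the construction in Subsection \ref{subsec:Module_Cayley}, I invoke the fact that the evaluation $*$-homomorphisms $\mathrm{ev}_s, \mathrm{ev}_t : \tilde{A} \to A$ induce the same map $KK(\C, \tilde{A}) \to KK(\C, A)$ for all $s, t \in [0,1]$, by the homotopy invariance of $KK$-theory. A direct check shows $\mathrm{ev}_{t,*}[\calC(\tilde{u})] = [\calC(u(t))]$, using that the Cayley transform and its bounded transform commute with pointwise evaluation via the canonical isomorphism $\tilde{E} \otimes_{\mathrm{ev}_t} A \cong E$ (and the fact that the domain $\ol{(\one - \tilde{u})\tilde{E}}_{\tilde{A}}$ evaluates fibrewise to $\ol{(\one - u(t))E}_A$). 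Homotopy invariance then yields $[\calC(u(s))] = [\calC(u(t))]$ for all $s, t \in [0,1]$.

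The main obstacle is the uniform Calkin-norm estimate: while the pointwise bound $\|u(t) - \one\|_{\calQ_A(E)} < 2$ is assumed for each $t$, strong continuity alone does not yield continuity of $t \mapsto \|u(t) - \one\|_{\calQ_A(E)}$, so upgrading pointwise $<2$ to a strict uniform $<2$ requires care. One approach is to exploit upper semicontinuity of the quotient norm under strong operator limits together with the compactness of $[0,1]$; alternatively, one may directly construct a norm-continuous family of compact approximants for $\tilde{u} - \one$ via a partition-of-unity argument, selecting local compact approximants near each $t_0$ and patching them together using the strong continuity of the path.
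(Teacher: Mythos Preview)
Your overall strategy---assemble the path into a Kasparov module over $A\otimes C([0,1])$ and invoke homotopy invariance of the evaluation maps---is exactly what the paper does. The gap lies in the step you yourself flag as the ``main obstacle'': establishing that $\tilde u$ is of Fredholm type. First, your identification $\|\tilde u-\one\|_{\calQ_{\tilde A}(\tilde E)}=\sup_t\|u(t)-\one\|_{\calQ_A(E)}$ is only the inequality $\geq$: the left-hand side is an $\inf_K\sup_t$ over norm-continuous $K\colon[0,1]\to\mathbb K_A(E)$, the right-hand side a $\sup_t\inf_k$, and a continuous selection of near-optimal compact approximants need not exist. Second, and more seriously, your proposed repair via upper semicontinuity of $T\mapsto\|T\|_{\calQ_A(E)}$ in the strong topology is false even for unitaries: on $\ell^2(\N)$ let $v_n$ be diagonal with $v_ne_k=e_k$ for $k\leq n$ and $v_ne_k=-e_k$ for $k>n$; then $v_n\to\one$ strongly while $\|v_n-\one\|_{\calQ}=2$ for every $n$. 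The partition-of-unity idea founders for the same reason: a compact $k_0$ with $\|u(t_0)-\one-k_0\|<2$ gives no control over $\|u(t)-\one-k_0\|$ for $t$ merely strongly close to $t_0$.

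The paper avoids computing $\|\tilde u-\one\|_{\calQ_{\tilde A}(\tilde E)}$ entirely. Instead it first fixes a single normalising function $\chi$ valid for every $\calC(u(t))$ and then appeals to \cite[Lemma~1.1]{Wahl08} to obtain strong continuity of $t\mapsto\chi(\calC(u(t)))$ and, as the operative conclusion, \emph{norm} continuity of $t\mapsto\one-\chi(\calC(u(t)))^2$ with values in the compacts. The point is that $1-\chi^2\in C_c(\R)$, so one only needs to control a compactly supported piece of the functional calculus of $\calC(u(t))$; this is far more tractable than bounding the full Calkin norm of $\tilde u-\one$. The resulting norm-continuous family of compact operators is precisely what is needed to build the even Kasparov module over $C([0,1])\otimes A$ directly, without ever passing through Definition~\ref{Def:u_Fred_type} for $\tilde u$.
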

\begin{proof}
Our assumptions are such that for all $t \in [0,1]$, $\calC(u(t))$ is an odd, self-adjoint
and Fredholm operator on $\ol{(\one - u(t))E}$ and so has a normalising function $\chi_t$.
Let us therefore fix a~function $\chi$ such that $\chi$ is a normalising function of
 $\calC( u(t) )$ for all $t \in [0,1]$. (This can be done via
Proposition \ref{prop:Fredholm_properties} and taking a function $\chi$ such that
$\operatorname{supp}(\chi) \subset \operatorname{supp}(\chi_t)$ for all $t\in [0,1]$.)
Then by~\cite[Lemma~1.1]{Wahl08},
the path
$[0,1] \ni t \mapsto \chi( \calC(u(t)) ) \in \End_A( \ol{(\one - u(t))E} )$ is strongly continuous.
In particular, the function
$[0,1] \ni t \mapsto \one - \chi( \calC(u(t)) )^2 \in \mathbb{K}_A( \ol{(\one - u(t))E} )$ and is \emph{norm} continuous.
Therefore, we can construct an even Kasparov module
\[
 \Big( \C, C( [0,1], \ol{(\one - u(\cdot) )E})_{C[0,1] \otimes A}, \chi\big[ \calC(u(\cdot)) \big] \Big),
 \qquad \big[ \chi( \calC(u(\cdot)) )g\big](t) = \chi\big[ \calC(u(t))\big] g(t),
\]
where $g \in C( [0,1], \ol{(\one - u(\cdot) )E})_{C[0,1] \otimes A}$ is such that $g(t) \in \ol{(\one - u(t))E}_A$.
We therefore have obtained a homotopy of even
Kasparov modules in $KK(\C, A)$ and so $\big[ \calC(u(t))\big]$ is constant.
\end{proof}

\begin{Corollary}
If $u_0$ is homotopic to $\one$ in $\End_A(E)$ via a path $\{u_t\}_{t \in [0,1]}$ of chiral unitaries
with respect to $\gamma_0$ of Fredholm type,
then $\big[ \calC(u_0) \big]$ is
trivial in $KK(\C, A)$.
\end{Corollary}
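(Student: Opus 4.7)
The plan is to reduce the statement to a direct application of Proposition \ref{prop:homotopy_of_chiral_unitaries}, with $\gamma_0(t) := \gamma_0$ taken as the constant path and $u(t) := u_t$. By hypothesis $\{u_t\}_{t\in [0,1]}$ is a strongly continuous path of chiral unitaries with respect to $\gamma_0$ of Fredholm type, which is exactly the condition $\|u_t-\one\|_{\calQ_A(E)}<2$ for all $t \in [0,1]$ required by Proposition \ref{prop:homotopy_of_chiral_unitaries}. That proposition then yields
\[
\bigl[\calC(u_0)\bigr] = \bigl[\calC(u_1)\bigr] = \bigl[\calC(\one)\bigr] \in KK(\C, A).
\]

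It remains to identify $[\calC(\one)]$ with the zero class. By definition, $\Dom(\calC(\one)) = (\one - \one)E_A = \{0\}$, so $\calC(\one)$ is the zero operator on the trivial (zero) Hilbert $C^*$-module $\{0\}_A$. The corresponding Kasparov module $(\C, \{0\}, 0)$ is degenerate: the operator $F=0$ trivially satisfies $F^2 = \one$ (both sides being the unique element of $\End_A(\{0\})$), in particular $\one - F^2 \in \mathbb{K}_A(\{0\})$ and $F$ is odd for any $\Z_2$-grading. By Remarks \ref{Remark_KKgroup_notes}(1), degenerate Kasparov modules represent the identity of $KK(\C, A)$, so $[\calC(\one)] = 0$ and hence $[\calC(u_0)] = 0$.

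The only place the argument requires any care is in ensuring the hypotheses of Proposition \ref{prop:homotopy_of_chiral_unitaries} are met along the entire homotopy; but strong continuity and uniform Fredholm type along the path are exactly what is assumed in the corollary statement, so no further work is needed. There is no essential obstacle beyond this bookkeeping.
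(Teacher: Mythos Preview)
Your proof is correct and is precisely the argument the paper intends: the corollary is stated without proof in the paper, being an immediate consequence of Proposition~\ref{prop:homotopy_of_chiral_unitaries} together with the observation that $\calC(\one)$ lives on the zero module and hence gives a degenerate Kasparov module. Your explicit identification of $[\calC(\one)]$ as the zero class via Remarks~\ref{Remark_KKgroup_notes}(1) just spells out the triviality that the paper leaves implicit.
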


\begin{Remark}
If $u \in \End_A(E)$ is a chiral unitary with respect to $\gamma_0 \in \End_A(E)$, then so is $u^*$ and $-u$.
Furthermore, $u$ is of Fredholm type if and only if $u^*$ is of Fredholm type and~$\big[ \calC(u^*) \big]$ represents the inverse of $\big[ \calC(u) \big]$ in $KK(\C, A)$.
In contrast,
there is no relation between~$\big[\calC(u)\big]$ and~$\big[\calC(-u)\big]$ in
general. Indeed, there is no guarantee that $\calC(-u)$ is Fredholm if $\calC(u)$ is Fredholm.
This is analogous to the indices $\Ind(P_0, P_1)$ and $\Ind(P_0, \one - P_1)$ considered in Section~\ref{sec:HSpace_chiral_unitaries}.
\end{Remark}

To summarise the contents of this subsection, we have defined an index taking values in $KK(\C, A)\cong K_0(A)$
for chiral unitaries $u = \gamma_0 \gamma_1$ acting on the Hilbert $C^*$-module $E_A$ that are of Fredholm type
(see Definition \ref{Def:u_Fred_type}). Proposition \ref{prop:homotopy_of_chiral_unitaries} shows that this index is
indeed a~homotopy invariant of chiral unitaries on $E_A$ of Fredholm type.
Recalling Proposition \ref{Prop:Hspace_Cayley_P0P1}, the index~$\big[\calC(u)\big] \in KK(\C, A)$,
is a direct generalisation of the indices defined for chiral unitaries on Hilbert spaces.

\subsection{The Cayley transform of a pair of projections}

Because any chiral unitary $u \in \End_A(E)$ has a decomposition
$u = (2p_0- \one)(2p_1- \one)$ with $p_0, p_1 \in \End_A(E)$ projections, we can rewrite many of
our results concerning the Cayley transform in terms of $p_0$ and $p_1$ directly.
This Cayley transform can be seen as an equivalent formulation of the
 index of a pair of projections
considered in Section \ref{subsec:Index_pair_projs}.
We first note the following, which can be proved with basic algebra.

\begin{Lemma}[{cf.~\cite[Section 2]{ASS}}] \label{lem:p_q_properties}
Let $p_0, p_1 \in \End_A(E)$ be projections.
\begin{itemize}\itemsep=0pt
 \item[$(i)$] Then the operators $(p_1+p_0- \one)$ and $(p_1-p_0)$ anti-commute and $(p_1-p_0)^2 + (p_1+p_0- \one)^2 = \one$.
 \item[$(ii)$] The operator $(p_1-p_0)^2$ commutes with $p_0$ and $p_1$.
\end{itemize}
\end{Lemma}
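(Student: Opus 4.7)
The plan is to verify both statements by direct algebraic expansion, using only that $p_0$ and $p_1$ are idempotent; the self-adjointness is not needed, and neither is any analytic input. So this is a purely computational lemma in the unital $*$-algebra generated by two idempotents.

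For part (i), I would first compute the two products
\[
 (p_1+p_0-\one)(p_1-p_0) \quad \text{and} \quad (p_1-p_0)(p_1+p_0-\one).
\]
Expanding each and using $p_j^2 = p_j$, the first collapses to $p_0 p_1 - p_1 p_0$ and the second to $p_1 p_0 - p_0 p_1$, so their sum vanishes, giving the anti-commutation. For the identity $(p_1-p_0)^2 + (p_1+p_0-\one)^2 = \one$, I would expand each square separately, getting
\[
 (p_1-p_0)^2 = p_0 + p_1 - p_0 p_1 - p_1 p_0, \qquad
 (p_1+p_0-\one)^2 = \one - p_0 - p_1 + p_0 p_1 + p_1 p_0,
\]
and then add.

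For part (ii), the cleanest route is to deduce it from part (i) rather than expand afresh. Since $(p_1+p_0-\one)$ anti-commutes with $(p_1-p_0)$, applying the anti-commutation twice shows that $(p_1-p_0)^2$ commutes with $(p_1+p_0-\one)$, hence with $p_0+p_1 = (p_1+p_0-\one)+\one$. It trivially commutes with $p_1-p_0$. Writing $p_0 = \tfrac{1}{2}\bigl((p_0+p_1)-(p_1-p_0)\bigr)$ and $p_1 = \tfrac{1}{2}\bigl((p_0+p_1)+(p_1-p_0)\bigr)$, the conclusion follows.

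There is no real obstacle here; the only minor care needed is to make sure the expansions in part (i) use only $p_j^2 = p_j$ and no further identity such as $p_j^* = p_j$, so that the statement holds in the generality stated (in particular, in any unital $C^*$-algebra of adjointable operators without further hypotheses on $p_0$ and $p_1$ beyond being projections).
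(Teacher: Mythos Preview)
Your proposal is correct and is exactly the kind of direct algebraic verification the paper intends; the paper gives no proof and simply remarks that the lemma ``can be proved with basic algebra.'' Your observation that only idempotency (not self-adjointness) is used, and your slick derivation of part~(ii) from the anti-commutation in part~(i), are both valid and nicely streamlined.
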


\begin{Definition}
Let $p_0, p_1 \in \End_A(E)$ be projections. We define the Cayley transform of a~pair of projections as the operator
\[
 \calC(p_1,p_0) = {\rm i}(p_1+p_0- \one)(p_1-p_0)^{-1}, \qquad \Dom\bigl( \calC(p_1,p_0) \bigr) = (p_1-p_0)E_A.
\]
\end{Definition}

\begin{Lemma} \label{lem:C(p_0,p_1)_basics}
The operator $\calC(p_1,p_0)$ is self-adjoint and regular operator on $\ol{(p_1-p_0)E}_A$ and anti-commutes with
$2p_0- \one$.
\end{Lemma}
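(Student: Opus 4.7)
The plan is to reduce the lemma to Lemma~\ref{lem:chiral_cayley_selfadj} applied to the chiral unitary $u = (2p_0-\one)(2p_1-\one) = \gamma_0 \gamma_1$, where $\gamma_j = 2p_j - \one$. Since $\calC(u)$ is already known to be self-adjoint and regular on $\ol{(\one-u)E}_A$ and to anti-commute with $\gamma_0$, all three claims will follow once $\calC(p_1,p_0)$ is identified with $\calC(u)$.

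First I would record the algebraic identities
\[
\one + u = \gamma_0(\gamma_0 + \gamma_1) = 2\gamma_0(p_0 + p_1 - \one), \qquad
\one - u = \gamma_0(\gamma_0 - \gamma_1) = -2\gamma_0(p_1 - p_0),
\]
which are immediate from $\gamma_0^2 = \one$. The second identity gives $(\one-u)E_A = \gamma_0(p_1-p_0)E_A$; combined with the fact, shown in the proof of Lemma~\ref{lem:chiral_cayley_selfadj}, that $\gamma_0$ preserves $(\one-u)E_A$, this forces $\Dom(\calC(p_1,p_0)) = (p_1-p_0)E_A = (\one-u)E_A = \Dom(\calC(u))$. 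Hence the two operators live on the same Hilbert $C^*$-submodule, with common closure $\ol{(p_1-p_0)E}_A = \ol{(\one-u)E}_A$.

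Next I would verify that the operators agree on this common domain by substituting the identities into $\calC(u) = {\rm i}(\one+u)(\one-u)^{-1}$: for $x = (\one-u)v$ the relation $(p_1-p_0)v = -\tfrac{1}{2}\gamma_0 x$ yields
\[
\calC(u) x = {\rm i}(\one+u)v = 2{\rm i}\gamma_0(p_0+p_1-\one)v = -\gamma_0\, \bigl[\,{\rm i}(p_0+p_1-\one)(p_1-p_0)^{-1}\bigr]\, \gamma_0 x = -\gamma_0\, \calC(p_1,p_0)\, \gamma_0 x.
\]
Applying the chirality $\gamma_0 \calC(u)\gamma_0 = -\calC(u)$ from Lemma~\ref{lem:chiral_cayley_selfadj} then gives the identification $\calC(p_1,p_0) = \calC(u)$, so self-adjointness, regularity on $\ol{(p_1-p_0)E}_A$, and the anti-commutation with $2p_0-\one$ all transfer directly.

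The delicate point is the interpretation of the putative inverse $(p_1-p_0)^{-1}$, which is multi-valued whenever $\Ker(p_1-p_0)$ is nontrivial. This is circumvented by the observation that $\gamma_0$ is invertible in $\one-u = -2\gamma_0(p_1-p_0)$, so $\Ker(p_1-p_0) = \Ker(\one-u)$ and the well-definedness of $\calC(p_1,p_0)$ on $(p_1-p_0)E_A$ is simply inherited from that of $\calC(u)$ on $(\one-u)E_A$, already settled via Lance's treatment cited in Lemma~\ref{lem:chiral_cayley_selfadj}.
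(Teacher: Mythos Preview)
Your proposal is correct and follows the same strategy as the paper: reduce to Lemma~\ref{lem:chiral_cayley_selfadj} by identifying $\calC(p_1,p_0)$ with $\calC(u)$ for $u = (2p_0-\one)(2p_1-\one)$. The only difference is a minor algebraic choice: the paper factors $\gamma_1 = 2p_1-\one$ on the \emph{right}, writing $\one+u = 2(p_1+p_0-\one)(2p_1-\one)$ and $(\one-u)^{-1} = \tfrac{1}{2}(2p_1-\one)^{-1}(p_1-p_0)^{-1}$, so that the $\gamma_1$'s cancel immediately and $\calC(u) = \calC(p_1,p_0)$ drops out in one line; you instead factor $\gamma_0$ on the \emph{left}, which first yields $\calC(u) = -\gamma_0\,\calC(p_1,p_0)\,\gamma_0$ and then requires invoking the anti-commutation $\gamma_0\calC(u)\gamma_0 = -\calC(u)$ to close the loop. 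Both are fine, but the right-factorisation is a touch cleaner.
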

\begin{proof}
Defining $u=(2p_0- \one)(2p_1- \one)$ where $(2p_0- \one)u(2p_0- \one) = u^*$, this is just a
restatement of Lemma \ref{lem:chiral_cayley_selfadj} after noting that
\begin{align*}
 &(\one +u) = \bigl(2p_1- \one + 2p_0- \one)(2p_1- \one) = 2(p_1+p_0- \one)(2p_1- \one), \\
 &(\one -u)^{-1} = \bigl((2p_1- \one - (2p_0- \one))(2p_1- \one) \bigr)^{-1} = \frac{1}{2} (2p_1- \one)^{-1}(p_1-p_0)^{-1}. \tag*{\qed}
\end{align*}\renewcommand{\qed}{}
\end{proof}

\begin{Lemma}
Let $p_0, p_1 \in \End_A(E)$ be projections.
If $\|p_0-p_1\|_{\calQ_A(E)} < 1$, then $\calC(p_1,p_0)$ is Fredholm.
\end{Lemma}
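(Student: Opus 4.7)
The plan is to reduce this to the already-proved Fredholmness of the Cayley transform of a chiral unitary of Fredholm type. Note from the computation in the proof of Lemma \ref{lem:C(p_0,p_1)_basics} that, for $u = (2p_0-\one)(2p_1-\one)$, one has $(\one+u) = 2(p_1+p_0-\one)(2p_1-\one)$ and $(\one-u)^{-1} = \tfrac{1}{2}(2p_1-\one)(p_1-p_0)^{-1}$, so on $\Dom(\calC(u)) = (\one-u)E_A = (p_1-p_0)E_A$ the two operators $\calC(u)$ and $\calC(p_1,p_0)$ coincide (using $(2p_1-\one)^2 = \one$). Since $u$ is a chiral unitary with respect to $\gamma_0 = 2p_0-\one$, the result will follow once we show $u$ is of Fredholm type in the sense of Definition \ref{Def:u_Fred_type}.

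For that step, write
\[
 \one - u = \one - (2p_0-\one)(2p_1-\one) = (2p_0-\one)\bigl[(2p_0-\one) - (2p_1-\one)\bigr] = 2(2p_0-\one)(p_0-p_1).
\]
Since $(2p_0-\one)$ is a unitary in $\End_A(E)$, its image in $\calQ_A(E)$ has norm one, and hence
\[
 \|\one - u\|_{\calQ_A(E)} \leq 2\,\|p_0 - p_1\|_{\calQ_A(E)} < 2.
\]
Thus $u$ is of Fredholm type, so by the preceding proposition $\calC(u)$ is Fredholm on $\ol{(\one-u)E}_A = \ol{(p_1-p_0)E}_A$, and therefore so is $\calC(p_1,p_0)$.

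Alternatively, and perhaps more cleanly, one can argue directly using Lemma \ref{lem:unbdd_Fred_condition}(ii). Namely, on $\Dom(\calC(p_1,p_0))$ one computes
\[
 {\rm i} + \calC(p_1,p_0) = {\rm i}\bigl[(p_1-p_0) + (p_1+p_0-\one)\bigr](p_1-p_0)^{-1} = {\rm i}(2p_1-\one)(p_1-p_0)^{-1},
\]
so that $({\rm i} + \calC(p_1,p_0))^{-1} = -{\rm i}(p_1-p_0)(2p_1-\one)$ extends to an adjointable operator on $\ol{(p_1-p_0)E}_A$. Because $(2p_1-\one)$ is a self-adjoint unitary, passing to the generalised Calkin algebra yields
\[
 \big\| ({\rm i} + \calC(p_1,p_0))^{-1} \big\|_{\calQ_A(\ol{(p_1-p_0)E})} \leq \|p_1-p_0\|_{\calQ_A(E)} < 1,
\]
from which Fredholmness of $\calC(p_1,p_0)$ follows.

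The main subtlety, and really the only thing worth checking with care, is that the quotient norm is computed on the correct module: $\calC(p_1,p_0)$ is self-adjoint and regular only after restricting to $\ol{(p_1-p_0)E}_A$, whereas the hypothesis bounds $\|p_0-p_1\|$ in $\calQ_A(E)$. This is harmless because the natural embedding $\ol{(p_1-p_0)E}_A \hookrightarrow E_A$ is isometric and compatible with the quotient by compacts, so the Calkin-norm estimate computed on $E_A$ controls the one on the submodule.
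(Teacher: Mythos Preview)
Your proposal is correct. Your second (direct) argument is precisely the paper's proof: compute ${\rm i} + \calC(p_1,p_0) = {\rm i}(2p_1-\one)(p_1-p_0)^{-1}$, invert, and apply Lemma~\ref{lem:unbdd_Fred_condition}(ii) to the bound $\|(p_1-p_0)(2p_1-\one)\|_{\calQ_A(E)} = \|p_0-p_1\|_{\calQ_A(E)} < 1$.

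Your first argument, reducing to the already-established Fredholmness of $\calC(u)$ for $u=(2p_0-\one)(2p_1-\one)$ of Fredholm type, is a valid and slightly different route. It has the advantage of making the identification $\calC(p_1,p_0)=\calC(u)$ do all the work, so that Definition~\ref{Def:u_Fred_type} and the preceding proposition apply verbatim; the paper instead re-runs the resolvent estimate in the projection variables. Both approaches are equally short, and the paper in fact later records the equivalence $\|p_1-p_0\|_{\calQ_A(E)}<1 \Leftrightarrow \|u-\one\|_{\calQ_A(E)}<2$ (your computation $\one-u = 2(2p_0-\one)(p_0-p_1)$) in the subsequent proposition, so your first argument simply front-loads that observation. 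Your remark about the submodule-versus-module Calkin norm is a fair point of care; the paper asserts the equality without comment, and your inequality is all that is needed.
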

\begin{proof}
We first note that ${\rm i} + \calC(p_1,p_0) = {\rm i}(2p_1- \one)(p_1-p_0)^{-1}$ on $\Dom(\calC(p_1,p_0))$ and so
$({\rm i}+ \calC(p_1,p_0))^{-1} = -{\rm i}(p_1-p_0)(2p_1- \one)$. Therefore, if $\|p_0-p_1\|_{\calQ_A(E)} < 1$, then
\[
 \big\| ({\rm i}+ \calC(p_1,p_0))^{-1} \big\|_{\calQ_{A}( \ol{(p_1-p_0)E} ) } = \| (-{\rm i})(p_1-p_0)(2p_1- \one) \|_{\calQ_A(E)}
 = \|p_0-p_1\|_{\calQ_A(E)} < 1.
\]
The result then follows by Lemma \ref{lem:unbdd_Fred_condition}.
\end{proof}

\begin{Remark}
Let us again briefly study the bounded transform
$F_{\calC(p_1,p_0)} = \calC(p_1,p_0)\bigl( \one + \calC(p_1,p_0)\bigr)^{-1/2}$. Using Lemma \ref{lem:p_q_properties}, we have that
\begin{align*}
 \one + \calC(p_1,p_0)^2 &= \one+ (p_1+p_0- \one)^2(p_1-p_0)^{-2}
 = \bigl( (p_1-p_0)^2 + (p_1+p_0- \one)^2 \bigr) (p_1-p_0)^{-2} \\
 &= (p_1-p_0)^{-2}
\end{align*}
and so $F_{\calC(p_1,p_0)} = {\rm i}(p_1+p_0- \one)(p_1-p_0)^{-1}|p_1-p_0| = {\rm i}(p_1+p_0- \one)V$ with $V$
the completion of the operator $V(p_1-p_0)e = |p_1-p_0|e$ on $(p_1-p_0)E_A$ and $e \in E_A$.
We also note that
\begin{align*}
 \one - F_{\calC(p_1,p_0)}^2 &= \one + (p_1+p_0- \one)(p_1-p_0)^{-1}|p_1-p_0| (p_1+p_0- \one)(p_1-p_0)^{-1}|p_1-p_0| \\
 &= \one - (p_1+p_0- \one)^2 (p_1-p_0)^{-2} |p_1-p_0|^2 \\
 &= (p_1-p_0)^2 + (p_1+p_0- \one)^2 - (p_1+p_0- \one)^2 = (p_1-p_0)^{2}.
\end{align*}
Therefore, $\pi\bigl(F_{\calC(p_1,p_0)}\bigr)$ is unitary in $\calQ_A\bigl( \ol{(p_1-p_0)E} \bigr)$ if and only
if $p_1-p_0 \in \mathbb{K}_A(E)$.
\end{Remark}

The self-adjoint unitary $2p_0- \one$ acts as a $\Z_2$-grading operator for the Hilbert
$C^*$-module $\ol{(p_1-p_0)E}_A$.
We therefore see that for a pair of projections $p_0$ and $p_1$ in $\End_A(E)$ with $\|p_1-p_0\|_{\calQ_A(E)} < 1$,
we can construct a Kasparov module and equivalence class $\big[\calC(p_1, p_0)\big] \in KK(\C,A)$.

Let us now relate the index $\big[\calC(p_1, p_0)\big]$ to the index for chiral unitaries.

\begin{Proposition}
Let $p_0, p_1 \in \End_A(E)$ be projections. Then $p_0$ and $p_1$ are a Fredholm pair of projections
if and only if $u = (2p_0- \one)(2p_1- \one) \in \End_A(E)$ is a chiral unitary with respect to~$(2p_0 - \one)$
of Fredholm type and
\begin{equation} \label{eq:C(u)_and_C(p,q)}
 \big[ \calC(p_1, p_0) \big] = \big[ \calC( u) \big] \in KK(\C, A), \qquad u = (2p_0- \one)(2p_1- \one).
\end{equation}
\end{Proposition}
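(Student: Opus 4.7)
The plan is to establish both halves of the proposition in essentially one algebraic move, using the two distinct factorizations of $\one - u$.

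For the equivalence of Fredholm conditions, I would write $\gamma_j = 2p_j - \one$ and use the factorization $\one - u = \gamma_0(\gamma_0 - \gamma_1) = 2\gamma_0(p_0 - p_1)$. Since $\gamma_0$ is a self-adjoint unitary in $\End_A(E)$, its image in $\calQ_A(E)$ is also unitary, and hence multiplication by $\gamma_0$ is isometric in the quotient norm. This gives $\|\one - u\|_{\calQ_A(E)} = 2\|p_0 - p_1\|_{\calQ_A(E)}$, which immediately translates Fredholm type ($\| \one - u \|_{\calQ_A(E)} < 2$) into the Fredholm pair condition ($\|p_0 - p_1\|_{\calQ_A(E)} < 1$) and vice versa.

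For the $KK$-theoretic equality I intend to show that, under this common hypothesis, the two Kasparov modules $\bigl(\C, \overline{(\one-u)E}_A, \chi(\calC(u))\bigr)$ and $\bigl(\C, \overline{(p_1-p_0)E}_A, \chi(\calC(p_1,p_0))\bigr)$ coincide on the nose. The key is the alternative factorization $\one - u = (\gamma_1 - \gamma_0)\gamma_1 = 2(p_1 - p_0)\gamma_1$, checked by replacing one copy of $\one$ by $\gamma_1^2$. Since $\gamma_1$ is a unitary on $E_A$, it is a bijection of $E_A$ onto itself, and therefore $(\one - u)E_A = 2(p_1 - p_0)\gamma_1 E_A = (p_1 - p_0)E_A$ as subsets of $E_A$. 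Taking closures yields a common underlying Hilbert $C^*$-module for the two Kasparov modules, graded in both cases by $\gamma_0 = 2p_0 - \one$.

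Next, the algebraic manipulation already carried out in the proof of Lemma \ref{lem:C(p_0,p_1)_basics} (expressing $(\one+u)$ and $(\one-u)^{-1}$ in terms of $p_0, p_1$) shows that on this common domain the two Cayley transforms satisfy $\calC(u) = \mathrm{i}(p_1 + p_0 - \one)(p_1 - p_0)^{-1} = \calC(p_1, p_0)$ as self-adjoint regular operators. Applying any normalising function then produces identical bounded self-adjoint Fredholm operators, hence identical even Kasparov modules, proving $[\calC(u)] = [\calC(p_1, p_0)] \in KK(\C, A)$. The only potential obstacle is verifying that $(\one - u)E_A$ and $(p_1 - p_0)E_A$ really agree as subsets of $E_A$ rather than just as abstract Hilbert $C^*$-modules up to isomorphism; the factorization $\one - u = 2(p_1 - p_0)\gamma_1$ combined with $\gamma_1 E_A = E_A$ resolves this, after which everything reduces to unpacking definitions.
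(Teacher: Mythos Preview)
Your proof is correct and follows essentially the same approach as the paper. The paper's argument is terser—it cites Lemma~\ref{lem:C(p_0,p_1)_basics} for the identification $\calC(p_1,p_0) = \calC(u)$ (that lemma's proof already contains the factorization $\one - u = 2(p_1 - p_0)(2p_1 - \one)$ you use for the domain equality) and writes $p_1 - p_0 = \tfrac{1}{2}(2p_0 - \one)(u - \one)$ for the norm equivalence—but the underlying algebra is identical to yours.
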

\begin{proof}
We have already seen in Lemma \ref{lem:C(p_0,p_1)_basics} that
$\calC(p_1, p_0) = \calC(u)$ as unbounded operators on~$E_A$.
Noting that $p_1-p_0 = \tfrac{1}{2} (2p_0 - \one)(u- \one)$,
$\|p_1-p_0\|_{\calQ_A(E)} < 1$ if and only if $\|u - \one\|_{\calQ_A(E)} < 2$.
The result then follows.
\end{proof}

We also have that $\big[\calC( p_0, p_1) \big] = \big[ \calC(u^*) \big] = \big[ \calC(p_1,p_0) \big]^{-1}$ and
$\big[ \calC(-u) \big]=\big[ \calC( p_1, \one - p_0) \big]$ as elements of $KK(\C, A)$.
We can also apply the results of Proposition \ref{prop:homotopy_of_chiral_unitaries} to the pair of projections
setting. Namely, if $[0,1]\ni t \mapsto p_0(t)$ and $[0,1]\ni t \mapsto p_1(t)$ are strongly continuous paths of
projections in $\End_A(E)$ such that $\| p_0(t) - p_1(t) \|_{\calQ_A(E)}< 1$ for all $t\in [0,1]$, then~$\big[ \calC(p_1(t), p_0(t)) \big] \in KK(\C, A)$ will be constant.

Let us now consider the connection of $\big[ \calC(p_1, p_0) \big]$ to more standard
presentations of $K$-theory for operator algebras. Recall that
if $\|p_1-p_0\|_{\calQ_A(E)} < 1$, then there is a well-defined class in relative $K$-theory
$[p_1]-[p_0] \in K_0( \End_A(E), \mathbb{K}_A(E)) \cong K_0(\mathbb{K}_A(E))$.
We relate this relative $K$-theory class to $\big[ \calC(p_1, p_0) \big]$ by the following result.

\begin{Proposition} \label{prop:Cayley_proj_iso}
There is an isomorphism
$\mathfrak{C}\colon K_0( \End_A(E), \mathbb{K}_A(E)) \to KK(\C,A)$ such that
for $p_0, p_1 \in \End_A(E)$ projections with $\|p_1-p_0\|_{\calQ_A(E)} < 1$,
\[
 \mathfrak{C} \bigl( [p_1]-[p_0] \bigr) = \big[ \calC(p_1,p_0) \big].
\]
\end{Proposition}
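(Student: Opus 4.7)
The plan is to \emph{define} $\mathfrak{C}$ as the composition of two standard isomorphisms and then verify the asserted formula on the representative $[p_1]-[p_0]$. Concretely, set
\[
 \mathfrak{C}\colon\ K_0(\End_A(E), \mathbb{K}_A(E)) \xrightarrow{\sim} KK(\C, \mathbb{K}_A(E)) \xrightarrow{\sim} KK(\C, A),
\]
where the first arrow is the isomorphism of Theorem \ref{Theorem:KK_to_K_Kasparov} and the second is the isomorphism induced by the Morita equivalence of $\mathbb{K}_A(E)$ with $A$, valid when $E_A$ is a full Hilbert $C^*$-module. To deal with non-full $E_A$ and with projection pairs in matrix amplifications $M_n(\End_A(E)) = \End_A(E^n)$, first enlarge $E_A$ by a direct summand of $\ell^2(\N,A)$; the resulting module is full by Kasparov stabilisation, and extending the projections by zero on the extra summand does not change their class in relative $K$-theory. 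Since $\mathfrak{C}$ is a composition of isomorphisms it is automatically an isomorphism of abelian groups, so the substantive content of the proposition is the formula.

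Next I would trace the composition on the representative $[p_1]-[p_0]$. By Theorem \ref{Theorem:KK_to_K_Kasparov}, the image in $KK(\C,\mathbb{K}_A(E))$ is the explicit Kasparov module built on $p_1\mathbb{K}_A(E)\oplus p_0\mathbb{K}_A(E)$ from a lift of any unitary in $\calQ_A(E)$ implementing $\pi(p_1) \sim \pi(p_0)$, and the Morita equivalence pushes this to exactly the class $[(p_1,p_0)]\in KK(\C,A)$ of Proposition \ref{prop:Module_index_of_projections}. Hence $\mathfrak{C}([p_1]-[p_0]) = [(p_1,p_0)]$, and the formula reduces to the identity
\[
 [(p_1,p_0)] = \big[\calC(p_1,p_0)\big] \qquad \text{in } KK(\C,A).
\]

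The final identity is the crux. For the Cayley side I would use the bounded transform $F_{\calC(p_1,p_0)} = {\rm i}(p_1+p_0-\one)V$ on $\ol{(p_1-p_0)E}_A$ from equation \eqref{eq:C(u)_bdd_transfrom} (with $u = (2p_0-\one)(2p_1-\one)$), where $V$ is the unitary completion of $(p_1-p_0)e \mapsto |p_1-p_0|e$. The grading by $2p_0-\one$ makes this into a representative of $[\calC(p_1,p_0)]$. For the other side, take the representative of $[(p_1,p_0)]$ from Proposition \ref{prop:Module_index_of_projections} with the \emph{same} lift $V$ of the unitary $\pi(V)\pi(p_1)\pi(V)^* = \pi(p_0)$, which exists by Lemma \ref{lemma:Kas_unit_equiv_projs}. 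Embed both representatives into a common Hilbert module, extending the Cayley representative by a degenerate summand on the orthogonal complement of $\ol{(p_1-p_0)E}_A$ so that the underlying graded module is $E_A$ with grading $2p_0-\one$. By Lemma \ref{lem:p_q_properties}(i), $p_1+p_0-\one$ and $p_1-p_0$ anti-commute; a direct computation using this fact shows that, modulo $\mathbb{K}_A(E)$, the resulting two self-adjoint Fredholm operators anti-commute. Lemma \ref{prop:anticommuting_Fredholms_homotopic} then forces $[(p_1,p_0)] = [\calC(p_1,p_0)]$, as required.

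The main obstacle is this last step: arranging the two Kasparov modules on a common underlying Hilbert $C^*$-module and controlling the anti-commutation modulo compacts when $\ker(p_1-p_0)$ need not be complemented in $E_A$. The key point mitigating this is that only the range-closure $\ol{(p_1-p_0)E}_A$ carries non-trivial spectral information for $\calC(p_1,p_0)$, so the orthogonal piece can be absorbed into a degenerate summand without changing the $KK$-class, at which point Lemma \ref{prop:anticommuting_Fredholms_homotopic} applies cleanly.
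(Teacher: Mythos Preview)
Your approach differs from the paper's in a way that creates a genuine gap. The paper defines $\mathfrak{C}$ as the composition
\[
K_0(\End_A(E),\mathbb{K}_A(E)) \to KK(\Cl_1, A\otimes\Cl_1) \to KK(\C,A),
\]
where the first arrow is the relative Cayley isomorphism of \cite[Section~4]{BKR2} applied to the odd self-adjoint unitaries $(2p_j-\one)\otimes\rho$. This lands \emph{directly} on the module $\ol{(p_1-p_0)E}_A$ with Fredholm operator $(2p_0-\one)(p_1+p_0-\one)(p_1-p_0)^{-1}$; since this operator and $\calC(p_1,p_0)={\rm i}(p_1+p_0-\one)(p_1-p_0)^{-1}$ anti-commute \emph{exactly} on the same domain of the same module, Lemma~\ref{prop:anticommuting_Fredholms_homotopic} applies immediately and finishes the proof.

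You instead define $\mathfrak{C}$ via Theorem~\ref{Theorem:KK_to_K_Kasparov} composed with Morita equivalence, so that $\mathfrak{C}([p_1]-[p_0]) = [(p_1,p_0)]$, and then you must prove $[(p_1,p_0)] = [\calC(p_1,p_0)]$. The problem is that these two Kasparov modules live on genuinely different graded Hilbert $C^*$-modules: $p_1E_A\oplus p_0E_A$ with diagonal grading versus $\ol{(p_1-p_0)E}_A$ with grading $2p_0-\one$. Your sketch of a common embedding is too vague to bridge this. In particular, the polar unitary $V$ is only defined on $\ol{(p_1-p_0)E}_A$, so it is not obviously a lift on all of $E_A$ that you can feed into Proposition~\ref{prop:Module_index_of_projections}; the orthogonal complement of $\ol{(p_1-p_0)E}_A$ need not exist; and Lemma~\ref{prop:anticommuting_Fredholms_homotopic} as stated requires exact anti-commutation, not anti-commutation modulo compacts. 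That the paper itself proves only the existence of an \emph{automorphism} $\phi$ with $\phi([(p_1,p_0)]) = [\calC(p_1,p_0)]$ (Corollary~\ref{Corollary:Ind(p,q)_to_C(p,q)}), rather than equality, is a strong signal that the direct comparison you propose is non-trivial and not something to hand-wave. The $\Cl_1$ detour is precisely what lets the paper avoid the module-matching problem altogether.
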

\begin{proof}
To construct the isomorphism $\mathfrak{C}$, some care is required as the relative $K$-theory
 class $[p_1]-[p_0] \in K_0( \End_A(E), \mathbb{K}_A(E))$
considers $E_A$ as an
ungraded Hilbert $C^*$-module, whereas $\big[ \calC(p_1,p_0) \big] \in KK(\C, A)$ uses that
$E_A$ and $\ol{(p_1-p_0)E}_A$ can be equipped with a $\Z_2$-grading via $\gamma_0 = 2p_0-\one$.
We can amend this discrepancy by considering the Hilbert $C^*$-module
$E_A \hox \Cl_1$ with $\Cl_1$ the $\Z_2$-graded Hilbert $C^*$-module over itself. There is then a
$\Z_2$-graded isomorphism from $E_A \hox \Cl_1$ (with $E_A$ $\Z_2$-graded) to
$E_A \otimes \Cl_1$ (with $E_A$ ungraded) given by $e \hox \rho^k \mapsto e \otimes \rho^{k + |e|}$,
where $\rho$ is the odd self-adjoint generator of $\Cl_1$, $k \in \Z$ and $e \in E_A$ has homogeneous
grading $|e| \in \{0,1\}$. Also note that $\End_{A \otimes \Cl_1}( E \otimes \Cl_1) \cong \End_A(E) \otimes \Cl_1$,
where $\End_A(E)$ is an ungraded algebra.

We define $\mathfrak{C}$ by the following composition
\[
 K_0( \End_A(E), \mathbb{K}_A(E))
 \to KK(\Cl_1, A \otimes \Cl_1) \to KK(\C, A),
\]
where each map is an isomorphism.
For the first map, we use the Cayley isomorphism of stable homotopy classes of odd self-adjoint
unitaries considered in~\cite[Section 4]{BKR2}. Namely, given
$p_0, p_1 \in \End_A(E)$ projections with $\|p_1-p_0\|_{\calQ_A(E)} < 1$,
we can consider the self-adjoint odd unitaries
$(2p_1- \one)\otimes \rho, (2p_0- \one)\otimes \rho \in \End_A(E)\otimes \Cl_1$.
Applying the results of~\cite[Section 4]{BKR2}, we have an isomorphism
$K_0( \End_A(E), \mathbb{K}_A(E)) \to KK(\Cl_1, A \otimes \Cl_1)$ given by
\[
 [p_1]-[p_0] \mapsto
 \big[\bigl( \Cl_1, \ol{(p_1-p_0)E}_A \otimes \Cl_1, (2p_0- \one)( p_1+p_0-\one) (p_1-p_0)^{-1} \otimes \rho \bigr) \big],
\]
where the left $\Cl_1$-action on $\ol{(p_1-p_0)E}_A \otimes \Cl_1$ is generated by $\gamma_0 \otimes \rho$
and $(2p_0- \one)( p_1+p_0-\one) (p_1-p_0)^{-1} \otimes \rho$
is the relative Cayley transform of $(2p_1- \one)\otimes \rho$ and $(2p_0- \one)\otimes \rho$.

To recover $\big[ \calC(p_1, p_0) \big]$, we now restore the $\Z_2$-grading on $\ol{(p_1-p_0)E}_A$,
which can be done by the $\Z_2$-graded isomorphism $\ol{(p_1-p_0)E}_A \otimes \Cl_1 \to \ol{(p_1-p_0)E}_A \hox \Cl_1$,
where $e \otimes \rho^k \mapsto e \hox \rho^{k+|e|}$,
$k \in\Z$ and $e \in \ol{(p_1-p_0)E}_A$ is such that $\gamma_0 e = (-1)^{|e|}e$. Note also that
this isomorphism is compatible with the domain of $(2p_0- \one)( p_1+p_0- \one) (p_1-p_0)^{-1}\otimes \rho$.
The isomorphism $\ol{(p_1-p_0)E}_A \otimes \Cl_1 \cong \ol{(p_1-p_0)E}_A \hox \Cl_1$ gives us the
unitarily equivalent Kasparov module
\[
\big[ \bigl( \Cl_1, \ol{(p_1-p_0)E}_A \hox \Cl_1 , (2p_0- \one)( p_1+p_0 - \one) (p_1-p_0)^{-1} \hox \one \bigr) \big],
\]
which we can now split into an external product
\[
 \big[ \bigl( \C, \ol{(p_1-p_0)E}_A , (2p_0- \one)( p_1+p_0 - \one) (p_1-p_0)^{-1} \bigr) \big] \hat\otimes_\C
 \big[ ( \Cl_1, \Cl_1, 0 ) \big].
\]
The equivalence class
$\big[ ( \Cl_1, \Cl_1, 0 ) \big]$ is the ring identity in $KK(\Cl_1,\Cl_1)$ and therefore this splitting implements
the isomorphism $KK\bigl(\Cl_1, A \hox \Cl_1\bigr)\xrightarrow{\simeq} KK(\C, A)$.

Composing the above operations gives the map $\mathfrak{C}\colon K_0( \End_A(E), \mathbb{K}_A(E)) \to KK(\C,A)$, i.e.,
\[
 \mathfrak{C} \bigl( [p_1]-[p_0] \bigr) = \big[ \bigl( \C, \ol{(p_1-p_0)E}_A , (2p_0- \one)( p_1+p_0 - \one) (p_1-p_0)^{-1} \bigr) \big].
\]
Finally, we observe that the operators $(2p_0- \one)( p_1+p_0 - \one) (p_1-p_0)^{-1}$ and $i(p_1+p_0- \one)(p_1-p_0)^{-1}$ have the
same domain of definition and anti-commute. Hence they will define the same class in~$KK(\C, A)$ by
Proposition \ref{prop:anticommuting_Fredholms_homotopic}. Thus $\mathfrak{C} \bigl( [p_1]-[p_0] \bigr) = \big[ \calC(p_1,p_0) \big]$.
\end{proof}

Recalling Section \ref{subsec:Index_pair_projs}, if $E_A$ is a full Hilbert $C^*$-module and
$\|p_1-p_0\|_{\calQ_A(E)} < 1$ then
 there is also an isomorphism $K_0( \End_A(E), \mathbb{K}_A(E)) \xrightarrow{\simeq} KK(\C,A)$ that maps
 $[p_1] - [p_0]$ to
\[
 [(p_1, p_0)] = \bigg[ \biggl( \C, p_1 E_A \oplus p_0 E_A, \begin{pmatrix} 0 & p_1 V^* p_0 \\ p_0 V p_1 & 0 \end{pmatrix} \biggr) \bigg]
 \in KK(\C, A),
\]
with $V \in \End_A(E)$ such that $\pi(V) \in \calQ_A(E)$ is unitary and
$\pi( V p_1 V^*) = \pi(p_0)$.
A direct comparison of $\big[\calC(p_1,p_0)\big]$ and $[(p_1, p_0)]$ in $KK(\C, A)$ is difficult, but we can at
least say the following.

\begin{Corollary} \label{Corollary:Ind(p,q)_to_C(p,q)}
If $E_{\!A}$ is a full Hilbert $C^*$-module, then there is an automorphism $\phi$ of $K\!K(\C,\! A)$ such
that $\phi\bigl( [(p_1, p_0)] \bigr) = \big[ \calC(p_1, p_0) \big]$.
\end{Corollary}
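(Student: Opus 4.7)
The plan is to observe that both equivalence classes $[(p_1, p_0)]$ and $\bigl[\calC(p_1, p_0)\bigr]$ arise as images of one and the same element of relative $K$-theory, namely $[p_1] - [p_0] \in K_0\bigl(\End_A(E), \mathbb{K}_A(E)\bigr)$, under two independently constructed isomorphisms landing in $KK(\C, A)$. Defining $\phi$ as the composition of one of these isomorphisms with the inverse of the other then yields the required automorphism for free, without having to exhibit an explicit $KK$-equivalence between the two Kasparov modules.

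Concretely, Proposition \ref{prop:Cayley_proj_iso} already supplies an isomorphism $\mathfrak{C}\colon K_0\bigl(\End_A(E), \mathbb{K}_A(E)\bigr) \to KK(\C, A)$ with $\mathfrak{C}\bigl([p_1] - [p_0]\bigr) = \bigl[\calC(p_1, p_0)\bigr]$. On the other side, Theorem \ref{Theorem:KK_to_K_Kasparov} gives an isomorphism $\Psi\colon K_0\bigl(\End_A(E), \mathbb{K}_A(E)\bigr) \to KK\bigl(\C, \mathbb{K}_A(E)\bigr)$ sending $[p_1] - [p_0]$ to the off-diagonal Kasparov module that represents $\bigl[(p_1, p_0)_{\mathbb{K}_A(E)}\bigr]$. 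Now the fullness hypothesis on $E_A$ enters: it ensures that $\mathbb{K}_A(E)$ is Morita equivalent to $A$, which produces an isomorphism $\mathcal{M}\colon KK\bigl(\C, \mathbb{K}_A(E)\bigr) \to KK(\C, A)$ sending $\bigl[(p_1, p_0)_{\mathbb{K}_A(E)}\bigr]$ to $[(p_1, p_0)]$, exactly as recorded in the Remark following Proposition \ref{prop:Module_index_of_projections}.

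Setting $\phi = \mathfrak{C} \circ \Psi^{-1} \circ \mathcal{M}^{-1}$ yields an automorphism of $KK(\C, A)$ as a composition of three isomorphisms, and tracking $[(p_1, p_0)]$ through the chain gives $\phi\bigl([(p_1, p_0)]\bigr) = \mathfrak{C}\bigl([p_1] - [p_0]\bigr) = \bigl[\calC(p_1, p_0)\bigr]$.

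The main conceptual obstacle, which this route neatly sidesteps, is the absence of any direct Kasparov-module equivalence between the off-diagonal Fredholm operator on $p_1 E_A \oplus p_0 E_A$ representing $[(p_1, p_0)]$ and the odd self-adjoint operator $\calC(p_1, p_0)$ on $\overline{(p_1 - p_0)E}_A$: these Kasparov modules are built on visibly distinct Hilbert $C^*$-modules, so no unitary equivalence or compact perturbation can relate them. Factoring through the common ungraded preimage $[p_1] - [p_0]$ in relative $K$-theory is precisely what allows the identification to be made at the level of $KK(\C, A)$ without demanding such an explicit equivalence.
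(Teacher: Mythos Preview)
Your proof is correct and follows exactly the same route as the paper's own argument: the paper defines $\phi = \mathfrak{C} \circ \zeta$ with $\zeta$ the composition of the Morita equivalence $KK(\C, A) \cong KK(\C, \mathbb{K}_A(E))$ and the inverse of the isomorphism from Theorem~\ref{Theorem:KK_to_K_Kasparov}, which is precisely your $\mathfrak{C} \circ \Psi^{-1} \circ \mathcal{M}^{-1}$. Your write-up in fact spells out the tracking of $[(p_1,p_0)]$ through the chain more explicitly than the paper's one-line proof does.
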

\begin{proof}
We define $\phi$ as $\mathfrak{C} \circ \zeta$ with $\mathfrak{C}$ the isomorphism from
Proposition \ref{prop:Cayley_proj_iso} and $\zeta$ the composition of the Morita equivalence
$KK(\C, A) \cong KK(\C, \mathbb{K}_A(E))$ with the inverse of the isomorphism from Theorem \ref{Theorem:KK_to_K_Kasparov}.
\end{proof}

Recalling that $\big[ \calC(p_1, p_0) \big] = \big[ \calC( u) \big]$ for $u = (2p_0- \one)(2p_1- \one)$,
our results show that the $K$-theory group $K_0(A)$ can be regarded as stable homotopy classes of
chiral unitaries acting on Hilbert $C^*$-modules over $A$. The Cayley transform provides an isomorphism
of such unitaries to $KK(\C, A)$.
As our results show, the Cayley transform on Hilbert $C^*$-modules provides
a~convenient framework to pass between various pictures of $K$-theory for operator algebras and
so may
be useful for future studies in noncommutative index theory.
Similar results for graded and real $K$-theory were also considered in~\cite{B21, BKR2}.

\subsection{Connection to the total symmetry index of a chiral unitary}

To better connect our indices on Hilbert $C^*$-modules to the Hilbert space indices studied in
Section \ref{sec:HSpace_chiral_unitaries},
we now define an analogue of the index $\operatorname{si}(U) = \Ind_{\Gamma_0}(U)$ in the Hilbert $C^*$-module setting.
Like the Hilbert space setting, we will show that this index can be decomposed into a sum of the
indices that we have defined using the Cayley transform.
Like the previous subsections, we will
work in a fixed countably
generated Hilbert $C^*$-module $E_A$ over a $\sigma$-unital $C^*$-algebra~$A$.

\begin{Proposition}
Let $u \in \End_A(E)$ be a chiral unitary with respect to $\gamma_0 \in \End_A(E)$. If
$\| u+ u^*\|_{\calQ_A(E)} < 2$, then $\tfrac{1}{2\rm{i}}(u-u^*)$ is a self-adjoint Fredholm operator in $\End_A(E)$
that anti-commutes with $\gamma_0$.
\end{Proposition}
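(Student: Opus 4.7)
The plan is to set $Q = \tfrac{1}{2{\rm i}}(u-u^*)$ and verify the three claims---self-adjointness, anti-commutation with $\gamma_0$, and Fredholmness---in that order. The first two are essentially algebraic, while the Fredholmness follows by reducing to Lemma~\ref{lem:unbdd_Fred_condition}(i).

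Self-adjointness is immediate since $(u-u^*)^*=-(u-u^*)$, so the factor $\tfrac{1}{2{\rm i}}$ gives $Q^*=Q$. For the anti-commutation, the chiral relation $\gamma_0 u\gamma_0=u^*$ together with its adjoint $\gamma_0 u^*\gamma_0=u$ gives $\gamma_0(u-u^*)\gamma_0=-(u-u^*)$, hence $\gamma_0 Q=-Q\gamma_0$.

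For Fredholmness, I would first note that $\|Q\|\le 1$ since $u$ is unitary, so Lemma~\ref{lem:unbdd_Fred_condition}(i) applies: it suffices to verify $\|\one-Q^2\|_{\calQ_A(E)}<1$. The key observation is the identity
\[
 \one-Q^2 \;=\; \one+\tfrac{1}{4}(u-u^*)^2 \;=\; \one+\tfrac{1}{4}\bigl(u^2+u^{*2}-2\bigr) \;=\; \tfrac{1}{4}(u+u^*)^2,
\]
where I have used $uu^*=u^*u=\one$. Since $u+u^*$ is self-adjoint, so is its image in $\calQ_A(E)$, and therefore $\|(u+u^*)^2\|_{\calQ_A(E)}=\|u+u^*\|_{\calQ_A(E)}^2$. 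The hypothesis $\|u+u^*\|_{\calQ_A(E)}<2$ then yields $\|\one-Q^2\|_{\calQ_A(E)}<1$, and Fredholmness follows.

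There is no real obstacle here; the only thing to spot is the clean identity $\one-Q^2=\tfrac{1}{4}(u+u^*)^2$, which converts the spectral hypothesis on $u+u^*$ directly into the Calkin-norm condition required by Lemma~\ref{lem:unbdd_Fred_condition}(i). The anti-commutation with $\gamma_0$ is incidental to Fredholmness but is worth recording because it exhibits $Q$ as an odd self-adjoint Fredholm operator, and thus puts it in the natural setting of the symmetry index $\operatorname{si}(Q)$ from Section~\ref{subsec:suzuki_index}.
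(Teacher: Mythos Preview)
Your proof is correct and matches the paper's own argument essentially line for line: the same identity $\one-Q^2=\tfrac{1}{4}(u+u^*)^2$ followed by Lemma~\ref{lem:unbdd_Fred_condition}(i), together with the immediate checks of self-adjointness and anti-commutation. If anything, your explicit note that $\|Q\|\le 1$ (needed to invoke part~(i) of the lemma) is a small clarification the paper leaves implicit.
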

\begin{proof}
Letting $T = \tfrac{1}{2\rm{i}}(u-u^*)$, we see that
\[
 \one - T^2 = \one + \frac{1}{4}\bigl(u^2 + u^{-2} -2\bigr) = \frac{1}{4}\bigl(2 + u^2 + u^{-2}\bigr) = \frac{1}{4}(u+u^*)^2.
\]
In particular $\big\| \one - T^2 \|_{\calQ_A(E)} = \tfrac{1}{4} \bigl( \|u+u^*\|_{\calQ_A(E)}\bigr)^2$ and $\pi(T)\in \calQ_A(E)$
is invertible if $\| u+ u^*\|_{\calQ_A(E)} < 2$ by Lemma \ref{lem:unbdd_Fred_condition}. The property that
$T=T^*$ is immediate and $T\gamma_0 + \gamma_0 T=0$ is a~simple check.
\end{proof}

Because $\tfrac{1}{2\rm{i}}(u-u^*)$ is an odd self-adjoint Fredholm operator on $E_A = (\one + \gamma_0)E_A \oplus (\one -\gamma_0)E_A$,
it defines a class $\big[ \tfrac{1}{2\rm{i}}(u-u^*) \big] \in KK(\C, A) \cong K_0(A)$.

Much like the Hilbert space setting in Section \ref{subsec:Ind(p,q)_HSpace}, we can relate $\big[ \tfrac{1}{2\rm{i}}(u-u^*) \big]$
to the sum of an index of a pair of projections in $E_A$.

\begin{Proposition} \label{prop:C*Mod_Suzuki_is_Ind(p,q)}
Let $u = (2p_0-\one)(2p_1- \one) \in \End_A(E)$ be a chiral unitary in $\End_A(E)$.
If~$\|p_0-p_1\|_{\calQ_A(E)} < \tfrac{1}{\sqrt{2}}$ and $\|p_0+p_1- \one\|_{\calQ_A(E)} < \tfrac{1}{\sqrt{2}}$, then
\[
 [(p_0, p_1)] \oplus [(p_0, \one -p_1)] = \big[ \tfrac{1}{2\rm{i}}(u-u^*) \big] \in KK(\C, A ).
\]
\end{Proposition}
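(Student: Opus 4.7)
My plan is to represent both sides as even Kasparov modules on $p_0 E_A \oplus (\one-p_0) E_A$ whose off-diagonal blocks factor as $(\one-p_0) p_1 p_0$ times a positive invertible (modulo compacts) element commuting with $p_0$, then to match them by absorbing a phase and using a straight-line homotopy in the positive invertible cone. First, the norm hypotheses imply that $(p_0,p_1)$ and $(p_0,\one-p_1)$ are Fredholm pairs; a direct expansion using $\gamma_j = 2p_j-\one$ combined with $(p_0-p_1)^2+(p_0+p_1-\one)^2=\one$ from Lemma~\ref{lem:p_q_properties} gives $u+u^* = 2\one - 4(p_0-p_1)^2$ and $\|u+u^*\|_{\calQ_A(E)}<2$, so the preceding proposition makes $Q := \tfrac{1}{2i}(u-u^*) = -2i(p_0p_1-p_1p_0)$ a bounded odd self-adjoint Fredholm operator on $(E_A,\gamma_0)$ with off-diagonal block $Q_+ = (\one-p_0) Q p_0 = 2i(\one-p_0) p_1 p_0$. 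By Lemma~\ref{lem:[(p,q)]_properties}(ii), $[(p_0,\one-p_1)] = [(p_1,\one-p_0)]$, and Proposition~\ref{prop:Index_pair_projection_sum_Kasmod} applied to the Fredholm chain $p_0,p_1,\one-p_0$ represents the sum $[(p_0,p_1)] \oplus [(p_0,\one-p_1)]$ by
\[
 \biggl(\C,\ p_0 E_A \oplus (\one-p_0) E_A,\ \begin{pmatrix} 0 & V^* p_1 W^* \\ W p_1 V & 0 \end{pmatrix}\biggr),
\]
with $V, W \in \End_A(E)$ lifting unitaries in $\calQ_A(E)$ conjugating $\pi(p_0)\to\pi(p_1)$ and $\pi(p_1)\to\pi(\one-p_0)$ respectively.

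Second, I would take Kasparov's canonical choices $V_0 = p_1 p_0 + (\one-p_1)(\one-p_0)$ and $W_0 = (\one-p_0)p_1 + p_0(\one-p_1)$. Short computations give $V_0^* V_0 = \one - (p_1-p_0)^2$ and $W_0^* W_0 = (p_1-p_0)^2 = \one - (p_0+p_1-\one)^2$, both invertible in $\calQ_A(E)$ under the hypotheses, so the $\calQ_A(E)$-polar decompositions $V = V_0 (V_0^* V_0)^{-1/2}$ and $W = W_0 (W_0^* W_0)^{-1/2}$ (inverses lifted from $\calQ_A(E)$) yield unitaries with the correct conjugation property. The crucial identity $W_0 p_1 V_0 = (\one-p_0) p_1 p_0$ is immediate from $V_0 p_0 = p_1 p_0$ and $W_0 p_1 = (\one-p_0) p_1$; since $(p_1-p_0)^2$ commutes with $p_0$ and $p_1$ by Lemma~\ref{lem:p_q_properties}(ii), so do the normalizing factors, and we obtain
\[
 \pi(W p_1 V) = \pi\bigl((\one-p_0) p_1 p_0\bigr) \cdot \pi(h_1),
\]
for $h_1 = (W_0^* W_0)^{-1/2} (V_0^* V_0)^{-1/2}$, a positive invertible in $\calQ_A(E)$ commuting with $p_0$.

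Finally, any normalizing function $\chi$ applied to $Q$ produces an off-diagonal block $(\one-p_0)\chi(Q) p_0 = 2i(\one-p_0) p_1 p_0 \cdot h_2$ with $h_2 = \chi(|Q|)|Q|^{-1}$ positive invertible in the Calkin algebra and commuting with $p_0$ (since $Q^2 = 4(p_0-p_1)^2(p_0+p_1-\one)^2$ commutes with $p_0$). Conjugating the Kasparov module for $[Q]$ by the even unitary $\mathrm{diag}(\one,-i\one)$ absorbs the phase $2i$ into a positive factor $2$, so both Kasparov modules now have off-diagonal of the form $(\one-p_0)p_1p_0 \cdot h$, with $h$ positive invertible in $\calQ_A(E)$ commuting with $p_0$. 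The straight-line interpolation $t \mapsto (1-t)h_1 + 2t h_2$ remains in the positive invertible cone of $\calQ_A(E)$ (which is convex), giving a norm-continuous homotopy of Fredholm operators on $E_A$ whose Kasparov-module normalizations via $\chi$ provide a homotopy between the two representatives, as in Proposition~\ref{prop:homotopy_of_chiral_unitaries}. The main obstacle will be making the last step fully rigorous: ensuring $\one - F_t^2 \in \mathbb{K}_A(E)$ along the entire path, which reduces to uniform invertibility of the interpolating positive factor in $\calQ_A(E)$ and is guaranteed by convexity of the positive cone.
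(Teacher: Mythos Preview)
Your argument is correct and follows the same architecture as the paper's proof: both verify $\|u+u^*\|_{\calQ_A(E)}<2$ from the hypotheses, invoke Proposition~\ref{prop:Index_pair_projection_sum_Kasmod} to realise the sum $[(p_0,p_1)]\oplus[(p_1,\one-p_0)]$ as a single Kasparov module on $p_0E_A\oplus(\one-p_0)E_A$ with off-diagonal $Wp_1V$, and use Lemma~\ref{lem:[(p,q)]_properties}(ii) to exchange $[(p_1,\one-p_0)]$ for $[(p_0,\one-p_1)]$. The paper, however, simply \emph{asserts} that $\big[\tfrac{1}{2\mathrm{i}}(u-u^*)\big]$ ``can be concretely expressed'' by the Kasparov module with off-diagonal $Wp_1V$, without exhibiting any link between $Q_+=2\mathrm{i}(\one-p_0)p_1p_0$ and $Wp_1V$ beyond the fact that both are Fredholm from $p_0E_A$ to $(\one-p_0)E_A$. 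You supply that link: choosing Kasparov's canonical intertwiners $V_0=p_1p_0+(\one-p_1)(\one-p_0)$ and $W_0=(\one-p_0)p_1+p_0(\one-p_1)$ yields $W_0p_1V_0=(\one-p_0)p_1p_0$ on the nose, and the polar corrections $(V_0^*V_0)^{-1/2}$, $(W_0^*W_0)^{-1/2}$ together with the normalising factor $\chi(|Q|)|Q|^{-1}$ are all positive Calkin-invertibles in the commutative $C^*$-subalgebra generated by $(p_0-p_1)^2$, hence commute with $p_0$ and $p_1$ and can be connected by a straight line in the positive cone. This makes the identification that the paper leaves implicit completely explicit. One small point: your interpolating family is self-adjoint only because $h_1$ and $h_2$ commute with both $p_0$ \emph{and} $p_1$ (not just $p_0$); you have this since both are functions of $(p_0-p_1)^2$, so the final homotopy is indeed through odd self-adjoint Fredholm operators as required.
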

\begin{proof}
We first note that
\[
 u + u^* = \frac{1}{2} \bigl( (u+ \one)^*(u+ \one) - (u- \one)^*(u- \one) \bigr) = 2\bigl( (p_0+p_1- \one)^2 - (p_1-p_0)^2 \bigr),
\]
which we can use to estimate $\| u+ u^*\|_{\calQ_A(E)}\! < \!2$ if
$\|p_0-p_1\|_{\calQ_A(E)}\! <\! \tfrac{1}{\sqrt{2}}$ and ${\|p_0+p_1\!-\! \one\|_{\calQ_A(E)}\! <\!\! \tfrac{1}{\sqrt{2}}}$.
We also have unitaries $v, w \in \calQ_{A}(E)$
such that
\[
 v \pi(p_0) v^* = \pi(p_1), \qquad w \pi(p_1) w^* = \pi(\one - p_0).
\]
Under the decomposition $E_A = p_0 E_A \oplus (\one - p_0)E_A$, one finds that
\[
 \frac{1}{2\rm{i}}(u-u^*) = 2{\rm i}\begin{pmatrix} 0 & -p_0 p_1 (\one - p_0) \\ (\one - p_0) p_1 p_0 & 0 \end{pmatrix}
\]
(see the beginning of proof of Proposition \ref{prop:HSpace_SuzukiIndex_is_ProjectionIndex} for further details).
In particular, the class $\big[ \tfrac{1}{2\rm{i}}(u-u^*) \big] \in KK(\C,A)$ can be concretely expressed via the Kasparov module
\[
 \biggl( \C, p_0 E_A \oplus (\one - p_0) E_A, \begin{pmatrix} 0 & V^* p_1 W^* \\ W p_1 V & 0 \end{pmatrix} \biggr),
\]
where $V, W \in \End_A(E)$ are lifts of $v$ and $w \in \calQ_A(E)$, respectively.
However, recalling Proposition~\ref{prop:Index_pair_projection_sum_Kasmod}, this Kasparov module precisely represents
the sum $[(p_0, p_1)] \oplus [(p_1, \one - p_0)] \in KK(\C, A)$. Applying part (ii) of Lemma \ref{lem:[(p,q)]_properties},
where therefore have that
\[
 \big[ \tfrac{1}{2\rm{i}}(u-u^*) \big] = [(p_0, p_1)] \oplus [(p_1, \one - p_0)] = [(p_0, p_1)] \oplus [(p_0, \one -p_1)]. \tag*{\qed}
\]\renewcommand{\qed}{}
\end{proof}

\begin{Remarks}\quad
\begin{itemize}\itemsep=0pt
 \item[(i)] Let ${\rm Inv}$ be the automorphism $[x]\mapsto [x]^{-1}$ on $KK(\C, A)$. Then supposing
 that $E_A$ is a full Hilbert $C^*$-module, we can use the automorphism $\phi$ of $KK(\C, A)$ from
 Corollary \ref{Corollary:Ind(p,q)_to_C(p,q)} and equation \eqref{eq:C(u)_and_C(p,q)} on p.~\pageref{eq:C(u)_and_C(p,q)}
 to infer that
 \begin{align*}
 ( {\rm Inv}\circ \phi) \bigl( \big[ \tfrac{1}{2\rm{i}}(u-u^*) \big] \bigr)& = \big[ \calC(p_1, p_0) \big] \oplus \big[ \calC(\one - p_1, p_0) \big]\\
 & = \big[ \calC(u) \big] \oplus \big[ \calC( -u) \big ] \in KK(\C, A).
 \end{align*}
 \item[(ii)] The hypotheses $\|p_0-p_1\|_{\calQ_A(E)} < \tfrac{1}{\sqrt{2}}$ and $\|p_0+p_1- \one\|_{\calQ_A(E)} < \tfrac{1}{\sqrt{2}}$
 in Proposition \ref{prop:C*Mod_Suzuki_is_Ind(p,q)}
 are most likely far from optimal and we expect that assumed bounds
 can be weakened to $\|p_0-p_1\|_{\calQ_A(E)} < 1$ and
 $\|p_0+p_1- \one\|_{\calQ_A(E)} < 1$. We leave this question to another place.
\end{itemize}
\end{Remarks}

\subsection{Connection to the generator/Hamiltonian} \label{subsec:Module_generator_index}

We once again consider the countably generated Hilbert $C^*$-module $E_A$, a
self-adjoint regular operator $H\colon \Dom(H) \to E_A$ and a self-adjoint
unitary $\gamma_0 \in \End_A(E)$ such that
\[
 \gamma_0 \cdot \Dom(H) \subset \Dom(H),
 \qquad \gamma_0 H \gamma_0 = -H.
\]

We can then define the unitaries $\pm {\rm e}^{{\rm i}\pi H} \in \End_A(E)$, which play the role of
 discrete time step operators and are chiral unitaries with respect to $\gamma_0$.

From the perspective of topological phases of matter, we are often interested in low-energy effects
and properties of Hamiltonians. Hence we may also wish to consider $\eta(H) \in \End_A(E)$ with $\eta\colon \R \to [-1,1]$ a
continuous, odd and non-decreasing function such that $\eta^{-1}(\{0\}) = \{0\}$ and
$\lim_{x\to \infty} \eta(x) = 1$.\footnote{The function $\eta$ is clearly very similar to a normalising function
considered in Definition \ref{def:Reg_fn}.}
Because $\eta$ is odd, $\gamma_0 \eta(H) = - \eta(H) \gamma_0$ and so ${\rm e}^{{\rm i} \pi \eta(H)}$ is a
chiral unitary with respect to $\gamma_0$.

We are particularly interested in the unitary $U_H := -{\rm e}^{{\rm i}\pi \eta(H)}$. Our sign convention is largely
motivated by the following result.

\begin{Theorem} \label{Theorem:Generator_and_unitary_same_class}
Let $H$ be a self-adjoint regular operator on $E_A$ anti-commuting with
a self-adjoint unitary $\gamma_0 \in \End_A(E)$.
If $H$ is Fredholm, then the chiral unitary
$U_H = -{\rm e}^{{\rm i} \pi \eta(H)}$ is of Fredholm type and
$[ H ] = \big[ \calC(U_H) \big] \in KK(\C, A) \cong K_0(A)$.
\end{Theorem}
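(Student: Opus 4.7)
The plan is to reduce everything to continuous functional calculus on $H$ together with the compact-perturbation invariance of the $KK$-class, after a brief homotopy adjustment of $\eta$.

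First I would verify that $U_H$ is of Fredholm type. Since $H$ is Fredholm, Proposition \ref{prop:Fredholm_properties} says that $\pi(F_H)\in\calQ_A(E)$ is a self-adjoint unitary with spectrum in $\{\pm 1\}$, so for any $g\in C(\R,\C)$ with $g(\pm\infty)=0$ one has $g(H)\in\mathbb{K}_A(E)$ via the substitution $g=(g\circ\phi^{-1})(F_H)$ with $\phi(x)=x(1+x^2)^{-1/2}$. The function $x\mapsto \eta(x)^2-1$ has this property, so $\eta(H)^2-\one\in\mathbb{K}_A(E)$ and therefore $\pi(\eta(H))$ is itself a self-adjoint unitary in $\calQ_A(E)$ with spectrum in $\{\pm 1\}$. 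Applied once more to the function $y\mapsto 1+e^{{\rm i}\pi y}$, which vanishes at $y=\pm 1$, this yields $\one-U_H=\one+e^{{\rm i}\pi\eta(H)}\in\mathbb{K}_A(E)$. In particular $\|\one-U_H\|_{\calQ_A(E)}=0<2$, so $U_H$ is of Fredholm type.

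Next I would use Proposition \ref{prop:homotopy_of_chiral_unitaries} to reduce to a convenient $\eta$. A convex interpolation between the given $\eta$ and any smooth $\tilde\eta$ with $\tilde\eta(\R)\subset(-1,1)$ satisfies the same axioms along the path; the corresponding unitaries $-e^{{\rm i}\pi\eta_s(H)}$ form a strongly continuous path of chiral unitaries (with fixed $\gamma_0$), each of Fredholm type by the argument above, so $[\calC(U_H)]$ is unchanged. Thus I may assume $\eta(\R)\subset(-1,1)$, whence $\one-U_H=\one+e^{{\rm i}\pi\eta(H)}$ is injective with dense range and $\ol{(\one-U_H)E}_A=E_A$. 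Then $\calC(U_H)$ is a self-adjoint regular operator on the full module $E_A$, and the formal computation using $(1-e^{{\rm i}\theta})/(1+e^{{\rm i}\theta})=-{\rm i}\tan(\theta/2)$ identifies $\calC(U_H)=\tan(\pi\eta(H)/2)$. Since $\eta(x)\in(-1,1)$, a routine manipulation (with $\tan\theta/\sqrt{1+\tan^2\theta}=\sin\theta$ for $\theta\in(-\pi/2,\pi/2)$) gives the bounded transform $F_{\calC(U_H)}=\sin(\pi\eta(H)/2)\in\End_A(E)$.

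Finally I would compare the Kasparov modules. Both $F_H=H(\one+H^2)^{-1/2}$ and $F_{\calC(U_H)}=\sin(\pi\eta(H)/2)$ are odd (by oddness of $\phi$ and of $\sin(\pi\eta(\cdot)/2)$ together with $\gamma_0 H\gamma_0=-H$), self-adjoint, and Fredholm on the $\Z_2$-graded module $E_A$. Their difference is $g(H)$ where $g(x)=x/\sqrt{1+x^2}-\sin(\pi\eta(x)/2)$ is continuous and satisfies $g(0)=0$ and $g(\pm\infty)=\pm 1\mp 1=0$; by the vanishing-at-infinity principle from the first paragraph, $g(H)\in\mathbb{K}_A(E)$. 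Hence $(\C,E_A,F_H)$ and $(\C,E_A,F_{\calC(U_H)})$ are compact perturbations of each other and define the same class in $KK(\C,A)$, giving $[H]=[\calC(U_H)]$.

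The main obstacle is the domain issue: a priori $\calC(U_H)$ lives on the closed submodule $\ol{(\one-U_H)E}_A$, which need not be complemented in $E_A$ nor equal to it. The homotopy step in paragraph two dissolves this problem by allowing us to replace $\eta$ with a strict variant; without that reduction one would need to separately account for the spectral subspaces of $H$ on which $\eta(H)=\pm 1$ and match their contribution to the $KK$-class, which is awkward in the $C^*$-module setting where orthogonal complements behave poorly.
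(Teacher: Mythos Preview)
There is a genuine gap: you repeatedly use that $g(H)\in\mathbb{K}_A(E)$ for every $g\in C_0(\R)$, but this characterises operators with \emph{compact resolvent}, not Fredholm operators. Proposition~\ref{prop:Fredholm_properties} only guarantees an $\varepsilon>0$ with $\varphi(H)\in\mathbb{K}_A(E)$ for $\varphi\in C_c(-\varepsilon,\varepsilon)$, and in general $\one-F_H^2=(\one+H^2)^{-1}$ is \emph{not} compact, so $\pi(F_H)$ does not have spectrum contained in $\{\pm 1\}$. This undermines your first paragraph (you cannot conclude $\one-U_H\in\mathbb{K}_A(E)$; the correct bound $\|\one-U_H\|_{\calQ_A(E)}<2$ requires the decomposition $g=\tilde g+g_\varepsilon$ with $g_\varepsilon\in C_c(-\varepsilon,\varepsilon)$ and $\|\tilde g\|_\infty<2$, as in Lemma~\ref{lem:Fredholm_then_UT_Fredholm}) and, more seriously, your third paragraph: neither $F_H$ nor $F_{\calC(U_H)}=\sin(\pi\eta(H)/2)$ need satisfy $\one-F^2\in\mathbb{K}_A(E)$, and their difference $g(H)$ with $g\in C_0(\R)$ is not compact in general.

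Your strategy can be salvaged by replacing bounded transforms with genuine normalising functions. After the homotopy to a strict $\eta$ (so that $\calC(U_H)=\tan(\pi\eta(H)/2)$ on all of $E_A$), choose $\chi$ representing $[H]$ with $\chi\equiv\pm 1$ outside $(-\varepsilon,\varepsilon)$, and choose $\chi_1$ representing $[\calC(U_H)]$ with $\chi_1\equiv\pm 1$ outside $(-\delta,\delta)$ for $\delta=\tan(\pi\eta(\varepsilon)/2)$. Then $\chi_1(\calC(U_H))=\psi(H)$ with $\psi=\chi_1\circ\tan(\pi\eta(\cdot)/2)$ equal to $\pm 1$ for $|x|\geq\varepsilon$, so $\chi-\psi\in C_c(-\varepsilon,\varepsilon)$ and $(\chi-\psi)(H)\in\mathbb{K}_A(E)$ by Proposition~\ref{prop:Fredholm_properties}; now the compact-perturbation argument goes through. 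The paper avoids this functional-calculus comparison altogether: it introduces the auxiliary chiral unitary $\widetilde{\calC}(H)=(H-{\rm i})(H+{\rm i})^{-1}$, verifies the exact identity $\calC\bigl(\widetilde{\calC}(H)\bigr)=H$, and then connects $\widetilde{\calC}(H)$ to $U_H$ by a homotopy of Fredholm-type chiral unitaries, invoking Proposition~\ref{prop:homotopy_of_chiral_unitaries}.
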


We will prove this result in a few steps.

\begin{Lemma}
Define the operator $\wt{\calC}(H ) = (H-{\rm i})(H+{\rm i})^{-1}$. Then
$\wt{\calC}(H )$ is a unitary operator in $\End_A(E)$ such that
$\gamma_0 \wt{\calC}(H) \gamma_0 = \wt{\calC}(H)^*$. Furthermore, if $(H+{\rm i})^{-1} \in \mathbb{K}_A(E)$,
then $\wt{\calC}(H ) \in \mathbb{K}_A(E)^{\sim}$, the minimal unitisation of $\mathbb{K}_A(E)$.
\end{Lemma}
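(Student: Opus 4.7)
The plan is to verify each claim by a short direct computation, relying on the fact that since $H$ is self-adjoint and regular, the operators $H\pm {\rm i}$ admit bounded adjointable inverses in $\End_A(E)$ with $\bigl((H+{\rm i})^{-1}\bigr)^* = (H-{\rm i})^{-1}$. This is standard for regular self-adjoint operators on Hilbert $C^*$-modules (see \cite[Chapter 9]{Lance}) and is already implicit in the discussion of the bounded transform in Section~\ref{sec:NCIndex_theory_prelims}, since $\one + H^2 = (H+{\rm i})^*(H+{\rm i})$ has bounded inverse.

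Granted this, unitarity of $\wt{\calC}(H)$ is an algebraic check: because $H$ commutes with the scalars $\pm {\rm i}$, the factors $(H\pm {\rm i})$ and $(H\mp {\rm i})^{-1}$ commute on the appropriate domains, so
\[
 \wt{\calC}(H)^*\wt{\calC}(H) = (H-{\rm i})^{-1}(H+{\rm i})(H-{\rm i})(H+{\rm i})^{-1} = (H-{\rm i})^{-1}(H-{\rm i})(H+{\rm i})(H+{\rm i})^{-1} = \one,
\]
and the opposite product is handled identically.

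For the chiral relation, I would use that $\gamma_0$ preserves $\Dom(H)$ and $\gamma_0 H \gamma_0 = -H$, which gives $\gamma_0(H\pm {\rm i})\gamma_0 = -(H\mp {\rm i})$ and hence $\gamma_0(H+{\rm i})^{-1}\gamma_0 = -(H-{\rm i})^{-1}$. Conjugating $\wt{\calC}(H)$ by $\gamma_0$ then yields
\[
 \gamma_0 \wt{\calC}(H) \gamma_0 = \bigl(-(H+{\rm i})\bigr)\bigl(-(H-{\rm i})^{-1}\bigr) = (H+{\rm i})(H-{\rm i})^{-1} = \wt{\calC}(H)^*.
\]
Finally, the unitisation statement is immediate from the identity
\[
 \wt{\calC}(H) - \one = \bigl((H-{\rm i}) - (H+{\rm i})\bigr)(H+{\rm i})^{-1} = -2{\rm i}(H+{\rm i})^{-1},
\]
which lies in $\mathbb{K}_A(E)$ by hypothesis.

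The only real subtlety, rather than an obstacle, is bookkeeping of domains: the intermediate equalities involving products like $(H+{\rm i})(H-{\rm i})$ occur on $\Dom(H^2)$, but since each such product is multiplied on one side by $(H\pm {\rm i})^{-1}$, which maps $E_A$ into $\Dom(H)$ (and further compositions into $\Dom(H^2)$), everything extends by density to identities between bounded adjointable operators on all of $E_A$. This is routine once regularity is invoked.
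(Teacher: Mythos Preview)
Your proof is correct and follows essentially the same route as the paper: the chiral relation and the identity $\wt{\calC}(H)-\one = -2{\rm i}(H+{\rm i})^{-1}$ are handled identically, and for unitarity the paper simply cites \cite[Theorem~10.5]{Lance} where you unpack the computation by hand. Your added remark about domain bookkeeping is a reasonable caution but, as you note, not an obstacle once regularity is in place.
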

\begin{proof}
Because $H$ is self-adjoint and regular, its Cayley transform $\wt{\calC}(H)\in \End_A(E)$ is unitary~\cite[Theorem 10.5]{Lance}.
Using that $\gamma_0 \cdot \Dom(H) \subset \Dom(H)$, we compute
\[
 \gamma_0 \wt{\calC}(H ) \gamma_0 = \gamma_0 (H-{\rm i}) \gamma_0 \bigl( \gamma_0(H+{\rm i} ) \gamma_0 \bigr)^{-1}
 = (H+{\rm i})(H-{\rm i} )^{-1} = \wt{\calC}(H)^*.
\]
If $(H+{\rm i})^{-1} \in \mathbb{K}_A(E)$, then
\[
 \one - \wt{\calC}(H ) = \bigl( H+{\rm i} - (H-{\rm i} ) \bigr)(H+{\rm i} )^{-1} = 2{\rm i} (H+{\rm i} )^{-1} \in \mathbb{K}_A(E). \tag*{\qed}
\]\renewcommand{\qed}{}
\end{proof}

\begin{Remark}
If $H$ is bounded, $-1$ is not in the spectrum of $\wt{\calC}(H )$. If $H$ is invertible (with bounded inverse), $1$
is not in the spectrum of $\wt{\calC}(H )$.
\end{Remark}

\begin{Lemma} \label{lem:TFred_iff_CTFred}
A self-adjoint regular operator $H$ is Fredholm on $E_A$ if and only if $\wt{\calC}(H)$ is of Fredholm type.
\end{Lemma}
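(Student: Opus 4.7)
The plan is to observe that this equivalence is essentially a direct consequence of the computation already carried out in the previous lemma together with Lemma \ref{lem:unbdd_Fred_condition}(ii). Namely, the previous lemma showed that
\[
 \one - \wt{\calC}(H) = 2{\rm i}(H+{\rm i})^{-1}
\]
as elements of $\End_A(E)$, so applying the quotient map and taking norms in $\calQ_A(E)$ gives
\[
 \bigl\| \one - \wt{\calC}(H) \bigr\|_{\calQ_A(E)} = 2 \bigl\| (H+{\rm i})^{-1} \bigr\|_{\calQ_A(E)}.
\]

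Recalling Definition \ref{Def:u_Fred_type}, $\wt{\calC}(H)$ is of Fredholm type if and only if the left-hand side is strictly less than $2$, which by the displayed identity is equivalent to $\bigl\| (H+{\rm i})^{-1} \bigr\|_{\calQ_A(E)} < 1$. But Lemma \ref{lem:unbdd_Fred_condition}(ii) says precisely that the latter inequality characterises the Fredholm property of a self-adjoint regular operator $H$ on $E_A$. Chaining these equivalences yields the claim.

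There is no real obstacle: both non-trivial ingredients (the algebraic identity for $\one - \wt{\calC}(H)$ and the resolvent characterisation of Fredholm regular operators) have already been established earlier, and the present lemma just combines them. The only care needed is to note that $\wt{\calC}(H)$ is already a \emph{bounded} adjointable unitary (from the previous lemma), so the Fredholm-type condition of Definition \ref{Def:u_Fred_type} is directly applicable without further analysis of domains.
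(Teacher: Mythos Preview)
Your proof is correct and follows essentially the same approach as the paper: both use the identity $\one - \wt{\calC}(H) = 2{\rm i}(H+{\rm i})^{-1}$ from the previous lemma together with the resolvent criterion of Lemma~\ref{lem:unbdd_Fred_condition}(ii) to equate $\|\one - \wt{\calC}(H)\|_{\calQ_A(E)} < 2$ with $\|(H+{\rm i})^{-1}\|_{\calQ_A(E)} < 1$.
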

\begin{proof}
Recalling Lemma \ref{lem:unbdd_Fred_condition}, $H$ is Fredholm on $E_A$ if and only if
$\big\| (H+{\rm i})^{-1} \big\|_{\calQ_A(E)} < 1$. We now use that $\one - \wt{\calC}(H) = 2{\rm i}(H+{\rm i})^{-1}$, so
$\big\| \one - \wt{\calC}(H) \big\|_{\calQ_A(E)} < 2$ if and only if $\big\| (H+{\rm i})^{-1} \big\|_{\calQ_A(E)} < 1$.
\end{proof}

\begin{Lemma} \label{lem:Fredholm_then_UT_Fredholm}
If $H$ is Fredholm, then $U_H =-{\rm e}^{{\rm i} \pi \eta(H)}$ is a chiral unitary of Fredholm type.
\end{Lemma}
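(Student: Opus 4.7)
The plan is to verify the two claims separately: that $U_H$ is a chiral unitary, and that it satisfies the Fredholm type inequality $\|U_H - \one\|_{\calQ_A(E)} < 2$. Chirality is immediate from the symmetries: since $\eta$ is odd and $\gamma_0 H \gamma_0 = -H$, the continuous functional calculus for self-adjoint regular operators yields $\gamma_0 \eta(H) \gamma_0 = \eta(-H) = -\eta(H)$, whence $\gamma_0 {\rm e}^{{\rm i}\pi \eta(H)} \gamma_0 = {\rm e}^{-{\rm i}\pi \eta(H)} = \bigl({\rm e}^{{\rm i}\pi \eta(H)}\bigr)^*$ and therefore $\gamma_0 U_H \gamma_0 = U_H^*$.

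The content is in the norm estimate. Set $f(x) = {\rm e}^{{\rm i}\pi\eta(x)} + 1$, so that $\|U_H - \one\|_{\calQ_A(E)} = \|f(H)\|_{\calQ_A(E)}$. A quick trigonometric calculation gives the pointwise identity $|f(x)| = 2|\cos(\pi\eta(x)/2)|$, which attains the value $2$ exactly when $\eta(x) = 0$, i.e., by the assumption $\eta^{-1}(\{0\}) = \{0\}$, exactly at $x = 0$. So $|f|$ is strictly bounded away from $2$ on the complement of any neighborhood of the origin.

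Now I would feed in the Fredholm property of $H$ via Proposition \ref{prop:Fredholm_properties}(ii): there is an $\varepsilon > 0$ such that $\varphi(H) \in \mathbb{K}_A(E)$ for every $\varphi \in C_c(-\varepsilon,\varepsilon)$. Pick a continuous cutoff $\phi \in C_c(-\varepsilon,\varepsilon)$ with $\phi \equiv 1$ on $[-\varepsilon/2,\varepsilon/2]$. Then the product $\phi \cdot f$ lies in $C_c(-\varepsilon,\varepsilon)$ (applied componentwise to its real and imaginary parts), so $\phi(H)f(H) = (\phi f)(H) \in \mathbb{K}_A(E)$. Writing $f(H) = \phi(H)f(H) + ((1-\phi)f)(H)$ and passing to the quotient,
\[
 \|f(H)\|_{\calQ_A(E)} \leq \|((1-\phi)f)(H)\|_{\End_A(E)} \leq \|(1-\phi)f\|_\infty.
\]
But $(1-\phi)f$ vanishes on $[-\varepsilon/2,\varepsilon/2]$, while for $|x| \geq \varepsilon/2$ the monotonicity and oddness of $\eta$ give $|\eta(x)| \geq \eta(\varepsilon/2) > 0$, hence $|f(x)| \leq 2\cos(\pi\eta(\varepsilon/2)/2) < 2$. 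This yields the required strict inequality.

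There is no real obstacle here beyond keeping track that the continuous functional calculus for regular self-adjoint operators on Hilbert $C^*$-modules respects the bound $\|g(H)\|_{\End_A(E)} \leq \|g\|_\infty$ for bounded continuous $g$, and that products of such functions commute with $H$ in the expected way. The essential mechanism is the one already used in Lemma \ref{lem:TFred_iff_CTFred}: Fredholmness of $H$ means a spectral gap at $0$ modulo compacts, and $U_H$ fails to have the eigenvalue $-1$ precisely where $H$ has the eigenvalue $0$, so the Fredholm-type condition for $U_H$ must follow from that of $H$.
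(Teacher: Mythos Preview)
Your proof is correct and follows essentially the same approach as the paper's: both use Proposition~\ref{prop:Fredholm_properties}(ii) to split $\one - U_H = g(H)$ into a compactly supported piece near $0$ (which lands in $\mathbb{K}_A(E)$) and a remainder with sup-norm strictly below $2$. You are simply more explicit than the paper, writing out the identity $|f(x)| = 2|\cos(\pi\eta(x)/2)|$ and the concrete bound $2\cos(\pi\eta(\varepsilon/2)/2)$ where the paper just asserts the existence of such a decomposition.
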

\begin{proof}
By the definition of $\eta(H)$, we have that $\one - U_H = \one + {\rm e}^{{\rm i} \pi \eta(H)} = g(H)$ with $g \in C_0(\R)$ such that
$\|g\|_\infty = 2$ and $g^{-1}(\{ 2,-2\}) = g^{-1}(\{ 2\}) = \{0\}$.
Because $H$ is Fredholm,
by Proposition~\ref{prop:Fredholm_properties} there exists some $\varepsilon >0$ such that
$h(H) \in \mathbb{K}_A(E)$ for all $h \in C_c( -\varepsilon, \varepsilon)$. We can therefore decompose
$g(H) = \tilde{g}(H) + g_\varepsilon(H)$ with $g_\varepsilon \in C_c( -\varepsilon, \varepsilon)$ and
$\| \tilde{g} \|_\infty < 2$. Thus we have the inequality
\[
 \| \one - U_H\|_{\calQ_A(E)} = \| g(H) \|_{\calQ_A(E)} = \| \tilde{g}(H) \|_{\calQ_A(E)} < 2
\]
as required.
\end{proof}

If the self-adjoint operator $H$ has compact resolvent, $(H+{\rm i})^{-1} \in \mathbb{K}_A(E)$,
then $g(H) \in \mathbb{K}_A(E)$ for any $g \in C_0(\R)$. In particular, $U_H =-{\rm e}^{{\rm i} \pi \eta(H)}$ will be in the minimal
unitisation of $\mathbb{K}_A(E)$ with $\| \one - U_H \|_{\calQ_A(E)} = 0$.

\begin{Lemma}
The unitaries $U_H =-{\rm e}^{{\rm i} \pi \eta(H)}$ and $\wt{\calC}(H ) = (H-{\rm i})(H+{\rm i})^{-1}$ are homotopic in $\End_A(E)$.
If $H$ is Fredholm on $E_A$, then they are homotopic via a path of chiral unitaries of Fredholm type.
\end{Lemma}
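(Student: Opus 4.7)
Both operators arise from the continuous functional calculus applied to $H$ via bounded continuous functions $f_j\colon \R \to S^1 \subset \C$, namely $U_H = f_0(H)$ with $f_0(x) = -{\rm e}^{{\rm i}\pi\eta(x)}$ and $\wt{\calC}(H) = f_1(H)$ with $f_1(x) = (x-{\rm i})/(x+{\rm i})$. Both satisfy $f_j(0) = -1$, $\lim_{x\to\pm\infty} f_j(x) = 1$, $f_j^{-1}(\{-1\}) = \{0\}$, and $f_j(-x) = \ol{f_j(x)}$. The plan is to connect $f_0$ to $f_1$ by a continuous path inside this class of scalar functions and to transfer the homotopy through functional calculus.

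To interpolate, I pick continuous strictly increasing lifts $\psi_j\colon \R \to [0,2\pi]$ with $f_j = {\rm e}^{{\rm i}\psi_j}$, $\psi_j(-\infty) = 0$, $\psi_j(0) = \pi$, $\psi_j(+\infty) = 2\pi$, and $\psi_j(-x) = 2\pi - \psi_j(x)$; explicit choices are $\psi_0(x) = \pi(1 + \eta(x))$ and $\psi_1(x) = 2\pi - 2\arg(x + {\rm i})$ with $\arg$ taken in $(0,\pi)$. Setting $\psi_s = (1-s)\psi_0 + s\psi_1$ and $f_s(x) = {\rm e}^{{\rm i}\psi_s(x)}$, the sign analysis $\psi_j(x) \in (\pi, 2\pi)$ for $x > 0$ and $\psi_j(x) \in (0, \pi)$ for $x < 0$ shows $\psi_s$ takes the value $\pi$ only at $x = 0$, so each $f_s$ inherits every listed property of $f_0$ and $f_1$.

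Defining $u(s) = f_s(H) \in \End_A(E)$, each $u(s)$ is unitary and, because $\gamma_0 H \gamma_0 = -H$ and $f_s(-x) = \ol{f_s(x)}$, functional calculus gives $\gamma_0 u(s) \gamma_0 = f_s(-H) = u(s)^*$, so every $u(s)$ is a chiral unitary with respect to $\gamma_0$. Norm continuity of $s \mapsto u(s)$ follows from the uniform estimate $\|f_s - f_{s'}\|_\infty \leq \|\psi_s - \psi_{s'}\|_\infty \leq 2\pi|s-s'|$ together with the contractivity of the continuous functional calculus. This already yields the unqualified homotopy in the first assertion.

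For the strengthening when $H$ is Fredholm, I invoke Proposition \ref{prop:Fredholm_properties} to fix $\varepsilon > 0$ such that $h(H) \in \mathbb{K}_A(E)$ for every $h \in C_c(-\varepsilon,\varepsilon)$. Since $s \mapsto f_s$ is norm continuous and each $f_s$ attains $-1$ only at $0$, the quantity $M_s := \sup\{|1 - f_s(x)| : |x| \geq \varepsilon\}$ is continuous in $s$ and strictly less than $2$; compactness of $[0,1]$ gives $M := \sup_{s \in [0,1]} M_s < 2$. Decomposing $1 - f_s = \tilde h_s + h_{s,\varepsilon}$ with $h_{s,\varepsilon} \in C_c(-\varepsilon,\varepsilon)$ and $\|\tilde h_s\|_\infty \leq M$, exactly as in the proof of Lemma \ref{lem:Fredholm_then_UT_Fredholm}, yields $\|\one - u(s)\|_{\calQ_A(E)} \leq M < 2$ uniformly in $s \in [0,1]$, so every $u(s)$ is of Fredholm type. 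The main point of care throughout is the strict monotonicity observation guaranteeing $\psi_s^{-1}(\{\pi\}) = \{0\}$ for every $s$; without this the convex interpolation of argument functions could exit the Fredholm-type regime.
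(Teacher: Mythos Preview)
Your proof is correct and follows essentially the same route as the paper: both interpolate between the two scalar functions $x\mapsto -{\rm e}^{{\rm i}\pi\eta(x)}$ and $x\mapsto (x-{\rm i})(x+{\rm i})^{-1}$ at the level of $\R\to S^1$, push the interpolation through functional calculus, and then reuse the decomposition argument of Lemma~\ref{lem:Fredholm_then_UT_Fredholm} to obtain the Fredholm-type condition along the path. The paper states this more tersely, writing the interpolation as $u_t=-{\rm e}^{{\rm i}\pi\eta_t(H)}$ for a path $\eta_t$ of odd non-decreasing functions with $\eta_t^{-1}(\{0\})=\{0\}$ and $\lim_{x\to\infty}\eta_t(x)=1$, and then invokes Lemma~\ref{lem:Fredholm_then_UT_Fredholm} for each $t$; your linear interpolation of argument lifts $\psi_s$ is exactly an explicit instance of this, with $\eta_s=\psi_s/\pi-1$.

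One small inaccuracy worth noting: you call the lifts $\psi_j$ ``strictly increasing'', but $\psi_0(x)=\pi(1+\eta(x))$ is only non-decreasing in general, since the paper assumes merely that $\eta$ is non-decreasing with $\eta^{-1}(\{0\})=\{0\}$. This does not affect your argument, because the sign analysis you actually use---$\psi_j(x)\in(\pi,2\pi]$ for $x>0$ and $\psi_j(x)\in[0,\pi)$ for $x<0$---follows already from $\eta^{-1}(\{0\})=\{0\}$ together with non-decreasingness and oddness, and that is all that is needed to conclude $\psi_s^{-1}(\{\pi\})=\{0\}$ for every $s$.
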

\begin{proof}
It is simple to check that both unitaries asympotically behave like $1$ at $\pm \infty$ and will wind once
around the one-point compactification. Hence
they are homotopic via a strongly continuous path of unitaries $\{u_t\}_{t \in [0,1]} \subset \End_A(E)$.
We can write this homotopy as $u_t = -{\rm e}^{{\rm i}\pi \eta_t(H)}$, where $t\mapsto \eta_t$ is a
path of continuous odd non-decreasing functions such that for all $t\in [0,1]$, $\eta_t^{-1}(\{0\}) = \{0\}$
and $\lim_{x\to \infty} \eta_t(x) = 1$.
Therefore, if $H$ is Fredholm on $E_A$, then
by Lemma \ref{lem:Fredholm_then_UT_Fredholm}, $u_t$ will be a chiral unitary of Fredholm type for
every $t\in [0,1]$.
\end{proof}

\begin{proof}[Proof of Theorem \ref{Theorem:Generator_and_unitary_same_class}]
By Lemma \ref{lem:TFred_iff_CTFred}, $H$ is Fredholm on $E_A$ if and only if
$\wt{\calC}(H)$ is a chiral unitary of Fredholm type, which in turn implies that
$\calC\bigl( \wt{\calC}(H) \bigr)$ is a self-adjoint Fredholm operator on
$\ol{\bigl(\wt{\calC}(H)-1\bigr)E}_A$
anti-commuting with $\gamma_0$.
We first note that the domain of $\calC\bigl( \wt{\calC}(H) \bigr)$ is
\[
 \bigl( \one - \wt{\calC}(H)\bigr)E_A = 2{\rm i}(H+{\rm i})^{-1}E_A = \Dom(H)
\]
and on this domain
\begin{align*}
 &\calC\bigl( \wt{\calC}(H) \bigr) = {\rm i}\bigl( \one + \wt{\calC}(H)\bigr) \bigl( \one - \wt{\calC}(H) \bigr)^{-1}= {\rm i}(H +{\rm i} + (H-{\rm i})) ( H+{\rm i} - (H-{\rm i}))^{-1} = H.
\end{align*}
Hence $[ H ] = \big[ \calC\bigl( \wt{\calC}(H) \bigr) \big]\! \in\! KK(\C, A)$.
Finally, by Lemma \ref{lem:TFred_iff_CTFred}, $\wt{\calC}(H)$ is homotopic to $-{\rm e}^{{\rm i} \pi \eta(H)}$ via
a path of chiral unitaries of Fredholm type.
Therefore, by Proposition \ref{prop:homotopy_of_chiral_unitaries},
$[ H] = \big[ \calC\bigl( \wt{\calC}(H) \bigr) \big] = \big[ \calC( U_H) \big]$.
\end{proof}

\begin{Remark}
Indices of the form $[H] \in KK(\C, A)$ often appear in the study of higher index theory or systems with
a boundary. Theorem \ref{Theorem:Generator_and_unitary_same_class} shows that we can recast this
index problem in terms of homotopy equivalence classes of unitary operators. Depending on the situation
under study, it may be more tractable to work with unitary operators rather than (possibly unbounded) self-adjoint
operators. Hence our result may offer additional insight into the class~$[H]$. We leave a more thorough
study of this question to future work.
\end{Remark}

\subsection{Some brief remarks on non-chiral unitaries and quantum walks}

Split-step quantum walks represent a special subclass of more general quantum walks, unitary operators
$U$ on $\calH$ that have a decomposition into shift and coin operators.\footnote{See~\cite{Cedzich22b}
for an algorithm to decompose any banded unitary acting on a one-dimensional lattice structure into shift and
coin operators.}
However, index theoretic properties of generic quantum walk unitaries are seldom studied as
any two unitaries in $\calB(\calH)$ or the minimal unitisation of $\calK(\calH)$ are stably homotopic.
By expanding our domain of definition of quantum walks to include unitaries on Hilbert $C^*$-modules $E_A$,
we are able to access non-trivial index-theoretic invariants via $KK^1(\C, A) \cong K_1(A)$,
the odd $K$-theory of operator algebras.

Let us fix a $\sigma$-unital $C^*$-algebra $A$ and a countably generated Hilbert $C^*$-module $E_A$.
Our results in Sections \ref{subsec:Module_Cayley} and \ref{subsec:Module_generator_index} transfer to the
setting of non-chiral unitaries in $\End_A(E)$ up to a degree shift in $K$-theory. That is, we are
considering classes in $KK^1(\C, A) \cong K_1(A)$, though the proofs are the same but without the grading
operator $\gamma_0$. We provide a brief summary.

\begin{Definition}
We say that a unitary $u \in \End_A(E)$ is of Fredholm type if $\| u - \one \|_{\calQ_A(E)} < 2$.
\end{Definition}

\begin{Proposition}
Let $u \in \End_A(E)$ be a unitary of Fredholm type. Then the operator
\[
 \calC(u ) = {\rm i}(\one +u)(\one -u)^{-1} , \qquad \Dom(\calC(u)) = (\one -u)E_A
\]
is self-adjoint, regular and Fredholm on $\ol{(u-\one)E}_A$ and so defines a class
$\big[ \calC(u) \big] \in KK^1(\C, A) \cong K_1(A)$. If $\{u_t\}_{t\in [0,1]} \subset \End_A(E)$ is a
strongly continuous path of unitaries of Fredholm type, then $\big[ \calC(u_t) \big] \in KK^1(\C, A)$ is constant.
\end{Proposition}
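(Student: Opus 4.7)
The plan is to mirror the chiral case (Lemma \ref{lem:chiral_cayley_selfadj}, Proposition \ref{prop:homotopy_of_chiral_unitaries}) but drop the $\gamma_0$ so that the resulting Kasparov modules are \emph{ungraded} and therefore represent classes in $KK^1(\C,A)\cong K_1(A)$ instead of $KK(\C,A)$.

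First, self-adjointness and regularity of $\calC(u)$ on the closed submodule $\ol{(\one-u)E}_A$ are general facts about Cayley transforms of unitaries on Hilbert $C^*$-modules and do not use any symmetry; they are proved in \cite[Chapter 10]{Lance} and were already invoked in Lemma \ref{lem:chiral_cayley_selfadj}. To verify the Fredholm property I would use Lemma \ref{lem:unbdd_Fred_condition}(ii): on $\Dom(\calC(u))$ one computes
\[
 {\rm i} + \calC(u) = {\rm i}\bigl((\one-u)+(\one+u)\bigr)(\one-u)^{-1} = 2{\rm i}(\one-u)^{-1},
\]
so $({\rm i}+\calC(u))^{-1} = -\tfrac{{\rm i}}{2}(\one-u)$ extends to an adjointable map $E_A\to\Dom(\calC(u))$. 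Hence
\[
 \bigl\| ({\rm i}+\calC(u))^{-1}\bigr\|_{\calQ_A(\ol{(\one-u)E})} = \tfrac{1}{2}\|u-\one\|_{\calQ_A(E)} < 1,
\]
which gives Fredholmness. Since there is no grading in sight, Proposition \ref{prop:Fredholm_to_KK} produces an \emph{odd} Kasparov module $\bigl(\C,\ol{(\one-u)E}_A,\chi(\calC(u))\bigr)$ for any normalising function $\chi$, and thus a well-defined class $[\calC(u)]\in KK^1(\C,A)$ that is independent of the choice of $\chi$.

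For homotopy invariance I would follow the proof of Proposition \ref{prop:homotopy_of_chiral_unitaries} verbatim, only removing the grading. First I would check that under strong continuity of $t\mapsto u_t$ combined with the uniform estimate $\|u_t-\one\|_{\calQ_A(E)}<2$ on $[0,1]$, the family $\calC(u_t)$ is a strongly continuous family of self-adjoint regular Fredholm operators on the varying module $\ol{(\one-u_t)E}_A$. Then I would pick a single normalising function $\chi$ that works simultaneously for every $\calC(u_t)$: by Proposition \ref{prop:Fredholm_properties} each $\calC(u_t)$ admits some $\varepsilon_t>0$ with $\varphi(\calC(u_t))$ compact for $\varphi\in C_c(-\varepsilon_t,\varepsilon_t)$, and a compactness argument on $[0,1]$ lets me take a uniform $\varepsilon>0$ and a single odd $\chi$ supported in $(-\varepsilon,\varepsilon)^c$ asymptotic to $\pm 1$. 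By \cite[Lemma~1.1]{Wahl08} the map $t\mapsto \chi(\calC(u_t))$ is then strongly continuous and $t\mapsto \one-\chi(\calC(u_t))^2$ is norm-continuous into $\mathbb{K}_A(\ol{(\one-u_t)E})$, so the triple
\[
 \Bigl(\C,\, C\bigl([0,1],\ol{(\one-u_\bullet)E}\bigr)_{C[0,1]\otimes A},\, \chi(\calC(u_\bullet))\Bigr)
\]
is an odd Kasparov module furnishing a homotopy in $KK^1(\C,A)$ between $[\calC(u_0)]$ and $[\calC(u_1)]$.

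The step I expect to be the main obstacle is the uniform-normalising-function argument, which is the only place the proof is not a one-line quotation of a previous result: one must ensure the underlying Hilbert $C^*$-module $\ol{(\one-u_t)E}_A$ varies continuously and that a single $\chi$ witnesses Fredholmness of the whole family. Once this is set up the remainder is formal, and the lack of $\gamma_0$ simply sends the output from $KK$ to $KK^1$.
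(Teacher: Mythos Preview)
Your proposal is correct and matches the paper's own approach exactly: the paper does not give a separate proof of this proposition but simply states that ``the proofs are the same but without the grading operator $\gamma_0$,'' pointing back to Lemma~\ref{lem:chiral_cayley_selfadj}, the Fredholm computation following Definition~\ref{Def:u_Fred_type}, and Proposition~\ref{prop:homotopy_of_chiral_unitaries}. Your write-up reproduces precisely those steps, and your identification of the uniform normalising function as the only nontrivial point is accurate and already handled in the proof of Proposition~\ref{prop:homotopy_of_chiral_unitaries}.
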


\begin{Theorem}
Let $\eta\colon \R \to \R$ be a continuous, odd and non-decreasing function such that
$\eta^{-1}(\{0\}) = \{0\}$ and
$\lim_{x\to \infty}\eta(x) = 1$.
If $H$ is a self-adjoint regular Fredholm operator on $E_A$, then
$U_H = -{\rm e}^{{\rm i} \pi \eta(H)} \in \End_A(E)$ is a unitary of Fredholm type and
$[ H ] = \big[ \calC(U_H) \big] \in KK^1( \C, A)$.
\end{Theorem}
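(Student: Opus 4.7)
The plan is to follow the proof of Theorem~\ref{Theorem:Generator_and_unitary_same_class} line by line, dropping every appearance of the grading $\gamma_0$ and recording classes in $KK^1(\C,A)$ rather than $KK(\C,A)$. As the text remarks just before the statement, no step of the chiral argument genuinely used chirality, so the task reduces to verifying this carefully.

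First I would show that $U_H$ is of Fredholm type. The argument of Lemma~\ref{lem:Fredholm_then_UT_Fredholm} is purely functional-calculus: write $\one - U_H = g(H)$ with $g(x) = 1 + {\rm e}^{{\rm i}\pi \eta(x)} \in C_0(\R)$; the hypotheses on $\eta$ force $\|g\|_\infty = 2$ and $g^{-1}(\{2\}) = \{0\}$. Proposition~\ref{prop:Fredholm_properties} then decomposes $g = g_\varepsilon + \tilde g$ with $g_\varepsilon \in C_c(-\varepsilon,\varepsilon)$ and $\|\tilde g\|_\infty < 2$, yielding $\|\one - U_H\|_{\calQ_A(E)} < 2$. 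No grading enters this step.

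Second I would introduce the bounded Cayley transform $\wt{\calC}(H) = (H-{\rm i})(H+{\rm i})^{-1}$, a unitary in $\End_A(E)$ by \cite[Theorem 10.5]{Lance}. The identity $\one - \wt{\calC}(H) = 2{\rm i}(H+{\rm i})^{-1}$ combined with Lemma~\ref{lem:unbdd_Fred_condition} shows that $\wt{\calC}(H)$ is of Fredholm type iff $H$ is Fredholm, and the algebraic computation carried out in the proof of Theorem~\ref{Theorem:Generator_and_unitary_same_class} gives $\Dom(\calC(\wt{\calC}(H))) = \Dom(H)$ with $\calC(\wt{\calC}(H)) = H$ on that domain. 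By the definition of $[H] \in KK^1(\C,A)$ via the bounded transform of a regular self-adjoint Fredholm operator, this immediately yields $[H] = [\calC(\wt{\calC}(H))]$.

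Third I would exhibit a strongly continuous path of unitaries of Fredholm type from $\wt{\calC}(H)$ to $U_H$. Both unitaries have the form $f(H)$ for continuous $f\colon \R \to \T$ sending $0 \mapsto -1$ and $\pm\infty \mapsto 1$ with the same winding behaviour, so the path can be written as $u_t = -{\rm e}^{{\rm i}\pi \eta_t(H)}$ for a continuous family $\{\eta_t\}_{t \in [0,1]}$ of odd non-decreasing profile functions each satisfying the hypotheses of the theorem. Applying the first-step argument with $\eta_t$ in place of $\eta$ shows each $u_t$ is of Fredholm type, and the odd-degree homotopy invariance stated in the Proposition immediately preceding the theorem gives that $[\calC(u_t)] \in KK^1(\C,A)$ is constant along the path. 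Combining this with the second step produces $[H] = [\calC(\wt{\calC}(H))] = [\calC(U_H)]$, as required. The only technical point requiring care is constructing the profile homotopy $\eta_t$ together with a uniform Fredholm-type estimate along it; both issues are already implicit in the chiral case and involve no genuinely new ideas.
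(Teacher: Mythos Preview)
Your proposal is correct and is precisely the approach the paper takes: the paper does not give a separate proof of this theorem but explicitly states that the proofs in Sections~\ref{subsec:Module_Cayley} and~\ref{subsec:Module_generator_index} transfer verbatim to the non-chiral setting by dropping the grading operator $\gamma_0$ and recording classes in $KK^1(\C,A)$ instead of $KK(\C,A)$. Your three-step outline reproduces the proof of Theorem~\ref{Theorem:Generator_and_unitary_same_class} (via Lemmas~\ref{lem:Fredholm_then_UT_Fredholm} and~\ref{lem:TFred_iff_CTFred} and the homotopy argument) exactly as intended.
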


Let us consider a simpler setting, where the unitary $u \in \mathbb{K}_A(E)^\sim$, the minimal
unitisation of $u \in \mathbb{K}_A(E)^\sim$, and so $\| u - \one \|_{\calQ_A(E)} = 0$.
Because $\mathbb{K}_A(E)$ is Morita equivalent to an
ideal of~$A$, we can directly consider the class $[u] \in K_1(A)$ without applying the Cayley
transform.
The following result reconciles these two approaches.

\begin{Theorem}[{\cite[Theorem 3.5]{BKR2}}]
The map
\[
 K_1(A) \ni [u] \mapsto \big[ \calC(u) \big] \in KK^1(\C, A)
\]
is well defined and an isomorphism of groups.
\end{Theorem}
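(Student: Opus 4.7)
The plan is to verify well-definedness, check additivity, and then construct an explicit inverse, paralleling the strategy used for Proposition \ref{prop:Cayley_proj_iso} but without the $\Z_2$-grading. For well-definedness, a class in $K_1(A)$ is represented (after Morita identification and stabilisation) by a unitary $u$ in $\mathbb{K}_A(E)^\sim$, with equivalence generated by homotopy through such unitaries. Since $u - \one \in \mathbb{K}_A(E)$, we have $\|u - \one\|_{\calQ_A(E)} = 0 < 2$, so the Cayley class $\big[\calC(u)\big] \in KK^1(\C,A)$ is defined, and any $K_1$-homotopy is automatically a strongly continuous path of unitaries of Fredholm type with vanishing quotient-norm bound; the non-chiral homotopy invariance statement recorded just above then gives $\big[\calC(u_0)\big] = \big[\calC(u_1)\big]$. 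For the homomorphism property, addition in $K_1(A)$ is realised by $[u] + [v] = [u \oplus v]$, while $\calC(u \oplus v) = \calC(u) \oplus \calC(v)$ on $\overline{(\one - u)E}_A \oplus \overline{(\one - v)E}_A$; since addition in $KK^1(\C, A)$ is by direct sum of Kasparov modules, additivity follows.

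For bijectivity, I would construct an explicit inverse. Any class in $KK^1(\C,A)$ admits a representative $(\C, E_A, \chi(H))$ where $H$ is self-adjoint, regular and Fredholm with compact resolvent and $\chi$ is a normalising function; this is the standard unbounded picture of $KK$-theory. For such $H$, the unitary $U_H = -{\rm e}^{{\rm i}\pi \eta(H)}$ equals $\one + g(H)$ for some $g \in C_0(\R)$, hence lies in $\mathbb{K}_A(E)^\sim$ by compactness of the resolvent. The non-chiral version of Theorem \ref{Theorem:Generator_and_unitary_same_class} stated just before the present theorem then gives $[H] = \big[\calC(U_H)\big]$, establishing surjectivity. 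For injectivity, one shows $[H] \mapsto [U_H]$ descends to a well-defined inverse: a homotopy of such Fredholm operators $\{H_t\}$ yielding a Kasparov module over $A \otimes C[0,1]$ produces, via the fibrewise assignment $H_t \mapsto -{\rm e}^{{\rm i}\pi \eta(H_t)}$, a norm-continuous path of unitaries in $\mathbb{K}_A(E)^\sim$, which is a homotopy witness in $K_1(A)$.

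The main obstacle is reconciling the two equivalence relations: $KK^1$-homotopy allows compact perturbations and changes of normalising function, whereas $K_1$-equivalence requires honest norm-continuous paths of unitaries in $\mathbb{K}_A(E)^\sim$. The compatibility hinges on the boundedness and continuity of $\eta$ together with the compact resolvent hypothesis, which together ensure $H \mapsto -{\rm e}^{{\rm i}\pi \eta(H)}$ is norm-continuous on the relevant class of operators. A more conceptual alternative is to identify the Cayley map with the known isomorphism $K_1(A) \xrightarrow{\simeq} KK^1(\C, A)$ on a set of generators using multiplicativity, bypassing the direct construction of an inverse.
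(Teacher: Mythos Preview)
The paper does not contain a proof of this statement: it is quoted verbatim as \cite[Theorem 3.5]{BKR2} and no argument is given in the present paper. So there is no ``paper's own proof'' to compare against; your proposal is an attempt to reprove a cited external result.

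As a sketch your outline is plausible, and the well-definedness and additivity steps are fine. The genuine gap is in the bijectivity argument, and you have correctly identified it yourself. Two specific points deserve scrutiny. First, the claim that every class in $KK^1(\C,A)$ is represented by a self-adjoint regular Fredholm operator with \emph{compact resolvent} is not automatic from the bounded Kasparov picture used in this paper; one must pass to the unbounded picture and argue (e.g., by stabilising with a degenerate module carrying an invertible operator with compact resolvent) that such a representative exists. Second, and more seriously, showing that $[H]\mapsto [U_H]$ is well defined on $KK^1(\C,A)$ requires invariance under the full $KK$-equivalence relation, not merely under strongly continuous paths of Fredholm operators on a fixed module. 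A $KK^1$-homotopy is a Kasparov module over $A\otimes C[0,1]$, which need not arise as a family $\{H_t\}$ of compact-resolvent operators on a single $E_A$; unitary equivalence and change of module must also be handled. Your ``conceptual alternative'' of matching the map against a known isomorphism on generators is closer to what is actually done in \cite{BKR2}, where the Cayley map is compared explicitly with the standard identification $K_1(A)\cong KK^1(\C,A)$ rather than by building an inverse from scratch.
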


\begin{Example}[quantum walks with weighted shifts]
Let us consider the following simple quantum walk unitary on $\ell^2\bigl(\Z, \C^2\bigr)$,
\[
 U_{m,n} = S_{m,n} C, \qquad S_{m,n} = \begin{pmatrix} S^m & 0 \\ 0 & (S^*)^n \end{pmatrix},
 \qquad C \in \calU\bigl(\C^2\bigr), \qquad m, n \in \Z,
\]
where $S$ is the shift operator on $\ell^2(\Z)$, $Se_k = e_{k+1}$ with $\{e_k\}_{k\in \Z}$ the
standard orthonormal basis of $\ell^2(\Z)$. The operator $U_{m,n}$ can also be considered as
a unitary that acts on the Hilbert $C^*$-module $E_{C^*(\Z)} = C^*(\Z)_{C^*(\Z)} \otimes \C^2$
by left-multiplication. Because $C^*(\Z)$ is unital,
we have that $\End_{C^*(\Z)}\bigl( C^*(\Z) \otimes \C^2\bigr) = \mathbb{K}_{C^*(\Z)}\bigl(C^*(\Z) \otimes \C^2\bigr) = M_2(C^*(\Z))$.
So in particular the condition $\| U_{m,n} - \one \|_{\calQ} < 2$ is trivially satisfied and
$U_{m,n}$ defines a class $[U_{m,n}] \in K_1(C^*(\Z))$. We can take a path of unitary matrices connecting
$C$ to $\one$ in $\calU(\C^2)$, therefore
\[
 [ S_{m,n} C ] = [ S_{m,n} ] = [S^m] \oplus [(S^*)^{n} ] \in K_1(C^*(\Z)).
\]
Hence, the isomorphism $K_1(C^*(\Z)) \xrightarrow{\simeq} \Z$ is such that $[U_{m,n}] \mapsto m-n$.
\end{Example}

\section{An index formula for anisotropic split-step quantum walks} \label{sec:1D_winding}

In this section, we provide an extension of the winding number formula for split-step quantum walks by
Matsuzawa~\cite{Matsuzawa} to the setting of semifinite index theory~\cite{CPRS2}.
We have previously defined a $K$-theoretic index $[\calC(u)] \in KK(\C, A)$ for a chiral
unitary of Fredholm type on a~Hilbert $C^*$-module $E_A$. Here we consider the case of an
index $[\calC(u)] \in KK(\C, B)$ with $B$ a~unital $C^*$-algebra with a continuous trace
$\tau_B\colon B \to \C$. This allows us to consider the $\R$-valued index
$(\tau_B)_\ast \bigl( [\calC(u)] \bigr)$ using the map $(\tau_B)_\ast\colon KK(\C, B) \to \R$ from
Section \ref{subsec:induced_trace}. As we will show, if we consider the Hilbert $C^*$-module
$\ell^2(\Z, B)$ and assume an anisotopy condition on the chiral unitary $u$, then we can compute
$(\tau_B)_\ast \bigl( [\calC(u)] \bigr)$ via the noncommutative winding number of a pair of unitaries
at the limits $\pm \infty$.

\subsection{Preliminaries and setting}

Let $B$ be a {unital} $C^*$-algebra with an automorphism $\alpha \in \Aut(B)$.
We can define the crossed product $C^*$-algebra $B \rtimes_\alpha \Z$, which is
the $C^*$-closure of $B$ and a unitary element $S$ such that $Sb = \alpha(b) S$.

We now consider the Hilbert $C^*$-module $\ell^2(\Z, B) \cong \ell^2(\Z) \otimes B$, where we
wish to consider quantum walk-like operators. The usual setting of quantum walks can be recovered
by taking $B = \C^n$ or $M_n(\C)$.

Let $C(\Z \cup \{\pm \infty\}, B)$ be the $C^*$-algebra of functions $f\colon \Z \to B$ such that the limits
at $\pm \infty$ exist.
 The algebra $C(\Z \cup \{\pm \infty\}, B)$ also comes with an automorphism $\wt{\alpha}$,
 \[
 \wt{\alpha}( f )(x) = \alpha[ f(x-1) ], \quad x\in \Z, \qquad \wt{\alpha}( f )(\pm \infty) = \alpha[ f(\pm \infty) ],
 \quad f \in C(\Z \cup \{\pm \infty\}, B).
 \]
We can therefore consider the
crossed product $C(\Z \cup \{\pm \infty\}, B) \rtimes_{\wt{\alpha}} \Z$, which is generated by
$C(\Z \cup \{\pm \infty\}, B)$ and a unitary $\wt{S}$ that implements $\wt{\alpha}$.

There is an action of $C(\Z \cup \{\pm \infty\}, B)$ on $B$ given by $f \cdot b = f(0) b$ for
$f \in C(\Z \cup \{\pm \infty\}, B)$ and $b \in B$. This action can be extended to give a representation
of $C(\Z \cup \{\pm \infty\}, B) \rtimes_{\wt{\alpha}} \Z$ on~$\ell^2(\Z) \otimes B$, where for $f \in C(\Z \cup \{\pm \infty\}, B)$,
$b \in B$ and $\{e_j\}_{j\in \Z}$ the standard orthonormal basis of $\ell^2(\Z)$,
\begin{equation} \label{eq:crossed_prod_action}
 \wt{S}^n f\cdot (e_j \otimes b) = e_{j+n} \otimes \wt{\alpha}_{-j}(f)\cdot b = e_{j+n} \otimes \wt{\alpha}_{-j}(f)(0)b
 = e_{j+n} \otimes \alpha_{-j}[f(j)] b.
\end{equation}

\begin{Lemma}[{\cite[Proposition 3.1]{BKR1}}] \label{Lemma:cross_prod_acts_on_module}
Equation \eqref{eq:crossed_prod_action} extends to a
$\ast$-homomorphism
\[C(\Z \cup \{\pm \infty\}, B) \rtimes_{\wt{\alpha}} \Z \to \End_B\bigl( \ell^2(\Z, B) \bigr).\]
\end{Lemma}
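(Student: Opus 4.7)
The plan is to invoke the universal property of the crossed product. Recall that $C(\Z \cup \{\pm \infty\}, B) \rtimes_{\wt{\alpha}} \Z$ is generated by a faithful copy of $C(\Z \cup \{\pm \infty\}, B)$ together with a unitary $\wt{S}$ implementing $\wt{\alpha}$ by conjugation. Since $\Z$ is amenable, the full and reduced crossed products coincide, so it suffices to exhibit a covariant pair on $\ell^2(\Z, B)$.

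First, I would build a $*$-representation $\pi\colon C(\Z \cup \{\pm \infty\}, B) \to \End_B(\ell^2(\Z, B))$ from the $n=0$ case of equation \eqref{eq:crossed_prod_action}, namely $\pi(f)(e_j \otimes b) = e_j \otimes \alpha_{-j}[f(j)] b$, extended $B$-linearly. Since $\Z \cup \{\pm \infty\}$ is compact, any $f \in C(\Z \cup \{\pm \infty\}, B)$ is uniformly bounded, and each $\alpha_{-j}$ is isometric, so the diagonal multiplier $j \mapsto \alpha_{-j}[f(j)]$ has sup-norm at most $\|f\|_\infty$. This is exactly the data of a bounded $B$-linear diagonal operator on $\ell^2(\Z, B)$, which is adjointable with $\pi(f)^* = \pi(f^*)$; multiplicativity is immediate from the pointwise product.

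Next, I would introduce the shift $V \in \End_B(\ell^2(\Z, B))$ defined by $V(e_j \otimes b) = e_{j+1} \otimes b$, which is visibly unitary with $V^*(e_j \otimes b) = e_{j-1} \otimes b$. The covariance relation $V \pi(f) V^* = \pi(\wt{\alpha}(f))$ is then a direct check on basis elements: using $\wt{\alpha}(f)(j) = \alpha[f(j-1)]$ and $\alpha_{-j} \circ \alpha = \alpha_{1-j}$,
\[
V \pi(f) V^*(e_j \otimes b) = e_j \otimes \alpha_{1-j}[f(j-1)] b = \pi(\wt{\alpha}(f))(e_j \otimes b).
\]

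Finally, by the universal property of $C(\Z \cup \{\pm \infty\}, B) \rtimes_{\wt{\alpha}} \Z$, the covariant pair $(\pi, V)$ induces a $*$-homomorphism $\rho\colon C(\Z \cup \{\pm \infty\}, B) \rtimes_{\wt{\alpha}} \Z \to \End_B(\ell^2(\Z, B))$ with $\rho(\wt{S}) = V$ and $\rho|_{C(\Z \cup \{\pm \infty\}, B)} = \pi$. Evaluating $\rho(\wt{S}^n f)(e_j \otimes b) = V^n \pi(f)(e_j \otimes b) = e_{j+n} \otimes \alpha_{-j}[f(j)] b$ recovers \eqref{eq:crossed_prod_action} on generators, proving the claim. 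The only genuinely nontrivial step is the adjointability of $\pi(f)$, which is a routine instance of the standard fact that norm-bounded $B$-valued diagonal multipliers on $\ell^2(\Z, B)$ lie in $\End_B(\ell^2(\Z, B))$; everything else is covariance bookkeeping.
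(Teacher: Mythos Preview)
Your proof is correct. The paper itself does not supply a proof of this lemma; it simply cites \cite[Proposition~3.1]{BKR1}. Your argument via the universal property of the crossed product---building the diagonal representation $\pi$ of $C(\Z \cup \{\pm\infty\}, B)$ together with the shift unitary $V$, verifying the covariance relation $V\pi(f)V^* = \pi(\wt\alpha(f))$, and then invoking the universal property to obtain the integrated $\ast$-homomorphism---is exactly the standard route for results of this type and is almost certainly what the cited reference does as well. The boundedness and adjointability of $\pi(f)$ via the diagonal-multiplier fact is handled correctly, and your covariance computation checks out.
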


Lemma \ref{Lemma:cross_prod_acts_on_module}
implies that we can naturally consider $C(\Z \cup \{\pm \infty\}, B) \rtimes_{\wt{\alpha}} \Z$
as a subalgebra of~$\End_B(\ell^2(\Z, B)) \cong \Mult(\calK \otimes B)$ with $\calK$ the compact
operators on $\ell^2(\Z)$.

Let us briefly comment on the algebra $C(\Z \cup \{\pm \infty\}, B) \rtimes_{\wt{\alpha}} \Z$ and its relevance
for considering quantum walk-like operators.
Taking a chiral unitary $u = \gamma_0 \gamma_1 \in \End_B\bigl(\ell^2(\Z, B)\bigr)$,
we expect the unitaries $\gamma_0$ and $\gamma_1$ to be determined
by the shift operator $\wt{S}$ and functions $\Z \to B$. By assuming an anisotropic condition on the $B$-valued functions, they
can be considered as elements in $C(\Z \cup \{\pm \infty\}, B)$. If $\gamma_0$ and $\gamma_1$ contain
at most a finite polynomial of shift operators, then~$\gamma_0$,~$\gamma_1$ and therefore $u$ are elements of $C( \Z \cup \{\pm \infty\}, B) \rtimes_{\wt{\alpha}} \Z$.

Letting $C_0(\Z, B)$ be the functions $f\colon \Z \to B$ that vanish at $\pm \infty$, we can also consider
$C_0(\Z, B) \rtimes_{\wt{\alpha}} \Z$, which is a $2$-sided ideal in $C(\Z \cup \{\pm \infty\}, B) \rtimes_{\wt{\alpha}} \Z$.

\begin{Lemma}[{\cite[p.~147]{Rieffel82}}]
There is an isomorphism $C_0( \Z, B) \rtimes_{\wt{\alpha}} \Z \cong B \otimes \calK$.
\end{Lemma}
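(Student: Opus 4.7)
The plan is to construct an explicit $\ast$-isomorphism by first untwisting the automorphism $\wt{\alpha}$ via a pointwise gauge transformation, reducing to the case of a pure translation action with $B$ acted on trivially, and then invoking the standard identification $C_0(\Z)\rtimes_T\Z\cong\calK$.

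First, I would define the map $\Phi\colon C_0(\Z, B)\to C_0(\Z, B)$ by $\Phi(f)(x)=\alpha^{-x}(f(x))$. Since $\alpha\in\Aut(B)$, the map $\Phi$ is a $\ast$-automorphism of $C_0(\Z, B)$ (pointwise application of an automorphism preserves norm and vanishing at $\pm\infty$). Let $\beta$ denote the translation-only action on $C_0(\Z, B)$ given by $\beta(f)(x)=f(x-1)$, which corresponds to $T\otimes\mathrm{id}_B$ under the identification $C_0(\Z, B)\cong C_0(\Z)\otimes B$. A short computation shows that $\Phi$ intertwines $\wt{\alpha}$ with $\beta$:
\[
\Phi(\wt{\alpha}(f))(x)=\alpha^{-x}\bigl(\alpha(f(x-1))\bigr)=\alpha^{-(x-1)}(f(x-1))=\Phi(f)(x-1)=\beta(\Phi(f))(x).
\]

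Next, by the universal property of the (full) crossed product, any equivariant $\ast$-iso\-mor\-phism of the coefficient algebra induces a $\ast$-isomorphism of the crossed products; consequently,
\[
C_0(\Z, B)\rtimes_{\wt{\alpha}}\Z \;\cong\; C_0(\Z, B)\rtimes_{\beta}\Z.
\]
Since $\beta=T\otimes\mathrm{id}_B$, and since $\Z$ is amenable so that full and reduced crossed products coincide and commute with tensor products against a trivially-acted-on $C^*$-algebra, we obtain
\[
C_0(\Z, B)\rtimes_\beta\Z \;\cong\; \bigl(C_0(\Z)\rtimes_T\Z\bigr)\otimes B.
\]
Finally, the classical fact that $C_0(\Z)\rtimes_T\Z\cong\calK$ (realised concretely via the regular representation on $\ell^2(\Z)$, where the generators of $C_0(\Z)$ become the rank-one diagonal projections $|e_k\rangle\langle e_k|$ and the implementing unitary becomes the shift $S$) yields $C_0(\Z,B)\rtimes_{\wt\alpha}\Z\cong\calK\otimes B\cong B\otimes\calK$.

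The main obstacle is purely bookkeeping: checking that the gauge transformation $\Phi$ together with $\mathrm{id}_\Z$ intertwines the covariance relations at the level of the full crossed-product $C^*$-algebra (and, if desired, that the resulting isomorphism is compatible with the representation on $\ell^2(\Z, B)$ from Lemma \ref{Lemma:cross_prod_acts_on_module}, where $\Phi$ corresponds to conjugation by the unitary $\sum_k |e_k\rangle\langle e_k|\otimes\alpha^{-k}$). Once the intertwining identity is in hand, the remaining steps are standard consequences of amenability of $\Z$ and Fourier/Stone-type duality for $C_0(\Z)\rtimes\Z$, so no deep input is required beyond the untwisting argument.
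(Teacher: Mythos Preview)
Your proof is correct and follows essentially the same route as the paper: both untwist the action $\wt{\alpha}=T\otimes\alpha$ to $T\otimes\mathrm{id}$ (the paper writes the resulting map directly on $C_c(\Z,C_0(\Z,B))$ as $g(x;y)\mapsto\alpha_{-y}[g(x;y)]$, which is exactly your $\Phi$ extended to the crossed product), then factor out $B$ and identify $C_0(\Z)\rtimes_T\Z\cong\calK$. The only cosmetic difference is that the paper invokes Takai duality for this last identification while you realise it via the regular representation.
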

\begin{proof}[Proof sketch]
Writing $C_0( \Z, B) \cong C_0(\Z) \otimes B$, the action $\wt{\alpha}$ decomposes as $T\otimes \alpha$,
where $T$ is the translation action on $C_0(\Z)$.
We can then construct an isomorphism
\[
 \bigl( C_0(\Z) \otimes B \bigr) \rtimes_{T \otimes \alpha} \Z \xrightarrow{\simeq} \bigl( C_0(\Z) \otimes B \bigr) \rtimes_{T \otimes {\rm id}} \Z,
\]
where on the dense $\ast$-subalgebra $C_c( \Z, C_0(\Z, B) )$ this map is given by
\[
 g(x; y) \mapsto \alpha_{-y} [ g(x;y)], \qquad x, y \in \Z, \quad g(x;y) \in B.
\]
One then checks this map is compatible with the convolution product.
We therefore have the following chain of isomorphisms
\begin{align*}
 C_0( \Z, B) \rtimes_{\wt{\alpha}} \Z &\cong \bigl( C_0(\Z) \otimes B \bigr) \rtimes_{T \otimes \alpha} \Z
 \cong \bigl( C_0(\Z) \otimes B \bigr) \rtimes_{T \otimes {\rm id}} \Z \\
 &\cong \bigl(C_0(\Z) \rtimes_T \Z \bigr) \otimes B \cong \calK\bigl(\ell^2(\Z)\bigr) \otimes B,
\end{align*}
with the last isomorphism the Takai duality of $\Z$.
\end{proof}

We have that the quotient map
$C(\Z \cup \{ \pm \infty\}, B ) \to C(\Z \cup \{ \pm \infty\}, B ) / C_0(\Z, B) \cong B\oplus B$ can be concretely realised
by evaluating a function $f \in C(\Z \cup \{\pm \infty\}, B)$ at the endpoints $\pm \infty$. This induces a $\ast$-homomorphism
\[
 ({\rm ev}_L, {\rm ev}_R)\colon\ C(\Z \cup \{ \pm \infty\}, B ) \rtimes_{\wt{\alpha}} \Z \to (B \rtimes_\alpha \Z)^{\oplus 2}
\]
whose kernel is $C_0(\Z, B) \rtimes_\alpha \Z$.
Summarising our results, we obtain the following short exact sequence
\begin{equation} \label{eq:doublesided_SES}
 0 \to \calK \otimes B \to C(\Z \cup \{ \pm \infty\}, B ) \rtimes_{\wt{\alpha}} \Z \xrightarrow{({\rm ev}_L, {\rm ev}_R)} (B \rtimes_\alpha \Z)^{\oplus 2} \to 0.
\end{equation}

We remark that an analogous short exact sequence was constructed in~\cite[Section 4.2]{RichardSuzuki18} for the
case that $B = M_2(\C)$.

\subsection{The index formula}

To state our index formula, we now consider the setting where $B$ has a faithful norm-lower semicontinuous
trace $\tau_B\colon B \to \C$ such that $\tau(\one) = 1$ and $\tau_B\bigl( \alpha(b) \bigr) = \tau_B(b)$ for all $b \in B$.
If $B$ is a subalgebra of $M_n(\C)$ and we are in the setting of regular quantum walks, $\tau_B$ is the normalised
matrix trace.

Because $\tau_B$ is invariant under the automorphism
$\alpha \in \Aut(B)$, we can define the semifinite dual trace on the crossed product
\[
 \Tr_\tau\colon\ \Dom(\Tr_\tau) \subset B \rtimes_\alpha \Z \to \C, \qquad \Tr_\tau( S^n b) = \delta_{n,0} \tau_B(b).
\]

Let us recall the noncommutative calculus and winding number.
The algebra $B\rtimes_\alpha \Z$ possesses a derivation $\delta\colon \Dom(\delta) \to B\rtimes_\alpha \Z$, where
$\Dom(\delta)$ is dense, $S^n b \in \Dom(\delta)$ and $\delta( S^n b) = n S^n b$. It can be easily checked that
\[
 \delta(a_1 a_2) = \delta(a_1) a_2 + a_1 \delta(a_2), \qquad \delta(a_1^*) = - \delta(a_1)^*, \qquad
 \Tr_\tau( \delta( a_1 ) ) = 0
\]
for $a_1$, $a_2$ in the dense $\ast$-subalgebra $\calA \subset B\rtimes_\alpha \Z$ of elements $\sum_n S^n b_n$ such that the
function $n \mapsto \| b_n \|_B$ has superpolynomial decay in $\Z$.

\begin{Remark}
The noncommutative calculus has been highly successful in characterising the topological phase of systems in condensed
matter physics, which often admit a description via crossed product $C^*$-algebras, see~\cite{PSBbook} for example.
\end{Remark}

For invertible elements in a dense subspace of $B \rtimes_\alpha \Z$, we define the noncommutative winding number
\[
 \operatorname{Wind}\colon\ \Dom( \operatorname{Wind}) \subset GL( B\rtimes_\alpha \Z ) \to \C, \qquad
 \operatorname{Wind}( a) = \frac{1}{2{\rm i}\pi } \Tr_\tau\bigl( a^{-1} \delta(a) \bigr).
\]

If $B$ is a subalgebra of $M_n(\C)$ for some $n \in \mathbb{N}$, then
 $B \rtimes_\alpha \Z$ is isomorphic to a subalgebra of~$M_n( C(\T) )$ and $\operatorname{Wind}$ is the usual
 winding number of invertible matrix-valued functions of $\T$.

The noncommutative winding number of $B\rtimes_\alpha \Z$ is closely connected to the Toeplitz extension
\begin{gather} \label{eq:Toeplitz_ext}
 0 \to \calK \otimes B \to \calT_\alpha \to B \rtimes_\alpha \Z \to 0,
\end{gather}
where $\calT_\alpha$ is generated by $B$ and an isometry $\wh{S}$ such that
\[
 \wh{S} b = \alpha(b) \wh{S}, \qquad \wh{S}^* b = \alpha^{-1}(b) \wh{S}^*,
 \qquad \wh{S}^* \wh{S} = \one, \qquad \wh{S} \wh{S}^* = \one - p,
\]
with $p=p^*=p^2$ a projection. The surjection $\sigma\colon \calT_\alpha \to B \rtimes_\alpha \Z$ is uniquely
determined by $\sigma\big( \wh{S}b\big) = Sb$.

The short exact sequence in equation \eqref{eq:doublesided_SES} and the Toeplitz extension of equation \eqref{eq:Toeplitz_ext} give boundary maps in $K$-theory, $\partial_{L/R}$ and
$\partial_{{\rm Toep}}$, respectively. That is, there are homomorphisms
\[
 \partial_{L/R}\colon\ K_\ast(B\rtimes_\alpha \Z)^{\oplus 2} \to K_{\ast-1}( B), \qquad
 \partial_{{\rm Toep}}\colon\ K_\ast( B \rtimes_\alpha \Z) \to K_{\ast - 1}( B).
\]
The following result relates these two maps.

\begin{Lemma} \label{Lemma:double_limit_is_double_Toeplitz}
If $[(w_L,w_R)] \in K_\ast ( B\rtimes_\alpha \Z )^{\oplus 2}$, then
\[
 \partial_{L/R} \big[(w_L,w_R) \big] = \partial_{{\rm Toep}} [w_L] - \partial_{{\rm Toep}} [w_R],
\]
where $-[v]$ denotes the addition of $[v]^{-1}$.
\end{Lemma}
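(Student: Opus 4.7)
\emph{Proof plan.} The argument proceeds by naturality of the six-term exact sequence, applied to two morphisms of short exact sequences that realise the Toeplitz extension \eqref{eq:Toeplitz_ext} as the compression of the double-sided extension \eqref{eq:doublesided_SES} to one half-line of $\Z$. Using bilinearity of the boundary map together with the splitting $[(w_L, w_R)] = [(w_L, \one)] + [(\one, w_R)]$ in $K_*\bigl((B\rtimes_\alpha\Z)^{\oplus 2}\bigr)$, it suffices to establish the two identities
\[
\partial_{L/R}[(w, \one)] = \partial_{\rm Toep}[w], \qquad \partial_{L/R}[(\one, w)] = -\partial_{\rm Toep}[w],
\]
with the precise assignment of signs dictated by which endpoint is labelled $L$.

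For one of these, I take the central projection $q_+ = \chi_{[1, \infty)}\cdot\one_B \in C(\Z\cup\{\pm\infty\}, B)$ and define
\[
\iota_+\colon\ \calT_\alpha \to C(\Z\cup\{\pm\infty\}, B)\rtimes_{\wt\alpha}\Z, \qquad \iota_+(b) = b\, q_+, \qquad \iota_+(\wh{S}) = q_+\wt{S}q_+.
\]
A direct check shows that $q_+\wt{S}q_+ = \wt{S}q_+$ is an isometry in the corner $q_+ A q_+$ (writing $A = C(\Z\cup\{\pm\infty\}, B)\rtimes_{\wt\alpha}\Z$), satisfies $(q_+\wt{S}q_+) b = \alpha(b)(q_+\wt{S}q_+)$, and has defect $\one - (q_+\wt{S}q_+)(q_+\wt{S}q_+)^* = \chi_{\{1\}}\cdot\one_B$, playing the role of the generating projection $p$ of $\calT_\alpha$. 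Thus $\iota_+$ is well defined. It fits into a morphism of short exact sequences whose restriction on the ideal is the corner inclusion of compact operators (which induces a $K$-theory isomorphism), and whose induced map on quotients sends $w \mapsto (0, w)$, that is $w \mapsto (\one, w)$ on $K$-theory after the unit-preserving extension. Naturality of the boundary map then yields one of the identities with the correct sign.

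The remaining identity is handled by the analogous construction on the left half-line with $q_- = \chi_{(-\infty, 0]}\cdot\one_B$. The subtlety is that the isometry in $q_- A q_-$ compatible with Toeplitz-type commutation relations is $q_-\wt{S}^*q_-$ rather than $q_-\wt{S}q_-$, since the forward shift leaks past $0$ whereas the backward shift does not. Because $(q_-\wt{S}^*q_-) b = \alpha^{-1}(b)(q_-\wt{S}^*q_-)$, the resulting embedding has source $\calT_{\alpha^{-1}}$ rather than $\calT_\alpha$. Under the canonical $\ast$-isomorphism $B\rtimes_{\alpha^{-1}}\Z \cong B\rtimes_\alpha\Z$ sending the generating unitary to its inverse, $K_*$-classes transform by $[w] \mapsto [w]^{-1}$ and the Toeplitz boundary maps are exchanged up to an overall sign. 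This sign, combined with naturality applied to the left-hand embedding, produces the relative minus sign in the statement of the lemma.

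The principal obstacle is the bookkeeping of signs coming from the orientation-reversal on the left half-line. A more computational but potentially cleaner alternative would be to construct an explicit unitary lift of $(w_L, w_R)$ inside $C(\Z\cup\{\pm\infty\}, B)\rtimes_{\wt\alpha}\Z$, for instance by partition-of-unity interpolation between Toeplitz-type lifts of $w_L$ and $w_R$ on the two half-lines, and then read off the image under the connecting map in $K_{*-1}(\calK\otimes B) \cong K_{*-1}(B)$ directly from the asymptotic behaviour at $\pm\infty$; this would avoid invoking $\calT_{\alpha^{-1}}$ and would render the signs manifest.
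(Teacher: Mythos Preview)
Your approach is sound and reaches the same destination as the paper, though by a slightly different road. The paper's proof is very terse: it asserts that the one-sided extension
\[
0 \to \calK \otimes B \to C(\Z \cup \{-\infty\}, B) \rtimes_{\wt{\alpha}} \Z \xrightarrow{{\rm ev}_L} B \rtimes_\alpha \Z \to 0
\]
is equivalent (as an extension, i.e.\ as a class in $KK^1(B\rtimes_\alpha\Z,B)$) to the Toeplitz extension, because the completely positive section coming from the fixed point $-\infty$ agrees with the Toeplitz section $Sb\mapsto\wh{S}b$; it then notes that the $+\infty$ endpoint reverses orientation, yielding the inverse extension and hence the minus sign. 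Your argument instead builds explicit $\ast$-homomorphisms $\iota_\pm$ from Toeplitz-type algebras into corners of the two-sided algebra and invokes naturality of the six-term sequence. This is more elementary (no appeal to $KK$-equivalence of extensions) and more constructive, and it makes transparent exactly where the orientation reversal enters: the isometry on the left half-line implements $\alpha^{-1}$ rather than $\alpha$.

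One point deserves care. With your $q_+=\chi_{[1,\infty)}$ the induced quotient map is inclusion into the $R$-coordinate, so naturality gives $\partial_{L/R}[(\one,w)]$ equal to the Toeplitz boundary map composed with the ideal inclusion $\calK(\ell^2([1,\infty)))\otimes B\hookrightarrow\calK(\ell^2(\Z))\otimes B$. You assert this composite is $\partial_{\rm Toep}$ ``with the correct sign'', and then locate the minus sign entirely in the $q_-$ side via $\calT_{\alpha^{-1}}$. That assignment is not quite right: a direct computation with $B=\C$ and the isometry $F=(1-q_+)+\wt{S}q_+$ (which has $F_L=1$, $F_R=S$) gives $\Index(F)=-1$, matching $+\partial_{\rm Toep}[S]$ rather than $-\partial_{\rm Toep}[S]$. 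So the half-line that produces the Toeplitz map directly is the $R$ side, and the sign flip actually lives in how the paper normalises $\partial_{\rm Toep}$ relative to the index map (there is a global sign freedom here that the paper absorbs into the quoted winding-number formula from~\cite{BSBMatrix}). Your structural argument is correct; just be aware that pinning down which side carries the minus requires fixing all the conventions simultaneously, and your heuristic ``$\alpha^{-1}$ gives the sign'' may land it on the wrong endpoint.
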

\begin{proof}
The fixed point $-\infty$ of the translation action of $\Z$ on $C(\Z \cup \{- \infty\}, B)$ gives a completely positive map
$B\rtimes_\alpha \Z \to C( \Z \cup \{- \infty\}, B) \rtimes_{\wt{\alpha}} \Z$ that is equivalent to the
completely positive map $Sb \mapsto \wh{S}b$ from the Toeplitz extension. That is, the Toeplitz extension is equivalent to the short exact sequence
\[
 0 \to \calK \otimes B \to C( \Z \cup \{-\infty\}, B) \rtimes_{\wt{\alpha}} \Z \xrightarrow{{\rm ev}_L} B \rtimes_\alpha \Z \to 0.
\]
The limit $+\infty$ reverses the orientation and so represents the inverse of the Toeplitz extension.
\end{proof}

Suppose that $u \in C( \Z \cup \{\pm \infty\}, B) \rtimes_{\wt{\alpha}} \Z$ is a chiral unitary on $\ell^2(\Z, B)$ with respect to some
$\gamma_0 \in C( \Z \cup \{ \pm \infty\}, B) \rtimes_{\wt{\alpha}} \Z$. If $u$ is of Fredholm type, then it determines
a class $\big[\calC(u) \big] \in KK(\C, B)$. Because $B$ possesses a unital trace $\tau_B$, we can also consider the numerical
in\-dex~$(\tau_B)_\ast \bigl( [\calC(u)] \bigr) \in \R$ (cf.~Section~\ref{subsec:induced_trace}). As the following result shows,
the noncommutative calculus on
$B \rtimes_\alpha \Z$ can be used to give a concrete formula for this index.

\begin{Theorem} \label{Theorem:doublesided_windingno}
Let $\bullet \in \{ L, R\}$ and $F \in C(\Z \cup \{\pm \infty\}, B) \rtimes_{\wt{\alpha}} \Z \subset \End_B\bigl(\ell^2(\Z, B)\bigr)$ be any element such that
$F_\bullet := {\rm ev}_\bullet(F) \in B\rtimes_\alpha \Z$ is unitary and is contained in $\Dom(\operatorname{Wind})$.
Then $F$ defines a~class $[F] \in KK(\C, B)$ and
\[
 ( \tau_B)_\ast \bigl( [F] \bigr) = \operatorname{Wind}(F_R) - \operatorname{Wind}(F_L)
 = \frac{1}{2{\rm i}\pi} ( \Tr_\tau( F_R^* \delta(F_R)) - \Tr_\tau( F_L^* \delta(F_L)) ) .
\]
\end{Theorem}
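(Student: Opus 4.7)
The plan is to identify $[F]$ with the image of $[(F_L, F_R)] \in K_1((B \rtimes_\alpha \Z)^{\oplus 2})$ under the $K$-theoretic boundary map of \eqref{eq:doublesided_SES}, then use Lemma \ref{Lemma:double_limit_is_double_Toeplitz} to split this into a difference of two Toeplitz boundary classes, and finally invoke a semifinite noncommutative Noether--Toeplitz theorem to evaluate each summand as a winding number.

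First I would observe that the kernel of $({\rm ev}_L, {\rm ev}_R)$ in \eqref{eq:doublesided_SES} is $\calK \otimes B \cong \mathbb{K}_B(\ell^2(\Z, B))$, and because $F$ lifts the unitary pair $(F_L, F_R)$, the element $F \in \End_B(\ell^2(\Z, B))$ is unitary modulo compacts and hence Fredholm on $\ell^2(\Z,B)$. By the standard description of the connecting homomorphism in operator $K$-theory as the Fredholm index of a lift, one has
\[
 [F] = \partial_{L/R}[(F_L, F_R)] \in K_0(\mathbb{K}_B(\ell^2(\Z, B))) \cong K_0(B) \cong KK(\C, B).
\]
Applying Lemma \ref{Lemma:double_limit_is_double_Toeplitz} then rewrites this as
\[
 [F] = \partial_{\rm Toep}[F_L] - \partial_{\rm Toep}[F_R] \in K_0(B).
\]

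The core of the argument is to establish a semifinite noncommutative Noether--Toeplitz identity: for any unitary $w \in B \rtimes_\alpha \Z$ lying in $\Dom(\operatorname{Wind})$,
\[
 (\tau_B)_\ast\bigl(\partial_{\rm Toep}[w]\bigr) = -\operatorname{Wind}(w) = -\frac{1}{2\pi{\rm i}} \Tr_\tau\bigl(w^* \delta(w)\bigr).
\]
Granting this, applying $(\tau_B)_\ast$ to the previous display immediately yields $(\tau_B)_\ast([F]) = \operatorname{Wind}(F_R) - \operatorname{Wind}(F_L)$, as desired.

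I expect the Noether--Toeplitz step to be the main obstacle: it requires realising the pairing of $\partial_{\rm Toep}$ with $\tau_B$ concretely and matching the result with the noncommutative winding number, with sign conventions tracked carefully. The natural route is to construct an unbounded Kasparov cycle on $B \rtimes_\alpha \Z$ from the derivation $\delta$ and trace $\Tr_\tau$ that represents the Toeplitz extension class, and then to invoke the semifinite local index formula of Carey--Phillips--Rennie--Sukochev \cite{CPRS2} — or to adapt Matsuzawa's direct argument \cite{Matsuzawa} to the semifinite setting — to compute the pairing in closed form. As a minor secondary check, one must verify that the class $[F] \in KK(\C,B)$, when realised as a Kasparov module (for example via the off-diagonal operator built from $F$ on a $\Z_2$-graded doubling of $\ell^2(\Z,B)$), really does coincide with the boundary class $\partial_{L/R}[(F_L,F_R)]$ used in the first step; this is a standard but conventions-sensitive compatibility.
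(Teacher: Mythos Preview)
Your proposal is correct and follows essentially the same route as the paper: identify $[F]$ with $\partial_{L/R}[(F_L,F_R)]$, split via Lemma~\ref{Lemma:double_limit_is_double_Toeplitz} into Toeplitz boundary terms, and evaluate each via a semifinite Noether--Toeplitz identity. The paper carries out the last step exactly as you anticipate---by representing $\partial_{\rm Toep}$ as the Kasparov product with an unbounded cycle over $B\rtimes_\alpha\Z$ (citing \cite{BKR1}) and then invoking a semifinite index formula (citing \cite{BSBMatrix} rather than \cite{CPRS2} directly)---so your outline matches the paper's proof in both structure and substance.
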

\begin{proof}
If $F$ is such that $F_L$ and $F_R$ are unitary, then recalling that $\mathbb{K}_B\bigl(\ell^2(\Z, B)\bigr) \cong \calK \otimes B$,
we have that $F \in C(\Z \cup \{\pm \infty\}, B) \rtimes_{\wt{\alpha}} \Z \subset \End_B\bigl(\ell^2(\Z, B)\bigr)$
is unitary modulo compact operators. Hence, the triple
\[
 \biggl( \C, \ell^2(\Z, B) \oplus \ell^2(\Z, B), \begin{pmatrix} 0 & F^* \\ F & 0 \end{pmatrix} \biggr)
\]
is an even Kasparov module and gives a class $[F] \in KK(\C, B) \cong K_0(B)$.
Because $F_L$ and $F_R$ are unitary elements in $B \rtimes_\alpha \Z$, we can consider the
class $[(F_L, F_R)] \in K_1( B \rtimes_\alpha \Z)^{\oplus 2}$.
Recalling the
index map in $K$-theory (see, for example, \cite[Section 8.3]{Blackadar}), $[F]$ represents
$\partial_{L/R} [( F_L, F_R)] \in K_0( B)\cong KK(\C, B)$
by construction.
Therefore, $(\tau_B)_\ast ([F]) = \bigl((\tau_B)_\ast \circ \partial_{L/R} \bigr) [( F_L, F_R)]$
and we consider the composition
\[
 K_1( B\rtimes_\alpha \Z)^{\oplus 2} \xrightarrow{ \partial_{L/R}} K_0(B) \xrightarrow{( \tau_B)_\ast} \C.
\]

Using Lemma \ref{Lemma:double_limit_is_double_Toeplitz}, we can write this map as
\[
 ( \tau_B)_\ast \bigl( \partial_{{\rm Toep}}( [F_L] ) \bigr) - ( \tau_B)_\ast \bigl(\partial_{{\rm Toep}}( [F_R] ) \bigr).
\]
Now, using~\cite[Proposition 3.3]{BKR1}, the boundary map $\partial_{{\rm Toep}}$ can be represented as
the Kasparov product with the unbounded Kasparov module $\big[\hat{X}\big] \in KK^1( B\rtimes_\alpha \Z, B)$ constructed in~\cite[Section~3]{BKR1}.
So we can equivalently consider the product
\begin{equation} \label{eq:boundary_to_Kasprod}
 K_1( B \rtimes_\alpha \Z) \times KK^1(B\rtimes_\alpha \Z, B ) \to K_0( B ) \xrightarrow{( \tau_B)_\ast} \C.
\end{equation}
If $F_L, F_R \in \Dom(\operatorname{Wind})$,
we can use the index formula~\cite[Theorem 6]{BSBMatrix}, which says that~\eqref{eq:boundary_to_Kasprod} applied
to a unitary $F_\bullet \in \Dom(\operatorname{Wind})$ is given by $-\operatorname{Wind}(F_\bullet)$.
The result now follows.
\end{proof}

\begin{Remarks}\quad
\begin{itemize}\itemsep=0pt
 \item[(1)] Given the setting of Theorem \ref{Theorem:doublesided_windingno}, if in addition the operator
 $F \in C(\Z \cup \{\pm \infty\}, B) \rtimes_{\wt{\alpha}} \Z$
has closed range, then $\Ker(F)$ and $\Ker(F^*)$ are finitely generated and projective modules over the unital $C^*$-algebra $B$.
In such a setting, the map $KK(\C, B) \xrightarrow{(\tau_B)_*} \C$ can be explicitly characterised and our index formula can be
written as
\[
 \tau_B \bigl( \Ker(F) \bigr) - \tau_B \bigl( \Ker(F^*) \bigr) = \operatorname{Wind}(F_R) - \operatorname{Wind}(F_L).
\]

 \item[(2)] There is a dense $\ast$-subalgebra $\calA \subset B \rtimes_\alpha \Z$ that is Fr\'{e}chet, stable under the
 holomorphic functional calculus and $GL(\calA) \subset \Dom(\operatorname{Wind})$. The noncommutative
 winding number is a homotopy invariant and gives a well-defined map $K_1( \calA) \to \C$.
 Because $K_1(\calA) \cong K_1( B \rtimes_\alpha \Z)$,
 we can define $\operatorname{Wind}(v)$ for any invertible $v \in B \rtimes_\alpha \Z$ via
 $\operatorname{Wind}(v) = \operatorname{Wind}(\wt{v})$, where $[v] = [\wt{v}] \in K_1(B \rtimes_\alpha \Z)$
 and $\wt{v} \in \calA$. Hence we can remove the assumption that
 $F_L, F_R \in \Dom(\operatorname{Wind})$ from Theorem \ref{Theorem:doublesided_windingno}, though the existence
 of elements $\wt{F}_L, \wt{F}_R \in \calA$ is non-constructive in general.
 \item[(3)] Given Lemma \ref{Lemma:double_limit_is_double_Toeplitz}, Theorem \ref{Theorem:doublesided_windingno}
 is not so surprising to those familiar with the Noether--Toeplitz index theorem. However, the use of double-sided limits
 is an interesting variation, something that also appears in the work of Matsuzawa~\cite{Matsuzawa}. We expect further
 variants and generalisations of such an index formula to hold that may also be of relevance for topological properties of chiral
 unitaries and split-step quantum walks. We leave this question to another place.
\end{itemize}
\end{Remarks}

Let us now examine the hypotheses of Theorem \ref{Theorem:doublesided_windingno} for the case of a
chiral unitary $u \in C( \Z \cup \{\pm \infty\}, B) \rtimes_{\wt{\alpha}} \Z$ with respect to
$\gamma_0 \in C( \Z \cup \{\pm \infty\}, B) \rtimes_{\wt{\alpha}} \Z$ and acting on $\ell^2(\Z, B)$.
If $\| \one - (u_L, u_R) \|_{B \rtimes_\alpha \Z} < 2$, then $u$ is of Fredholm type and
we obtain a Fredholm operator $\chi( \calC(u) )$ with $\chi$ a normalising function for
$\calC(u)$. Recalling that $({\rm i} + \calC(u))^{-1} = -\tfrac{{\rm i}}{2}( \one -u) \in C( \Z \cup \{\pm \infty\}, B) \rtimes_{\wt{\alpha}} \Z$,
we also have that $\chi (\calC(u) ) \in C( \Z \cup \{\pm \infty\}, B) \rtimes_{\wt{\alpha}} \Z$.

Because $\gamma_0 \in C( \Z \cup \{\pm \infty\}, B) \rtimes_{\wt{\alpha}} \Z$, so is
$p_0 = \tfrac{1}{2}(\one +\gamma_0)$ and $\one - p_0$. In particular the operator
$F = (\one - p_0) \chi( \calC(u) ) p_0 \in C( \Z \cup \{\pm \infty\}, B) \rtimes_{\wt{\alpha}} \Z$
is such that $F_L, F_R \in B\rtimes_\alpha \Z$ is unitary.
If $F_L$ and $F_R$ are contained in $\Dom(\operatorname{Wind})$, then we can say that
\[
 (\tau_B)_\ast \bigl( [\calC(u)] \bigr) = \operatorname{Wind}(F_R) - \operatorname{Wind}(F_L).
\]
The more difficult question is whether $F_L$ and $F_R$ are contained in $\Dom(\operatorname{Wind})$.
Certainly if $F$ is of the form
\begin{equation} \label{eq:smooth_elements}
 \sum_{n \in \Z} \wt{S}^n f_n, \qquad f_n \in C(\Z \cup \{\pm \infty\}, B),
\end{equation}
where $f_n$ is such that the function $n \mapsto \| f_n( \pm \infty) \|_B$ has superpolynomial decay in $n$,
then $F_L$ and $F_R$ will be in $\Dom(\operatorname{Wind})$.
However, a concrete expression for $F$ is challenging in general.

Let us consider the case that $\one - u \in C_0(\Z, B) \rtimes_{\wt{\alpha}} \Z$, i.e.,
$u$ is in the minimal unitisation $\mathbb{K}_B\bigl( \ell^2( \Z, B) \bigr)^\sim$.
Then $\calC(u)$ has compact resolvent we can use the normalising function
$\chi(x) = x\bigl(1+x^2\bigr)^{-1/2}$. Recalling \eqref{eq:C(u)_bdd_transfrom} on p.~\pageref{eq:C(u)_bdd_transfrom},
\begin{align*}
 F = (\one - p_0) F_{\calC(u)} p_0 = \frac{{\rm i}}{2} (\one - p_0) (\one + u)V p_0,
\end{align*}
where $V$ is the extension of the operator on $(u-\one)\ell^2(\Z, B)$,
$V(u-\one)e = |u-\one|e$ for $e \in \ell^2(\Z, B)$. Supposing that $u$ and
$\gamma_0$ are of the form described in~\eqref{eq:smooth_elements}, then
the only obstruction is the operator~$V$. Writing
\[V = |\one -u|(\one -u)^{-1} = (2-u-u^*)^{1/2}(\one -u)^{-1},\] we have that $V \in C( \Z \cup \{\pm \infty\}, B) \rtimes_{\wt{\alpha}} \Z$,
but a condition on $u$ that implies sufficient regularity of~$V_L$ and~$V_R$ with respect
to $\delta$ and $\Tr_\tau$ is difficult to state
in general. We leave a more comprehensive analysis of this question to future work.

\subsection*{Acknowledgements}

The author thanks C.~Cedzich, S.~Richard and Y.~Tanaka for helpful discussions.
We also thank the anonymous referees for their numerous suggestions that have helped improve the manuscript.
This work is supported by a JSPS Grant-in-Aid for Early-Career Scientists (No.~19K14548).
An earlier version of this paper was completed while the author was affiliated to the WPI-AIMR, Tohoku University.

\pdfbookmark[1]{References}{ref}
\LastPageEnding

\end{document}